\newtheorem{teo}{Theorem}[section]
\newtheorem{lemma}[teo]{Lemma}
\newtheorem{coro}[teo]{Corollary}
\newtheorem{prop}[teo]{Proposition}
\newtheorem{lem}[teo]{Lemma}
\theoremstyle{remark}
\newtheorem{remark}[teo]{Remark}
\numberwithin{equation}{section}
\begin{document}

\title[Lower central and derived series of surface braid groups]{The lower central and derived series of the braid groups of compact surfaces}

%    Information for first author
\author[J.~Guaschi]{John Guaschi}
\address{Normandie Univ., Unicaen, CNRS, Laboratoire de Math\'ematiques Nicolas Oresme UMR CNRS~6139, 14000 Caen, France.}
\email{john.guaschi@unicaen.fr}
%    Address of record for the research reported here
%\address{Department of Mathematics, Louisiana State University, Baton
%Rouge, Louisiana 70803}
%    Current address
%\curraddr{Department of Mathematics and Statistics,
%Case Western Reserve University, Cleveland, Ohio 43403}
%\email{xyz@math.university.edu}
%    \thanks will become a 1st page footnote.
%\thanks{The first author was supported in part by NSF Grant \#000000.}

%    Information for second author
\author[C.~M.~Pereiro]{Carolina de Miranda e Pereiro}
\address{Universidade Federal do Esp\'{i}rito Santo, UFES, Departamento de Matem\'{a}tica, 29075-910, Vit\'{o}ria, Esp\'{i}rito Santo, Brazil}
\email{carolinapereiro@gmail.com}
%\address{Mathematical Research Section, School of Mathematical Sciences,
%Australian National University, Canberra ACT 2601, Australia}
%\email{two@maths.univ.edu.au}
%\thanks{Support information for the second author.}
%
%%    General info
\subjclass[2010]{Primary: 20F36, 20F14; Secondary: 20E26}
\date{\today}
%
%\dedicatory{This paper is dedicated to our advisors.}

%\keywords{Differential geometry, algebraic geometry}

\begin{abstract}
\noindent Let $M$ be a compact surface, either orientable or non-orientable. We study the lower central and derived series of the braid and pure braid groups of $M$ in order to determine the values of $n$ for which $B_{n}(M)$ and $P_{n}(M)$ are residually nilpotent or residually soluble. First, we solve this problem for the case where $M$ is the $2$-torus. We then give a general description of these series for an arbitrary semi-direct product that allows us to calculate explicitly the lower central series of $P_{2}(\mathbb{K})$, where $\mathbb{K}$ is the Klein bottle, and to give an estimate for the derived series of $P_{n}(\mathbb{K})$. Finally, if $M$ is a non-orientable compact surface without boundary, we determine the values of $n$ for which $B_{n}(M)$ is residually nilpotent or residually soluble in the cases that were not already known in the literature.
\end{abstract}

\maketitle

\section{Introduction}

The braid groups of the disc, also called the Artin braid groups, were introduced by E.~Artin~\cite{Ar0}. If $n\geq1$, the $n$-string Artin braid group, denoted by $B_{n}$, is generated by elements $\sigma_{1},\ldots,\sigma_{n-1}$ that are subject to the Artin relations:\[\left\{\begin{aligned}&\sigma_{i}\sigma_{i+1}\sigma_{i}=\sigma_{i+1}\sigma_{i}\sigma_{i+1} & \text{for all $1\leq i\leq n-2$}\\ &\sigma_{j}\sigma_{i}=\sigma_{i}\sigma_{j}& \text{if $|i-j|\geq 2$ and $1\leq i,j\leq n-1$.}\end{aligned}\right.\]
The notion of braid group was generalised to surfaces by Fox and Neuwirth using configuration spaces as follows~\cite{FoN}. Let $M$ be a compact, connected surface, and let $n\in \mathbb{N}$. The \textit{$n^{\text{th}}$ configuration space of $M$}, denoted by $F_{n}(M)$, is defined by:\[F_{n}(M)=\left\{(x_{1},\ldots,x_{n})\,: \text{$x_{i}\in M$, and $x_{i}\neq x_{j}$ if $i\neq j$, $i,j=1,\ldots,n$}\right\}.\]
The \textit{$n$-string pure braid group $P_{n}(M)$} of $M$ is defined by $P_{n}(M)=\pi_{1}(F_{n}(M))$. The symmetric group $S_{n}$ on $n$ letters acts freely on $F_{n}(M)$ by permuting coordinates, and the \textit{$n$-string braid group $B_{n}(M)$} of $M$ is defined by $B_{n}(M)=\pi_{1}(F_{n}(M)/S_{n})$.  This gives rise to the following short exact sequence: \begin{equation}\label{seq1}1\longrightarrow P_{n}(M)\longrightarrow B_{n}(M)\longrightarrow S_{n}\longrightarrow 1.\end{equation}

If $m\geq 1$, the projection $p\colon\thinspace F_{n+m}(M)\longrightarrow F_{n}(M)$ defined by $p(x_{1},\ldots,x_{n},\ldots,x_{n+m})=(x_{1},\ldots,x_{n})$ induces a homomorphism $p_{\ast}\colon\thinspace P_{n+m}(M)\longrightarrow P_{n}(M)$. Geometrically, $p_{\ast}$ is the homomorphism that `forgets' the last $m$ strings. If $M$ is without boundary, Fadell and Neuwirth showed that $p$ is a locally-trivial fibration~\cite{FN}, with fibre $F_{m}(M\setminus\left\{x_{1},\ldots,x_{n}\right\})$ over the point $(x_{1},\ldots,x_{n})$, which we consider to be a subspace of the total space via the map \mbox{$i\colon\thinspace F_{m}(M\setminus\left\{x_{1},\ldots,x_{n}\right\})\longrightarrow F_{n+m}(M)$} defined by $i((y_{1},\ldots,y_{m}))=(x_{1},\ldots,x_{n},y_{1},\ldots,y_{m})$. Applying the associated long exact sequence in homotopy to this fibration, we obtain the Fadell-Neuwirth short exact sequence of pure braid groups:
\begin{equation}\label{seqFN}
1\longrightarrow P_{m}(M\setminus\left\{x_{1},\ldots,x_{n}\right\})\stackrel{i_{\ast}}{\longrightarrow} P_{n+m}(M)\stackrel{p_{\ast}}{\longrightarrow} P_{n}(M)\longrightarrow 1,\end{equation}
where $n\geq 3$ if $M$ is the sphere $\mathbb S^{2}$~\cite{Fa,FVB}, $n\geq 2$ if $M$ is the projective plane $\mathbb RP^{2}$~\cite{FVB}, and $n\geq1$ otherwise~\cite{FN}, and $i_{\ast}$ is the homomorphism induced by the map $i$. This sequence has been widely studied. If $M$ is the torus $\mathbb{T}$ or the Klein bottle $\mathbb{K}$, the existence of a non-vanishing vector field on $M$ allows one to construct a section for $p$~\cite{FN}. This implies that the short exact sequence~(\ref{seqFN}) splits for all $n,m\in\mathbb{N}$, and that $P_{n}(M)$ may be decomposed as an iterated semi-direct product (see Proposition~\ref{sectionPn} for an explicit section for $p_{\ast}$ in the case $M=\mathbb{K}$).

Let $G$ be a group. If $g,g'\in G$ then $[g,g']=gg'g^{-1}g'^{-1}$ denotes their \emph{commutator}, and if $H$ and $K$ are subgroups of $G$, then the \emph{commutator subgroup of $H$ and $K$}, denoted by $[H,K]$, is defined by $[H,K]=\langle [h,k] : \text{$h\in H$ and $k\in K$}\rangle$, the subgroup of $G$ generated by the commutators of $H$ and $K$. The \textit{lower central series} $\left\{\Gamma_{i}(G)\right\}_{i\geq1}$ of $G$ is defined inductively by $\Gamma_{1}(G)=G$, and for $i\geq1$, $\Gamma_{i+1}(G)=\left[\Gamma_{i}(G),G\right]$, and the \textit{derived series} $\left\{G^{(i)}\right\}_{i\geq0}$ of $G$ is defined inductively by $G^{(0)}=G$, and for $i\geq0$, $G^{(i+1)}=\left[G^{(i)},G^{(i)}\right]$. The quotient $G/\Gamma_{2}(G)$ is the \textit{Abelianisation} of $G$ that we denote by $G^{\text{Ab}}$. Following P.~Hall, for any group-theoretic property $\mathcal P$, a group $G$ is said to be residually $\mathcal P$ if for any (non-trivial) element $x\in G$, there exists a group $H$ that possesses property $\mathcal P$ and a surjective homomorphism $\varphi:G\longrightarrow H$ such that $\varphi(x)\neq1$. It is well known that a group $G$ is \textit{residually nilpotent} (resp.\ \textit{residually soluble}) if and only if $\bigcap_{i\geq1}\Gamma_{i}(G)=\left\{1\right\}$ (resp.\ $\bigcap_{i\geq 0}G^{(i)}=\left\{1\right\}$).

If $p$ is a prime number, the \emph{lower $\mathbb{F}_{p}$-linear central filtration} $\left\{\gamma^{p}_{i}(G)\right\}_{i\geq1}$ of $G$ is defined inductively by $\gamma^{p}_{1}(G)=G$, and for $i\geq1$, $\gamma^{p}_{i+1}(G)=\left\langle \left[\gamma^{p}_{i}(G),G\right],\,x^{p}\,:\,x\in\gamma^{p}_{i}(G)\right\rangle$~\cite{Pa}. If the group $G$ is finitely generated, then $G$ is \emph{residually $p$-finite} if and only if $\bigcap_{i\geq1}\gamma^{p}_{i}(G)=\left\{1\right\}$~\cite[Proposition~2.3(2)]{Pa}. For any group $G$, $G^{(i)}\subset\Gamma_{i+1}(G)\subset \gamma^{p}_{i+1}(G)$, so residually $p$-finite implies residually nilpotent, which in turn implies residually soluble.

In this paper, we are interested in computing the lower central and derived series of $B_{n}(M)$ and $P_{n}(M)$, the aim being to determine the values of $n$ for which these groups are residually nilpotent or residually soluble. We also give some partial results for the residual $p$-finiteness of $P_{n}(\mathbb{K})$.

The first results about the lower central series of the braid groups were obtained by Gorin and Lin who gave a presentation of the commutator subgroup $\Gamma_{2}(B_{n})$ of $B_{n}$ for $n\geq 3$, and who showed that $(B_{n})^{(1)}=(B_{n})^{(2)}$ for all $n\geq 5$, which implies that $(B_{n})^{(1)}$ is perfect~\cite{GL}. As a consequence, $\Gamma_{2}(B_{n})=\Gamma_{3}(B_{n})$, for all $n\geq 3$, so $B_{n}$ is not residually nilpotent. The lower central series of the pure braid group $P_{n}$ was studied by Falk and Randell~\cite{FR} and by Kohno~\cite{K}, who proved independently that $P_{n}$ is residually nilpotent for all $n\geq 1$.

The braid groups of orientable surfaces were studied by Bellingeri, Gervais and Guaschi~\cite{BGG}. If $M_{g,m}$ is a compact, connected, orientable surface of genus $g\geq1$ with $m\geq 0$ boundary components, then $B_{n}(M_{g,m})$ is not residually nilpotent if $n\geq 3$, and $B_{2}(\mathbb{T})$ is residually nilpotent. In the case of the pure braid groups, $P_{n}(M_{g,m})$ is residually torsion-free nilpotent for all $n\geq1$ if $m\geq1$, or if $g=1$ and $m=0$ (the torus). If $m=0$ and $g\geq1$, Bardakov and Bellingeri proved that $P_{n}(M_{g,m})$ is residually torsion-free nilpotent for all $n\geq1$, and the braid group $B_{2}(M_{g,m})$ is residually $2$-finite, in particular, it is residually nilpotent~\cite{BB}. Gon\c{c}alves and Guaschi studied the lower central and derived series of the braid groups of the sphere $\mathbb S^{2}$ and the projective plane $\mathbb RP^2$~\cite{GG, GG1}. For the sphere, $B_{n}(\mathbb S^{2})$ is residually nilpotent if and only if $n\leq2$, and residually soluble if and only if $n\leq4$. In the case of the projective plane, $B_{n}(\mathbb RP^{2})$ is residually nilpotent if and only if $n\leq3$, and if $n\neq 4$, $B_{n}(\mathbb RP^{2})$ is residually soluble if and only if $n<4$. More recently, if $M$ is a non-orientable surface different from $\mathbb RP^{2}$, Bellingeri and Gervais showed that $P_{n}(M)$ is residually $2$-finite, and so is residually nilpotent~\cite{BG}.

In this paper, we study the derived series of the torus and the lower central series and derived series of non-orientable surfaces. Our main results are as follows. 

\begin{teo}\label{toro} 
The group $B_{n}(\mathbb{T})$ is residually soluble if and only if $n\leq5$.
\end{teo}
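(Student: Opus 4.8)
The plan is to exploit the short exact sequence~\eqref{seq1}, namely $1\to P_n(\mathbb{T})\to B_n(\mathbb{T})\to S_n\to 1$, together with the fact recalled in the Introduction (the case $g=1$, $m=0$ of the results of Bellingeri--Gervais--Guaschi and Bardakov--Bellingeri) that $P_n(\mathbb{T})$ is residually torsion-free nilpotent, and hence residually soluble, so that $\bigcap_{i\ge0}(P_n(\mathbb{T}))^{(i)}=\{1\}$. Since a group $G$ is residually soluble if and only if $\bigcap_{i\ge0}G^{(i)}=\{1\}$, the whole problem reduces to locating the derived series $\{(B_n(\mathbb{T}))^{(i)}\}$ relative to the normal subgroup $P_n(\mathbb{T})$ and to the derived series of $S_n$.

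First I would dispose of the case $n\le4$. Here $S_n$ is soluble, say $(S_n)^{(d)}=\{1\}$, so that $(B_n(\mathbb{T}))^{(d)}\subseteq P_n(\mathbb{T})$, and an immediate induction gives $(B_n(\mathbb{T}))^{(d+i)}\subseteq (P_n(\mathbb{T}))^{(i)}$ for every $i\ge0$. Intersecting over $i$ yields $\bigcap_i(B_n(\mathbb{T}))^{(i)}\subseteq\bigcap_i(P_n(\mathbb{T}))^{(i)}=\{1\}$, so $B_n(\mathbb{T})$ is residually soluble. The same reasoning isolates the only possible obstruction when $n\ge5$: the perfect simple group $A_n=(S_n)^{(i)}$ $(i\ge1)$. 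Indeed, the image of $\bigcap_i(B_n(\mathbb{T}))^{(i)}$ in $S_n$ is contained in $A_n$, and the question becomes whether this image, together with the part of the intersection lying in $P_n(\mathbb{T})$, can be forced down to the identity.

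The heart of the matter is therefore the explicit analysis of the derived series of $B_n(\mathbb{T})$ for $n\ge5$, for which I would apply the general formulae for the derived series of a semi-direct product established earlier in the paper to the iterated semi-direct decomposition of $P_n(\mathbb{T})$ and to the action of $S_n$. Viewing the successive quotients of the derived (and lower central) series of $P_n(\mathbb{T})$ as modules over $\mathbb{Z}[A_n]$, one computes the abelianisation of each derived subgroup $(B_n(\mathbb{T}))^{(m)}$. For $n=5$ the aim is to show that $\bigcap_i(B_5(\mathbb{T}))^{(i)}=\{1\}$ by proving that the series strictly descends at every stage, so that its intersection with $P_5(\mathbb{T})$ is driven to the identity by the residually soluble filtration, the subtle point being that the projection of the intersection to $S_5$ is nonetheless trivial even though each derived subgroup surjects onto $A_5$. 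For $n\ge6$ the aim is the opposite: to show that some derived subgroup $(B_n(\mathbb{T}))^{(m)}$ is perfect, i.e.\ $(B_n(\mathbb{T}))^{(m)}=(B_n(\mathbb{T}))^{(m+1)}$, whence it lies in $\bigcap_i(B_n(\mathbb{T}))^{(i)}$ and is non-trivial, since it surjects onto $A_n$. Equivalently, because residual solubility is inherited by subgroups, for $n\ge6$ it suffices to exhibit a single non-residually-soluble subgroup of $B_n(\mathbb{T})$.

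I expect the main obstacle to be precisely this dichotomy between $n=5$ and $n\ge6$. Since $A_n$ is perfect for every $n\ge5$, its augmentation ideal satisfies $I_{A_n}=I_{A_n}^{2}$ (as $I_{A_n}/I_{A_n}^{2}\cong (A_n)^{\mathrm{Ab}}=0$), so the first-order homological analysis of $H_1(P_n(\mathbb{T}))$ as an $A_n$-module cannot distinguish the two cases. The separation must therefore emerge only deeper in the derived series, where the relevant $\mathbb{Z}[A_n]$-coinvariants of the higher derived quotients of $P_n(\mathbb{T})$ vanish for $n\ge6$ but survive for $n=5$. Controlling these coinvariants, equivalently deciding at which finite stage (if ever) a derived subgroup of $B_n(\mathbb{T})$ first becomes perfect, is the delicate computational core of the argument, and it is here that the non-split nature of~\eqref{seq1} and the specific difference between the representation theory of $A_5$ and of $A_n$ for $n\ge6$ must be brought to bear.
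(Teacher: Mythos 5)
Your reduction for $n\leq 4$ is correct and coincides with the paper's Proposition~\ref{toro1}: solubility of $S_{n}$ together with residual solubility of $P_{n}(\mathbb{T})$ (the paper invokes Gruenberg's Theorem~\ref{TG} rather than iterating $(B_{n}(\mathbb{T}))^{(d+i)}\subseteq (P_{n}(\mathbb{T}))^{(i)}$ by hand, but the content is the same). The remainder of your plan, however, rests on a dichotomy between $n=5$ and $n\geq 6$ that does not exist. The paper's Proposition~\ref{toro2} shows that $B_{n}(\mathbb{T})$ is \emph{not} residually soluble for every $n\geq 5$, including $n=5$: one has $(B_{n}(\mathbb{T}))^{(2)}=(B_{n}(\mathbb{T}))^{(3)}\neq\{1\}$, so the derived series stabilises at a non-trivial subgroup already at the second stage. (The bound ``$n\leq 5$'' printed in the statement of Theorem~\ref{toro} is inconsistent with Propositions~\ref{toro1} and~\ref{toro2}, from which the theorem is deduced; what the proofs actually establish is residual solubility if and only if $n\leq 4$.) Consequently your programme for $n=5$ --- proving that the derived series strictly descends at every stage --- cannot be carried out, and your guiding hypothesis that the relevant $\mathbb{Z}[A_{n}]$-coinvariants ``vanish for $n\geq 6$ but survive for $n=5$'' is unfounded: nothing in the problem distinguishes $A_{5}$ from $A_{n}$ for $n\geq 6$, and the behaviour is uniform over all $n\geq 5$.

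The missing mechanism is not a module-theoretic analysis of the derived quotients of $P_{n}(\mathbb{T})$ through the sequence~(\ref{seq1}); it is the Gorin--Lin phenomenon for the Artin generators. For $n\geq 5$ (and only then --- this is precisely where the hypothesis enters, via~\cite[Theorem~1.4]{GG} and~\cite{GL}) the cosets of $\sigma_{1},\ldots,\sigma_{n-1}$ in $B_{n}(\mathbb{T})/(B_{n}(\mathbb{T}))^{(2)}$ all coincide in a single element $\sigma$. Relations~(3),~(4),~(5) and~(8) of Theorem~\ref{total} then force $\sigma$ to be central with $[b,a]=\sigma^{-2}$ and $\sigma^{2n}=1$, so that $B_{n}(\mathbb{T})/(B_{n}(\mathbb{T}))^{(2)}$ is the finitely generated class-two nilpotent group $\left\langle \sigma,a,b\,:\,[a,\sigma]=[b,\sigma]=\sigma^{2n}=1,\,[b,a]=\sigma^{-2}\right\rangle$; the delicate point, namely that $\sigma$ has order exactly $2n$, is obtained from the Reidemeister--Schreier presentation of $(B_{n}(\mathbb{T}))^{(1)}$ given in Proposition~\ref{Bn(T)1}, not from representation theory. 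Repeating the identical computation modulo $(B_{n}(\mathbb{T}))^{(3)}$ shows that the projection $B_{n}(\mathbb{T})/(B_{n}(\mathbb{T}))^{(3)}\longrightarrow B_{n}(\mathbb{T})/(B_{n}(\mathbb{T}))^{(2)}$ is an isomorphism, whence $(B_{n}(\mathbb{T}))^{(2)}=(B_{n}(\mathbb{T}))^{(3)}$, and this subgroup is non-trivial since it surjects onto $A_{n}$. Any correct proof must produce this stabilisation uniformly for all $n\geq 5$; a strategy that searches for a perfect derived subgroup only when $n\geq 6$, while leaving $n=5$ on the residually soluble side, is attempting to prove a false statement.
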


For non-orientable surfaces, we first study the case of the Klein bottle. For an arbitrary semi-direct product, the following result describes its lower central series, and gives some information about its derived series. It will be used in the computation of the lower central and derived series of $P_n(\mathbb{K})$, but we believe that it may be applicable to other groups.

\begin{teo}\label{serie} Let $G$ and $H$ be groups, and let $\varphi:G\longrightarrow Aut (H)$ be an action of $G$ on $H$. We define recursively the following subgroups of $H$: $L_{1}=V_{1}=H$, and if $n\geq 2$: 
\begin{align*}
K_{n} & =\left\langle \varphi(g)(h).h^{-1}\,:\, g\in \Gamma_{n-1}(G), h\in H\right\rangle, & 
H_{n} &=\left\langle \varphi(g)(h).h^{-1}\,:\, g\in G, h\in L_{n-1}\right\rangle,\\
\widetilde{H}_{n}& =\left\langle \varphi(g)(h).h^{-1}\,:\, g\in G, h\in V_{n-1}\right\rangle, &
L_{n}&=\left\langle K_{n},H_{n},[H,L_{n-1}]\right\rangle,\\
V_{n}&=\bigl\langle \widetilde{H}_{n},[H,V_{n-1}]\bigr\rangle. &&
\end{align*}
%$\begin{array}{l}%% 
%K_{n}=\left\langle \varphi(g)(h).h^{-1}\,:\, g\in \Gamma_{n-1}(G), h\in H\right\rangle,\\
%H_{n}=\left\langle \varphi(g)(h).h^{-1}\,:\, g\in G, h\in L_{n-1}\right\rangle,\,\,\, \widetilde{H}_{n}=\left\langle \varphi(g)(h).h^{-1}\,:\, g\in G, h\in V_{n-1}\right\rangle,\\ 
%\mbox{and}\,\, L_{n}=\left\langle K_{n},H_{n},[H,L_{n-1}]\right\rangle,\,\,\, V_{n}=\left\langle \widetilde{H}_{n},[H,V_{n-1}]\right\rangle.
%\end{array} $
Then $\varphi$ induces an action, which we also denote by $\varphi$, of $\Gamma_{n}(G)$ on $L_{n}$ (resp.\ of $G^{(n+1)}$ on $V_{n+2}$), and for all $n\in \mathbb{N}$, we have:
\begin{enumerate}
\item\label{it:lcspart1} $\Gamma_{n}(H\rtimes_{\varphi}G)=L_{n}\rtimes_{\varphi}\Gamma_{n}(G)$.

\item\label{it:lcspart2} $(H\rtimes_{\varphi}G)^{(n-1)}\subset V_{n}\rtimes_{\varphi}G^{(n-1)}$.
\end{enumerate}
\end{teo}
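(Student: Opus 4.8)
The plan is to argue by induction on $n$, with the lower central series as the model case and the derived series handled in parallel but only as a chain of inclusions. Write $\Gamma = H\rtimes_{\varphi}G$, let $\pi\colon\Gamma\to G$ be the canonical projection, and identify $H$ with $\{(h,1)\}$ and $G$ with $\{(1,g)\}$. The starting observation is purely formal: since $\pi$ is a homomorphism and $G\leq\Gamma$, monotonicity of the lower central series gives $\pi(\Gamma_{n}(\Gamma))=\Gamma_{n}(G)$ and $\Gamma_{n}(G)\subseteq\Gamma_{n}(\Gamma)$; hence every $(h,g)\in\Gamma_{n}(\Gamma)$ satisfies $(1,g)\in\Gamma_{n}(\Gamma)$ and therefore $(h,1)=(h,g)(1,g)^{-1}\in\Gamma_{n}(\Gamma)\cap H$. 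This already yields a semidirect decomposition $\Gamma_{n}(\Gamma)=(\Gamma_{n}(\Gamma)\cap H)\rtimes_{\varphi}\Gamma_{n}(G)$, so the whole content of part~(\ref{it:lcspart1}) is the identity $\Gamma_{n}(\Gamma)\cap H=L_{n}$; the base case $n=1$ is trivial since $L_{1}=V_{1}=H$ and $\Gamma_{1}(G)=G^{(0)}=G$. The same remark reduces part~(\ref{it:lcspart2}) to controlling the $H$-component of a commutator subgroup.

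Next I would record the four elementary commutator computations in $\Gamma$: for $a,b\in H$ and $x,y\in G$ one has $[(a,1),(b,1)]=([a,b],1)$, $[(1,x),(1,y)]=(1,[x,y])$, $[(1,x),(b,1)]=(\varphi(x)(b)b^{-1},1)$ and $[(a,1),(1,y)]=(a\,\varphi(y)(a^{-1}),1)$. Assuming inductively that $\Gamma_{n-1}(\Gamma)=L_{n-1}\rtimes_{\varphi}\Gamma_{n-1}(G)=\langle L_{n-1},\Gamma_{n-1}(G)\rangle$, and using the standard fact that $[\langle S\rangle,\langle T\rangle]$ coincides with the normal closure, in the subgroup generated by $S\cup T$, of $\{[s,t]:s\in S,\,t\in T\}$, these identities show that $\Gamma_{n}(\Gamma)=[\Gamma_{n-1}(\Gamma),\Gamma]$ is the normal closure of $(L_{n},1)\cup(1,\Gamma_{n}(G))$: the commutators $[L_{n-1},H]$, $[\Gamma_{n-1}(G),H]$ and $[L_{n-1},G]$ contribute exactly the generators of $[H,L_{n-1}]$, $K_{n}$ and $H_{n}$ respectively, while $[\Gamma_{n-1}(G),G]$ contributes $\Gamma_{n}(G)$. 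This gives $L_{n}\rtimes_{\varphi}\Gamma_{n}(G)\subseteq\Gamma_{n}(\Gamma)$ at once, and the reverse inclusion follows once we know that $L_{n}\rtimes_{\varphi}\Gamma_{n}(G)$ is a \emph{normal} subgroup of $\Gamma$, so that it absorbs the relevant normal closure.

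The technical core, and the step I expect to be the main obstacle, is therefore a second induction showing that each $L_{n}$ (and likewise each $V_{n}$) is a normal, $\varphi(G)$-invariant subgroup of $H$; this is exactly what makes $L_{n}\rtimes_{\varphi}\Gamma_{n}(G)$ a subgroup, what furnishes the claimed restricted action of $\Gamma_{n}(G)$ on $L_{n}$, and what supplies the normality needed above. The $\varphi(G)$-invariance of $K_{n}$, $H_{n}$ and $[H,L_{n-1}]$ follows by applying $\varphi(g_{0})$ to a generator and using normality of $\Gamma_{n-1}(G)$ in $G$ together with the invariance of $L_{n-1}$; this invariance, with $\Gamma_{n}(G)\trianglelefteq G$, then yields normality of $L_{n}\rtimes_{\varphi}\Gamma_{n}(G)$ in $\Gamma$, the only delicate point being that conjugating the $H$-part by $h_{0}\in H$ produces a term $\varphi(x')(h_{0})h_{0}^{-1}$ with $x'\in\Gamma_{n}(G)\subseteq\Gamma_{n-1}(G)$, which is absorbed precisely by $K_{n}$. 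Normality of $L_{n}$ in $H$ is the genuinely computational part: for $K_{n}$ and $[H,L_{n-1}]$ one conjugates a generator by $h_{0}$ and rewrites it as a product of generators of the same type, whereas for $H_{n}$ a short manipulation shows that $h_{0}\,(\varphi(g)(h)h^{-1})\,h_{0}^{-1}$ splits as $\varphi(g)(h')(h')^{-1}\cdot h_{0}[q^{-1},h]h_{0}^{-1}$ with $h'\in L_{n-1}$ and $q\in H$, so that the first factor lies in $H_{n}$ and the second in $[H,L_{n-1}]$, both inside $L_{n}$.

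Finally, part~(\ref{it:lcspart2}) runs along the same lines but only as a chain of inclusions. The identical invariance/normality argument (with no $K$-type term, since the derived series involves no conjugation by outside elements) shows each $V_{n}$ is normal and $\varphi(G)$-invariant in $H$, whence $G^{(n+1)}$ acts on $V_{n+2}$ and every $V_{m+1}\rtimes_{\varphi}G^{(m)}$ is a subgroup. Writing $k=n-1$ and assuming $(H\rtimes_{\varphi}G)^{(k-1)}\subseteq V_{k}\rtimes_{\varphi}G^{(k-1)}=:W$, monotonicity of the commutator gives $(H\rtimes_{\varphi}G)^{(k)}\subseteq[W,W]$, and the four commutator identities show that the generators of $[W,W]$ lie in $V_{k+1}\rtimes_{\varphi}G^{(k)}$; the key simplification is that here the offending term $\varphi(x')(h_{0})h_{0}^{-1}$ has $h_{0}\in V_{k}$, so it is already a generator of $\widetilde{H}_{k+1}$ and no $K$-type subgroup is required. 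Since $V_{k+1}\rtimes_{\varphi}G^{(k)}$ is normalised by $W$, it absorbs the relevant normal closure, the inclusion $[W,W]\subseteq V_{k+1}\rtimes_{\varphi}G^{(k)}$ follows, and the induction is complete.
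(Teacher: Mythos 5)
Your proposal is correct, and it assembles the proof along a genuinely different route from the paper's, even though the computational core is the same. What is shared: the induction on $n$, the four elementary commutator identities in $H\rtimes_{\varphi}G$, and the generator-by-generator verification that $L_{n}$ and $V_{n}$ are normal in $H$ --- your ``technical core'' is precisely the paper's Lemma~\ref{Lnnormal}, including the observation that the conjugation defect is absorbed by $K_{n}$ on the lower-central side and by $\widetilde{H}_{k+1}$ on the derived side. Where you diverge: (i) you open with the projection argument giving $\Gamma_{n}(H\rtimes_{\varphi}G)=\bigl(\Gamma_{n}(H\rtimes_{\varphi}G)\cap H\bigr)\rtimes_{\varphi}\Gamma_{n}(G)$, a reduction the paper never makes; (ii) for the hard inclusion the paper expands a generic commutator $[(h,g),(x,y)]$ with $(x,y)\in L_{n}\rtimes_{\varphi}\Gamma_{n}(G)$ explicitly (its equation~(\ref{eq:hgxy})) and decomposes the $H$-component by hand into $H$-conjugates of elements of $K_{n+1}$, $H_{n+1}$ and $[H,L_{n}]$, whereas you invoke the standard fact that $[\langle S\rangle,\langle T\rangle]$ is the normal closure of the commutators $[s,t]$, so that only commutators of ``pure'' elements need computing, and you then absorb the normal closure by showing that $L_{n}\rtimes_{\varphi}\Gamma_{n}(G)$ is normal in all of $H\rtimes_{\varphi}G$; (iii) you prove the stronger statement that $L_{n}$ is $\varphi(g)$-invariant for \emph{every} $g\in G$ (the paper only proves invariance under $\Gamma_{n}(G)$), which yields the automorphism claim for free by applying invariance to $g$ and $g^{-1}$, where the paper instead verifies surjectivity of the restricted map by constructing explicit preimages. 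Your assembly is leaner and makes conceptually visible why $K_{n}$ must sit inside $L_{n}$ (it is exactly what makes the candidate subgroup normal in the whole semi-direct product) and why no $K$-type term is needed for the $V_{n}$ (there one only conjugates by elements whose $H$-part already lies in $V_{k}$); the paper's explicit expansion, in return, produces formulas that are reused verbatim later, both in part~(\ref{it:lcspart2}) of its own proof and in equation~(\ref{eq:comutador}) in the proof of Lemma~\ref{lemma2}. One small repair: your displayed identity in the normality step for $H_{n}$, namely $h_{0}\bigl(\varphi(g)(h)h^{-1}\bigr)h_{0}^{-1}=\varphi(g)(h')(h')^{-1}\cdot h_{0}[q^{-1},h]h_{0}^{-1}$, does not hold as written; the correct splitting (that of Lemma~\ref{Lnnormal}) is $h_{0}\bigl(\varphi(g)(h)h^{-1}\bigr)h_{0}^{-1}=\bigl(\varphi(g)(uhu^{-1})(uhu^{-1})^{-1}\bigr)[u,h][h,h_{0}]$, where $u\in H$ satisfies $\varphi(g)(u)=h_{0}$, so that the first factor is an $H_{n}$-generator (as $uhu^{-1}\in L_{n-1}$ by normality of $L_{n-1}$) and the last two lie in $[H,L_{n-1}]$. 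This has exactly the structure you claimed --- an $H_{n}$-generator times $[H,L_{n-1}]$-terms --- so nothing in your architecture is affected.
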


For the case of the commutator subgroup ($n=2$), part~(\ref{it:lcspart1}) was obtained in~\cite[Proposition~3.3]{GG}. Using Theorem~\ref{serie}, we compute explicitly $\Gamma_{n}(P_{2}(\mathbb{K}))$ and $\gamma^{2}_{n}(P_{2}(\mathbb{K}))$ in Theorems~\ref{GammaP2} and~\ref{gammaP2} respectively, and and it will follow that $P_{2}(\mathbb{K})$ is residually nilpotent and residually $2$-finite. If $n\geq 3$, in Theorem~\ref{soluvel}, we give an estimate of the derived series of $P_{n}(\mathbb{K})$, which will enable us to show that $P_{n}(\mathbb{K})$ is residually soluble, and will allow us to determine the values of $n$ for which $B_{n}(\mathbb{K})$ is residually nilpotent or residually soluble.

\begin{teo}\label{klein} $B_{n}(\mathbb{K})$ is residually nilpotent if and only if $n\leq2$, and residually soluble if and only if $n\leq4$.
\end{teo}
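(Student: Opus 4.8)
The plan is to prove Theorem~\ref{klein} by leveraging the structural results already announced for $P_n(\mathbb{K})$ together with the short exact sequence~(\ref{seq1}) relating $B_n(\mathbb{K})$, $P_n(\mathbb{K})$ and $S_n$. The two assertions --- residual nilpotence iff $n\leq 2$, and residual solubility iff $n\leq 4$ --- split naturally into positive statements (the groups \emph{are} residually nilpotent/soluble for small $n$) and obstruction statements (they are \emph{not} for larger $n$), and I would treat these two directions by different techniques.

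\textbf{The positive direction.} For $n\leq 2$, I would show $B_2(\mathbb{K})$ is residually nilpotent by direct computation of its lower central series, much as one handles $B_2(\mathbb{T})$; the group $B_1(\mathbb{K})=\pi_1(\mathbb{K})$ is residually nilpotent since it is a (torsion-free) one-relator group of the expected type, or one can cite known results on $\pi_1$ of surfaces. For residual solubility when $n\leq 4$, the strategy is to combine the extension~(\ref{seq1}) with Theorem~\ref{soluvel}. Since $S_n$ is soluble for $n\leq 4$, and $P_n(\mathbb{K})$ is residually soluble (as asserted to follow from Theorem~\ref{soluvel}), I would use the standard fact that an extension of a residually soluble group by a \emph{soluble} group is residually soluble: concretely, given a nontrivial $x\in B_n(\mathbb{K})$, either $x$ maps nontrivially to $S_n$ (and we are done, as $S_n$ is soluble), or $x\in P_n(\mathbb{K})$, in which case residual solubility of $P_n(\mathbb{K})$ supplies a soluble quotient detecting $x$. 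Care is needed to ensure the detecting quotient of $B_n(\mathbb{K})$ is itself soluble; the clean way is to verify $\bigcap_{i}(B_n(\mathbb{K}))^{(i)}=\{1\}$ directly using $(B_n(\mathbb{K}))^{(i)}\subset (P_n(\mathbb{K}))^{(i-d)}\rtimes (S_n)^{(i)}$ for an appropriate shift $d$ coming from the solubility degree of $S_n$, and then invoking $\bigcap_i (P_n(\mathbb{K}))^{(i)}=\{1\}$.

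\textbf{The obstruction direction.} To show $B_n(\mathbb{K})$ is \emph{not} residually nilpotent for $n\geq 3$ and \emph{not} residually soluble for $n\geq 5$, the natural route mirrors the Gorin--Lin phenomenon for $B_n$ and the surface cases in~\cite{BGG,GG,GG1}: I would exhibit a nontrivial element lying in $\bigcap_i \Gamma_i(B_n(\mathbb{K}))$ (resp.\ $\bigcap_i (B_n(\mathbb{K}))^{(i)}$). The key algebraic input is that the lower central (resp.\ derived) series \emph{stabilises} from some finite stage, i.e.\ $\Gamma_k(B_n(\mathbb{K}))=\Gamma_{k+1}(B_n(\mathbb{K}))$ for some $k$ (resp.\ the analogous equality for the derived series), so that the intersection equals a nontrivial stabilised term. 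For non-nilpotence at $n=3$ one typically computes the abelianisation and shows $\Gamma_2=\Gamma_3$ is nontrivial; for non-solubility at $n=5$ one shows the derived subgroup is perfect, i.e.\ $(B_5(\mathbb{K}))^{(1)}=(B_5(\mathbb{K}))^{(2)}$, from which $\bigcap_i (B_n(\mathbb{K}))^{(i)}=(B_n(\mathbb{K}))^{(1)}\neq\{1\}$ for all $n\geq 5$ (using that $B_n$ and its surface analogues inject into or relate to $B_n(\mathbb{K})$ in a way that propagates perfectness upward in $n$). A convenient mechanism is to use a surjection $B_n(\mathbb{K})\twoheadrightarrow B_n$ or $B_n(\mathbb{K})\twoheadrightarrow B_n(\mathbb{T})$, or a homomorphism from the Artin braid group into $B_n(\mathbb{K})$, so that known perfectness/stabilisation results transfer.

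\textbf{The main obstacle.} The hardest part will be establishing the perfectness statement $(B_5(\mathbb{K}))^{(1)}=(B_5(\mathbb{K}))^{(2)}$ (and its persistence for $n\geq 5$), since this requires either a workable presentation of the commutator subgroup of $B_n(\mathbb{K})$ or a sufficiently strong comparison with the classical braid group where $(B_n)^{(1)}=(B_n)^{(2)}$ for $n\geq 5$ is known~\cite{GL}. The surface relations in $\mathbb{K}$ (the genus/non-orientability relations among the $\sigma_i$ and the surface generators) complicate a direct Reidemeister--Schreier computation, so I expect the cleanest argument to route through the homomorphism $B_n(\mathbb{K})\to B_n$ induced by filling in the surface, checking that this map sends the commutator subgroup onto $(B_n)^{(1)}$ and that the extra surface-generator contributions already lie in the second derived subgroup; combined with Theorem~\ref{soluvel} to handle the boundary value $n=4$ on the soluble side, this pins down the exact thresholds $n\leq 2$ and $n\leq 4$.
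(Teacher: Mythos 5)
Your high-level skeleton does match the paper's: the positive direction for solubility goes through the extension~(\ref{seq1}), the solubility of $S_{n}$ for $n\leq4$, the residual solubility of $P_{n}(\mathbb{K})$ (Theorem~\ref{soluvel}) and Gruenberg's theorem (your ``standard fact'' is precisely Theorem~\ref{TG}); and the obstruction direction does rest on the stabilisations $\Gamma_{2}=\Gamma_{3}$ for $n\geq3$ and $(B_{n}(\mathbb{K}))^{(1)}=(B_{n}(\mathbb{K}))^{(2)}$ for $n\geq5$, which is what the paper proves in Propositions~\ref{klein.nilp} and~\ref{klein.sol}. But your proposed mechanism for establishing those stabilisations is a genuine gap. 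There is no surjection $B_{n}(\mathbb{K})\twoheadrightarrow B_{n}$ induced by ``filling in the surface'': killing the surface generators $a,b$ in the presentation of Theorem~\ref{total} imposes, via relation~(8), the sphere relation $\sigma_{1}\cdots\sigma_{n-2}\sigma^{2}_{n-1}\sigma_{n-2}\cdots\sigma_{1}=1$, so the quotient obtained is (a quotient of) $B_{n}(\mathbb S^{2})$, not $B_{n}$. In the other direction, the disc-inclusion homomorphism $B_{n}\to B_{n}(\mathbb{K})$ is not surjective and only controls the image of $(B_{n})^{(1)}$; the subgroup $(B_{n}(\mathbb{K}))^{(1)}$ also contains commutators involving $a$ and $b$, and showing that these ``extra surface-generator contributions'' lie in $(B_{n}(\mathbb{K}))^{(2)}$ is exactly the whole difficulty --- it cannot be outsourced to the Gorin--Lin result for $B_{n}$. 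The paper's proof works directly in the quotients $B_{n}(\mathbb{K})/\Gamma_{3}(B_{n}(\mathbb{K}))$ and $B_{n}(\mathbb{K})/(B_{n}(\mathbb{K}))^{(2)}$: the Artin relations identify all the $\sigma_{i}$-cosets to a single element $\sigma$ (for the derived-series version this step is where $n\geq5$ is used, as in~\cite{GG}), and then relations~(3)--(8) of Theorem~\ref{total} force $\sigma^{2}=a^{2}=1$ and all generator cosets to commute, so each quotient collapses onto $B_{n}(\mathbb{K})^{\text{Ab}}$ and the corresponding series stabilises at a nontrivial term.

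There is a second, smaller gap in your positive direction for residual nilpotence of $B_{2}(\mathbb{K})$. A ``direct computation of its lower central series'' is not what is done (nor easily doable), and you cannot instead run the extension argument with residual nilpotence of $P_{2}(\mathbb{K})$: Gruenberg's Theorem~\ref{TG} has no clause for the class of nilpotent groups, precisely because residual nilpotence is not inherited by extensions. The paper's route is to use the stronger statement that $P_{2}(\mathbb{K})$ is residually $2$-finite (Theorem~\ref{gammaP2}); since $S_{2}\cong\mathbb Z_{2}$ is $2$-finite, Theorem~\ref{TG} applied to the class of $2$-finite groups shows that $B_{2}(\mathbb{K})$ is residually $2$-finite, hence residually nilpotent. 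Relatedly, your claimed inclusion $(B_{n}(\mathbb{K}))^{(i)}\subset (P_{n}(\mathbb{K}))^{(i-d)}\rtimes (S_{n})^{(i)}$ presupposes a splitting of~(\ref{seq1}) that does not exist; this becomes moot once Theorem~\ref{TG} is invoked properly, but as written it is not a valid repair.
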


For a non-orientable surface $M$ without boundary of higher genus, we may decide whether $B_{n}(M)$ is residually nilpotent or residually soluble using results of~\cite{BG,GG1}.

\begin{teo}\label{final} Let $M$ be a compact non-orientable surface of genus $g$ without boundary. If $g\geq 2$ (resp.\ $g=1$), then $B_{n}(M)$ is residually nilpotent if and only if $n\leq2$ (resp.\ $n\leq3$). For all $g\geq1$, $B_{n}(M)$ is residually soluble if and only if $n\leq4$.
\end{teo}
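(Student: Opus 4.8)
The plan is to prove Theorem~\ref{final} by combining the classification results already available in the literature with the detailed computations carried out earlier in the paper for the low-genus cases.  The statement concerns a compact non-orientable surface $M$ of genus $g$ without boundary, and splits naturally according to whether $g=1$ (so $M=\mathbb{RP}^2$) or $g\geq 2$.  The case $g=1$ is already settled by the work of Gon\c{c}alves and Guaschi~\cite{GG,GG1}, who showed that $B_{n}(\mathbb{RP}^2)$ is residually nilpotent if and only if $n\leq 3$ and residually soluble if and only if $n\leq 4$ (with the caveat noted in the introduction concerning the excluded value); so for $g=1$ there is essentially nothing new to prove, and I would simply invoke those references.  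The genuinely new content is the case $g\geq 2$, and the remaining task is to establish the two equivalences there.

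For the residual nilpotence claim when $g\geq 2$, I would argue in two directions.  For the positive direction ($n\leq 2$), the result of Bellingeri and Gervais~\cite{BG} that $P_{n}(M)$ is residually $2$-finite already handles $n=1$, since $B_1(M)=P_1(M)=\pi_1(M)$; for $n=2$ one needs that $B_{2}(M)$ itself is residually nilpotent, which for non-orientable surfaces should follow either from a direct residual $2$-finiteness argument along the lines of~\cite{BB,BG} or from the semi-direct product machinery of Theorem~\ref{serie} applied to the split decomposition of the relevant subgroup.  For the negative direction, I would show that $\Gamma_{2}(B_{n}(M))=\Gamma_{3}(B_{n}(M))$ for all $n\geq 3$, which forces $\bigcap_{i}\Gamma_{i}(B_{n}(M))\supset\Gamma_{2}(B_{n}(M))\neq\{1\}$ and hence destroys residual nilpotence.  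This is the standard obstruction coming from the Artin relations: in the abelianisation the generators $\sigma_i$ are all identified, so $\Gamma_2$ stabilises quickly; the point is to verify that the surface relations do not disrupt this, and to check the relevant identities in $B_n(M)^{\mathrm{Ab}}$ and in $\Gamma_2/\Gamma_3$ using a presentation of $B_n(M)$.

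For the residual solubility claim, the threshold $n\leq 4$ is uniform in $g$, which suggests that the argument is controlled by a quotient that does not see the surface genus.  For the positive direction ($n\leq 4$), I would exhibit, for each nontrivial element, a soluble quotient detecting it; the natural strategy is to use the Fadell--Neuwirth sequence~(\ref{seqFN}) and the residual solubility (indeed residual $2$-finiteness) of the pure braid factors from~\cite{BG}, combined with the fact that $S_n$ is soluble for $n\leq 4$.  Concretely, $B_n(M)$ is an extension of the soluble group $S_n$ by $P_n(M)$, and $P_n(M)$ is residually soluble by~\cite{BG}; an extension of a soluble group by a residually soluble group, under suitable finiteness, is residually soluble, which yields the result for $n\leq 4$.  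For the negative direction ($n\geq 5$), the obstruction is that $S_n$ is no longer soluble, and more precisely that the perfect commutator subgroup phenomenon of Gorin and Lin~\cite{GL} survives in $B_n(M)$: one shows $(B_n(M))^{(k)}=(B_n(M))^{(k+1)}\neq\{1\}$ for some $k$, so the derived series stabilises at a nontrivial term.

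The main obstacle I anticipate is the negative (``only if'') directions, and in particular isolating exactly where the derived and lower central series stabilise for $B_n(M)$ with $M$ of genus $g\geq 2$.  The positive directions are relatively soft, resting on the residual $2$-finiteness input of~\cite{BG} together with elementary extension-closure properties of residual nilpotence and solubility; by contrast, the stabilisation arguments require a careful presentation of $B_n(M)$ and a genus-independent analysis of $\Gamma_2/\Gamma_3$ and of the higher derived quotients, showing that the added surface generators and relations neither break the Gorin--Lin-type perfectness for $n\geq 5$ nor the fast stabilisation of the lower central series for $n\geq 3$.  I would reduce as much as possible to the already-computed behaviour of the Klein bottle case (Theorem~\ref{klein}) and to the symmetric-group quotient~(\ref{seq1}), so that the surface-specific work is confined to verifying a small number of commutator identities.
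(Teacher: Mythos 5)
Your treatment of the case $g\geq 2$ follows the paper's own route: the negative directions are proved exactly as you sketch, by using Bellingeri's presentation (Theorem~\ref{Bel}) to show that for $n\geq 3$ the quotient $B_{n}(M)/\Gamma_{3}(B_{n}(M))$, and for $n\geq 5$ the quotient $B_{n}(M)/(B_{n}(M))^{(2)}$, coincide with $B_{n}(M)^{\text{Ab}}$ (Theorem~\ref{not}; Propositions~\ref{klein.nilp} and~\ref{klein.sol} for the Klein bottle), while the positive directions come from residual $2$-finiteness of the pure braid groups together with Theorem~\ref{TG} applied to the sequence~(\ref{seq1}). One caution there: residual nilpotence is not closed under extensions in the loose form you invoke, so for $n=2$ you must argue via residual $2$-finiteness of $P_{2}(M)$ (by~\cite{BG}, or Theorem~\ref{gammaP2} for $\mathbb{K}$) and Gruenberg's theorem for the class of $2$-finite groups, using that $S_{2}$ is a $2$-group; of the two options you offer, only the ``direct residual $2$-finiteness argument'' is viable.

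The genuine gap is in the case $g=1$. You assert that for $\mathbb{R}P^{2}$ ``there is essentially nothing new to prove'', but the caveat you brush aside is precisely the new content of the theorem: \cite{GG1} proves that for $n\neq 4$, $B_{n}(\mathbb{R}P^{2})$ is residually soluble if and only if $n<4$, and leaves the case $n=4$ open. Since the statement asserts that $B_{4}(\mathbb{R}P^{2})$ \emph{is} residually soluble, this must be proved, and your generic positive-direction strategy fails here because \cite{BG} explicitly excludes $\mathbb{R}P^{2}$, so you cannot quote it for the residual solubility of $P_{4}(\mathbb{R}P^{2})$. The paper fills this by applying Theorem~\ref{TG} to the Fadell--Neuwirth sequence $1\to P_{m}(\mathbb{R}P^{2}\setminus\{x_{1},x_{2}\})\to P_{m+2}(\mathbb{R}P^{2})\to P_{2}(\mathbb{R}P^{2})\to 1$, using that $P_{2}(\mathbb{R}P^{2})$ is the quaternion group of order $8$ (a finite $2$-group, by~\cite{VB}) and that the pure braid groups of the punctured projective plane \emph{are} covered by~\cite{BG} and hence residually $2$-finite; this gives that $P_{4}(\mathbb{R}P^{2})$ is residually $2$-finite, in particular residually soluble, and a second application of Theorem~\ref{TG} to~(\ref{seq1}), with $S_{4}$ soluble, yields the result for $B_{4}(\mathbb{R}P^{2})$. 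Without some such argument, your proof of the solubility statement for $g=1$ is incomplete.
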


This paper is organised as follows. In Section~\ref{sec:gens}, we give presentations of the braid groups used in this paper, as well as the statement of Theorem~\ref{TG} due to Gruenberg that will be required in the proofs of some of our results. Theorem~\ref{serie} is proved in Section~\ref{sec:lcssemi}. In Section~\ref{sec:torus}, we study the case of the torus and we prove Theorem~\ref{toro}. In Section~\ref{sec:klein}, our focus is on the braid groups of the Klein bottle, and we use Theorem~\ref{serie} for the proof of Theorem~\ref{klein}. Theorem~\ref{final} is proved in Section~\ref{sec:higher}. 
  If $M$ is a compact surface different from $\mathbb{K}$ and the M\"obius band, the centre $Z(B_{n}(M))$ of $B_{n}(M)$ is known~\cite{B1,chow,GVB,Mu,PR,VB}. We determine $Z(B_{n}(\mathbb{K}))$ in Proposition~\ref{centro}, and for the sake of completeness, in Proposition~\ref{prop:centremobius} of the  Appendix, we compute the centre of the braid groups of the M\"obius band.

\subsubsection*{Acknowledgements}
The authors would like to thank P.~Bellingeri, S.~Gervais, D.~Gon\c{c}alves, L.~Paris and D. Vendr\'{u}scolo for stimulating conversations. C.~M.~Pereiro was supported by project grant n\textsuperscript{o}~2010/18930-6 and~2012/01740-5 from FAPESP. During the writing of this paper, J.~Guaschi was partially supported by the CNRS/FAPESP PRC project n\textsuperscript{o}~275209.

\section{Generalities}\label{sec:gens}

In this section, we give the presentations of the braid and pure braid groups that will be used in this paper. If $M=\mathbb{T}$ or $\mathbb{K}$, we will make use of the following presentations of $P_{n}(M)$ and $B_{n}(M)$.

\begin{teo}[\cite{P}]\label{puras}
Let $n\geq1$, and let $M$ be the torus $\mathbb{T}$ or the Klein bottle $\mathbb{K}$. The following constitutes a presentation of the pure braid group $P_n(M)$ of $M$:

generators: $\left\{a_i,\, b_i,\,i=1,\ldots,n\right\}\cup\left\{C_{i,j},\,1\leq i < j\leq n\right\}$.

relations:
\begin{enumerate}
%\footnotesize
\item\label{it:puras1} $a_ia_j=a_ja_i$, 		 $(1\leq i<j\leq n)$;
	
\item\label{it:puras2} $a^{-1}_{i}b_ja_i=b_ja_jC^{-1}_{i,j}C_{i+1,j}a^{-1}_{j}$, 	 $(1\leq i<j\leq n)$;
	
\item\label{it:puras3} $a^{-1}_{i}C_{j,k}a_i=\left\{\begin{array}{l} C_{j,k},\,\,  (1\leq i<j<k\leq n)\,\, \mbox{or}\,\,(1\leq j<k<i\leq n),\\
	a_kC^{-1}_{i+1,k}C_{i,k}a^{-1}_{k}C_{j,k}C^{-1}_{i,k}C_{i+1,k},\,\,  (1\leq j\leq i<k\leq n);\end{array}\right.$
	
\item\label{it:puras4} $C^{-1}_{i,l}C_{j,k}C_{i,l}=\left\{\begin{array}{l} C_{j,k},\,\,  (1\leq i<l<j<k\leq n)\,\, \mbox{or}\,\,(1\leq j\leq i<l<k\leq n),\\
	C_{i,k}C^{-1}_{l+1,k}C_{l,k}C^{-1}_{i,k}C_{j,k}C^{-1}_{l,k}C_{l+1,k},\,\,  (1\leq i< j\leq l<k\leq n);\end{array}\right.$
	
\item\label{it:puras5} $\left\{ \begin{array}{lll}
\prod^{n}_{j=i+1}C^{-1}_{i,j}C_{i+1,j}=a_{i}b_{i}C_{1,i}a^{-1}_{i}b^{-1}_{i},& 		 (1\leq i\leq n), &\text{if}\,\, M=\mathbb{T},\\
 \prod^{n}_{j=i+1}C_{i,j}C^{-1}_{i+1,j}=b_{i}C_{1,i}a^{-1}_{i}b^{-1}_{i}a^{-1}_{i},& 		 (1\leq i\leq n), &\text{if}\,\, M=\mathbb{K};\end{array}\right.$
	
\item\label{it:puras6} $\left\{\begin{array}{ll} b_{j}b_{i}=b_{i}b_{j},\,\, (1\leq i<j\leq n), &\text{if}\,\, M=\mathbb{T},\\
b_{j}b_{i}=b_{i}b_{j}C_{i,j}C^{-1}_{i+1,j},\,\, 		 (1\leq i<j \leq n), &\text{if}\,\, M=\mathbb{K};\end{array}\right.$
	
\item\label{it:puras7} $\left\{\begin{array}{ll} b^{-1}_{i}a_jb_i=a_jb_jC_{i,j}C^{-1}_{i+1,j}b^{-1}_{j},\,\, (1\leq i<j\leq n), &\text{if}\,\, M=\mathbb{T},\\
	b^{-1}_{i}a_jb_i=a_jb_j(C_{i,j}C^{-1}_{i+1,j})^{-1}b^{-1}_{j},\,\, (1\leq i<j\leq n), &\text{if}\,\, M=\mathbb{K}; \end{array}\right.$
	
\item\label{it:puras8} $\left\{\begin{array}{ll}{b^{-1}_{i}C_{j,k}b_i=\left\{\begin{array}{l} C_{j,k},\,\,  (1\leq i<j<k\leq n)\,\, \mbox{or}\,\,(1\leq j<k<i\leq n),\\
	C_{i+1,k}C^{-1}_{i,k}C_{j,k}b_kC_{i,k}C^{-1}_{i+1,k}b^{-1}_{k},\,\,  (1\leq j\leq i<k\leq n);\end{array}\right.} & \text{if}\,\, M=\mathbb{T},\\
	{b^{-1}_{i}C_{j,k}b_i=\left\{\begin{array}{l} C_{j,k},\,\,  (1\leq i<j<k\leq n)\,\, \mbox{or}\,\,(1\leq j<k<i\leq n),\\
	C_{i+1,k}C^{-1}_{i,k}C_{j,k}b_k(C_{i,k}C^{-1}_{i+1,k})^{-1}b^{-1}_{k},\,\,  (1\leq j\leq i<k\leq n).\end{array}\right.} &\text{if}\,\, M=\mathbb{K}. \end{array}\right.$
	
	\end{enumerate}

\end{teo}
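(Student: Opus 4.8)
The plan is to prove the presentation by induction on $n$, building $P_{n}(M)$ from $P_{n-1}(M)$ by means of the Fadell--Neuwirth fibration. Forgetting the last string gives the short exact sequence~\eqref{seqFN},
\[
1\longrightarrow F\longrightarrow P_{n}(M)\stackrel{p_{\ast}}{\longrightarrow}P_{n-1}(M)\longrightarrow 1,\qquad F=\pi_{1}\bigl(M\setminus\{x_{1},\ldots,x_{n-1}\}\bigr),
\]
and, since $M$ has zero Euler characteristic, the fibre $F$ is free of rank $n$. Geometrically $F$ is carried by the last string, with generators $a_{n},b_{n}$ (the two loops of that string around the generators of $\pi_{1}(M)$) and $C_{1,n},\ldots,C_{n-1,n}$ (its loops around the remaining punctures). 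I would then apply the standard presentation of a group extension: taking the geometric generators $a_{i},b_{i},C_{j,k}$ with indices at most $n-1$ as set-theoretic lifts of the generators of $P_{n-1}(M)$, the group $P_{n}(M)$ is generated by these together with the generators of $F$, and a complete set of relations is assembled from three families.

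The first family consists of the relations of $F$, which reduce to the single surface relation of the last string; in the statement this is relation~(\ref{it:puras5}) with $i=n$, expressing $C_{1,n}$ in terms of $a_{n},b_{n}$ and leaving $F$ free on $a_{n},b_{n},C_{2,n},\ldots,C_{n-1,n}$. The second family consists of the relations of $P_{n-1}(M)$ lifted through the chosen geometric generators; since these lifts do \emph{not} form a homomorphic section, each lifted relation acquires a correction term lying in $F$. The third family records the conjugation action of each generator $a_{i},b_{i},C_{j,k}$ of $P_{n-1}(M)$ on each generator of $F$. Together, the second and third families account for all relations involving the index $n$: the correction terms (the factors carrying a second index equal to $n$) extend relations~(\ref{it:puras1})--(\ref{it:puras4}) and~(\ref{it:puras6})--(\ref{it:puras8}) to their full index range and put relation~(\ref{it:puras5}) for $i<n$ into its $P_{n}(M)$ form, while the conjugation words determine the right-hand sides of~(\ref{it:puras2}), (\ref{it:puras3}), (\ref{it:puras4}), (\ref{it:puras7}) and~(\ref{it:puras8}) when the top index equals $n$. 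The base case $n=1$ is $P_{1}(M)=\pi_{1}(M)$, given by relation~(\ref{it:puras5}) with $i=n=1$ under the convention $C_{1,1}=1$, namely the commutator relation for $\mathbb{T}$ and the relation $b_{1}a_{1}^{-1}b_{1}^{-1}=a_{1}$ for $\mathbb{K}$.

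The step I expect to be hardest is the explicit geometric determination of the correction and conjugation words in the second and third families: one must compute how dragging the first $n-1$ strings along each generator of $P_{n-1}(M)$ transforms the loops $a_{n},b_{n},C_{i,n}$ of the last string, and reduce the result to a word in $F$. This is precisely where the torus and the Klein bottle diverge, because the loops $a_{i}$ are orientation-reversing on $\mathbb{K}$; the resulting sign changes are visible in relations~(\ref{it:puras5})--(\ref{it:puras8}), and keeping track of them requires an orientation-sensitive choice of geometric representatives compatible with a non-vanishing vector field on $M$ (the same data that yields the section of Proposition~\ref{sectionPn}). A final, more clerical, task is to verify by Tietze transformations that the assembled presentation is consistent and complete --- in particular that the conjugations $C_{i,l}^{-1}a_{n}C_{i,l}$ and $C_{i,l}^{-1}b_{n}C_{i,l}$, which are not listed explicitly, are consequences of the stated relations via the surface relation~(\ref{it:puras5}) --- thereby closing the induction.
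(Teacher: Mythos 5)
Your proposal is correct and takes essentially the same route as the paper, whose own ``proof'' is precisely this sketch: the Fadell--Neuwirth sequence~(\ref{seqFN}), induction on $n$, and the presentation-of-extensions method of~\cite[Proposition~1, p.~139]{J}, with all details left to the reader (and to~\cite{P}). One minor correction to the step you single out: the conjugates $C_{i,l}^{-1}a_{n}C_{i,l}$ and $C_{i,l}^{-1}b_{n}C_{i,l}$ ($l\leq n-1$) need not be derived from relation~(\ref{it:puras5}); they are already present, in transposed form, as the commutation cases $(1\leq j<k<i\leq n)$ of relations~(\ref{it:puras3}) and~(\ref{it:puras8}) with $i=n$.
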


\begin{teo}[\cite{P}]\label{total} Let $n\geq1$, and let $M$ be the torus $\mathbb{T}$ or the Klein bottle $\mathbb{K}$. The following constitutes a presentation of the braid group $B_n(M)$ of $M$:

generators: $a, b, \sigma_1, \ldots, \sigma_{n-1}$.

relations:
\begin{enumerate}
\item\label{it:full1} $\sigma_{i}\sigma_{i+1}\sigma_{i}=\sigma_{i+1}\sigma_{i}\sigma_{i+1}$;
	
\item\label{it:full2} $\sigma_{j}\sigma_{i}=\sigma_{i}\sigma_{j}$, if $|i-j|\geq 2$;

\item\label{it:full3} $a\sigma_{j}=\sigma_{j}a$, if $j\geq 2$;
	
\item\label{it:full4} $b\sigma_{j}=\sigma_{j}b$, if $j\geq 2$;
	
\item\label{it:full5} $b^{-1}\sigma_{1}a=\sigma_{1}a\sigma_{1}b^{-1}\sigma_{1}$;

\item\label{it:full6} $a(\sigma_1a\sigma_1)=(\sigma_1a\sigma_1)a$;

\item\label{it:full7} $\left\{\begin{array}{lr} b(\sigma^{-1}_{1}b\sigma^{-1}_{1})=(\sigma^{-1}_{1}b\sigma^{-1}_{1})b,& \text{if}\,\, M=\mathbb{T},\\
b(\sigma^{-1}_{1}b\sigma_{1})=(\sigma^{-1}_{1}b\sigma^{-1}_{1})b,& \text{if}\,\, M=\mathbb{K};\end{array}\right.$
	
\item\label{it:full8} $\sigma_{1}\sigma_{2}\cdots\sigma_{n-2}\sigma^{2}_{n-1}\sigma_{n-2}\cdots\sigma_{2}\sigma_{1}=\left\{\begin{array}{lr} bab^{-1}a^{-1}& \text{if}\,\, M=\mathbb{T},\\
ba^{-1}b^{-1}a^{-1}&\,\, \text{if}\,\, M=\mathbb{K}.\end{array}\right.$
\end{enumerate}
\end{teo}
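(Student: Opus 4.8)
The plan is to derive the presentation from the short exact sequence~\eqref{seq1},
$$1\longrightarrow P_n(M)\longrightarrow B_n(M)\stackrel{\pi}{\longrightarrow} S_n\longrightarrow 1,$$
by combining the presentation of $P_n(M)$ given in Theorem~\ref{puras} with the standard procedure for writing down a presentation of the middle group of a group extension from presentations of its kernel and quotient. For the quotient I would use the Coxeter presentation of $S_n$ with generators $\bar\sigma_1,\dots,\bar\sigma_{n-1}$ subject to the braid relations and the relations $\bar\sigma_i^2=1$, and I would lift each $\bar\sigma_i$ to the geometric generator $\sigma_i\in B_n(M)$ given by a positive half-twist of the $i$-th and $(i+1)$-th strands.

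First I would record the resulting large presentation of $B_n(M)$ on the generating set $\{a_i,b_i:1\le i\le n\}\cup\{C_{i,j}:1\le i<j\le n\}\cup\{\sigma_1,\dots,\sigma_{n-1}\}$, whose relations split into three families: (i) the relations of Theorem~\ref{puras} among the pure generators; (ii) for each defining relator of $S_n$ an equation expressing the corresponding $\sigma$-word as an explicit element of $P_n(M)$, so that the braid relators of $S_n$ lift to the honest braid relations~\ref{it:full1}--\ref{it:full2} while each relator $\bar\sigma_i^2$ gives $\sigma_i^2$ as a pure braid generator (namely $C_{i,i+1}$, up to the conventions of Theorem~\ref{puras}); and (iii) for every $\sigma_i$ and every pure generator $x\in\{a_j,b_j,C_{j,k}\}$ an equation expressing the conjugate $\sigma_i x\sigma_i^{-1}$ as a word in $P_n(M)$. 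Family~(iii) is the geometric heart of the computation, and I would determine it from the standard picture of pushing the surface loops $a_j,b_j$ and the winding elements $C_{j,k}$ past a half-twist of strands $i,i+1$, paying particular attention to the orientation-reversing behaviour on the Klein bottle.

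Next I would reduce this presentation by Tietze transformations, keeping only $a=a_1$, $b=b_1$ and the $\sigma_i$. The conjugation relations of family~(iii) let me express every $a_j$ and $b_j$ as an explicit $\sigma$-conjugate of $a$ and $b$ (schematically $a_{j+1}=\sigma_j a_j\sigma_j$, and similarly for $b$) and, together with family~(ii), let me write each $C_{j,k}$ in terms of the $\sigma_i$'s and these surface generators; I then eliminate all $a_j,b_j$ $(j\ge2)$ and all $C_{j,k}$. The key simplification is that conjugation by the $\sigma_i$ propagates each relation from the pair of strands $1,2$ to every pair of strands, so the $O(n^2)$ relations of Theorem~\ref{puras} collapse onto their $n=2$ instances: these become exactly the commutation relations~\ref{it:full3}--\ref{it:full4} (a surface loop on strand~$1$ commutes with half-twists not involving strand~$1$) and the local relations~\ref{it:full5}--\ref{it:full7} on strands~$1$ and~$2$, while the surviving global relation coming from family~(ii) and relation~\ref{it:puras5} becomes the surface relation~\ref{it:full8}, which identifies the full-loop pure braid $\sigma_1\cdots\sigma_{n-2}\sigma_{n-1}^2\sigma_{n-2}\cdots\sigma_1$ with the (twisted, in the $\mathbb{K}$ case) defining relator of $\pi_1(M)$.

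The main obstacle will be the bookkeeping in this reduction: one must verify that every relation of Theorem~\ref{puras}, once rewritten in terms of $a$, $b$ and the $\sigma_i$, is a consequence of~\ref{it:full1}--\ref{it:full8}, and conversely that no relation is lost. This is lengthy but essentially mechanical once the conjugation formulas of family~(iii) are fixed; the only genuinely delicate points are the exponents and orderings in the non-orientable case, where the half-twist reverses local orientation and thereby changes the way $b_j$ interacts with the surface relator --- precisely the phenomenon responsible for the differing $\mathbb{T}$ and $\mathbb{K}$ forms of~\ref{it:full7} and~\ref{it:full8}. An alternative route would bypass Theorem~\ref{puras} and build the presentation inductively from the split Fadell--Neuwirth fibrations~\eqref{seqFN}, but the extension method above stays closest to the already-stated presentation of $P_n(M)$.
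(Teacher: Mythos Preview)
Your proposal is correct and follows essentially the same route as the paper: the paper states that Theorem~\ref{total} is obtained from the short exact sequence~\eqref{seq1} together with the presentation of $P_n(M)$ in Theorem~\ref{puras} and the standard presentation-of-an-extension procedure, and then leaves the details to the reader. Your outline of the three families of relations and the subsequent Tietze reduction to the generators $a,b,\sigma_1,\dots,\sigma_{n-1}$ is exactly what that omitted computation consists of; the alternative Fadell--Neuwirth route you mention at the end is the one the paper reserves for Theorem~\ref{puras}.
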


We consider the torus and the Klein bottle to be a square whose edges are identified as indicated in Figure~\ref{tgk}. Geometric representatives of the generators of $P_{n}(\mathbb{T})$ and $P_{n}(\mathbb{K})$ given in Theorem~\ref{puras} are illustrated in Figure~\ref{fig:geradores}, and may be interpreted as follows. For $1\leq i \leq n$, the $i^{\text{th}}$ string is the only non-trivial string of the braid $a_{i}$ (resp.\ of $b_{i}$), and it passes through the edge $\alpha$ (resp.\ $\beta$). If $1\leq i< j \leq n$, the $j^{\text{th}}$ string is the only non-trivial string of the braid $C_{i,j}$, and it encircles all of the basepoints between the $i^{\text{th}}$ and $j^{\text{th}}$ points. If $i=j$, it will be convenient to define $C_{i,i}$ to be the trivial braid. The figures represent the projection of the braids onto $M$, so the constant paths in each figure correspond to vertical strings of the braid.
\begin{figure}[h!]%[!h]
\hfill
\begin{tikzpicture}[scale=1, very thick]

{\draw[line width=2pt][-] (0,1) -- (-2,0);}
{\draw[line width=2pt][->>] (0,1) -- (-1,0.5);}
{\draw[line width=2pt][-] (2,0) -- (0,-1);}
{\draw[line width=2pt][-<<] (0,-1) -- (1,-0.5);}
{\draw[line width=2pt][-<] (0,1) -- (1,0.5);}
{\draw[line width=2pt][-] (2,0) -- (0,1);}
{\draw[line width=2pt][>-] (-1,-0.5) -- (-2,0);}
{\draw[line width=2pt][-] (0,-1) -- (-2,0);}

{\draw[line width=2pt][-] (7,1) -- (5,0);}
{\draw[line width=2pt][-] (9,0) -- (7,-1);}
{\draw[line width=2pt][->>] (7,1) -- (6,0.5);}
{\draw[line width=2pt][->>] (7,-1) -- (8,-0.5);}

{\draw[line width=2pt][-<] (7,1) -- (8,0.5);}
{\draw[line width=2pt][-] (9,0) -- (7,1);}
{\draw[line width=2pt][>-] (6,-0.5) -- (5,0);}
{\draw[line width=2pt][-] (7,-1) -- (5,0);}

%
%{\draw[line width=2pt][->] (9,0) -- (7,1);}
%{\draw[line width=2pt][->] (7,-1) -- (5,0);}

{\node at (0,-1.5) {$M=\mathbb{T}$};};
{\node at (7,-1.5) {$M=\mathbb{K}$};};

\foreach \j  in {1,8}
{\node at (\j,-0.85) {$\alpha$};};

\foreach \j  in {-1,6}
{\node at (\j,0.85) {$\alpha$};};

\foreach \j  in {1,8}
{\node at (\j,0.85) {$\beta$};};

\foreach \j  in {-1,6}
{\node at (\j,-0.85) {$\beta$};};

\end{tikzpicture}
\hspace*{\fill}
\caption{Squares representing $\mathbb{T}$ and $\mathbb{K}$}\label{tgk}
\end{figure}
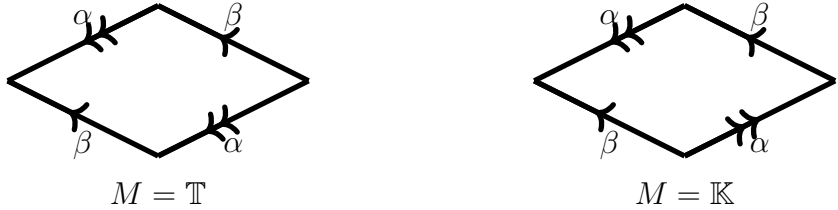
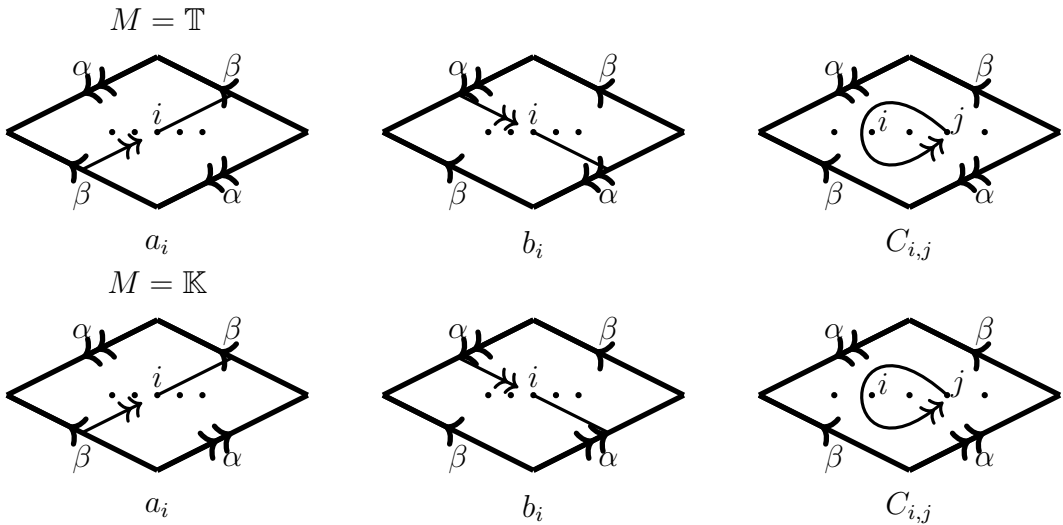
\begin{figure}[h!]%[!h]
\hfill
\begin{tikzpicture}[scale=1, very thick]

{\draw[line width=2pt][->>] (0,1)-- (-1,0.5);}
{\draw[line width=2pt][-] (0,1)-- (-2,0);}
{\draw[line width=2pt][-<<] (0,-1) -- (1,-0.5);}
{\draw[line width=2pt][-] (2,0) -- (0,-1);}
{\draw[line width=2pt][-<] (0,1) -- (1,0.5);}
{\draw[line width=2pt][-] (2,0) -- (0,1);}
{\draw[line width=2pt][>-] (-1,-0.5) -- (-2,0);}
{\draw[line width=2pt][-] (0,-1) -- (-2,0);}

{\draw (0,0) circle (0.2mm);}
{\draw (0.3,0) circle (0.2mm);}
{\draw (0.6,0) circle (0.2mm);}
{\draw (-0.3,0) circle (0.2mm);}
{\draw (-0.6,0) circle (0.2mm);}

\draw (0,0) -- (1,0.5);
\draw[->>] (-1,-0.5) -- (-0.2,-0.1);

\node at (0,0.25) {$i$};
\node at (0,-1.5) {$a_{i}$};
{\draw[line width=2pt][->>] (5,1) -- (4,0.5);}
{\draw[line width=2pt][-] (5,1) -- (3,0);}
{\draw[line width=2pt][-<<] (5,-1) -- (6,-0.5);}
{\draw[line width=2pt][-] (7,0) -- (5,-1);}

{\draw[line width=2pt][-<] (5,1) -- (6,0.5);}
{\draw[line width=2pt][-] (7,0) -- (5,1);}
{\draw[line width=2pt][>-] (4,-0.5) -- (3,0);}
{\draw[line width=2pt][-] (5,-1) -- (3,0);}

%{\draw[line width=2pt][->] (7,0) -- (5,1);}
%{\draw[line width=2pt][->] (5,-1) -- (3,0);}
{\draw (5,0) circle (0.2mm);}
{\draw (5.3,0) circle (0.2mm);}
{\draw (5.6,0) circle (0.2mm);}
{\draw (4.7,0) circle (0.2mm);}
{\draw (4.4,0) circle (0.2mm);}

\draw (5,0) -- (6,-0.5);
\draw[->>] (4,0.5) -- (4.8,0.1);

\node at (5,0.25) {$i$};
\node at (5,-1.5) {$b_{i}$};

{\draw[line width=2pt][->>] (10,1) -- (9,0.5);}
{\draw[line width=2pt][-] (10,1) -- (8,0);}
{\draw[ line width=2pt][-<<] (10,-1)--(11,-0.5);}
{\draw[ line width=2pt][-] (12,0) -- (10,-1);}

{\draw[line width=2pt][-<] (10,1) -- (11,0.5);}
{\draw[line width=2pt][-] (12,0) -- (10,1);}
{\draw[line width=2pt][>-] (9,-0.5) -- (8,0);}
{\draw[line width=2pt][-] (10,-1) -- (8,0);}

%{\draw[ line width=2pt][->] (12,0) -- (10,1);}
%{\draw[ line width=2pt][->] (10,-1) -- (8,0);}
{\draw (10,0) circle (0.2mm);}
{\draw (10.5,0) circle (0.2mm);}
{\draw (11,0) circle (0.2mm);}
{\draw (9.5,0) circle (0.2mm);}
{\draw (9,0) circle (0.2mm);}

%(10,-3.5)\draw[->>][cyan] (0.5,0) .. controls (-1.5,1.35) and (-1.5,-1.32) .. (0.45,-0.09);
\draw[->>] (10.5,0) .. controls (9,1.35) and (9,-1.32) .. (10.45,-0.09);
%\draw[->>] (11,-3.5) .. controls (10.09,-3.24) .. (9.27,-3.37) .. controls (9.15,-3.5) .. (9.45,-3.76) .. controls (10,-3.78) .. (10.88,-3.59);
\node at (9.65,0.15) {$i$};
\node at (10.65,0.15) {$j$};
\node at (10,-1.5) {$C_{i,j}$};

{\draw[ line width=2pt][->>] (0,-2.5) -- (-1,-3);} %subtrair 3 e mais .5
{\draw[ line width=2pt][-] (0,-2.5) -- (-2,-3.5);}
{\draw[ line width=2pt][-] (2,-3.5) -- (0,-4.5);} 
{\draw[ line width=2pt][->>] (0,-4.5) -- (1,-4);} 

{\draw[line width=2pt][-<] (0,-2.5) -- (1,-3);}
{\draw[line width=2pt][-] (2,-3.5) -- (0,-2.5);}
{\draw[line width=2pt][>-] (-1,-4) -- (-2,-3.5);}
{\draw[line width=2pt][-] (0,-4.5) -- (-2,-3.5);}

%{\draw[ line width=2pt][->] (2,-3.5) -- (0,-2.5);}
%{\draw[ line width=2pt][->] (0,-4.5) -- (-2,-3.5);}
{\draw (0,-3.5) circle (0.2mm);}
{\draw (0.3,-3.5) circle (0.2mm);}
{\draw (0.6,-3.5) circle (0.2mm);}
{\draw (-0.3,-3.5) circle (0.2mm);}
{\draw (-0.6,-3.5) circle (0.2mm);}

\draw (0,-3.5) -- (1,-3);
\draw[->>] (-1,-4) -- (-0.2,-3.6);

\node at (0,-3.25) {$i$};
\node at (0,-5) {$a_{i}$};
{\draw[ line width=2pt][-] (5,-2.5) -- (3,-3.5);}
{\draw[ line width=2pt][->>] (5,-2.5) -- (4,-3);}
{\draw[ line width=2pt][->>] (5,-4.5) --(6,-4);}

{\draw[ line width=2pt][-] (7,-3.5) -- (5,-4.5);}

{\draw[line width=2pt][-<] (5,-2.5) -- (6,-3);}
{\draw[line width=2pt][-] (7,-3.5) -- (5,-2.5);}
{\draw[line width=2pt][>-] (4,-4) -- (3,-3.5);}
{\draw[line width=2pt][-] (5,-4.5) -- (3,-3.5);}

%
%{\draw[ line width=2pt][->] (7,-3.5) -- (5,-2.5);}
%{\draw[ line width=2pt][->] (5,-4.5) -- (3,-3.5);}
{\draw (5,-3.5) circle (0.2mm);}
{\draw (5.3,-3.5) circle (0.2mm);}
{\draw (5.6,-3.5) circle (0.2mm);}
{\draw (4.7,-3.5) circle (0.2mm);}
{\draw (4.4,-3.5) circle (0.2mm);}

\draw (5,-3.5) -- (6,-4);
\draw[->>] (4,-3) -- (4.8,-3.4);

\node at (5,-3.25) {$i$};
\node at (5,-5) {$b_{i}$};

{\draw[ line width=2pt][-] (10,-2.5) -- (8,-3.5);}
{\draw[ line width=2pt][->>] (10,-2.5) -- (9,-3);}
{\draw[ line width=2pt][->>] (10,-4.5) -- (11,-4);}
{\draw[ line width=2pt][-] (12,-3.5) -- (10,-4.5);}

{\draw[line width=2pt][-<] (10,-2.5) -- (11,-3);}
{\draw[line width=2pt][-] (12,-3.5) -- (10,-2.5);}
{\draw[line width=2pt][>-] (9,-4) -- (8,-3.5);}
{\draw[line width=2pt][-] (10,-4.5) -- (8,-3.5);}

%{\draw[ line width=2pt][->] (12,-3.5) -- (10,-2.5);}
%{\draw[ line width=2pt][->] (10,-4.5) -- (8,-3.5);}
{\draw (10,-3.5) circle (0.2mm);}
{\draw (10.5,-3.5) circle (0.2mm);}
{\draw (11,-3.5) circle (0.2mm);}
{\draw (9.5,-3.5) circle (0.2mm);}
{\draw (9,-3.5) circle (0.2mm);}

%(10,-3.5)\draw[->>][cyan] (0.5,0) .. controls (-1.5,1.35) and (-1.5,-1.32) .. (0.45,-0.09);
\draw[->>] (10.5,-3.5) .. controls (9,-2.15) and (9,-4.82) .. (10.45,-3.59);
%\draw[->>] (11,-3.5) .. controls (10.09,-3.24) .. (9.27,-3.37) .. controls (9.15,-3.5) .. (9.45,-3.76) .. controls (10,-3.78) .. (10.88,-3.59);
\node at (9.65,-3.35) {$i$};
\node at (10.65,-3.35) {$j$};
\node at (10,-5) {$C_{i,j}$};
%\node at (10,-1.5) {$C_{i,j}$};

\node at (0,1.5) {$M=\mathbb{T}$};
\node at (0,-2) {$M=\mathbb{K}$};

\foreach \j  in {1,6,11}
{\node at (\j,-0.85) {$\alpha$};};

\foreach \j  in {-1,4,9}
{\node at (\j,0.85) {$\alpha$};};

\foreach \j  in {1,6,11}
{\node at (\j,0.85) {$\beta$};};

\foreach \j  in {-1,4,9}
{\node at (\j,-0.85) {$\beta$};};

\foreach \j  in {1,6,11}
{\node at (\j,-4.35) {$\alpha$};};

\foreach \j  in {-1,4,9}
{\node at (\j,-2.65) {$\alpha$};};

\foreach \j  in {1,6,11}
{\node at (\j,-2.65) {$\beta$};};

\foreach \j  in {-1,4,9}
{\node at (\j,-4.35) {$\beta$};};
\end{tikzpicture}
\hspace*{\fill}
\caption{The generators of $P_{n}(\mathbb{T})$ and $P_{n}(\mathbb{K})$}\label{fig:geradores}
\end{figure}
The generators of $B_{n}(\mathbb{T})$ and $B_{n}(\mathbb{K})$ given in Theorem~\ref{total} may be taken to be the standard Artin generators $\sigma_{1},\ldots,\sigma_{n-1}$ of $B_{n}$ as shown in Figure~\ref{fig:artin}, and $a=a_{1}$ and $b=b_{1}$.% in Figure~\ref{fig:geradores}.
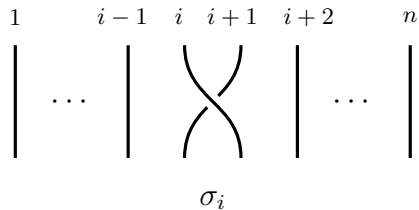
\begin{figure}[h!]%[!h]
\hfill
\begin{tikzpicture}[scale=0.75, very thick]

\foreach \k in {5}
{\draw (\k,3) .. controls (\k,2) and (\k-1,2) .. (\k-1,1);};

\foreach \k in {4}
{\draw[white,line width=6pt] (\k,3) .. controls (\k,2) and (\k+1,2) .. (\k+1,1);
\draw (\k,3) .. controls (\k,2) and (\k+1,2) .. (\k+1,1);};

%%\foreach \k in {15}
%{\draw (\k,3) .. controls (\k,2) and (\k+1,2) .. (\k+1,1);};
%
%%\foreach \k in {16}
%{\draw[white,line width=6pt] (\k,3) .. controls (\k,2) and (\k-1,2) .. (\k-1,1);
%\draw (\k,3) .. controls (\k,2) and (\k-1,2) .. (\k-1,1);};

\foreach \k in {1,3,6,8}
{\draw (\k,1)--(\k,3);};

\foreach \k in {2,7}
{\node at (\k,2) {$\cdots$};};

\foreach \k in {1}
{\node at (\k,3.5) {{\scriptsize$1$}};
\node at (\k+1.9,3.5) {{\scriptsize$i-1$}};
\node at (\k+2.9,3.52) {{\scriptsize$i$}};
\node at (\k+3.85,3.5) {{\scriptsize$i+1$}};
\node at (\k+5.2,3.5) {{\scriptsize$i+2$}};
\node at (\k+7,3.5) {{\scriptsize$n$}};};

\node at (4.5,0.25) {$\sigma_{i}$};
%\node at (15.5,0.25) {$\sigma_{i}^{-1}$};

\end{tikzpicture}
\hspace*{\fill}
\caption{The braid $\sigma_{i}$}\label{fig:artin}
\end{figure}
Various presentations of the braid and pure braid groups of the torus and the Klein bottle may be found in the literature~\cite{Be,B1,GM,S}, but we choose to work with those of Theorems~\ref{puras} and~\ref{total} because they highlight the similarities and differences between the braid groups of $\mathbb{T}$ and $\mathbb{K}$. For example, the word $C_{i,j}C^{-1}_{i+1,j}$ that appears in our presentation of $P_{n}(\mathbb{T})$ is often replaced by its inverse in $P_{n}(\mathbb{K})$. To prove Theorem~\ref{puras} (resp.\ Theorem~\ref{total}), one may use the Fadell-Neuwirth short exact sequence~(\ref{seqFN}) (resp.\ the short exact sequence~(\ref{seq1})), induction on $n$, and the following standard method for obtaining a presentation of a group extension~\cite[Proposition~1, p.~139]{J}. Given a short exact sequence $1\longrightarrow A\stackrel{i}{\longrightarrow}B\stackrel{p}{\longrightarrow}C\longrightarrow 1$ and presentations $C=\left\langle X\,|\,R\right\rangle$ and $A=\left\langle Y\,|\,S\right\rangle$, then $B=\bigl\langle \widetilde{X},\widetilde{Y}\,|\,\widetilde{S},\widetilde{R},\widetilde{T}\bigr\rangle$, where: 
\begin{align*}
\widetilde{X}&=\left\{\widetilde{x}\,:\, p(x)\in X\right\},\;\widetilde{Y}=\left\{\widetilde{y}=i(y)\,:\, y\in Y\right\}\\
\widetilde{S}&=\left\{i(s)\,:\, s\in S\right\},\;\widetilde{R}=\Bigl\{\widetilde{r}v^{-1}_{r}\,:\, r\in R, v_{r}\in\bigl\langle \widetilde{Y}\bigr\rangle\Bigr\}\\
\widetilde{T}&=\Bigl\{\widetilde{x}^{-1}\widetilde{y}\,\widetilde{x}w^{-1}_{x,y}\,:\,x\in X, y\in Y, w_{x,y}\in\bigl\langle \widetilde{Y}\bigr\rangle\Bigr\}.
\end{align*}
The details of the proofs of Theorems~\ref{puras} and~\ref{total} are left to the reader.

%%%%
\begin{remark}\label{Ab} Using Theorem~\ref{total}, it is straightforward to check that:
\begin{align*}
B_{n}(\mathbb{T})^{\text{Ab}} &\cong\mathbb Z\oplus\mathbb Z\oplus\mathbb Z_{2} =\left\langle \overline{a},\overline{b},\sigma\,:\,[\overline{a},\overline{b}]=[\overline{a},\sigma]=[\overline{b},\sigma]=\sigma^{2}=1\right\rangle\\
B_{n}(\mathbb{K})^{\text{Ab}} &\cong\mathbb Z\oplus\mathbb Z_{2}\oplus\mathbb Z_{2} =\left\langle \overline{a},\overline{b},\sigma\,:\,[\overline{a},\overline{b}]=[\overline{a},\sigma]=[\overline{b},\sigma]=\sigma^{2}=\overline{a}^{2}=1\right\rangle,
\end{align*}
for all $n\geq 2$, where $\overline{a}$ (resp.\ $\overline{b}$, $\sigma$) represents the $\Gamma_{2}$-coset of $a$ (resp.\ of $b$, $\sigma_{1}$).
\end{remark}

For compact non-orientable surfaces of genus $g\geq 3$ without boundary, we shall make use of the following presentation of their braid groups due to Bellingeri.

\begin{teo}[\cite{Be}]\label{Bel} Let $N_{g}$ be a compact, connected non-orientable surface of genus $g\geq 3$ without boundary. The braid group $B_{n}(N_{g})$ admits the following presentation:

generators: $\sigma_{1},\ldots,\sigma_{n-1},a_{1},\ldots,a_{g}$.

relations:
\begin{enumerate}
	
	\item $\sigma_{i}\sigma_{i+1}\sigma_{i}=\sigma_{i+1}\sigma_{i}\sigma_{i+1}$;
	
\item $\sigma_{j}\sigma_{i}=\sigma_{i}\sigma_{j}$, if $|i-j|\geq 2$;
	
	\item $a_{r}\sigma_{i}=\sigma_{i}a_{r}\,(1\leq r \leq g;\, i\neq1)$;
	
	\item $\sigma^{-1}_{1}a_{r}\sigma^{-1}_{1}a_{r}=a_{r}\sigma^{-1}_{1}a_{r}\sigma_{1}\, (1\leq r\leq g)$;
	
	\item $\sigma^{-1}_{1}a_{s}\sigma_{1}a_{r}=a_{r}\sigma^{-1}_{1}a_{s}\sigma_{1}\, (1\leq s<r\leq g)$;
	
	\item $a^{2}_{1}\cdots a^{2}_{g}=\sigma_{1}\sigma_{2}\cdots\sigma^{2}_{n-1}\cdots\sigma_{2}\sigma_{1}$.
	
\end{enumerate}
\end{teo}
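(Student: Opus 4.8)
The plan is to prove the presentation by induction on the number $n$ of strings, following the scheme that the paper indicates for Theorems~\ref{puras} and~\ref{total}: a forgetful fibration, induction, and the standard method for presenting a group extension~\cite{J}. The generators $\sigma_{1},\ldots,\sigma_{n-1}$ (the Artin half-twists) and $a_{1},\ldots,a_{g}$ (the loops along which the first string travels around the $g$ crosscaps of $N_{g}$) are geometrically natural, and each relation can be checked directly by an isotopy of braids: relations~(1)--(2) are the usual Artin relations; relation~(3) holds because $a_{r}$ moves only the first string whereas $\sigma_{i}$ with $i\neq 1$ involves strings of index $\geq 2$; and relations~(4)--(5) record how the loops $a_{r},a_{s}$ interact with the single crossing $\sigma_{1}$ of the first two strings. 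The substance of the proof is therefore \emph{completeness}: showing that no further relations are needed.

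For the induction, the base case $n=1$ gives $B_{1}(N_{g})=\pi_{1}(N_{g})=\langle a_{1},\ldots,a_{g}\mid a_{1}^{2}\cdots a_{g}^{2}=1\rangle$, which is exactly the asserted presentation once the empty families of $\sigma$-generators and of relations~(1)--(5) are discarded and relation~(6) is read with an empty right-hand side. For the inductive step I would use the fibration that remembers a single distinguished string: projecting a configuration to that string gives a locally trivial fibration over $N_{g}$ whose fibre is the unordered configuration space of the remaining $n-1$ points in $N_{g}$ minus a point, in the spirit of the Fadell--Neuwirth fibration~\cite{FN} used for~(\ref{seqFN}). On fundamental groups this yields a short exact sequence
\[1\longrightarrow B_{n-1}(N_{g}\setminus\{\ast\})\longrightarrow \widetilde{B}_{n}(N_{g})\longrightarrow \pi_{1}(N_{g})\longrightarrow 1,\]
where $\widetilde{B}_{n}(N_{g})$ is the index-$n$ subgroup of $B_{n}(N_{g})$ consisting of braids whose underlying permutation fixes the distinguished string. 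Since $N_{g}\setminus\{\ast\}$ is homotopy equivalent to a wedge of $g$ circles, the kernel is the braid group of a punctured surface, controlled by the inductive hypothesis (formulated so as to include punctured surfaces), and the quotient $\pi_{1}(N_{g})$ is known.

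I would then feed presentations of the kernel and of the quotient into the group-extension method~\cite{J}: lift the generators $a_{1},\ldots,a_{g}$ of $\pi_{1}(N_{g})$, keep $\sigma_{2},\ldots,\sigma_{n-1}$ together with the generators of the kernel $B_{n-1}(N_{g}\setminus\{\ast\})$, and adjoin the conjugation relations $\widetilde{x}^{-1}\widetilde{y}\,\widetilde{x}=w_{x,y}$ encoding the monodromy action. Passing from $\widetilde{B}_{n}(N_{g})$ back to the full group $B_{n}(N_{g})$ is accomplished by adjoining $\sigma_{1}$, which swaps the distinguished string with its neighbour; this reintroduces relations~(1)--(2) with $i=1$ and the mixed relations~(4)--(5). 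A sequence of Tietze transformations should then collapse the resulting large presentation onto the generators $\sigma_{1},\ldots,\sigma_{n-1},a_{1},\ldots,a_{g}$ and relations~(1)--(6).

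The main obstacle I expect is twofold. First, making the extension argument precise for the \emph{full} braid group rather than the pure one: unlike~(\ref{seqFN}), there is no direct forgetful map $B_{n}(N_{g})\to B_{n-1}(N_{g})$, so one must work with the distinguished-string subgroup $\widetilde{B}_{n}(N_{g})$ and control the index-$n$ passage via $\sigma_{1}$ carefully, using the coset structure coming from~(\ref{seq1}). Second, and most delicate, is deriving the twisted surface relation~(6): geometrically the based loop $a_{1}^{2}\cdots a_{g}^{2}$ of the first string represents the boundary of the fundamental polygon of $N_{g}$, which is null-homotopic in $N_{g}$ but, in the presence of the other $n-1$ strings, is isotopic to the first string encircling all the others, namely $\sigma_{1}\sigma_{2}\cdots\sigma_{n-1}^{2}\cdots\sigma_{2}\sigma_{1}$. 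Turning this isotopy into a purely algebraic consequence of the lifted kernel relation $a_{1}^{2}\cdots a_{g}^{2}=1$ and the monodromy relations is the crux, and it is here that the hypothesis $g\geq 3$ and the orientation-reversing nature of the $a_{r}$ (reflected already in the asymmetric relation~(4)) enter, the cases $g=1,2$ requiring the separate treatments of $\mathbb{RP}^{2}$ and $\mathbb{K}$.
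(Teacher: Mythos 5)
First, a structural point: the paper contains no proof of Theorem~\ref{Bel}. It is quoted from Bellingeri~\cite{Be} and used purely as an input (in the proof of Theorem~\ref{not}), so there is no in-paper argument to compare yours against; the relevant benchmarks are the proof in~\cite{Be} and the method this paper sketches for its own presentation theorems (Theorems~\ref{puras} and~\ref{total}), namely Fadell--Neuwirth fibrations, induction on $n$, and the presentation-of-an-extension method of~\cite[Proposition~1, p.~139]{J}. Your outline belongs to that family: the relations are indeed verified geometrically, completeness is indeed the substance, and your short exact sequence $1\to B_{n-1}(N_{g}\setminus\{\ast\})\to\widetilde{B}_{n}(N_{g})\to\pi_{1}(N_{g})\to 1$ is correct because $\pi_{2}(N_{g})=0$ for $g\geq 2$.

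The genuine gap is the step from $\widetilde{B}_{n}(N_{g})$ to $B_{n}(N_{g})$. The extension method of~\cite{J} presents a group from a \emph{normal} subgroup and the quotient, but $\widetilde{B}_{n}(N_{g})$, being the preimage of the stabiliser $S_{n-1}$ under $B_{n}(N_{g})\to S_{n}$, is not normal once $n\geq 3$, and there is no analogue of~\cite{J} that produces a presentation of a group from a presentation of a non-normal finite-index subgroup together with extra generators. Nor can ``a sequence of Tietze transformations'' do the job: Tietze transformations pass between presentations of one and the same group, so they cannot certify that whatever relations you adjoin along with $\sigma_{1}$ define $B_{n}(N_{g})$ --- deciding which relations suffice is exactly the statement being proved, so this step is circular as written. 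The standard repair, and the route this paper itself takes for $\mathbb{T}$ and $\mathbb{K}$, is to run the induction at the level of pure braid groups, where the Fadell--Neuwirth sequences~(\ref{seqFN}) are honest extensions (giving a presentation of $P_{n}(N_{g})$, with the inductive statement enlarged to include punctured surfaces), and only then to apply~\cite{J} to the sequence~(\ref{seq1}), $1\to P_{n}(N_{g})\to B_{n}(N_{g})\to S_{n}\to 1$, whose kernel is normal and whose quotient is $S_{n}$, finishing with Tietze reductions of the resulting large presentation. (Alternatively, your route could be made rigorous by mapping the abstractly presented group $G$ onto $S_{n}$, computing by Reidemeister--Schreier a presentation of the preimage of $S_{n-1}$ in $G$, and comparing it with the presentation of $\widetilde{B}_{n}(N_{g})$ obtained from your exact sequence over $\pi_{1}(N_{g})$; that computation is the heart of the matter and is absent.) Two lesser points: the presentations of the punctured-surface groups $B_{n-1}(N_{g}\setminus\{\ast\})$ on which your induction rests are never stated or proved, although relations (1)--(5) all live there; and nothing in your outline actually uses $g\geq 3$ --- the only restriction invoked is $\pi_{2}(N_{g})=0$, which already holds for $g=2$ --- so the theorem's hypothesis must enter in details you have not supplied (note that the paper needs a genuinely different presentation, Theorem~\ref{total}, for the Klein bottle).
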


To prove some of our results, we will also require the following theorem of Gruenberg.
\begin{teo}[\cite{G}]\label{TG} Let $\mathcal P$ denote one of the following classes:
\begin{enumerate}
	\item the class of soluble groups;
	\item the class of finite groups;
	\item the class of $p$-finite groups for a given prime number $p$.
\end{enumerate}
Let $K$ and $H$ be groups, and suppose that $K$ is $\mathcal P$ and that $H$ is residually $\mathcal P$. Then, for any group extension \[1\longrightarrow H\longrightarrow G\longrightarrow K\longrightarrow1,\] the group $G$ is residually $\mathcal P$.
\end{teo}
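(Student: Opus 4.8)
The plan is to show directly that every non-trivial $g\in G$ survives in some quotient of $G$ that lies in $\mathcal{P}$. Write $\pi\colon G\to K$ for the quotient map with kernel $H$. Throughout I would use that each of the three classes is closed under subgroups, quotients and extensions: an extension of a $\mathcal{P}$-group by a $\mathcal{P}$-group is again a $\mathcal{P}$-group (for soluble groups this is standard, while for finite groups and for finite $p$-groups it follows from the multiplicativity of the order). First I would dispose of the case $\pi(g)\neq1$: here $\pi$ is already a surjection onto $K\in\mathcal{P}$ that does not kill $g$. This reduces everything to separating a non-trivial element $g$ of the kernel $H$.

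For $g\in H\setminus\{1\}$ my approach is the standard ``core'' construction. Using that $H$ is residually $\mathcal{P}$, I would first pick a normal subgroup $M\trianglelefteq H$ with $g\notin M$ and $H/M\in\mathcal{P}$. Since $M$ need not be normal in $G$, I would replace it by its core $N=\bigcap_{x\in G}xMx^{-1}$, which is normal in $G$ by construction. The routine verifications are that $N\subseteq H$ and that each $xMx^{-1}\trianglelefteq H$ (because $H\trianglelefteq G$, so conjugation by $x$ restricts to an automorphism of $H$ sending the normal subgroup $M$ to $xMx^{-1}$, whence $H/(xMx^{-1})\cong H/M$), that $g\notin N$ since $N\subseteq M$, and that the diagonal map yields an embedding $H/N\hookrightarrow\prod_{x\in G}H/(xMx^{-1})$.

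The step I expect to be the main obstacle is deducing $H/N\in\mathcal{P}$ from this embedding, since the product is a priori infinite; this is exactly where the three classes diverge. In the finite and $p$-finite cases I would argue that $H/M$ is finite, so $[H:M]<\infty$, and since $H\subseteq N_{G}(M)$ one has $[G:N_{G}(M)]\leq[G:H]=\abs{K}<\infty$; hence $M$ has only finitely many conjugates in $G$ and the product has finitely many factors, so $H/N$ is a subgroup of a finite group (resp.\ of a finite $p$-group) and lies in $\mathcal{P}$. In the soluble case the product may genuinely be infinite, so instead I would exploit that every factor is isomorphic to $H/M$, hence soluble of one fixed derived length $\ell$; applying the coordinate projections shows that the whole product has derived length at most $\ell$, and therefore so does its subgroup $H/N$, which is thus soluble.

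Finally, in all three cases I would assemble the conclusion from the short exact sequence $1\to H/N\to G/N\to K\to1$: it presents $G/N$ as an extension of $K\in\mathcal{P}$ by $H/N\in\mathcal{P}$, so $G/N\in\mathcal{P}$ by the closure under extensions, while the projection $G\to G/N$ is onto and sends $g$ to a non-trivial element because $g\notin N$. Together with the case $\pi(g)\neq1$, this shows that every non-trivial element of $G$ is detected in a $\mathcal{P}$-quotient, that is, that $G$ is residually $\mathcal{P}$.
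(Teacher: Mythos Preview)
The paper does not give its own proof of this statement: Theorem~\ref{TG} is quoted from Gruenberg~\cite{G} and used as a black box in Sections~\ref{sec:torus}--\ref{sec:higher}. So there is nothing to compare against on the paper's side.

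Your argument is the standard one and is correct. The two potentially delicate points are handled properly: when $\mathcal{P}$ is the class of finite groups or of $p$-finite groups, the hypothesis says $K$ itself is finite, so $[G:H]<\infty$ and the core $N$ is a finite intersection of conjugates of $M$, giving $H/N$ finite (resp.\ a finite $p$-group); when $\mathcal{P}$ is the class of soluble groups, you cannot control the number of conjugates, but the uniform bound on derived length coming from $H/(xMx^{-1})\cong H/M$ suffices. The final step, passing from $H/N\in\mathcal{P}$ and $K\in\mathcal{P}$ to $G/N\in\mathcal{P}$ via closure under extensions, is exactly right for all three classes.
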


\section{The lower central and derived series of semi-direct products}\label{sec:lcssemi}

The main aim of this section is to establish the general decomposition of the lower central series and an estimate of the derived series of an arbitrary semi-direct product given in the statement of Theorem~\ref{serie}, which will be used in later computations of the lower central and derived series of $P_{n}(\mathbb{K})$. We first prove two lemmas that will be used in what follows.
If $x_{1},\ldots,x_{n}$ are elements of a group $G$, we set:
%and $y$ are elements of the group,  
%$[x,y]$ the element $xyx^{-1}y^{-1}$, and 
%$[x_{1},x_{2},\ldots,x_{n-1},x_{n}]$ will denote the iterated commutator 
\[[x_{1},x_{2},\ldots,x_{n-1},x_{n}]=\Bigl[x_{1},\bigl[x_{2},\ldots,[x_{n-1},x_{n}]\bigr]\Bigr],\]
and if $X$ is a subset of $G$ then we denote the normal closure of $X$ in $G$ by $\left\langle\! \left\langle X\right\rangle\! \right\rangle_{G}$.

\begin{lemma}\label{colchete} Let $G$ be a group, and let $x,y\in G$. For all $n\in \mathbb{N}$, we have:
\begin{equation}\label{itercomm}
[x^{2^{n}},y]=[x,x,x^{2},\ldots,x^{2^{n-1}},y].[x,x^{2},\ldots,x^{2^{n-1}},y]^{2}.[x^{2},\ldots,x^{2^{n-1}},y]^{2}\cdots[x^{2^{n-1}},y]^{2}.
\end{equation}
\end{lemma}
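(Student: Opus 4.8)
The plan is to prove \eqref{itercomm} by induction on $n$, after first rewriting its right-hand side in a recursive form that makes the induction transparent. Introduce the nested commutators $B_{n}=y$ and $B_{j}=[x^{2^{j}},B_{j+1}]$ for $0\leq j\leq n-1$, so that $B_{j}=[x^{2^{j}},x^{2^{j+1}},\ldots,x^{2^{n-1}},y]$ in the notation of the excerpt. The first factor of \eqref{itercomm} is then $[x,B_{0}]$, the $k$-th squared factor is $B_{k-1}^{2}$, and, using $[x,B_{0}]\cdot B_{0}^{2}=xB_{0}x^{-1}B_{0}={}^{x}B_{0}\cdot B_{0}$, the identity to be proved is equivalent to
\[
[x^{2^{n}},y]={}^{x}B_{0}\cdot B_{0}\cdot B_{1}^{2}\cdots B_{n-1}^{2}.\qquad(\star)
\]
For $n=1$ one has $B_{1}=y$ and $B_{0}=[x,y]$, and $(\star)$ reduces to the elementary identity $[x^{2},y]={}^{x}[x,y]\cdot[x,y]$, itself a special case of $[ab,c]={}^{a}[b,c][a,c]$.

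For the inductive step I would apply the statement for $n-1$ to the pair $(x^{2},y)$ in place of $(x,y)$. Writing $B'_{j}$ for the sequence associated to $x^{2}$, defined by $B'_{n-1}=y$ and $B'_{j}=[(x^{2})^{2^{j}},B'_{j+1}]=[x^{2^{j+1}},B'_{j+1}]$, a short downward induction on $j$ shows $B'_{j}=B_{j+1}$ for all $0\leq j\leq n-1$: the base case is $B'_{n-1}=y=B_{n}$, and the two recursions coincide after the index shift. Hence the induction hypothesis $(\star)$ for the pair $(x^{2},n-1)$, together with $(x^{2})^{2^{n-1}}=x^{2^{n}}$, reads
\[
[x^{2^{n}},y]={}^{x^{2}}B_{1}\cdot B_{1}\cdot B_{2}^{2}\cdots B_{n-1}^{2}.\qquad(\star\star)
\]

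It then remains to reconcile $(\star\star)$ with $(\star)$, which uses only the defining relation $B_{0}=[x,B_{1}]={}^{x}B_{1}\cdot B_{1}^{-1}$. From it one computes ${}^{x}B_{0}\cdot B_{0}={}^{x^{2}}B_{1}\cdot B_{1}^{-1}$, and therefore ${}^{x}B_{0}\cdot B_{0}\cdot B_{1}^{2}={}^{x^{2}}B_{1}\cdot B_{1}$. Substituting this for the first three factors of $(\star)$ turns $(\star)$ into $(\star\star)$, which holds by the induction hypothesis; this completes the induction and hence establishes \eqref{itercomm}.

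The commutator algebra involved is elementary once the reformulation $(\star)$ is in hand, so the only real obstacle is the bookkeeping: correctly reindexing the nested commutators when passing from $x$ to $x^{2}$ and verifying $B'_{j}=B_{j+1}$, so that the inductive hypothesis applies to precisely the tail $B_{1},\ldots,B_{n-1}$ that appears in $(\star)$. Getting this shift right is what makes the single-line manipulation ${}^{x}B_{0}\cdot B_{0}\cdot B_{1}^{2}={}^{x^{2}}B_{1}\cdot B_{1}$ close the argument.
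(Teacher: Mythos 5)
Your proof is correct, but it is not the paper's argument: the two inductions run in opposite directions. The paper passes from $n$ to $n+1$ by writing $[x^{2^{n+1}},y]=[(x^{2^{n}})^{2},y]$ and applying the base identity~\eqref{itercomm1} with $x^{2^{n}}$ in place of $x$, obtaining $[x^{2^{n}},x^{2^{n}},y]\cdot[x^{2^{n}},y]^{2}$; it then applies the induction hypothesis to the first factor with the second argument $y$ replaced by $[x^{2^{n}},y]$. That is, the paper keeps $x$ fixed and substitutes into the innermost ($y$) slot, so the factors produced by the hypothesis literally are the factors of the formula for $n+1$ and no re-indexing is needed. You instead keep $y$ fixed and substitute into the $x$-slot: writing $x^{2^{n}}=(x^{2})^{2^{n-1}}$, you invoke the hypothesis for the pair $(x^{2},y)$, check the index shift $B'_{j}=B_{j+1}$, and convert the head of the product via the one-line identity ${}^{x}B_{0}\cdot B_{0}\cdot B_{1}^{2}={}^{x^{2}}B_{1}\cdot B_{1}$, which prepends the two new outermost factors. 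What your route buys is that the reformulation $(\star)$ confines all the commutator algebra to that single line and avoids carrying the long substituted expressions $[x,\ldots,x^{2^{n-1}},[x^{2^{n}},y]]$ through the induction; what it costs is the index-shift bookkeeping, which the paper's inner substitution gets for free from the nested-commutator notation. Both arguments rest on the same base case $[x^{2},y]=[x,x,y]\cdot[x,y]^{2}$, and both are complete.
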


%\begin{proof} 

\begin{proof}
We prove the lemma by induction on $n$. Observe that:
\begin{equation}\label{itercomm1}
[x^{2},y]=x.x.y.x^{-1}.x^{-1}.y^{-1}=x[x,y]yx^{-1}y^{-1}=x[x,y]x^{-1}[x,y]=[x,x,y].[x,y]^{2},
\end{equation}
which proves~(\ref{itercomm}) in the case $n=1$.
Now let $n\geq 2$, and suppose that the result holds for all $1\leq i\leq n$. Applying~(\ref{itercomm1}) to the elements $x^{2^{n}}$ and $[x^{2^{n}},y]$, we have:
\[[x^{2^{n+1}},y]=[(x^{2^{n}})^{2},y]=[x^{2^{n}},x^{2^{n}},y][x^{2^{n}},y]^{2},\]
and applying~(\ref{itercomm1}) to the elements $x^{2^{n}}$ and $y$, we obtain:
%Using the equality for  $x$, $n$ and $[x^{2^{n}},]$, we have: 
%\[\begin{array}{lcr}[x^{2^{n}},[x^{2^{n}},y]]&=&[x,x,x^{2},\ldots,x^{2^{n-1}},[x^{2^{n}},y]].[x,x^{2},\ldots,x^{2^{n-1}},[x^{2^{n}},y]]^{2}.\\& &[x^{2},\ldots,x^{2^{n-1}},[x^{2^{n}},y]]^{2}\cdots[x^{2^{n-1}},[x^{2^{n}},y]]^{2},\end{array}\] 
\[\left[x^{2^{n}},[x^{2^{n}},y]\right]=\left[x,x,x^{2},\ldots,x^{2^{n-1}},[x^{2^{n}},y]\right]\left[x,x^{2},\ldots,x^{2^{n-1}},[x^{2^{n}},y]\right]^{2}\cdots\left[x^{2^{n-1}},[x^{2^{n}},y]\right]^{2}.\]
Thus:
\[\bigl[x^{2^{n+1}},y\bigr]=\bigl[x,x,x^{2}\!,\ldots,x^{2^{n-1}}\!\!,[x^{2^{n}}\!\!,y]\bigr]\bigl[x,x^{2},\ldots,x^{2^{n-1}},[x^{2^{n}}\!\!,y]\bigr]^{2}\cdots\bigl[x^{2^{n-1}},[x^{2^{n}}\!\!,y]\bigr]^{2}\bigl[x^{2^{n}}\!\!,y\bigr]^{2},\]
which completes the proof by induction.
\end{proof}
%\end{proof} %\hfill{$\blacksquare$} \\

%
%\begin{teo}\label{serie} Let $G, H$ be groups, and let $\varphi:G\longrightarrow Aut (H)$ be an action of $G$ on $H$. We define recursively the following subgroups of $H$: $L_{1}=V_{1}=H$, and if $n\geq 2$: 
%$\begin{array}{l}
%K_{n}=\left\langle \varphi(g)(h).h^{-1}\,:\, g\in \Gamma_{n-1}(G), h\in H\right\rangle,\\
%H_{n}=\left\langle \varphi(g)(h).h^{-1}\,:\, g\in G, h\in L_{n-1}\right\rangle,\,\,\, \widetilde{H}_{n}=\left\langle \varphi(g)(h).h^{-1}\,:\, g\in G, h\in V_{n-1}\right\rangle,\\ 
%\mbox{and}\,\, L_{n}=\left\langle K_{n},H_{n},[H,L_{n-1}]\right\rangle,\,\,\, V_{n}=\left\langle \widetilde{H}_{n},[H,V_{n-1}]\right\rangle.
%\end{array} $
%Then $\varphi$ induces an action, that we also denote by $\varphi$, of $\Gamma_{n}(G)$ on $L_{n}$ (resp.\ of $G^{(n)}$ on $V_{n+1}$), and: 
%\begin{enumerate}
%\item $\Gamma_{n}(H\rtimes_{\varphi}G)=L_{n}\rtimes_{\varphi}\Gamma_{n}(G)$, 
%
%\item $(H\rtimes_{\varphi}G)^{(n)}\subset V_{n+1}\rtimes_{\varphi}G^{(n)}$.
%\end{enumerate}
%\end{teo}

\begin{remark}\label{gerador}
With the notation of Theorem~\ref{serie},  In what follows, for the groups $K_{n}$, $H_{n}$ or $\widetilde{H}_{n}$, we will use the word \emph{generator} to mean a word of the form $\varphi(g)(h).h^{-1}$, where $g\in \Gamma_{n-1}(G)$ and $h\in H$, $g\in G$ and $h\in V_{n-1}$, or $g\in \Gamma_{n-1}(G)$ and $h\in H$ respectively. Similarly, a generator of the group $L_{n}$ (resp.\ $V_{n}$) will mean either a generator of $K_{n}$ or $H_{n}$, or an element of the form $[h,l]$, where $h\in H$ and $l\in L_{n-1}$ (resp.\ either a generator of $\widetilde{H}_{n}$, or an element of the form $[h,v]$, where $h\in H$ and $v\in V_{n-1}$) 
\end{remark}

\begin{lem}\label{Lnnormal}
Let $n\geq 2$. With the notation of Theorem~\ref{serie}, the subgroups $K_{n}$, $L_{n}$ and $V_{n}$ are normal in $H$ for all $n\geq 2$, and we have the inclusions $K_{n+1}\subset K_{n}$, $H_{n+1}\subset H_{n}$, $\widetilde{H}_{n+1}\subset \widetilde{H}_{n}$, $L_{n+1}\subset L_{n}$ and $V_{n+1}\subset V_{n}$.
\end{lem}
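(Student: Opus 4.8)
The plan is to prove both the normality statements and all the inclusions simultaneously by induction on $n$, establishing at each stage the slightly stronger assertion that $K_{n}$, $L_{n}$ and $V_{n}$ are normal subgroups of $H\rtimes_{\varphi}G$; equivalently, that they are normal in $H$ \emph{and} invariant under each automorphism $\varphi(g)$, $g\in G$. The extra $\varphi$-invariance costs nothing and turns out to be indispensable for the inclusions. It is convenient to work inside $H\rtimes_{\varphi}G$, where a generator $\varphi(g)(h).h^{-1}$ is simply the commutator $[g,h]$ and $\varphi(g)(h)=ghg^{-1}$. The base case $n=1$ is immediate: $L_{1}=V_{1}=H$ is the kernel of the projection $H\rtimes_{\varphi}G\to G$, hence normal, and each $\varphi(g)$ is an automorphism of $H$.

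The single computation on which everything rests is the conjugation identity, valid for all $g\in G$ and $h,h_{0}\in H$,
\[
h_{0}.\bigl(\varphi(g)(h).h^{-1}\bigr).h_{0}^{-1}=\bigl(c^{-1}d'c\bigr).[c^{-1},h'],\quad\text{where } c=\varphi(g)(h_{0}).h_{0}^{-1},\ h'=h_{0}hh_{0}^{-1},\ d'=\varphi(g)(h').h'^{-1},
\]
together with its simpler variant $h_{0}.(\varphi(g)(h).h^{-1}).h_{0}^{-1}=\bigl(\varphi(g)(h_{0}).h_{0}^{-1}\bigr)^{-1}.\varphi(g)(h_{0}h).(h_{0}h)^{-1}$; both follow from a one-line manipulation using only that $\varphi(g)$ is an automorphism. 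The second variant shows at once, and \emph{unconditionally}, that $K_{n}$ is normal in $H$, since for $g\in\Gamma_{n-1}(G)$ its two right-hand factors are again generators of $K_{n}$; and a direct computation with $\varphi(g_{0})$ gives $\varphi(g_{0})\bigl(\varphi(g)(h).h^{-1}\bigr)=\varphi(g_{0}gg_{0}^{-1})(h_{1}).h_{1}^{-1}$ with $h_{1}=\varphi(g_{0})(h)$, which lies in $K_{n}$ because $\Gamma_{n-1}(G)$ is normal in $G$. Hence $K_{n}\trianglelefteq H\rtimes_{\varphi}G$ for every $n$, with no recourse to the induction.

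For the inductive step I would first deduce the inclusions from the data already available at level $n-1$. The inclusion $K_{n+1}\subset K_{n}$ is immediate from $\Gamma_{n}(G)\subset\Gamma_{n-1}(G)$, and $[H,L_{n-2}]\subset L_{n-1}$ holds by definition; the only real point is $H_{n}\subset L_{n-1}$, and hence $L_{n}\subset L_{n-1}$, which follows from the $\varphi$-invariance of $L_{n-1}$: for $h\in L_{n-1}$ one has $\varphi(g)(h)\in L_{n-1}$, so $\varphi(g)(h).h^{-1}\in L_{n-1}$. The inclusions $H_{n+1}\subset H_{n}$, $\widetilde{H}_{n+1}\subset\widetilde{H}_{n}$ and $V_{n+1}\subset V_{n}$ are obtained in the same way from $L_{n}\subset L_{n-1}$, $V_{n}\subset V_{n-1}$ and the $\varphi$-invariance of $V_{n-1}$.

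It then remains to show $L_{n}\trianglelefteq H\rtimes_{\varphi}G$, and this is where the real work lies. The $\varphi$-invariance of $L_{n}=\langle K_{n},H_{n},[H,L_{n-1}]\rangle$ is routine given that of $K_{n}$ and of $L_{n-1}$. For normality in $H$, the families $K_{n}$ and $[H,L_{n-1}]$ are disposed of by the remarks above (the latter because $L_{n-1}$ is normal in $H$, so $[H,L_{n-1}]$ is too). \textbf{The main obstacle is the family $H_{n}$}: conjugating a generator $\varphi(g)(h).h^{-1}$ with $h\in L_{n-1}$ by $h_{0}\in H$ produces, via the main identity, the factor $[c^{-1},h']\in[H,L_{n-1}]\subset L_{n}$ (harmless, since $h'=h_{0}hh_{0}^{-1}\in L_{n-1}$ by normality of $L_{n-1}$) together with $c^{-1}d'c=[c^{-1},d'].d'$, whose commutator term naively only lands in $[H,L_{n}]\subset L_{n+1}$ — apparently circular. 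The resolution is exactly the inclusion established in the previous paragraph: since $d'\in H_{n}\subset L_{n}\subset L_{n-1}$, the term $[c^{-1},d']$ in fact lies in $[H,L_{n-1}]\subset L_{n}$. This is what breaks the circularity and closes the induction; the argument for $V_{n}$, with $\widetilde{H}_{n}$ in place of $H_{n}$ and $[H,V_{n-1}]$ in place of $[H,L_{n-1}]$, is word-for-word the same.
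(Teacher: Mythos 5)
Your proof is correct, but it takes a genuinely different route from the paper's. The paper's key device, for showing that a conjugate $h(\varphi(g)(y).y^{-1})h^{-1}$ of a generator of $K_{n+1}$ or $H_{n+1}$ stays in the subgroup, is the substitution $h=\varphi(g)(h')$, available because $\varphi(g)$ is surjective on $H$: with it the conjugate becomes $\bigl(\varphi(g)(h'yh'^{-1}).(h'yh'^{-1})^{-1}\bigr)[h',y][y,\varphi(g)(h')]$, i.e.\ a single generator (with conjugated entry) times commutators already lying in $[H,L_{n}]$, so normality closes on its own; the inclusions are then proved afterwards by a separate induction, via the chain $H_{n}\subset H_{n-1}\subset L_{n-1}$ --- which shows, incidentally, that your $\varphi$-invariance, while convenient, is not actually ``indispensable'' for the inclusions. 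You avoid the substitution trick altogether: you strengthen the inductive hypothesis to normality in $H\rtimes_{\varphi}G$ (equivalently, $H$-normality plus invariance under every $\varphi(g)$, $g\in G$), prove the inclusions \emph{first} inside each inductive step using that invariance, and then use $H_{n}\subset L_{n-1}$ to push the troublesome commutator $[c^{-1},d']$ down into $[H,L_{n-1}]\subset L_{n}$; that is indeed what breaks the circularity, and your two identities check out (the ``simpler variant'' is just the cocycle identity $h_{0}c(h)h_{0}^{-1}=c(h_{0})^{-1}c(h_{0}h)$ for $c(h)=\varphi(g)(h).h^{-1}$, and it gives a cleaner proof that $K_{n}$ is normal than the paper's, with no surjectivity needed). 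Your version buys two things: it is self-contained (the base case $n=1$ is trivial, whereas the paper cites Gon\c{c}alves--Guaschi for $n=2$), and the $G$-invariance is a strictly stronger conclusion --- since invariance under both $\varphi(g)$ and $\varphi(g^{-1})$ forces $\varphi(g)(L_{n})=L_{n}$, it would also shortcut the first part of the proof of Theorem~\ref{serie}, where the paper separately verifies, surjectivity argument included, that $\Gamma_{n+1}(G)$ acts on $L_{n+1}$ by automorphisms. One cosmetic slip: to get $L_{n}\subset L_{n-1}$ you need $[H,L_{n-1}]\subset L_{n-1}$, which follows from the inductive normality of $L_{n-1}$ in $H$, not from the fact ``$[H,L_{n-2}]\subset L_{n-1}$ by definition'' that you quote; since you invoke exactly that normality elsewhere in the argument, nothing is lost.
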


%\begin{proof} 
\begin{proof}
The proof is by induction on $n$. The proof 
%that $L_{n}$  (resp.\ $V_{n}$) is a normal subgroup of $H$. The proof of 
in the case $n=2$ was given in~\cite[Proposition~3.3]{GG}. So suppose that $n\geq 2$, and assume that $L_{n}$ (resp.\ $V_{n}$) is a normal subgroup of $H$, let $x\in L_{n+1}$ (resp.\ $V_{n+1}$) and let $h\in H$. Since $L_{n+1}=\bigl\langle K_{n+1},H_{n+1},[H,L_{n}]\bigr\rangle$ (resp.\ $V_{n+1}=\bigl\langle \widetilde{H}_{n+1},[H,V_{n}]\bigr\rangle$), it suffices to show that $hxh^{-1}\in L_{n+1}$, where $x$ is a generator of  $K_{n+1}, H_{n+1}$ or $[H,L_{n}]$ (resp.\ of $\widetilde{H}_{n+1}$ or $[H,V_{n}]$), as in Remark \ref{gerador}.%we will refer to these elements \comj{which elements? Perhaps the above remark now explains this part of the sentence?} as generators of $L_{n+1}$.

\begin{itemize}
	\item Suppose that $x=\varphi(g)(y)y^{-1}\in K_{n+1}$, where $g\in\Gamma_{n}(G)$ and $y\in H$. Then $\varphi(g)\in \operatorname{\text{Aut}}(H)$, and there exists $h'\in H$ such that $\varphi(g)(h')=h$,  so: \[hxh^{-1}=h(\varphi(g)(y).y^{-1})h^{-1}=(\varphi(g)(h'y)y^{-1}h'^{-1})(\varphi(g)(h')h'^{-1})^{-1}\in K_{n+1}\subset L_{n+1}.\] This also implies that $K_{n}$ is a normal subgroup of $H$ for all $n\geq 2$.
	
	\item Suppose that $x=\varphi(g)(y)y^{-1}$ is an element of $H_{n+1}$ (resp.\ of $\widetilde{H}_{n+1}$), where $g\in G$ and $y\in L_{n}$ (resp.\ $y \in V_{n}$), and let $h'\in H$ be such that $\varphi(g)(h')=h$. Then: 
\begin{align*}
hxh^{-1} &=h(\varphi(g)(y).y^{-1})h^{-1}\\
& \text{$=(\varphi(g)(h'yh'^{-1}).(h'yh'^{-1})^{-1})[h',y][y,\varphi(g)(h')]\in L_{n+1}$ (resp.\ $V_{n+1}$),}
%=(\varphi(g)(h'yh'^{-1}).(h'yh'^{-1})^{-1})[h',y][y,\varphi(g)(h')]\in L_{n+1} (\mbox{resp.}\,\,V_{n+1}),\] 
\end{align*}
because $h'yh'^{-1}\in L_{n}$ (resp.\ $V_{n}$) by the normality of $L_{n}$ (resp.\ $V_{n}$) in $H$ using the induction hypothesis. 
	
	\item Suppose that $x=[y,l]\in[H,L_{n}]$ (resp.\ $[H,V_{n}]$), where $y\in H$ and $l\in L_{n}$ (resp.\ $l\in V_{n}$). Then: \[hxh^{-1}=[hyh^{-1},hlh^{-1}]\in [H,L_{n}]\subset L_{n+1} (\mbox{resp.}\,\,[H,V_{n}]\subset V_{n+1}),\] because $hlh^{-1}\in L_{n}$ (resp.\ $V_{n}$) by the normality of $L_{n}$ (resp.\ $V_{n}$) in $H$.
\end{itemize}
This proves that $L_{n}$ (resp.\ $V_{n}$) is a normal subgroup of $H$ for all $n\geq 2$.

To prove the second part, notice that the inclusion $\Gamma_{n}(G)\subset \Gamma_{n-1}(G)$ implies that  $K_{n+1}\subset K_{n}$ for all $n\geq2$. It is easy to see that $H_{3}\subset H_{2}$ (resp. $\widetilde{H}_{3}\subset \widetilde{H}_{2}$) because $L_{2}\subset H$ (resp.\ $V_{2}\subset H$). By induction, suppose that $H_{n}\subset H_{n-1}$ (resp.\ $\widetilde{H}_{n}\subset \widetilde{H}_{n-1}$) for some $n\geq3$. Since $L_{n-1}$ (resp.\ $V_{n-1}$) is normal in $H$, we have $[H,L_{n-1}]\subset L_{n-1}$ (resp.\ $[H,V_{n-1}]\subset V_{n-1}$). Further, using the definitions and the induction hypothesis, we have the inclusions $K_{n}\subset K_{n-1}\subset L_{n-1}$ and $H_{n} \subset H_{n-1}\subset L_{n-1}$ (resp.\ $\widetilde{H}_{n} \subset \widetilde{H}_{n-1}\subset V_{n-1}$). It follows that $L_{n}\subset L_{n-1}$ (resp.\ $V_{n}\subset V_{n-1}$), and then that $H_{n+1}\subset H_{n}$ (resp.\ $\widetilde{H}_{n+1}\subset \widetilde{H}_{n}$). Consequently, $L_{n+1}\subset L_{n}$ and $V_{n+1}\subset V_{n}$ for all $n\geq 2$.
\end{proof}
%\end{proof} %\hfill{$\blacksquare$} \\

%Now, we are able to prove Theorem~\ref{serie}.

%\textit{Proof of .} 
\begin{proof}[Proof of Theorem~\ref{serie}]
The proof is by induction on $n$. The case $n=1$ is trivial. If $n=2$, part~(\ref{it:lcspart1}) was proved in~\cite[Proposition~3.3]{GG}, and part~(\ref{it:lcspart2}) follows from part~(\ref{it:lcspart1}) and the fact that $L_{2}=V_{2}$. Now suppose that parts~(\ref{it:lcspart1}) and~(\ref{it:lcspart2}) hold for some $n\geq 2$, and let us prove the result for $n+1$. Let $\varphi\colon\thinspace \Gamma_{n}(G)\longrightarrow \operatorname{\text{Aut}}(L_{n})$ be the action (also denoted by $\varphi$) induced by $\varphi$ such that $L_{n}\rtimes_{\varphi} \Gamma_{n}(G)=\Gamma_{n}(H\rtimes_{\varphi}G)$. We claim that $\varphi$ also induces an action $\varphi\colon\thinspace \Gamma_{n+1}(G)\longrightarrow \operatorname{\text{Aut}}(L_{n+1})$. To see this, let $g\in \Gamma_{n+1}(G)$. To prove that $\varphi(g)(L_{n+1})\subset L_{n+1}$, it suffices to take $x\in L_{n+1}$ to be of the form $x=\varphi(g')(h).h^{-1}$, where either $g'\in \Gamma_{n}(G)$ and $h\in H$, or $g'\in G$ and $h\in L_{n}$, or of the form $x=[h,l]\in[H,L_{n}]$, where $h\in H$ and $l\in L_{n}$. The result will then follow for all elements of $L_{n+1}$ because $\varphi(g)$ is a homomorphism.

\begin{itemize}
	\item If $x=\varphi(g')(h).h^{-1}\in K_{n+1}$, where $g'\in \Gamma_{n}(G)$ and $h\in H$, or $x=\varphi(g')(h).h^{-1}\in H_{n+1}$, where $g'\in G$ and $h\in L_{n}$, then: \[\varphi(g)(x)=\varphi(g)\left(\varphi(g')(h).h^{-1}\right)=\left(\varphi(gg')(h).h^{-1}\right)\left(\varphi(g)(h).h^{-1}\right)^{-1}.\]
If $g'\in \Gamma_{n}(G)$ and $h\in H$ then $\varphi(g)(x) \in K_{n+1}\subset L_{n+1}$ since $g$ and $gg'$ belong to $\Gamma_{n}(G)$. If $g'\in G$ and $h\in L_{n}$, then $\varphi(g)(x) \in H_{n+1}\subset L_{n+1}$ because $h\in L_{n+1}$.
		
	\item If $x=[h,l]\in[H,L_{n}]$, where $h\in H$ and $l\in L_{n}$ then:\[\varphi(g)(x)=[\varphi(g)(h),\varphi(g)(l)]\in [H,L_{n}],\] since $g\in \Gamma_{n+1}(G)\subset \Gamma_{n}(G)$, so $\varphi(g)(l) \in L_{n}$.
\end{itemize}

Since $\varphi(g)\colon\thinspace L_{n+1}\longrightarrow L_{n+1}$ is the restriction of an automorphism, it is injective, so to show that it is an automorphism, it suffices to prove surjectivity. We first consider the following two cases:
\begin{enumerate}[(a)]
	\item\label{it:surja} If $x=\varphi(g')(h).h^{-1}$, where either $g'\in \Gamma_{n}(G)$ and $h\in H$, or $g'\in G$ and $h\in L_{n}$, let: \[y=\left(\varphi(g^{-1}g')(h).h^{-1}\right)h\left(\varphi(g^{-1})(h^{-1}).h\right)h^{-1}\in L_{n+1},\] because $L_{n+1}\triangleleft H$, and one may check that $\varphi(g)(y)=x$.
		
	\item\label{it:surjb} If $x=[h,l]\in[H,L_{n}]$, where $h\in H$ and $l\in L_{n}$, there exist $l'\in L_{n}$ and $h'\in H$ such that $\varphi(l')=l$ and $\varphi(g)(h')=h$ by the induction hypothesis. Taking $y=[h',l']\in[H,L_{n}]\subset L_{n+1}$, we see that $\varphi(g)(y)=x$.
\end{enumerate}

This shows that if $x$ is a generator of $K_{n+1}, H_{n+1}$ or $[H,L_{n}]$, there exists $y\in L_{n+1}$ such that $\varphi(g)(y)=x$. Given an arbitrary element $x\in L_{n+1}$, there exist $x_{1},\ldots,x_{s}$, each of which satisfies one of the conditions of cases~(\ref{it:surja}) and~(\ref{it:surjb}) above, such that 
$x=x_{1}\cdots x_{s}$. So for $i=1,\ldots,s$, there exists $y_{i}\in L_{n+1}$ such that $\varphi(g)(y_{i})=x_{i}$, and we have $\varphi(g)(y_{1}\cdots y_{s})=x$, which proves the surjectivity of $\varphi(g)\colon\thinspace L_{n+1}\longrightarrow L_{n+1}$. Therefore the semi-direct product $L_{n+1}\rtimes_{\varphi} \Gamma_{n+1}(G)$ is well defined. Similar computations show that the same is true for the semi-direct product $V_{n+1}\rtimes_{\varphi} (G)^{(n)}$. % is also well defined. %two or three?? casos acima??

To complete the proof of part~(\ref{it:lcspart1}) of Theorem~\ref{serie}, it remains to show that $L_{n+1}\rtimes_{\varphi} \Gamma_{n+1}(G)=\Gamma_{n+1}(H\rtimes_{\varphi}G)$. We first prove that $L_{n+1}\rtimes_{\varphi} \Gamma_{n+1}(G)\subset \Gamma_{n+1}(H\rtimes_{\varphi}G)$. Let $(x,g)\in L_{n+1}\rtimes_{\varphi} \Gamma_{n+1}(G)$, where $x\in L_{n+1}$ and $g\in \Gamma_{n+1}(G)$. Since $(x,g)=(x,1)(1,g)$, it suffices to show that $(x,1)$ and $(1,g)$ belong to $\Gamma_{n+1}(H\rtimes_{\varphi}G)$. Clearly, $(1,g)\in\Gamma_{n+1}(H\rtimes_{\varphi}G)$. Further, $(x,1)$ is a product of elements each of which is of one of the following forms:
\begin{itemize}
	\item $(\varphi(g)(h).h^{-1},1)=[(1,g),(h,1)]$, where $g\in \Gamma_{n}(G),\, h\in H$, and $(1,g)\in \Gamma_{n}(H\rtimes_{\varphi}G)$. Then $(\varphi(g)(h).h^{-1},1)\in\Gamma_{n+1}(H\rtimes_{\varphi}G)$.
	
	\item $(\varphi(g)(h).h^{-1},1)=[(1,g),(h,1)]$, where $g\in G$ and $h\in L_{n}$. Then $(h,1)\in L_{n}\rtimes_{\varphi}\Gamma_{n}(G)= \Gamma_{n}(H\rtimes_{\varphi}G)$ by the induction hypothesis, and $(\varphi(g)(h).h^{-1},1)\in\Gamma_{n+1}(H\rtimes_{\varphi}G)$.
	
	\item $([h,l],1)\in[H,L_{n}]$, where $h\in H$ and $l\in L_{n}$. Then $([h,l],1)=[(h,1),(l,1)]$, and $l\in L_{n}\rtimes_{\varphi}\Gamma_{n}(G)= \Gamma_{n}(H\rtimes_{\varphi}G)$ by the induction hypothesis, so $([h,l],1)\in\Gamma_{n+1}(H\rtimes_{\varphi}G)$.
\end{itemize}
Since all of these elements belong to $\Gamma_{n+1}(H\rtimes_{\varphi}G)$, it follows  that $(x,1)\in \Gamma_{n+1}(H\rtimes_{\varphi}G)$, whence $L_{n+1}\rtimes_{\varphi} \Gamma_{n+1}(G)\subset\Gamma_{n+1}(H\rtimes_{\varphi}G)$.

For the other inclusion, let $[(h,g),(x,y)]\in \Gamma_{n+1}(H\rtimes_{\varphi}G)$, where $(h,g)\in H\rtimes_{\varphi}G$ and $(x,y)\in\Gamma_{n}(H\rtimes_{\varphi}G)$. By the induction hypothesis, $\Gamma_{n}(H\rtimes_{\varphi}G)= L_{n}\rtimes_{\varphi}\Gamma_{n}(G)$, so $x\in L_{n}$ and $y\in \Gamma_{n}(G)$, and thus: 
\begin{equation}\label{eq:hgxy}
[(h,g),(x,y)]=(h.\varphi(g)(x).\varphi(gyg^{-1})(h^{-1}).\varphi([g,y])(x^{-1}),[g,y]).
\end{equation} The second factor $[g,y]$ on the right-hand side of~(\ref{eq:hgxy}) belongs to $\Gamma_{n+1}(H\rtimes_{\varphi}G)$, and the first factor, denoted by $\rho$, may be written in the following form: 
\begin{equation*}
\rho = [h,x].xhx^{-1}\!\!\left(\varphi(g)(x).x^{-1}\right)\!xh^{-1}x^{-1}.xh\!\left(\varphi(gyg^{-1})(h^{-1}).h\right)\!h^{-1}x^{-1}.
x\!\left(\varphi([g,y])(x^{-1}).x\right)\!x^{-1}.
%([h,x].xhx^{-1}\left(\varphi(g)(x).x^{-1}\right)xh^{-1}x^{-1}.xh\left(\varphi(gyg^{-1})(h^{-1}).h\right)h^{-1}x^{-1}. x\left(\varphi([g,y])(x^{-1}).x\right)x^{-1},[g,y]).
\end{equation*}
Note that: 
\begin{itemize}
	\item $[h,x]\in [H,L_{n}]\subseteq L_{n+1}$, since $h\in H$ and $x\in L_{n}$.

\item $\varphi(g)(x).x^{-1} \in H_{n+1}\subseteq L_{n+1}$, since $x\in L_{n}$.

\item $\varphi(gyg^{-1})(h^{-1}).h \in K_{n+1}\subseteq L_{n+1}$, since $y\in \Gamma_{n}(G)$, so $gyg^{-1}\in \Gamma_{n}(G)$ because $\Gamma_{n}(G)$ is a normal subgroup of $G$.

\item $\varphi([g,y])(x^{-1}).x \in H_{n+1}\cap K_{n+1}\subseteq L_{n+1}$, since $x\in L_{n}$ and $[g,y]\in \Gamma_{n}(G)$.

\end{itemize}
By Lemma~\ref{Lnnormal}, the conjugates by elements of $H$ of the elements $[h,x]$, $\varphi(g)(x).x^{-1}$, $\varphi(gyg^{-1})(h^{-1}).h$ and $\varphi([g,y])(x^{-1}).x$ also belong to $L_{n+1}$, therefore $\rho\in L_{n+1}$ as required.
%the first factor on the right-hand side of~(\ref{eq:hgxy}) belongs to $L_{n+1}$ as required. 
This proves part~(\ref{it:lcspart1}) of the statement.

To prove part~(\ref{it:lcspart2}), suppose by induction that 
%$(H\rtimes_{\varphi}G)^{(n)}\subset V_{n+1}\rtimes_{\varphi}G^{(n)}$. 
$(H\rtimes_{\varphi}G)^{(n-1)}\subset V_{n}\rtimes_{\varphi}G^{(n-1)}$.
Then:
\[(H\rtimes_{\varphi}G)^{(n)}=[(H\rtimes_{\varphi}G)^{(n-1)},(H\rtimes_{\varphi}G)^{(n-1)}]\subset \left[V_{n}\rtimes_{\varphi}G^{(n-1)},V_{n}\rtimes_{\varphi}G^{(n-1)}\right].\]
To show that $\left[V_{n}\rtimes_{\varphi}G^{(n-1)},V_{n}\rtimes_{\varphi}G^{(n-1)}\right]\subset V_{n+1}\rtimes_{\varphi}G^{(n)}$, let $(h,g),(x,y)\in V_{n}\rtimes_{\varphi}G^{(n-1)}$. Then:
\begin{itemize}
\item $[h,x]\in [H,V_{n}]\subseteq V_{n+1}$ because $h,x \in V_{n}\subset H$.

\item the three elements $\varphi(g)(x).x^{-1}$, $\varphi(gyg^{-1})(h^{-1}).h$ and $\varphi([g,y])(x^{-1}).x$ belong to $\widetilde{H}_{n+1}$ because $h,x\in V_{n}$, so they belong to $V_{n+1}$.
\end{itemize}
Arguing in a manner similar to that for part~(\ref{it:lcspart1}) from~(\ref{eq:hgxy}) onwards, it follows that $[(h,g),(x,y)]\in V_{n+1}\rtimes_{\varphi}G^{(n)}$ as required.
\end{proof}
%\end{proof} %\hfill{$\blacksquare$} \\

The following lemma will help us to simplify some of the calculations in the following sections.
 
%\comj{As I mentioned earlier, this lemma is general, and we could put it in a section together with the proof of Theorem~\ref{serie}.}
\begin{lem}\label{geradoreslema} With the notation of Theorem~\ref{serie}, let $\widetilde{G}$ be a subgroup of $G$, let $\widetilde{H}$ be a subgroup of $H$, and let $X$ (resp.\ $Y$) be a generating set of $\widetilde{G}$ (resp.\ $\widetilde{H}$).

\begin{enumerate}
\item\label{geradoreslema1} The subgroup\[\bigl\langle \varphi(g)(h).h^{-1}\,:\,g\in \widetilde{G},\,h\in \widetilde{H}\bigr\rangle\] is contained in the normal closure $\bigl\langle \!\bigl\langle Z\bigr\rangle\! \bigr\rangle_{H}$ of  \[Z=\left\{\varphi(g)(h).h^{-1}\,:\,g\in X,\,h\in Y\right\}\] in $H$. In particular, if this subgroup is a normal subgroup of $G$, it is equal to $\bigl\langle \!\bigl\langle Z\bigr\rangle\! \bigr\rangle_{H}$. Consequently, if $X$ (resp.\ $Y$) is a generating set of $\Gamma_{n-1}(G)$ (resp.\ of $H$) then to calculate the subgroup $K_{n}$, it suffices to compute the elements $\varphi(g)(h).h^{-1}$, where $g\in X$ and $h\in Y$.
%, where $g$ is a generator of $\Gamma_{n-1}(G)$ \comj{`generator of $\Gamma_{n-1}(G)$' means?} and $h$ is a generator of $H$ \comj{or just `$h\in H$?}.%.on the generators. \comj{`generators' here means?}
	
\item\label{geradoreslema2} Let $W$ be a subset of $H$ such that $L_{n}=\bigl\langle \!\bigl\langle  W\bigr\rangle\! \bigr\rangle_{H}$ (resp.\ $V_{n}=\bigl\langle\! \bigl\langle W\bigr\rangle\!\bigr\rangle_{H}$) is the normal closure of $W$ in $H$. Let $X$ (resp. $Y$) be a generating set of $G$ (resp.\ of $H$). Then up to conjugacy and elements of $[H,L_{n}]$ (resp.\ of $[H,V_{n}]$), $H_{n+1}$ (resp.\ $\widetilde{H}_{n+1}$) is generated by the set %\comj{What is $X$?}
\[\left\{ \varphi(g)(w).w^{-1}\,:\,g\in X,\, w\in W\right\}.\] Therefore: %\comj{What are $X$ and $Y$ here?}
\begin{align*}
L_{n+1}&=\bigl\langle\! \bigl\langle K_{n+1}, \varphi(g)(w).w^{-1}, [h,w]\,: \,g\in X,\,h\in Y,\,w\in W \bigr\rangle\! \bigr\rangle_{H},\\
V_{n+1}&=\bigl\langle \!\bigl\langle \varphi(g)(w).w^{-1}, [h,w]\,:\,g\in X,\,h\in Y,\,w\in W \bigr\rangle\! \bigr\rangle_{H}.
\end{align*}
\end{enumerate}
\end{lem}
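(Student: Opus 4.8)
Throughout write $d(g,h)=\varphi(g)(h)h^{-1}$, so that $(d(g,h),1)=[(1,g),(h,1)]$ in $H\rtimes_{\varphi}G$. The whole argument rests on the two derivation identities
\[
d(g_1g_2,h)=\varphi(g_1)\bigl(d(g_2,h)\bigr)\,d(g_1,h),\qquad
d(g,h_1h_2)=d(g,h_1)\cdot h_1\,d(g,h_2)\,h_1^{-1},
\]
and the inverse formulas $d(g^{-1},h)=\varphi(g^{-1})\bigl(d(g,h)\bigr)^{-1}$ and $d(g,h^{-1})=\bigl(h^{-1}d(g,h)\,h\bigr)^{-1}$, all obtained by direct expansion. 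Setting $N=\bigl\langle\!\bigl\langle Z\bigr\rangle\!\bigr\rangle_{H}$ for part~(\ref{geradoreslema1}), the first identity shows that the naive induction on the word length of $g$ in $X^{\pm1}$ goes through \emph{provided} $\varphi(g)$ does not push elements of $N$ out of $N$; securing this $\varphi$-stability of $N$ is the heart of the matter.

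I would prove it with a subgroup trick that avoids the obvious circular computation. Fix $x\in X$ and put $\psi=\varphi(x)\in\operatorname{\text{Aut}}(H)$. Since $N\trianglelefteq H$, a one-line check using the identity for $d(\cdot,h_1h_2)$ shows that $D=\{h\in\widetilde H:\psi(h)h^{-1}\in N\}$ is a subgroup of $\widetilde H$; as $\psi(y)y^{-1}=d(x,y)\in Z\subseteq N$ for every $y\in Y$, we get $Y\subseteq D$ and hence $D=\widetilde H$, i.e.\ $d(x,h)\in N$ for all $h\in\widetilde H$. Using that $\widetilde H$ is stable under $\varphi(\widetilde G)$ (automatic in the applied case $\widetilde H=H$, and in particular for the ``consequently'' clause where $Y$ generates $H$), each $z=d(x',y)\in Z$ lies in $\widetilde H$, so $\psi(z)z^{-1}\in N$ gives $\psi(z)\in N$; thus $\varphi(x)(N)\subseteq N$, and feeding $h=\varphi(x^{-1})(y)$ back in yields $d(x^{-1},y)\in N$ and then $\varphi(x^{-1})(N)\subseteq N$, whence $\varphi(x)(N)=N$. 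A short induction on word length via the first identity now upgrades $d(x^{\pm1},h)\in N$ to $d(g,h)\in N$ for all $g\in\widetilde G$ and $h\in\widetilde H$, which is the asserted containment. The ``in particular'' clause is then immediate: $Z$ lies in this subgroup (take $g\in X$, $h\in Y$), so if the subgroup is normal in $H$ it contains, hence equals, $N$; and the ``consequently'' clause is the case $\widetilde G=\Gamma_{n-1}(G)$, $\widetilde H=H$, in which the subgroup is exactly $K_n$, normal in $H$ by Lemma~\ref{Lnnormal}.

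For part~(\ref{geradoreslema2}) I would run the same two reductions, but now \emph{modulo $[H,L_n]$ (resp.\ $[H,V_n]$) and up to conjugacy by $H$}, which is harmless because both correction subgroups are contained in $L_{n+1}$ (resp.\ $V_{n+1}$). First reduce the second argument of a generator $d(g,h)$ of $H_{n+1}$, with $h\in L_n=\bigl\langle\!\bigl\langle W\bigr\rangle\!\bigr\rangle_H$: writing $h$ as a product of $H$-conjugates of elements of $W^{\pm1}$ and applying the identity for $d(g,h_1h_2)$, the term $d(g,{}^uw)$ differs from the conjugate ${}^u\!\bigl(\varphi(g)(w)w^{-1}\bigr)$ only by commutators $[\,\cdot\,,w]$ with $w\in W\subseteq L_n\trianglelefteq H$, which lie in $[H,L_n]$. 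Next reduce the first argument $g\in G$ to $X$ through the identity $d(g_1g_2,w)=\varphi(g_1)\bigl(d(g_2,w)\bigr)d(g_1,w)$, the $\varphi(g_1)$-corrections being reabsorbed exactly as in part~(\ref{geradoreslema1}). Finally $[H,L_n]=[H,\bigl\langle\!\bigl\langle W\bigr\rangle\!\bigr\rangle_H]$ is, by standard commutator calculus, the normal closure in $H$ of the $[h,w]$ with $h\in Y$ and $w\in W$; combining the three pieces with $L_{n+1}=\bigl\langle K_{n+1},H_{n+1},[H,L_n]\bigr\rangle$ (normal in $H$ by Lemma~\ref{Lnnormal}), and symmetrically for $V_{n+1}$, gives the two displayed normal-closure presentations.

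The step I expect to be the real obstacle is the $\varphi$-stability of the ambient normal subgroup: showing that $\varphi(g)$ does not move $N$ (part~(\ref{geradoreslema1})), and, in part~(\ref{geradoreslema2}), verifying that the discrepancies introduced by conjugation and by breaking $h$ into its $W$-syllables genuinely fall into $[H,L_n]$ (resp.\ $[H,V_n]$) rather than merely into $L_n$ (resp.\ $V_n$). The subgroup trick makes the first of these clean and noncircular; the second is purely a matter of careful commutator bookkeeping.
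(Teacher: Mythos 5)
Your proposal is correct, and on part~(\ref{geradoreslema1}) it takes a genuinely different route from the paper's. The paper proves~(\ref{geradoreslema1}) by two explicit expansions -- the telescoping identity~(\ref{eq:varphigh1}) along the letters of $g$, then the splitting identity~(\ref{eq:varphigh2}) along the letters of the second argument -- and simply asserts that combining them gives the containment. Your route (the subgroup $\{h\in\widetilde{H}:\varphi(x)(h)h^{-1}\in N\}$, then the stability $\varphi(x^{\pm 1})(N)=N$, then induction on word length) is cleaner, treats inverse letters explicitly (the paper never does), and, crucially, isolates the hypothesis that makes the statement true. Indeed the stability assumption you insert -- invariance of $\widetilde{H}$ under $\varphi(\widetilde{G})$ -- is not a convenience but a necessary repair: as literally stated, for an arbitrary subgroup $\widetilde{H}\leq H$, part~(\ref{geradoreslema1}) is false. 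Take $H$ free on $u,v,w$ and $G$ free on $s,t$, acting by $\varphi(s)\colon u\mapsto u$, $v\mapsto w$, $w\mapsto v$ and $\varphi(t)\colon u\mapsto vu$, $v\mapsto v$, $w\mapsto w$, and let $\widetilde{G}=G$, $X=\{s,t\}$, $\widetilde{H}=\langle u\rangle$, $Y=\{u\}$. Then $Z=\{1,v\}$, so $\bigl\langle\!\bigl\langle Z\bigr\rangle\!\bigr\rangle_{H}=\bigl\langle\!\bigl\langle v\bigr\rangle\!\bigr\rangle_{H}$, while (with the paper's convention $\varphi(st)=\varphi(s)\circ\varphi(t)$) one has $\varphi(st)(u)u^{-1}=\varphi(s)(vu)u^{-1}=w\notin\bigl\langle\!\bigl\langle v\bigr\rangle\!\bigr\rangle_{H}$. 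This is exactly the failure point you diagnose in the paper's argument: after~(\ref{eq:varphigh1}), the elements $h_{i}'=\varphi\bigl(g_{i+1}^{\epsilon_{i+1}}\cdots g_{p}^{\epsilon_{p}}\bigr)(h)$ need not lie in $\widetilde{H}$, so~(\ref{eq:varphigh2}) cannot be applied to them. None of the paper's later results are affected, since every invocation has $\widetilde{H}=H$, or (in part~(\ref{geradoreslema2})) second arguments in $L_{n}$ or $V_{n}$, which are $\varphi(G)$-invariant because $H_{n+1}\subseteq L_{n+1}\subseteq L_{n}$ and $\widetilde{H}_{n+1}\subseteq V_{n+1}\subseteq V_{n}$; but the lemma one should actually state and prove is yours.

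On part~(\ref{geradoreslema2}) your plan is essentially the paper's: writing $d(g,h)=\varphi(g)(h)h^{-1}$, your observation that $d(g,\alpha w\alpha^{-1})$ differs from a conjugate of $d(g,w)$ by an element of $[H,L_{n}]$ is precisely the paper's identity~(\ref{eq:varphigh3}), and the reduction of $g$ to letters of $X$ is again the telescoping~(\ref{eq:varphigh1}), legitimate here by the invariance of $L_{n}$ just noted. Two points you leave as ``reabsorption'' and ``standard commutator calculus'' each need one more line. First, stability of the target subgroup $S=\bigl\langle\!\bigl\langle [H,L_{n}]\cup\{d(x,w)\,:\,x\in X,\,w\in W\}\bigr\rangle\!\bigr\rangle_{H}$ under $\varphi(x')$, $x'\in X^{\pm 1}$, should be checked on generators via your second-argument reduction: $\varphi(x')(d(x,w))=d(x',m)\ldotp m$ with $m=d(x,w)\in H_{n+1}\subseteq L_{n}$, and $d(x',m)\in S$ by that reduction (using the first identity here instead would be circular), while $\varphi(x')([H,L_{n}])\subseteq [H,L_{n}]$ is immediate. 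Second, the equality $[H,L_{n}]=\bigl\langle\!\bigl\langle [y,w]\,:\,y\in Y,\,w\in W\bigr\rangle\!\bigr\rangle_{H}$ follows by passing to the quotient of $H$ by the right-hand side: there each $\overline{w}$ commutes with the generating set $\overline{Y}$, hence is central, so the normal closure $\overline{L_{n}}$ is central and $[\overline{H},\overline{L_{n}}]=1$. With those two lines added, your argument is complete.
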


\begin{remark} With the notation of Lemma~\ref{geradoreslema}(\ref{geradoreslema1}), we will say that the elements of $Z$ are generators of the subgroup $\bigl\langle \!\bigl\langle Z\bigr\rangle\! \bigr\rangle_{H}$.
\end{remark}

\begin{proof}[Proof of Lemma~\ref{geradoreslema}]\mbox{}
\begin{enumerate}
\item\label{it:geradoreslema1pf} To prove the first part of the statement, note that it suffices to prove the result for elements of the subgroup of the form $\varphi(g)(h).h^{-1}$, where $g\in \widetilde{G}$ and $h\in \widetilde{H}$. If $g\in G$, there exist $g_{1},\ldots, g_{p}\in \widetilde{G}$ and $\epsilon_{1},\ldots, \epsilon_{p},\in \{ 1,-1\}$ such that $g_{i}^{\epsilon_{i}}\in X$ for all $i=1,\ldots,p$ and $g=g_{1}^{\epsilon_{1}}\cdots g_{p}^{\epsilon_{p}}$. Now:
\begin{align}
\varphi(g)(h).h^{-1}&=\prod^{p}_{i=1} \varphi(g_{i}^{\epsilon_{i}})\Bigl(\varphi\bigl(g_{i+1}^{\epsilon_{i+1}}\cdots g_{p}^{\epsilon_{p}}\bigr)(h)\Bigr).\Bigl(\varphi\bigl(g_{i+1}^{\epsilon_{i+1}}\cdots g_{p}^{\epsilon_{p}}\bigr)(h)\Bigr)^{-1}\notag\\
&= \prod^{p}_{i=1} \varphi(g_{i}^{\epsilon_{i}})(h_{i}').h_{i}'^{-1},\label{eq:varphigh1}
\end{align}
where for all $i=1,\ldots,p$, $h_{i}'=\varphi\bigl(g_{i+1}^{\epsilon_{i+1}}\cdots g_{p}^{\epsilon_{p}}\bigr)(h)$. Further, for all $h'\in \widetilde{H}$, there exist $h_{1},\ldots, h_{q}\in \widetilde{H}$ and $\delta_{1},\ldots,\delta_{q} \in \{ 1,-1\}$ such that $h_{j}^{\delta_{j}}\in Y$ for all $j=1,\ldots,q$ and $h'=h_{1}^{\delta_{1}}\cdots h_{q}^{\delta_{q}}$. Since
\begin{equation}\label{eq:varphigh2}
\varphi(g')(h').h^{-1}=\prod^{q}_{j=1}\Bigl( h_{1}^{\delta_{1}}\cdots h_{j-1}^{\delta_{j-1}}\bigl(\varphi(g')(h_{j}^{\delta_{j}}).h^{-\delta_{j}}_{j}\bigr)h^{-\delta_{j-1}}_{j-1}\cdots h^{-\delta_{1}}_{1}\Bigr)
\end{equation}
for all $g'\in G$, the first part of the statement follows by combining~(\ref{eq:varphigh1}) and~(\ref{eq:varphigh2}). The second and third parts are consequences of the first part.

%If $g\in\widetilde{G}$, then $g=g_{1}g_{2}\cdots g_{p}$, with $g_{i}\in X$, we have
%\[\varphi(g_{1}g_{2}\cdots g_{p})(h).h^{-1}=\prod^{p}_{i=1} \varphi(g_{i})\Bigg(\varphi\bigg(g_{i+1}\cdots g_{p}\bigg)(h)\Bigg)\Bigg(\varphi\bigg(g_{i+1}\cdots g_{p}\bigg)(h)\Bigg)^{-1}\]
%If $h=h_{1}\cdots h_{q}\in\widetilde{H}$, with $h_{j}\in Y$, then
%\[\varphi(g)(h).h^{-1}=\prod^{q}_{j=1}\Bigg( h_{1}\cdots h_{j-1}\bigg(\varphi(g)(h_{j}).h^{-1}_{j}\bigg)h^{-1}_{j-1}\cdots h^{-1}_{1}\Bigg),\]
%the others cases are done similarly, using properties of the homomorphism $\varphi$.

\item Let $\varphi(g)(h).h^{-1}\in H_{n+1}$ (resp.\ $\widetilde{H}_{n+1}$), where $g\in G$ and $h\in L_{n}$ (resp.\ $V_{n}$). As in~(\ref{it:geradoreslema1pf}) above,~(\ref{eq:varphigh1}) holds. For all $h'\in L_{n}$ (resp.\ $V_{n}$), there exist $x_{1}, \ldots, x_{q} \in W$, $\delta_{1}, \ldots, \delta_{q}\in \left\{1,-1\right\}$ and $\alpha_{1},\ldots, \alpha_{q} \in H$, such that $x^{\delta_{j}}_{j}\in W$ and $h'=\alpha_{1}x^{\delta_{1}}_{1}\alpha^{-1}_{1}\cdots \alpha_{q}x^{\delta_{q}}_{q}\alpha^{-1}_{q}$. Then we obtain an equation similar to~(\ref{eq:varphigh2}), where for all $j=1,\ldots, q$, $h^{\delta_{j}}_{j}$ is replaced by $\alpha_{j}x^{\delta_{j}}_{j}\alpha^{-1}_{j}$. Further, for all $j=1,\ldots,q$, $\varphi(g')(\alpha_{j} x^{\delta_{j}}_{j} \alpha^{-1}_{j}).(\alpha_{j} x^{\delta_{j}}_{j} \alpha^{-1}_{j})^{-1}$ is equal to:
\begin{equation}\label{eq:varphigh3}
\varphi(g')(\alpha_{j})\bigl(\varphi(g')(x^{\delta_{j}}_{j}).x^{-\delta_{j}}_{j}\bigr)\varphi(g')(\alpha^{-1}_{j})\cdot \alpha_{j}\underbrace{\bigl[\alpha^{-1}_{j}\varphi(g')(\alpha_{j}),x^{\delta_{j}}_{j}\bigr]}_{\in[H,L_{n}]}\alpha^{-1}_{j},
\end{equation}
Part~(\ref{geradoreslema1}) then follows from~(\ref{eq:varphigh1}),~(\ref{eq:varphigh2}) and~(\ref{eq:varphigh3}).\qedhere
\end{enumerate}
\end{proof}

\section{The case of the torus}\label{sec:torus}

In this section, we study the derived series of $B_{n}(\mathbb{T})$, the aim being to prove Theorem~\ref{toro}. We shall consider two cases, $n\leq 4$ and $n\geq 5$.

\begin{prop}\label{toro1}
If $n\leq4$ then $B_{n}(\mathbb{T})$ is residually soluble.
\end{prop}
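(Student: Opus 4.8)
The plan is to exploit the short exact sequence~(\ref{seq1}),
\[1\longrightarrow P_{n}(\mathbb{T})\longrightarrow B_{n}(\mathbb{T})\longrightarrow S_{n}\longrightarrow 1,\]
together with Gruenberg's theorem (Theorem~\ref{TG}) applied to the class $\mathcal{P}$ of soluble groups. Residual solubility will pass from the kernel $P_{n}(\mathbb{T})$ up to $B_{n}(\mathbb{T})$ through this extension provided the quotient $S_{n}$ is itself soluble, so the whole statement reduces to checking two elementary facts about the two ends of the sequence.

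First I would observe that the quotient $S_{n}$ is soluble exactly when $n\leq 4$: for $n\leq 4$ the derived series of $S_{n}$ terminates (one has the chain $S_{4}\rhd A_{4}\rhd V_{4}\rhd 1$, where $V_{4}$ is the Klein four-group, and the smaller symmetric groups are even simpler), whereas for $n\geq 5$ the group $S_{n}$ contains the non-abelian simple group $A_{n}$ and so is not soluble. In the range $n\leq 4$ of the proposition, $S_{n}$ is thus a finite soluble group, hence lies in the class $\mathcal{P}$.

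Next I would recall that the fibre $P_{n}(\mathbb{T})$ is residually soluble. Indeed, as noted in the introduction, $P_{n}(\mathbb{T})=P_{n}(M_{1,0})$ is residually torsion-free nilpotent for every $n\geq 1$ (the orientable, genus-one, boundaryless case treated in~\cite{BGG,BB}); in particular it is residually nilpotent, and hence residually soluble, which is all that is needed here. With both hypotheses of Theorem~\ref{TG}(1) in place --- the quotient $S_{n}$ soluble and the kernel $P_{n}(\mathbb{T})$ residually soluble --- I would conclude directly that $B_{n}(\mathbb{T})$ is residually soluble for $n\leq 4$.

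There is essentially no genuine obstacle in this direction: the argument is a one-line application of Gruenberg's theorem once the two input facts are assembled, and the real content of Theorem~\ref{toro} lies in the converse direction (the failure of residual solubility of $B_{n}(\mathbb{T})$ for $n\geq 5$), which must be handled by a separate argument. The only point requiring any care is to invoke the correct residual property of $P_{n}(\mathbb{T})$ and to apply Theorem~\ref{TG} consistently with the soluble class $\mathcal{P}$.
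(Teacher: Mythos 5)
Your proposal is correct and is essentially the paper's own proof: the paper likewise applies Gruenberg's theorem (Theorem~\ref{TG}) to the sequence~(\ref{seq1}), using the solubility of $S_{n}$ for $n\leq4$ and the residual solubility of $P_{n}(\mathbb{T})$ from~\cite{BGG}. The only cosmetic difference is that the paper cites~\cite[Theorem~4]{BGG} for residual solubility of $P_{n}(\mathbb{T})$ directly, whereas you deduce it from residual (torsion-free) nilpotence; both routes are valid.
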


\begin{proof} If $n\leq4$, the result follows by using the short exact sequence~(\ref{seq1}), Theorem~\ref{TG}, the solubility of $S_{n}$ if $n\leq4$, and the fact that $P_{n}(\mathbb{T})$ is residually soluble for all $n\geq1$~\cite[Theorem~4]{BGG}.\end{proof} %\hfill{$\blacksquare$} \\

To study the case $n\geq5$, we start by exhibiting a presentation of $(B_{n}(\mathbb{T}))^{(1)}$.
\begin{prop}\label{Bn(T)1} A presentation of $(B_{n}(\mathbb{T}))^{(1)}$ is given by:

generators: for $k,m\in \mathbb Z$ and $i=1,\ldots,n-1$:
\begin{itemize}
	\item $b_{k,m} = b^{k}a^mba^{-m}b^{-k-1}$;
	
	\item $d_{k,m} = b^ka^m\sigma_{1}b\sigma^{-1}_{1}a^{-m}b^{-1-k}$;
	
	\item $a_{k,m}=b^ka^m(\sigma_{1}a\sigma^{-1}_{1}a^{-1})a^{-m}b^{-k}$;
	
	\item $\theta_{i,k,m}=b^ka^m\sigma_{i}\sigma^{-1}_{1}a^{-m}b^{-k}$;
		
	\item $\rho_{i,k,m} = b^ka^m\sigma_{1}\sigma_{i}a^{-m}b^{-k}$;
\end{itemize}

relations:
\begin{enumerate}
	\item $\left\{\begin{array}{l} \theta_{i,k,m}\rho_{i+1,k,m}\theta_{i,k,m}=\theta_{i+1,k,m}\rho_{i,k,m}\theta_{i+1,k,m}\\
	\rho_{i,k,m}\theta_{i+1,k,m}\rho_{i,k,m}=\rho_{i+1,k,m}\theta_{i,k,m}\rho_{i+1,k,m}\end{array}\right.$

	\item $\left\{\begin{array}{l}\theta_{i,k,m}\rho_{j,k,m}=\theta_{j,k,m}\rho_{i,k,m}\\ \rho_{i,k,m}\theta_{j,k,m}=\rho_{j,k,m}\theta_{i,k,m}\end{array}\right.$ if $\left|i-j\right|\geq 2$.

  \item $\left\{\begin{array}{l} a_{k,m}=\theta^{-1}_{j,k,m}\theta_{j,k,m+1}\\ a_{k,m}=\rho_{j,k,m}\rho^{-1}_{j,k,m+1}\end{array}\right.$ for $j\geq 2$.
	
	\item $\left\{\begin{array}{l} b_{k,m}\theta_{j,k+1,m}=\theta_{j,k,m}d_{k,m}\\ d_{k,m}\rho_{j,k+1,m}=\rho_{j,k,m}b_{k,m}\end{array}\right.$ for $j\geq 2$.

	\item $\left\{\begin{array}{l} b^{-1}_{k-1,m}a_{k-1,m}b_{k-1,m+1}\rho^{-1}_{1,k,m+1}a^{-1}_{k,m}=1\\
	d^{-1}_{k-1,m}\rho_{1,k-1,m}\rho^{-1}_{1,k-1,m+1}d_{k-1,m+1}\rho^{-1}_{1,k,m}=1\end{array}\right.$

	\item $\left\{ \begin{array}{l} a_{k,m+1}\rho_{1,k,m+2}= a_{k,m}\rho_{1,k,m+1}\\
		\rho_{1,k,m}a_{k,m+1}=a_{k,m}\rho_{1,k,m+1}\end{array}\right.$
	
	\item $\left\{\begin{array}{l} b_{k,m}\rho^{-1}_{1,k+1,m}d_{k+1,m}=\rho^{-1}_{1,k,m}d_{k,m}b_{k+1,m}\\
	b_{k,m}\rho^{-1}_{1,k+1,m}d_{k+1,m}=d_{k,m}b_{k+1,m}\rho^{-1}_{1,k+2,m}\end{array}\right.$

	\item if $n$ is odd:	
	
\noindent $\left\{\begin{array}{l} \theta_{1,k,m}\rho_{2,k,m}\theta_{3,k,m}\cdots\rho_{n-1,k,m}\theta_{n-1,k,m}\cdots\rho_{3,k,m}\theta_{2,k,m}\rho_{1,k,m}=b_{k,m}b^{-1}_{k,m+1}\\
	\rho_{1,k,m}\theta_{2,k,m}\rho_{3,k,m}\cdots\theta_{n-1,k,m}\rho_{n-1,k,m}\cdots\theta_{3,k,m}\rho_{2,k,m}\theta_{1,k,m} = d_{k,m}a_{k+1,m}d^{-1}_{k,m+1}a^{-1}_{k,m} \end{array}\right.$
	
	\item if $n$ is even:
	
\noindent $\left\{\begin{array}{l} \theta_{1,k,m}\rho_{2,k,m}\theta_{3,k,m}\cdots\theta_{n-1,k,m}\rho_{n-1,k,m}\cdots\rho_{3,k,m}\theta_{2,k,m}\rho_{1,k,m}=b_{k,m}b^{-1}_{k,m+1}\\
	\rho_{1,k,m}\theta_{2,k,m}\rho_{3,k,m}\cdots\rho_{n-1,k,m}\theta_{n-1,k,m}\cdots\theta_{3,k,m}\rho_{2,k,m}\theta_{1,k,m} = d_{k,m}a_{k+1,m}d^{-1}_{k,m+1}a^{-1}_{k,m}.\end{array}\right.$
\end{enumerate}
\normalsize
\end{prop}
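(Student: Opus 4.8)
The plan is to obtain the presentation by applying the Reidemeister--Schreier rewriting process to the presentation of $B_{n}(\mathbb{T})$ given in Theorem~\ref{total}. Since $(B_{n}(\mathbb{T}))^{(1)}=\Gamma_{2}(B_{n}(\mathbb{T}))$ is exactly the kernel of the projection $\pi\colon B_{n}(\mathbb{T})\longrightarrow B_{n}(\mathbb{T})^{\mathrm{Ab}}$, Remark~\ref{Ab} identifies the relevant quotient as $\mathbb{Z}\oplus\mathbb{Z}\oplus\mathbb{Z}_{2}$, with $a\mapsto\overline{a}$, $b\mapsto\overline{b}$ and $\sigma_{i}\mapsto\sigma$ for every $i$. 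First I would fix the Schreier transversal
\[
T=\bigl\{\,b^{k}a^{m},\; b^{k}a^{m}\sigma_{1}\;:\;k,m\in\mathbb{Z}\,\bigr\},
\]
which maps bijectively onto $\mathbb{Z}\oplus\mathbb{Z}\oplus\mathbb{Z}_{2}$ and is prefix-closed: the class of $b^{k}a^{m}\sigma_{1}^{\,\epsilon}$ is $(k,m,\epsilon \bmod 2)$, and the relation $\sigma^{2}=1$ in the Abelianisation forces these two families to exhaust all cosets.

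Next I would compute the Schreier generators $\gamma(t,x)=t\,x\,\overline{(tx)}^{-1}$ for each $t\in T$ and each generator $x\in\{a,b,\sigma_{1},\ldots,\sigma_{n-1}\}$, where $\overline{w}\in T$ denotes the representative of the coset of $w$. A direct check shows that $\gamma(b^{k}a^{m},a)$ and $\gamma(b^{k}a^{m},\sigma_{1})$ are trivial, while the remaining generators are
\[
\gamma(b^{k}a^{m},b)=b_{k,m},\qquad \gamma(b^{k}a^{m},\sigma_{i})=\theta_{i,k,m}\ (i\geq 2),
\]
\[
\gamma(b^{k}a^{m}\sigma_{1},a)=a_{k,m},\qquad \gamma(b^{k}a^{m}\sigma_{1},b)=d_{k,m},\qquad \gamma(b^{k}a^{m}\sigma_{1},\sigma_{i})=\rho_{i,k,m},
\]
which recovers the stated generating set (note $\theta_{1,k,m}=1$, so its later appearances may be suppressed). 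For the relations, I would rewrite the conjugate $t\,r\,t^{-1}$ of each defining relator $r$ of Theorem~\ref{total} by every $t\in T$ and translate the result into words in the Schreier generators using the standard rewriting function. The local relators---the braid relations~\eqref{it:full1} and~\eqref{it:full2}, and the mixed relations~\eqref{it:full3}--\eqref{it:full7}---rewrite to the families~(1)--(7) of the statement, each relator typically producing two families according to whether $t$ lies in the even or odd $\sigma$-coset. The bookkeeping device to track is that reading a letter $\sigma_{i}$ flips the $\sigma$-parity of the current coset, so that $\sigma_{i}$ read from an even coset contributes a $\theta_{i,\cdot,\cdot}$ and from an odd coset a $\rho_{i,\cdot,\cdot}$; this is what produces the $\theta/\rho$ alternation visible throughout.

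The main obstacle is the surface relation~\eqref{it:full8}, namely $\sigma_{1}\sigma_{2}\cdots\sigma_{n-2}\sigma_{n-1}^{2}\sigma_{n-2}\cdots\sigma_{1}=bab^{-1}a^{-1}$, which is responsible for families~(8) and~(9). Rewriting the long left-hand word letter by letter yields an alternating product of $\theta$'s and $\rho$'s whose pattern at the two endpoints and at the central block $\sigma_{n-1}^{2}$ depends on the $\sigma$-parity accumulated up to that position, hence on the parity of $n$; this is exactly why the statement splits into the cases $n$ odd and $n$ even. Simultaneously the right-hand word $bab^{-1}a^{-1}$ rewrites to $b_{k,m}b_{k,m+1}^{-1}$ from the even coset and to $d_{k,m}a_{k+1,m}d_{k,m+1}^{-1}a_{k,m}^{-1}$ from the odd coset, giving the two relations in each of~(8) and~(9). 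Finally I would apply Tietze transformations to discard the trivial generators $\gamma(b^{k}a^{m},a)$, $\gamma(b^{k}a^{m},\sigma_{1})$ and $\theta_{1,k,m}$, and to bring the relations into the displayed form. The genuinely laborious part is not any single identity but the careful tracking of the indices $k,m$ and the parity analysis in the surface relation.
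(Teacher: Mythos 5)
Your proposal is correct and is exactly the paper's approach: the paper's own proof consists of applying the Reidemeister--Schreier rewriting process to the extension $1\to (B_{n}(\mathbb{T}))^{(1)}\to B_{n}(\mathbb{T})\to B_{n}(\mathbb{T})^{\text{Ab}}\to 1$ with the same Schreier transversal $\left\{b^{k}a^{m},\, b^{k}a^{m}\sigma_{1}\,:\,k,m\in\mathbb{Z}\right\}$, leaving the details to the reader. Your computation of the Schreier generators (including the triviality of $\gamma(b^{k}a^{m},a)$, $\gamma(b^{k}a^{m},\sigma_{1})$ and $\theta_{1,k,m}$) and your parity bookkeeping for the $\theta/\rho$ alternation, the splitting into even/odd cosets, and the case distinction on the parity of $n$ in the surface relation all match what the paper intends.
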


\begin{proof} One applies the Reidemeister-Schreier rewriting process~\cite[Appendix~1]{H} to the short exact sequence:
$$1\longrightarrow (B_{n}(\mathbb{T}))^{(1)}\longrightarrow B_{n}(\mathbb{T})\longrightarrow \underbrace{B_{n}(\mathbb{T})^{\text{Ab}}}_{\mathbb Z\oplus\mathbb Z\oplus\mathbb Z_{2}}\longrightarrow 1,$$
using the presentation of the group $B_{n}(\mathbb{T})$ given in Theorem~\ref{total}, and taking the Schreier transversal to be $\left\{b^{k}a^{m};\, b^{k}a^{m}\sigma_{1}\,:\, k,m \in \mathbb Z\right\}$. The details are left to the reader.\end{proof} %\hfill{$\blacksquare$} \\

\begin{prop}\label{toro2} If $n\geq5$, then $B_{n}(\mathbb{T})$ is not residually soluble. Moreover, $(B_{n}(\mathbb{T}))^{(2)}=(B_{n}(\mathbb{T}))^{(3)}$.
\end{prop}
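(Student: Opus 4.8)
The two assertions are linked, the equality $(B_{n}(\mathbb{T}))^{(2)}=(B_{n}(\mathbb{T}))^{(3)}$ being the substantial one, from which the failure of residual solubility is immediate. Writing $G=B_{n}(\mathbb{T})$, the Fadell--Neuwirth sequence~(\ref{seqFN}) identifies the fibre over $P_{1}(\mathbb{T})$ with $\pi_{1}$ of the once-punctured torus, which is free of rank two; hence $P_{2}(\mathbb{T})$ is non-soluble, and as it is a quotient of $P_{n}(\mathbb{T})\le G$, the group $G$ itself is non-soluble. Now if $G^{(2)}=G^{(3)}$ then $G^{(i)}=G^{(2)}$ for all $i\ge 2$, so $\bigcap_{i\ge 0}G^{(i)}=G^{(2)}$; non-solubility forces $G^{(2)}\ne\{1\}$, the intersection of the derived series is non-trivial, and $G$ is not residually soluble. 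It therefore remains to prove $G^{(2)}\subseteq G^{(3)}$, equivalently that $\overline{G}:=G/G^{(3)}$ is metabelian.

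The key observation is that, modulo $G^{(3)}$, all the Artin generators become equal. By Theorem~\ref{total}, relations~(\ref{it:full1}) and~(\ref{it:full2}) are precisely the Artin relations, so $\sigma_{i}\mapsto\sigma_{i}$ defines a homomorphism $f\colon B_{n}\to B_{n}(\mathbb{T})$. By the theorem of Gorin and Lin~\cite{GL}, the commutator subgroup $(B_{n})^{(1)}$ is perfect for $n\ge 5$, so its image $f((B_{n})^{(1)})$ is a perfect subgroup of $G^{(1)}$. Applying twice the elementary fact that a perfect subgroup of $G^{(k)}$ lies in $G^{(k+1)}=[G^{(k)},G^{(k)}]$, we obtain $f((B_{n})^{(1)})\subseteq G^{(3)}$; in particular $\sigma_{i}\sigma_{1}^{-1}\in G^{(3)}$ for every $i$. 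This is the step where the hypothesis $n\ge 5$ enters, and it is the heart of the argument.

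I would then pass to the quotient $Q:=B_{n}(\mathbb{T})/\langle\!\langle \sigma_{i}\sigma_{1}^{-1}:1\le i\le n-1\rangle\!\rangle_{B_{n}(\mathbb{T})}$. Since each $\sigma_{i}\sigma_{1}^{-1}$ lies in the normal subgroup $G^{(3)}$, there is a surjection $Q\twoheadrightarrow\overline{G}$, so it suffices to show that $Q$ is metabelian. In $Q$ all the $\sigma_{i}$ coincide with one element $s$, and reducing the presentation of Theorem~\ref{total}: the relations $a\sigma_{j}=\sigma_{j}a$ and $b\sigma_{j}=\sigma_{j}b$ make $s$ central; the relation $b^{-1}\sigma_{1}a=\sigma_{1}a\sigma_{1}b^{-1}\sigma_{1}$ collapses, using the centrality of $s$, to $[a,b]=s^{2}$; and the surface relation $\sigma_{1}\cdots\sigma_{n-1}^{2}\cdots\sigma_{1}=bab^{-1}a^{-1}$ collapses to $[b,a]=s^{2(n-1)}$, whence $s^{2n}=1$. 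Thus $Q=\langle a,b,s\mid [a,s]=[b,s]=1,\ [a,b]=s^{2},\ s^{2n}=1\rangle$ is nilpotent of class at most two, its derived subgroup $\langle s^{2}\rangle$ being central. In particular $Q^{(2)}=\{1\}$, hence $\overline{G}^{(2)}=\{1\}$, so $G^{(2)}=G^{(3)}$, as required.

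The only delicate point is the containment $f((B_{n})^{(1)})\subseteq G^{(3)}$ rather than merely $\subseteq G^{(1)}$, which rests on the perfectness supplied by Gorin and Lin and the two-step descent through the derived series; everything after that is a routine collapse of the defining relations. I note that the explicit presentation of $(B_{n}(\mathbb{T}))^{(1)}$ in Proposition~\ref{Bn(T)1} furnishes an alternative, more computational route to the same conclusion: one checks directly that every commutator of its generators lies in $G^{(3)}$, the braid-type relations forcing the ``Artin'' generators $\theta_{i,k,m}$ into $G^{(3)}$ and collapsing the $\rho_{i,k,m}$, after which the torus relations eliminate the remaining commutators.
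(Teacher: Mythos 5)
Your proof is correct, and it takes a genuinely different --- and more economical --- route than the paper's. Both arguments rest on the same two inputs: the Gorin--Lin theorem (the only place where $n\geq 5$ enters, forcing all the $\sigma_{i}$ to coincide modulo terms of the derived series) and the collapse of the relations of Theorem~\ref{total} once the $\sigma_{i}$ are identified with a single central element. The organisation, however, differs. The paper first determines $B_{n}(\mathbb{T})/(B_{n}(\mathbb{T}))^{(2)}$ \emph{exactly}; this requires showing that $\sigma$ has order precisely $2n$, which is where the Reidemeister--Schreier presentation of $(B_{n}(\mathbb{T}))^{(1)}$ (Proposition~\ref{Bn(T)1}) and the quotient argument of~(\ref{quociente}) are needed. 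It then deduces that the canonical projection $B_{n}(\mathbb{T})/(B_{n}(\mathbb{T}))^{(3)}\longrightarrow B_{n}(\mathbb{T})/(B_{n}(\mathbb{T}))^{(2)}$ is an isomorphism, because the defining relations of the target already hold modulo $(B_{n}(\mathbb{T}))^{(3)}$. You instead make the perfectness descent explicit --- $f((B_{n})^{(1)})$ is perfect, hence lies in $G^{(k)}$ for every $k$, so the normal closure $N$ of the elements $\sigma_{i}\sigma_{1}^{-1}$ is contained in $G^{(3)}$ --- and then you only need the quotient $G/N$ to be \emph{metabelian}, which follows from the presentation collapse with no need to know the order of $s$; indeed $G^{(2)}\subseteq N\subseteq G^{(3)}$ finishes the argument. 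This bypasses Proposition~\ref{Bn(T)1} and the order computation entirely. What the paper's longer route buys is finer information, namely the explicit presentation of $B_{n}(\mathbb{T})/(B_{n}(\mathbb{T}))^{(2)}$ with $\sigma$ of order exactly $2n$; what yours buys is brevity and self-containment, and it also justifies explicitly that $(B_{n}(\mathbb{T}))^{(2)}\neq\{1\}$ (via free subgroups coming from the Fadell--Neuwirth sequence), a point the paper asserts without proof. One small remark: your closing aside misdescribes how Proposition~\ref{Bn(T)1} is used in the paper --- it serves to pin down the order of $\sigma$ via the quotient by $\Theta$, not to push the $\theta_{i,k,m}$ into $G^{(3)}$ --- but this is an aside and does not affect your main argument.
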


Theorem~\ref{toro} then follows directly from Propositions~\ref{toro1} and~\ref{toro2}.

\medskip

%\textit{Proof of Proposition~\ref{toro2}.} 
\begin{proof}[Proof of Proposition~\ref{toro2}]
The first step is a standard procedure that may be found in~\cite[Theorem~1.4, p.~3389]{GG}, and uses just the Artin relations and some properties of the derived series. For future reference, we note that it may also be applied to the braid groups of non-orientable surfaces. If $M$ is a compact surface, consider the following short exact sequence: \[1\longrightarrow \frac{(B_{n}(M))^{(1)}}{(B_{n}(M))^{(2)}}\stackrel{i}{\longrightarrow} \frac{B_{n}(M)}{(B_{n}(M))^{(2)}}\stackrel{p}{\longrightarrow} B_{n}(M)^{\text{Ab}}\longrightarrow 1,\]
where $p$ is the canonical projection. Observe that for $i=1,\ldots,n-1$, the $(B_{n}(M))^{(2)}$-cosets of the $\sigma_{i}$ coincide in $B_{n}(M)/(B_{n}(M))^{(2)}$, and are equal to an element that we denote by $\sigma$. 

Now let $M=\mathbb{T}$. Using relations~(3) and~(4) of  Theorem~\ref{total}, the $(B_{n}(\mathbb{T}))^{(2)}$-cosets of $a$ and $b$ commute with $\sigma$ in $B_{n}(\mathbb{T})/(B_{n}(\mathbb{T}))^{(2)}$. Using this fact and relations~(5) and~(8) of Theorem~\ref{total}, it follows that $\sigma^{-2}=bab^{-1}a^{-1}$ and $\sigma^{2n}=1$, and so $\sigma$ has order at most $2n$. To show that the order of $\sigma$ in $B_{n}(\mathbb{T})/(B_{n}(\mathbb{T}))^{(2)}$ is exactly $2n$, using Proposition~\ref{Bn(T)1}, we note that: \begin{equation}\label{quociente}\frac{(B_{n}(\mathbb{T}))^{(1)}/(B_{n}(\mathbb{T}))^{(2)}}{\Theta}=\left\langle \rho_{1,0,0}\right\rangle\cong\mathbb Z_{n},\end{equation}
where $\Theta$ is the normal closure in $(B_{n}(\mathbb{T}))^{(1)}/(B_{n}(\mathbb{T}))^{(2)}$ of the $(B_{n}(\mathbb{T}))^{(2)}$-cosets of the elements of the set $\left\{\theta_{i,k,m},\, k,m\in\mathbb Z,\,i=1,\ldots,n-1\right\}$.
Let $q$ be the canonical projection of $(B_{n}(\mathbb{T}))^{(1)}/(B_{n}(\mathbb{T}))^{(2)}$ onto $\displaystyle \frac{((B_{n}(\mathbb{T}))^{(1)}/(B_{n}(\mathbb{T}))^{(2)})}{\Theta}$. The order of $\sigma$ in $B_{n}(\mathbb{T})/(B_{n}(\mathbb{T}))^{(2)}$ is even because $p(\sigma)$ is the generator of $\mathbb Z_{2}$. Suppose that the order of $\sigma$ is $2r$, where $r<n$. Then $i(\rho_{1,0,0})=\sigma^{2}$, and $i(\rho_{1,0,0}^{r})=\sigma^{2r}=1$. Since $i$ is injective, $\rho_{1,0,0}^{r}=1$, and it follows that $1=q(\rho_{1,0,0}^{r})=\rho_{1,0,0}^{r}$ in $\displaystyle \frac{({(B_{n}(\mathbb{T}))^{(1)}/(B_{n}(\mathbb{T}))^{(2)}})}{\Theta}$. Thus $\rho_{1,0,0}$ is of order $r<n$, which contradicts~(\ref{quociente}). Hence:
\[B_{n}(\mathbb{T})/(B_{n}(\mathbb{T}))^{(2)}=\left\langle \sigma,a,b\,:\,[a,\sigma]=[b,\sigma]=\sigma^{2n}=1,\,[b,a]=\sigma^{-2}\right\rangle.\]
To complete the proof, consider the short exact sequence:
\[1\longrightarrow \frac{(B_{n}(\mathbb{T}))^{(2)}}{(B_{n}(\mathbb{T}))^{(3)}} \longrightarrow \frac{B_{n}(\mathbb{T})}{(B_{n}(\mathbb{T}))^{(3)}}\stackrel{\widetilde{p}}{\longrightarrow} \frac{B_{n}(\mathbb{T})}{(B_{n}(\mathbb{T}))^{(2)}}\longrightarrow 1,\]
where $\widetilde{p}$ is the canonical projection. Then $\widetilde{p}(\sigma_{i})=\sigma$ for all $i=1,\ldots, n-1$, and as above, we see that for $i=1,\ldots,n-1$, the $(B_{n}(\mathbb{T}))^{(3)}$-cosets of the $\sigma_{i}$ coincide in $B_{n}(\mathbb{T})/(B_{n}(\mathbb{T}))^{(3)}$, and are equal to an element that we denote by $\widetilde{\sigma}$. Using the same relations of $B_{n}(\mathbb{T})$ as before, it follows that the $(B_{n}(\mathbb{T}))^{(3)}$-cosets of $a$ and $b$ commute with $\widetilde{\sigma}$ in $B_{n}(\mathbb{T})/(B_{n}(\mathbb{T}))^{(3)}$, $[b,a]=\sigma^{-2}$ and $\sigma^{2n}=1$, so $\widetilde{p}$ is an isomorphism, and hence $(B_{n}(\mathbb{T}))^{(2)}=(B_{n}(\mathbb{T}))^{(3)}$. We conclude that $B_{n}(\mathbb{T})$ is not residually soluble if $n\geq5$, because $(B_{n}(\mathbb{T}))^{(2)}$ is non trivial.
\end{proof}

\section{The case of the Klein bottle}\label{sec:klein}

In this section, we study the lower central and derived series of the (pure) braid groups of the Klein bottle, and we prove Theorem~\ref{klein}. In Section~\ref{sec:secklein}, we exhibit an algebraic section of the Fadell-Neuwirth short exact sequence~(\ref{seqFN}) for the Klein bottle, and we determine the centre of $B_{n}(\mathbb{K})$. These results will be used in the rest of the section, but are interesting in their own right. In Section~\ref{sec:lcsP2}, we focus on the case $n=2$, and in Theorems~\ref{GammaP2} and~\ref{gammaP2}, we describe the lower central and the lower $\mathbb{F}_{2}$-linear central filtration respectively of $P_{2}(\mathbb{K})$ in an explicit manner. In Section~\ref{sec:derivP2K}, we study the derived series of $P_{n}(\mathbb{K})$, and in Section~\ref{sec:seriesBnK}, we complete the proof of Theorem~\ref{klein} by extending our results to $B_{n}(\mathbb{K})$.

%We start with $P_{2}(\mathbb{K})$. We shall we use the split Fadell-Neuwirth short exact sequence arising from~(\ref{seqFN}): \[1\longrightarrow \pi_{1}(\mathbb{K}\setminus\left\{x_{1}\right\})\longrightarrow P_{2}(\mathbb{K})\longrightarrow P_{1}(\mathbb{K})\longrightarrow1,\] and the resulting decomposition of $P_{2}(\mathbb{K})$ as a semi-direct product to help us determine its lower central series. We shall require some results that we will describe in Section 5.1, such as an explicit algebraic section of the Fadell-Neuwirth short exact sequence for the Klein bottle, as well the centre of $B_{n}(\mathbb{K})$%as the general description given in the statement of Theorem~\ref{serie} of the lower central series and an estimate of the derived series of an arbitrary semi-direct product. \comj{Since this section is long, I suggest creating a new subsection that contains the proof of Theorem~\ref{serie}, as well as the statement of Lemma~\ref{geradoreslema}, which I think is general. If so, then this paragraph should be modified accordingly, as well as the titles of some of the subsections.}
%escrever uma pouco mais sobre cada seção

\subsection{A section in the case of the Klein bottle}\label{sec:secklein}

As we mentioned previously, the Fadell-Neuwirth short exact sequence~(\ref{seqFN}) admits a section for geometric reasons. We start by giving an explicit section.

%\comj{refer to this proposition in the introduction, at the end of the paragraph that contains~(\ref{seqFN})?}
\begin{prop}\label{sectionPn} In terms of the presentation of $P_{n}(\mathbb{K})$ given by Theorem~\ref{puras}, the map $s\colon\thinspace P_{n}(\mathbb{K})\longrightarrow P_{n+1}(\mathbb{K})$ defined on the generators of $P_{n}(\mathbb{K})$ by:
\begin{equation*}
\left\{
\begin{aligned}
a_i &\longmapsto a_i,\,\,i=1,\ldots,n-1\\
b_i &\longmapsto b_i,\,\,i=1,\ldots,n-1\\
C_{i,j} &\longmapsto C_{i,j},\,\, 1\leq i< j\leq n-1\\
C_{i,n} &\longmapsto C_{i,n}C_{i,n+1}C^{-1}_{n,n+1},\,\, 1\leq i \leq n-1 \\
a_n &\longmapsto a_{n}a_{n+1}\\
b_n &\longmapsto b_{n+1}b_{n}
\end{aligned}\right.
\end{equation*}
extends to an algebraic section for the Fadell-Neuwirth short exact sequence arising from equation~(\ref{seqFN}):
\begin{equation}\label{seqFNklein}
1\longrightarrow\pi_{1}(\mathbb{K}\setminus\left\{x_{1},\ldots,x_{n}\right\})\longrightarrow P_{n+1}(\mathbb{K})\stackrel{p_{\ast}}{\longrightarrow} P_{n}(\mathbb{K})\longrightarrow1.
\end{equation}
\end{prop}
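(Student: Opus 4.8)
The plan is to establish two facts: that the assignment $s$ on generators extends to a homomorphism $s\colon P_n(\mathbb{K})\to P_{n+1}(\mathbb{K})$, and that this homomorphism satisfies $p_{\ast}\circ s=\operatorname{id}_{P_n(\mathbb{K})}$. Granting the first fact, the second is immediate and can be checked on generators: the homomorphism $p_{\ast}$ forgets the $(n+1)$-st string, hence sends each of $a_{n+1}$, $b_{n+1}$ and $C_{i,n+1}$ to the identity and fixes every generator with indices at most $n$. Applying $p_{\ast}$ to the right-hand sides of the defining formulas then recovers the corresponding generator of $P_n(\mathbb{K})$ in each case, for example $p_{\ast}(s(a_n))=p_{\ast}(a_na_{n+1})=a_n$, $p_{\ast}(s(b_n))=p_{\ast}(b_{n+1}b_n)=b_n$ and $p_{\ast}(s(C_{i,n}))=p_{\ast}(C_{i,n}C_{i,n+1}C^{-1}_{n,n+1})=C_{i,n}$. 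So the crux is to prove that $s$ is well defined.

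To do this I would verify that $s$ respects each of the defining relations~(\ref{it:puras1})--(\ref{it:puras8}) of $P_n(\mathbb{K})$ from Theorem~\ref{puras}, working inside the analogous presentation of $P_{n+1}(\mathbb{K})$. The key simplification is that $s$ fixes every generator carrying indices at most $n-1$ and alters only the three families $a_n$, $b_n$ and $C_{i,n}$ that involve the top index $n$. Consequently any instance of a relation that only involves generators with indices $\le n-1$ is sent to the identical relation of $P_{n+1}(\mathbb{K})$ and holds automatically, so the verification reduces to those instances of~(\ref{it:puras1})--(\ref{it:puras8}) in which the index $n$ actually appears. These I would treat relation by relation: substitute the defining words for the affected generators and reduce the resulting element using the relations of $P_{n+1}(\mathbb{K})$ until the two sides coincide.

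For the commutation and conjugation relations~(\ref{it:puras1})--(\ref{it:puras4}) and~(\ref{it:puras6})--(\ref{it:puras8}) these computations are mechanical but delicate, because $s(C_{i,n})$ is the three-letter word $C_{i,n}C_{i,n+1}C^{-1}_{n,n+1}$; the guiding principle is that the additional factor $C_{i,n+1}C^{-1}_{n,n+1}$ encodes the new string $n+1$ running parallel to string $n$, and one uses the conjugation relations of $P_{n+1}(\mathbb{K})$ to move these extra letters into place so that both sides agree. The genuinely subtle case, which I expect to be the main obstacle, is the surface relation~(\ref{it:puras5}), specific to the Klein bottle, involving the product $\prod_{j=i+1}^{n}C_{i,j}C^{-1}_{i+1,j}$. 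For $i<n$ one applies $s$, expands the $j=n$ term, and after a short reduction with relation~(\ref{it:puras4}) recovers precisely the $j=n+1$ factor of relation~(\ref{it:puras5}) of $P_{n+1}(\mathbb{K})$, so that the image of the index-$i$ relation of $P_n(\mathbb{K})$ becomes the index-$i$ relation of $P_{n+1}(\mathbb{K})$. The remaining instance $i=n$, where the left-hand product is empty, requires showing directly that $s(b_n)s(C_{1,n})s(a_n)^{-1}s(b_n)^{-1}s(a_n)^{-1}=1$ in $P_{n+1}(\mathbb{K})$, which should follow from the instances $i=n$ and $i=n+1$ of relation~(\ref{it:puras5}) of $P_{n+1}(\mathbb{K})$. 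Once all relations are confirmed, $s$ is a well-defined homomorphism, and by the first paragraph it is a section.
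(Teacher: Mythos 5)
Your proposal takes essentially the same approach as the paper: verify that $p_{\ast}\circ s=\operatorname{id}$ on generators (immediate), and then check relation by relation that $s$ respects the presentation of Theorem~\ref{puras} inside $P_{n+1}(\mathbb{K})$, after correctly observing that only the instances involving the top index $n$ require work. The paper's proof differs only in which cases it writes out in full --- it performs the detailed reductions for relations~(\ref{it:puras2}) and~(\ref{it:puras6}) and leaves the remaining relations (including your relation~(\ref{it:puras5}) analysis) to the reader --- so your sketch is consistent with it.
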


The geometric idea behind this section is to make use of the non-vanishing vector field in $\mathbb{K}$ to duplicate the last string of the elements of $P_{n}(\mathbb{K})$.

\medskip

%\textit{Proof of Proposition~\ref{sectionPn}.} 
\begin{proof}[Proof of Proposition~\ref{sectionPn}] %\comj{I added in-text links to the relations.}
In what follows, the numbers~(\ref{it:puras1})--(\ref{it:puras8}) refer to the relations of $P_{n}(\mathbb{K})$ given by Theorem~\ref{puras}. The prove the proposition, it suffices to check that the images under $s$ of these %relations (1)--(8) of $P_{n}(\mathbb{K})$ given by Theorem~\ref{puras} 
remain valid in $P_{n+1}(\mathbb{K})$. We do this for relations~(\ref{it:puras2}) and~(\ref{it:puras6}). 
%The remaining relations will be left as an exercise. 
For relation~(\ref{it:puras2}), which is $a^{-1}_{i}b_{n}a_{i}=b_{n}a_{n}C^{-1}_{i,n}C_{i+1,n}a^{-1}_{n}$, we consider two cases.
\begin{itemize}
\item If $i+1=n$ then:
%\end{itemize}
\begin{align*}
s(a^{-1}_{i}b_na_i)&=a^{-1}_{i}.b_nb_{n+1}\underbrace{C_{n,n+1}.a_i}_{(\ref{it:puras3})} =\underbrace{a^{-1}_{i}b_{n}a_i}_{(\ref{it:puras2})}\underbrace{a^{-1}_{i}b_{n+1}(a_i}_{(\ref{it:puras2})}C_{n,n+1})\\
&=(b_na_nC^{-1}_{i,n}\underbrace{a^{-1}_{n})(b_{n+1}}_{(\ref{it:puras2})}a_{n+1}C^{-1}_{i,n+1}C_{n,n+1}a^{-1}_{n+1})C_{n,n+1}\\
&=b_na_n\underbrace{C^{-1}_{i,n}(b_{n+1}a_{n+1}}_{\text{(\ref{it:puras3}) and (\ref{it:puras8})}} C^{-1}_{n,n+1}\underbrace{a^{-1}_{n+1}a^{-1}_{n})a_{n+1}}_{(\ref{it:puras1})}C^{-1}_{i,n+1}C_{n,n+1}a^{-1}_{n+1}C_{n,n+1}\\
&=b_n\underbrace{a_n(b_{n+1}}_{(\ast\ast)}a_{n+1}C^{-1}_{i,n})C^{-1}_{n,n+1}\underbrace{(a^{-1}_{n})C^{-1}_{i,n+1}C_{n,n+1}}_{(\ast)}a^{-1}_{n+1}C_{n,n+1}\\
&=b_n(b_{n+1}C_{n,n+1}a_{n})a_{n+1}C^{-1}_{i,n}\underbrace{C^{-1}_{n,n+1}(C_{n,n+1}}C^{-1}_{i,n+1}\underbrace{a^{-1}_{n})a^{-1}_{n+1}C_{n,n+1}}_{(\ref{it:puras3})}\\
&=b_nb_{n+1}C_{n,n+1}.a_{n}a_{n+1}.\underbrace{C^{-1}_{i,n}C^{-1}_{i,n+1}(C_{n,n+1}}_{(\ref{it:puras4})}a^{-1}_{n}a^{-1}_{n+1})\\
&=b_nb_{n+1}C_{n,n+1}.a_{n}a_{n+1}.(C_{n,n+1}C^{-1}_{i,n}C^{-1}_{i,n+1}).a^{-1}_{n}a^{-1}_{n+1}=s(b_na_nC^{-1}_{i,n}a^{-1}_{n}),
\end{align*}%}
%
%{\small
%$\begin{array}{ll}s(a^{-1}_{i}b_na_i)&=a^{-1}_{i}.b_nb_{n+1}\underbrace{C_{n,n+1}.a_i}_{(\ref{it:puras3})}\,\,=\underbrace{a^{-1}_{i}b_{n}a_i}_{(\ref{it:puras2})}\underbrace{a^{-1}_{i}b_{n+1}(a_i}_{(\ref{it:puras2})}C_{n,n+1})\\
%&=(b_na_nC^{-1}_{i,n}\underbrace{a^{-1}_{n})(b_{n+1}}_{(\ref{it:puras2})}a_{n+1}C^{-1}_{i,n+1}C_{n,n+1}a^{-1}_{n+1})C_{n,n+1}\\
%&=b_na_n\underbrace{C^{-1}_{i,n}(b_{n+1}a_{n+1}}_{\text{(\ref{it:puras3}) and (\ref{it:puras8})}} C^{-1}_{n,n+1}\underbrace{a^{-1}_{n+1}a^{-1}_{n})a_{n+1}}_{(\ref{it:puras1})}C^{-1}_{i,n+1}C_{n,n+1}a^{-1}_{n+1}C_{n,n+1}\\
%&=b_n\underbrace{a_n(b_{n+1}}_{(\ast\ast)}a_{n+1}C^{-1}_{i,n})C^{-1}_{n,n+1}\underbrace{(a^{-1}_{n})C^{-1}_{i,n+1}C_{n,n+1}}_{(\ast)}a^{-1}_{n+1}C_{n,n+1}\\
%&=b_n(b_{n+1}C_{n,n+1}a_{n})a_{n+1}C^{-1}_{i,n}\underbrace{C^{-1}_{n,n+1}(C_{n,n+1}}C^{-1}_{i,n+1}\underbrace{a^{-1}_{n})a^{-1}_{n+1}C_{n,n+1}}_{(\ref{it:puras3})}\\
%&=b_nb_{n+1}C_{n,n+1}.a_{n}a_{n+1}.\underbrace{C^{-1}_{i,n}C^{-1}_{i,n+1}(C_{n,n+1}}_{(\ref{it:puras4})}a^{-1}_{n}a^{-1}_{n+1})\\
%&=b_nb_{n+1}C_{n,n+1}.a_{n}a_{n+1}.(C_{n,n+1}C^{-1}_{i,n}C^{-1}_{i,n+1}).a^{-1}_{n}a^{-1}_{n+1}\,\,=s(b_na_nC^{-1}_{i,n}a^{-1}_{n}).
%\end{array}$\\
%\noindent
where~$(\ast)$ (resp.~$(\ast\ast)$) is a consequence of relation~(\ref{it:puras3}) (resp.\ relations~(\ref{it:puras2}) and~(\ref{it:puras3})).
% of Theorem~\ref{puras}.

%\begin{itemize}
\item If $i+1<n$ then:% \comj{what is~$(\ast)$ below?}
%\end{itemize}
%
%{\small
%$\begin{array}{ll}s(a^{-1}_{i}b_na_i)&=a^{-1}_{i}.b_nb_{n+1}\underbrace{C_{n,n+1}.a_i}_{(\ref{it:puras3})}\,\,=\underbrace{a^{-1}_{i}b_{n}a_{i}}_{(\ref{it:puras2})}\underbrace{a^{-1}_{i}b_{n+1}(a_i}_{(\ref{it:puras2})}C_{n,n+1})\\
%&=(b_na_nC^{-1}_{i,n}C^{-1}_{i+1,n}\underbrace{a^{-1}_{n})(b_{n+1}}_{(\ref{it:puras2})}a_{n+1}C^{-1}_{i,n+1}C_{i+1,n+1}a^{-1}_{n+1})C_{n,n+1}\\
%&=b_na_n\underbrace{C^{-1}_{i,n}C_{i+1,n}(b_{n+1}a_{n+1}}_{\text{(\ref{it:puras3}) and (\ref{it:puras8})}}C^{-1}_{n,n+1}\underbrace{a^{-1}_{n+1}a^{-1}_{n})a_{n+1}}_{(\ref{it:puras1})}C^{-1}_{i,n+1}C_{i+1,n+1}a^{-1}_{n+1}C_{n,n+1}\\
%&=b_n\underbrace{a_n(b_{n+1}}_{p_{1}}a_{n+1}C^{-1}_{i,n}C_{i+1,n})C^{-1}_{n,n+1}\underbrace{(a^{-1}_{n})C^{-1}_{i,n+1}C_{i+1,n+1}}_{(\ast)}a^{-1}_{n+1}C_{n,n+1}\\
%&=b_n(b_{n+1}C_{n,n+1}a_{n})a_{n+1}C^{-1}_{i,n}C_{i+1,n}\underbrace{C^{-1}_{n,n+1}(C_{n,n+1}}C^{-1}_{i,n+1}C_{i+1,n+1}C^{-1}_{n,n+1}\underbrace{a^{-1}_{n})a^{-1}_{n+1}C_{n,n+1}}_{(\ref{it:puras3})}\\
%&=b_nb_{n+1}C_{n,n+1}a_{n}a_{n+1}C^{-1}_{i,n}\underbrace{C_{i+1,n}C^{-1}_{i,n+1}}_{(\ref{it:puras4})}C_{i+1,n+1}\underbrace{C^{-1}_{n,n+1}(C_{n,n+1}}a^{-1}_{n}a^{-1}_{n+1})\\
%&=b_nb_{n+1}C_{n,n+1}.a_{n}a_{n+1}.C^{-1}_{i,n}(C^{-1}_{i,n+1}C_{i+1,n})C_{i+1,n+1}.a^{-1}_{n}a^{-1}_{n+1}\,\,=s(b_na_nC^{-1}_{i,n}C_{i+1,n}a^{-1}_{n}),\end{array}$\\}
\begin{align*}
s(a^{-1}_{i}b_na_i)&=a^{-1}_{i}.b_nb_{n+1}\underbrace{C_{n,n+1}.a_i}_{(\ref{it:puras3})}=\underbrace{a^{-1}_{i}b_{n}a_{i}}_{(\ref{it:puras2})}\underbrace{a^{-1}_{i}b_{n+1}(a_i}_{(\ref{it:puras2})}C_{n,n+1})\\
&=(b_na_nC^{-1}_{i,n}C^{-1}_{i+1,n}\underbrace{a^{-1}_{n})(b_{n+1}}_{(\ref{it:puras2})}a_{n+1}C^{-1}_{i,n+1}C_{i+1,n+1}a^{-1}_{n+1})C_{n,n+1}\\
&=b_na_n\underbrace{C^{-1}_{i,n}C_{i+1,n}(b_{n+1}a_{n+1}}_{\text{(\ref{it:puras3}) and (\ref{it:puras8})}}C^{-1}_{n,n+1}\underbrace{a^{-1}_{n+1}a^{-1}_{n})a_{n+1}}_{(\ref{it:puras1})}C^{-1}_{i,n+1}C_{i+1,n+1}a^{-1}_{n+1}C_{n,n+1}\\
&=b_n\underbrace{a_n(b_{n+1}}_{(\ast\ast)}a_{n+1}C^{-1}_{i,n}C_{i+1,n})C^{-1}_{n,n+1}\underbrace{(a^{-1}_{n})C^{-1}_{i,n+1}C_{i+1,n+1}}_{(\ast)}a^{-1}_{n+1}C_{n,n+1}\\
&=b_n(b_{n+1}C_{n,n+1}a_{n})a_{n+1}C^{-1}_{i,n}C_{i+1,n}\underbrace{C^{-1}_{n,n+1}(C_{n,n+1}}C^{-1}_{i,n+1}C_{i+1,n+1}C^{-1}_{n,n+1}\underbrace{a^{-1}_{n})a^{-1}_{n+1}C_{n,n+1}}_{(\ref{it:puras3})}\\
&=b_nb_{n+1}C_{n,n+1}a_{n}a_{n+1}C^{-1}_{i,n}\underbrace{C_{i+1,n}C^{-1}_{i,n+1}}_{(\ref{it:puras4})}C_{i+1,n+1}\underbrace{C^{-1}_{n,n+1}(C_{n,n+1}}a^{-1}_{n}a^{-1}_{n+1})\\
&=b_nb_{n+1}C_{n,n+1}.a_{n}a_{n+1}.C^{-1}_{i,n}(C^{-1}_{i,n+1}C_{i+1,n})C_{i+1,n+1}.a^{-1}_{n}a^{-1}_{n+1}\\
&=s(b_na_nC^{-1}_{i,n}C_{i+1,n}a^{-1}_{n}),
\end{align*}
where~$(\ast)$ is a consequence of relation~(\ref{it:puras3}), similar to that in the case $i+1=n$,
% but in the case $i+1<n$, 
and $(\ast\ast)$ is the same relation as in the case $i+1=n$.

\end{itemize}
Thus $s$ respects relation~(\ref{it:puras2}). For relation~(\ref{it:puras6}), which is $b_{n}b_{i}=b_{i}b_{n}C_{i,n}C^{-1}_{i+1,n}$, we also consider two cases.
\begin{itemize}
\item if $i+1=n$ then:
%\end{itemize}
\begin{align*}
s(b_nb_i)&=b_nb_{n+1}\underbrace{C_{n,n+1}b_i}_{(\ref{it:puras8})}=b_n\underbrace{b_{n+1}(b_i}_{(\ref{it:puras6})}C_{n,n+1})=\underbrace{b_n(b_i}_{(\ref{it:puras6})}b_{n+1}C_{i,n+1}C^{-1}_{n,n+1})C_{n,n+1}\\
&=(b_{i}b_n\underbrace{C_{i,n})b_{n+1}}_{(\ref{it:puras8})}C_{i,n+1}=b_{i}b_n(b_{n+1}C_{i,n})C_{i,n+1}=b_{i}b_nb_{n+1}(C_{n,n+1}\underbrace{C^{-1}_{n,n+1})C_{i,n}C_{i,n+1}}_{(\ref{it:puras4})}\\
&=b_{i}b_nb_{n+1}C_{n,n+1}(C_{i,n}C_{i,n+1}C^{-1}_{n,n+1})=s(b_ib_nC_{i,n}).
\end{align*}

%{\small
%$\begin{array}{ll}s(b_nb_i)&=b_nb_{n+1}\underbrace{C_{n,n+1}b_i}_{(8)}\,\,=b_n\underbrace{b_{n+1}(b_i}_{(6)}C_{n,n+1})\\
%&=\underbrace{b_n(b_i}_{(6)}b_{n+1}C_{i,n+1}C^{-1}_{n,n+1})C_{n,n+1}\,\,=(b_{i}b_n\underbrace{C_{i,n})b_{n+1}}_{(8)}C_{i,n+1}\\
%&=b_{i}b_n(b_{n+1}C_{i,n})C_{i,n+1}\,\,=b_{i}b_nb_{n+1}(C_{n,n+1}\underbrace{C^{-1}_{n,n+1})C_{i,n}C_{i,n+1}}_{(4)}\\
%&=b_{i}b_nb_{n+1}C_{n,n+1}(C_{i,n}C_{i,n+1}C^{-1}_{n,n+1})\,\,=s(b_ib_nC_{i,n}).\end{array}$\\}

%\begin{itemize}
\item if $i+1< n$:
%\end{itemize}
\begin{align*}
s(b_nb_i)&=b_nb_{n+1}\underbrace{C_{n,n+1}b_i}_{(\ref{it:puras8})}=b_n\underbrace{b_{n+1}(b_i}_{(\ref{it:puras6})}C_{n,n+1})=\underbrace{b_n(b_i}_{(\ref{it:puras6})}b_{n+1}C_{i,n+1}C^{-1}_{i+1,n+1})C_{n,n+1}\\
&=(b_{i}b_n\underbrace{C_{i,n}C^{-1}_{i+1,n})b_{n+1}}_{(\ref{it:puras8})}C_{i,n+1}C^{-1}_{i+1,n+1}C_{n,n+1}\\
&=b_{i}b_n(b_{n+1}C_{i,n}\underbrace{C^{-1}_{i+1,n})C_{i,n+1}}_{(\ref{it:puras4})}C^{-1}_{i+1,n+1}C_{n,n+1}\\
&=b_{i}b_nb_{n+1}(C_{n,n+1}\underbrace{C^{-1}_{n,n+1})C_{i,n}(C_{i,n+1}}_{(\ref{it:puras4})}\underbrace{C^{-1}_{i+1,n})C^{-1}_{i+1,n+1}C_{n,n+1}}_{(\ref{it:puras4})}\\
&=b_{i}b_{n}b_{n+1}C_{n,n+1}(C_{i,n}C_{i,n+1}C^{-1}_{n,n+1})(C_{n,n+1}C^{-1}_{i+1,n+1}C^{-1}_{i+1,n})=s(b_ib_nC_{i,n}C^{-1}_{i+1,n}).
\end{align*}

%{\small$\begin{array}{ll}s(b_nb_i)&=b_nb_{n+1}\underbrace{C_{n,n+1}b_i}_{(8)}\,\,=b_n\underbrace{b_{n+1}(b_i}_{(6)}C_{n,n+1})\\
%&=\underbrace{b_n(b_i}_{(6)}b_{n+1}C_{i,n+1}C^{-1}_{i+1,n+1})C_{n,n+1}\,\,=(b_{i}b_n\underbrace{C_{i,n}C^{-1}_{i+1,n})b_{n+1}}_{(8)}C_{i,n+1}C^{-1}_{i+1,n+1}C_{n,n+1}\\
%&=b_{i}b_n(b_{n+1}C_{i,n}\underbrace{C^{-1}_{i+1,n})C_{i,n+1}}_{(4)}C^{-1}_{i+1,n+1}C_{n,n+1}\\
%&=b_{i}b_nb_{n+1}(C_{n,n+1}\underbrace{C^{-1}_{n,n+1})C_{i,n}(C_{i,n+1}}_{(4)}\underbrace{C^{-1}_{i+1,n})C^{-1}_{i+1,n+1}C_{n,n+1}}_{(4)}\\
%&=b_{i}b_{n}b_{n+1}C_{n,n+1}(C_{i,n}C_{i,n+1}C^{-1}_{n,n+1})(C_{n,n+1}C^{-1}_{i+1,n+1}C^{-1}_{i+1,n})\,\,=s(b_ib_nC_{i,n}C^{-1}_{i+1,n}).\end{array}$\\}
\end{itemize}
Thus $s$ respects relation~(\ref{it:puras6}). The computations for the other relations are similar, and are left to the reader.\end{proof} %\hfill{$\blacksquare$} \\%falar que tem na minha tese?!?!

As we mentioned at the end of the introduction, for any compact surface $M$ and for all $n\in \mathbb{N}$, the centre $Z(B_{n}(M))$ of $B_{n}(M)$ is known, with the exception of the Klein bottle and the M\"{o}bius band. The section given by Proposition~\ref{sectionPn} allows us to determine $Z(B_{n}(M))$ if $M=\mathbb{K}$. For the sake of completeness, in Proposition~\ref{prop:centremobius} of the Appendix of this paper, we also compute the centre of the braid groups of the M\"{o}bius band.

%If $M$ is a compact surface it is known the centre of $B_{n}(M)$ for all cases~\cite{B1,PR}, except for when $M=\mathbb{K}$. The section given by Proposition~\ref{sectionPn} allows us to compute the centre of $B_{n}(\mathbb{K})$. 

\begin{prop}\label{centro}
For all $n\in \mathbb{N}$, the centre of $B_{n}(\mathbb{K})$ is equal to $\left\langle(b_{n}\cdots b_{1})^{2}\right\rangle$, and is isomorphic to $\mathbb{Z}$.
\end{prop}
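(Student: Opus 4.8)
The plan is to compute $Z(B_{n}(\mathbb{K}))$ by first locating a central element explicitly and then showing that nothing else can be central, using the section from Proposition~\ref{sectionPn} together with the Fadell--Neuwirth sequence~(\ref{seqFNklein}) to reduce the dimension. First I would verify that the element $\delta = (b_{n}\cdots b_{1})^{2}$ is indeed central. Geometrically, $b_{n}\cdots b_{1}$ pushes every string once through the edge $\beta$, so its square should be the braid that slides all $n$ strings simultaneously once around a full $\beta$-loop; such a global translation of all basepoints commutes with every braid. Algebraically, using the presentation of $B_{n}(\mathbb{K})$ in Theorem~\ref{total} (and the relationship $a=a_{1}$, $b=b_{1}$ with the pure-braid generators of Theorem~\ref{puras}), I would check that $\delta$ commutes with each generator $\sigma_{i}$, $a$, and $b$. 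The commutation with the $\sigma_{i}$ follows because $\delta$ is symmetric in the strings, and commutation with $a,b$ uses relations~(\ref{it:full5})--(\ref{it:full8}); this is a finite, if tedious, verification and I would only sketch the key cancellations rather than grind through them.

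The harder direction is to show $Z(B_{n}(\mathbb{K}))\subseteq\langle\delta\rangle$, and here the main obstacle is controlling the center across all $n$ simultaneously. I would argue by induction on $n$, using the section $s\colon P_{n}(\mathbb{K})\to P_{n+1}(\mathbb{K})$ of Proposition~\ref{sectionPn}. A central element $z$ of $B_{n}(\mathbb{K})$ lies in $P_{n}(\mathbb{K})$ once one quotients to $S_{n}$ (since a central braid must induce a central, hence trivial for $n\geq 3$, permutation — or one checks that any central element has trivial underlying permutation because it commutes with all the $\sigma_{i}$). So it suffices to identify the center within the pure braid group and then check which of these elements remain central in the full braid group. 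Applying $p_{\ast}$ from~(\ref{seqFNklein}) and using that $Z$ must map into $Z(P_{n-1}(\mathbb{K}))$ (or into the centralizer of the image), the induction reduces the problem to the base case, which can be handled directly by the presentation.

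The key step I expect to be the crux is pinning down the center in low dimension and ruling out exotic central elements coming from the surface relations. Because $\mathbb{K}$ is non-orientable, the generator $a$ satisfies $\overline{a}^{2}=1$ in the abelianization (Remark~\ref{Ab}), which forces extra constraints: commuting with $\sigma_{1}$ and with the $b$-type generators, via relations~(\ref{it:full5}) and~(\ref{it:full7}), severely limits the exponents of $a$ and $b$ that a central element may carry. I would exploit the explicit form of $Z(B_{n}(\mathbb{K}))$ in the analogous torus computation as a template, adapting the signs that differ between $\mathbb{T}$ and $\mathbb{K}$ (as flagged in the paper's remark that $C_{i,j}C^{-1}_{i+1,j}$ is replaced by its inverse). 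Finally, once $z$ is known to be a power of $b_{n}\cdots b_{1}$ times a pure-braid correction, commutativity with $\sigma_{1}$ forces the correction to vanish and the power to be even, yielding $z\in\langle\delta\rangle$. The isomorphism $Z(B_{n}(\mathbb{K}))\cong\mathbb{Z}$ then follows because $\delta$ has infinite order, which is visible already in the abelianization where the $b$-exponent of $\delta$ is $2n\neq 0$.
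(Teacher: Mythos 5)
Your overall architecture coincides with the paper's proof: reduce $Z(B_{n}(\mathbb{K}))$ to the pure braid group, then induct on $n$ via the Fadell--Neuwirth sequence~(\ref{seqFNklein}), correcting a central element by a power of $(b_{n}\cdots b_{1})^{2}$ so that the difference lands in the free kernel, whose centre is trivial. The genuine gap lies at what is in fact the crux of the proposition: your justification that $\delta=(b_{n}\cdots b_{1})^{2}$ commutes with the $\sigma_{i}$ --- ``because $\delta$ is symmetric in the strings'' and is a ``global translation of all basepoints'' --- is not a valid reason, and taken at face value it would equally prove that $\beta_{n}=b_{n}\cdots b_{1}$ itself is central, which is false. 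Because the loop $\beta$ is orientation-reversing on $\mathbb{K}$, pushing all strings once around it inverts crossings: the paper establishes (see~(\ref{sigmab}) and the computation following it) that $\sigma_{i}^{-1}\beta_{n}\sigma_{i}=\sigma_{i}^{-2}\beta_{n}$, equivalently $\beta_{n}\sigma_{i}\beta_{n}^{-1}=\sigma_{i}^{-1}$, and on the pure-braid side that $a_{n}\beta_{n}=\beta_{n}a_{n}^{-1}$ and $b_{n}\beta_{n}=\beta_{n}b_{n}C_{1,n}$, proved by a reverse induction on the index using relations~(\ref{it:puras2}) and~(\ref{it:puras5})--(\ref{it:puras8}) of Theorem~\ref{puras}. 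Centrality of the \emph{square} is precisely the cancellation of these correction terms (an inversion composed with itself), not a symmetry statement; deriving those conjugation formulas is the real content of the proof, and your proposal leaves it unaddressed while substituting a heuristic that points in the wrong direction. Note also that the paper does not check commutation directly against the presentation of Theorem~\ref{total}; it works in $P_{n}(\mathbb{K})$ and uses the section $s$ of Proposition~\ref{sectionPn} to transport the inductive hypothesis $s(\beta_{n-1}^{2})=\beta_{n}^{2}$, which is what makes the computation tractable.

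Two further points. First, your reduction $Z(B_{n}(\mathbb{K}))\subset P_{n}(\mathbb{K})$ via ``a central element induces a central, hence trivial, permutation'' only works for $n\geq 3$: for $n=2$ the group $S_{2}$ is abelian, so the argument says nothing, and that case needs a separate treatment (the paper invokes~\cite{PR}; compare the normal-form argument required for exactly this step in Proposition~\ref{prop:centremobius} of the Appendix for the M\"obius band at $n=2$). Second, your closing observation about infinite order is correct and is a detail the paper glosses over: since $b_{i+1}=\sigma_{i}^{-1}b_{i}\sigma_{i}^{-1}$ and $\sigma^{2}=1$ in $B_{n}(\mathbb{K})^{\text{Ab}}$ (Remark~\ref{Ab}), every $b_{i}$ abelianises to $\overline{b}$, so $\delta$ maps to $\overline{b}^{2n}$, which indeed has infinite order.
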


\begin{proof}
The idea of the proof is similar to that of~\cite[Proposition 4.2]{PR}. Let $\beta_{n}=b_{n}\cdots b_{1}$, and let $Z_{n}=\left\langle \beta^{2}_{n}\right\rangle$. We will show by induction on $n$ that $Z_{n}=Z(B_{n}(\mathbb{K}))$. Arguing as in~\cite[Proposition~4.2, step~4]{PR}, we see that $Z(B_{n}(\mathbb K))\subset P_{n}(\mathbb{K})$, so $Z(B_{n}(\mathbb K))\subset Z(P_{n}(\mathbb K))$. Thus it suffices to show that $Z(P_{n}(\mathbb{K}))= Z_{n}$ and $Z_{n}\subset Z(B_{n}(\mathbb K))$. We prove that $Z(P_{n}(\mathbb{K}))= Z_{n}$ by induction on $n$. If $n=1$, by Theorem~\ref{puras}, we have:
\begin{equation}\label{presP1K}
\pi_{1}(\mathbb{K})=\bigl\langle a_{1}, b_{1}\,:\, a_{1}b_{1}=b_{1}a^{-1}_{1}\bigr\rangle,
\end{equation}
and it is well known that $Z(\pi_{1}(\mathbb{K}))=Z_{1}$. Now suppose by induction that $Z(P_{n-1}(\mathbb{K}))= Z_{n-1}$ for some $n\geq 2$. 

We first prove that $Z(P_{n}(\mathbb{K}))\subset Z_{n}$ in a manner similar to that of~\cite[Proposition~4.2, step~3]{PR}. Let $g\in Z(P_{n}(\mathbb{K}))$, and consider the Fadell-Neuwirth short exact sequence arising from~(\ref{seqFN}): \[1\longrightarrow\pi_{1}(\mathbb{K}\setminus\left\{x_{1},\ldots,x_{n-1}\right\})\longrightarrow P_{n}(\mathbb{K})\stackrel{p_{\ast}}{\longrightarrow} P_{n-1}(\mathbb{K})\longrightarrow1.\]
Since $p_{\ast}$ is surjective, $p_{\ast}(g)\in Z(P_{n-1}(\mathbb{K}))=Z_{n-1}$, and since $p_{\ast}(Z_{n})=Z_{n-1}$, there exists $h\in Z_{n}$ such that $p_{\ast}(h)=p_{\ast}(g)$. If $g'=gh^{-1}$ then $g'$ belongs to $Z(P_{n}(\mathbb{K}))$ and to the free group $\pi_{1}(\mathbb{K}\setminus\left\{x_{1},\ldots,x_{n-1}\right\})$ by exactness. Hence $g'\in Z(\pi_{1}(\mathbb{K}\setminus\left\{x_{1},\ldots,x_{n-1}\right\}))=\left\{1\right\}$, so $g'=1$, and thus $g=h\in Z_{n}$, which shows that $Z(P_{n}(\mathbb{K}))\subset Z_{n}$.

Still under the above induction hypothesis, we now prove that $Z_{n}\subset Z(P_{n}(\mathbb{K}))$. Using the section $s\colon\thinspace P_{n-1}(\mathbb{K})\longrightarrow P_{n}(\mathbb{K})$ given by Proposition~\ref{sectionPn}, we have $s(\beta^{2}_{n-1})=\beta^{2}_{n}$. By the induction hypothesis, 
$\beta^{2}_{n-1}a_{i}=a_{i}\beta^{2}_{n-1}$ and $\beta^{2}_{n-1}b_{i}=b_{i}\beta^{2}_{n-1}$ for all $1\leq i\leq n-1$, and $\beta^{2}_{n-1}C_{i,j}=C_{i,j}\beta^{2}_{n-1}$ for all $1\leq i<j\leq n-1$ in $P_{n-1}(\mathbb{K})$. Taking the image of both sides of these equations by $s$, we obtain the following relations:
\begin{equation}\label{eq:commbeta2}
\begin{cases}
\beta^{2}_{n}a_{i}=a_{i}\beta^{2}_{n} \;\text{and}\; \beta^{2}_{n}b_{i}=b_{i}\beta^{2}_{n}& \text{if $1\leq i< n-1$}\\ 
\beta^{2}_{n} \ldotp a_{n-1}a_{n}=a_{n-1}a_{n}\ldotp \beta^{2}_{n} \;\text{and}\; \beta^{2}_{n}\ldotp b_{n}b_{n-1}=b_{n}b_{n-1}\ldotp\beta^{2}_{n} &\text{if $i=n-1$,}
\end{cases}
\end{equation}
and
\begin{equation}\label{eq:commbeta2bis}
\begin{cases}
\beta^{2}_{n}C_{i,j}=C_{i,j}\beta^{2}_{n} & \text{if $1\leq i<j< n-1$}\\ 
\beta^{2}_{n}\ldotp C_{i,n-1}C_{i,n}C^{-1}_{n-1,n}=C_{i,n-1}C_{i,n}C^{-1}_{n-1,n}\ldotp \beta^{2}_{n} &\text{if $1\leq i<j= n-1$.}
\end{cases}
\end{equation}
%\[s(\beta^{2}_{n-1}a_{i})=s(a_{i}\beta^{2}_{n-1})\longrightarrow\end{array}\right.\]
%\[%s(\beta^{2}_{n-1}b_{i})=s(b_{i}\beta^{2}_{n-1})\longrightarrow
%%\left\{\begin{array}{l}\beta^{2}_{n}b_{i}=b_{i}\beta^{2}_{n},\,(1\leq i< n-1)\\ \beta^{2}_{n}.b_{n}b_{n-1}=b_{n}b_{n-1}.\beta^{2}_{n},\, {(i=n-1)}\end{array}\right.\]
%\hspace{-1cm}$s(\beta^{2}_{n-1}C_{i,j})=s(C_{i,j}\beta^{2}_{n-1})\longrightarrow\left\{\begin{array}{l}\beta^{2}_{n}C_{i,j}=C_{i,j}\beta^{2}_{n},\,(1\leq i<j< n-1)\\ \beta^{2}_{n}.C_{i,n-1}C_{i,n}C^{-1}_{n-1,n}=C_{i,n-1}C_{i,n}C^{-1}_{n-1,n}.\beta^{2}_{n},\, {(1\leq i< j=n-1)}.\end{array}\right.$
Let us prove that $a_{n}$ and $b_{n}$ commute with $\beta^{2}_{n}$, from which it will follow from~(\ref{eq:commbeta2}) that $\beta^{2}_{n}$ commutes with $a_{i}$ and $b_{i}$ for all $1\leq i\leq n$. Using relations~(\ref{it:puras6})--(\ref{it:puras8}) of Theorem~\ref{puras}, we have: %\comj{explain how to obtain these relations? (I added the second one for later)}
\begin{align}
C_{1,n}a^{-1}_{n}b_{i}&=C_{i+1,n}b_{i}C^{-1}_{i,n}C_{1,n}a^{-1}_{n}\label{eq:Cab}\\
b_{n} C_{i+1}b_{i} &= b_{i}b_{n} C_{i,n}\label{eq:bCb}
\end{align}
 for all $i=1,\ldots,n-1$. %\comj{The following avoids the $\vdots$ in the previous version, and the two cases are done at once:} 
%%%%%%%%%%%%%%%%%%%%%%%%%%%%%%%%%%%%%%%5 explicando as relações acima
To prove relation~(\ref{eq:Cab}), notice that by relations~(\ref{it:puras7}) and~(\ref{it:puras8}) we have
\begin{equation*}
b^{-1}_{i}a_{n}b_{i}=a_{n}\underbrace{b_{n}(C_{i,n}C^{-1}_{i+1,n})^{-1}b^{-1}_{n}}_{(\ref{it:puras8})}=a_{n}(C^{-1}_{i+1,n}b^{-1}_{i}C_{i,n}b_{i}),
\end{equation*}
and taking the inverse of both sides, it follows that $a^{-1}_{n}b_{i}=C^{-1}_{i,n}b_{i}C_{i+1,n}a^{-1}_{n}$. We thus obtain:
\[\begin{array}{ll}C_{1,n}\underbrace{a^{-1}_{n}b_{i}}&=C_{1,n}(C^{-1}_{i,n}b_{i}C_{i+1,n}a^{-1}_{n})=(b_{i}\underbrace{b^{-1}_{i})C_{1,n}(b_{i}}_{(\ref{it:puras8})}\underbrace{b^{-1}_{i})C^{-1}_{i,n}b_{i}}_{(\ref{it:puras8})}C_{i+1,n}a^{-1}_{n}\\
&=\underbrace{b_{i}(C_{i+1}}_{(\ref{it:puras8})}C^{-1}_{i,n}C_{1,n}b_{n}C_{i+1,n}C^{-1}_{i,n}b^{-1}_{n})(b_{n}C_{i,n}C^{-1}_{i+1,n}b^{-1}_{n}C^{-1}_{i+1,n})C_{i+1,n}a^{-1}_{n}\\
&=(C_{i+1,n}b_{i})C^{-1}_{i,n}C_{1,n}a^{-1}_{n}.\end{array}\]
To prove relation~(\ref{eq:bCb}), one may use relation~(\ref{it:puras6}) and the fact that $b_{i}$ commutes with $C_{i+1,n}$ by relation~(\ref{it:puras8}).
%%%%%%%%%%%%%%%%%%%%%%%%%%%%%%%%%%5
We now claim that:
\begin{equation*}
\text{$a_{n}\beta_{n}=b_{n} \cdots b_{i+1} C_{i+1,n}^{-1} C_{1,n}a_{n}^{-1}b_{i} b_{i-1}\cdots b_{1}$ and $b_{n}\beta_{n}=b_{n}\cdots b_{i+1}b_{n} C_{i+1,n} b_{i}\cdots b_{1}$}
%b_{n}\beta_{n}&=b_{n}\cdots b_{i+1}b_{n} C_{i+1,n} b_{i}\cdots b_{1}
\end{equation*}
for all $i=0,\ldots,n-1$. We shall prove the claim by reverse induction on $i$. First, we have $a_{n}\beta_{n}=a_{n}(b_{n} b_{n-1}\cdots b_{1})\stackrel{(\ref{it:puras5})}{=}(b_{n}C_{1,n}a^{-1}_{n})b_{n-1}\cdots b_{1}$ and $b_{n}\beta_{n}=b_{n}b_{n}\cdots b_{1}$, so the claim is valid if $i=n-1$. Suppose that it holds for some $1\leq i\leq n-1$. Then:  
\begin{align*}
a_{n}\beta_{n}&= b_{n} \cdots b_{i+1} C_{i+1,n}^{-1} C_{1,n}a_{n}^{-1}b_{i} b_{i-1}\cdots b_{1}\stackrel{(\ref{eq:Cab})}{=} b_{n} \cdots b_{i+1} C_{i+1,n}^{-1} C_{i+1,n}b_{i}C^{-1}_{i,n}C_{1,n}a^{-1}_{n}b_{i-1}\cdots b_{1}\\
&= b_{n} \cdots b_{i+1} b_{i}C^{-1}_{i,n}C_{1,n}a^{-1}_{n}b_{i-1}\cdots b_{1},\; \text{and}\\
b_{n}\beta_{n}&= b_{n}\cdots b_{i+1}b_{n} C_{i+1,n} b_{i}\cdots b_{1}\stackrel{(\ref{eq:bCb})}{=} b_{n}\cdots b_{i+1}b_{i}b_{n} C_{i,n}b_{i-1}\cdots b_{1}
\end{align*}
so the claim holds. Taking $i=0$, we obtain $a_{n}\beta_{n}=\beta_{n}a^{-1}_{n}$ and $b_{n}\beta_{n}=\beta_{n}b_{n}C_{1,n}$. 
%\[\begin{array}{ll}\underbrace{a_{n}(b_{n}}_{(5)}b_{n-1}\cdots b_{1})&=(b_{n}\underbrace{C_{1,n}a^{-1}_{n})b_{n-1}}\cdots b_{1}=b_{n}(b_{n-1}C^{-1}_{n-1,n}\underbrace{C_{1,n}a^{-1}_{n})b_{n-2}}\cdots b_{1}\\
%&=b_{n}b_{n-1}C^{-1}_{n-1,n}(C_{n-1,n}b_{n-2}C^{-1}_{n-2,n}C_{1,n}a^{-1}_{n})b_{n-3}\cdots b_{1}\\
%&=b_{n}b_{n-1}b_{n-2}C^{-1}_{n-2,n}\underbrace{C_{1,n}a^{-1}_{n}b_{n-3}}\cdots b_{1}\\
%&=b_{n}b_{n-1}b_{n-2}C^{-1}_{n-2,n}(C_{n-2,n}b_{n-3}\underbrace{C_{1,n}a^{-1}_{n})b_{n-4}}\cdots b_{1}\\
%&\vdots\\
%&=b_{n}\cdots b_{3}C^{-1}_{3,n}(C_{3,n}b_{2}C^{-1}_{2,n}\underbrace{C_{1,n}a^{-1}_{n})b_{1}}\\
%&=b_{n}\cdots b_{3}b_{2}C^{-1}_{2,n}(C_{2,n}b_{1}C^{-1}_{1,n}C_{1,n}a^{-1}_{n})\\
%&=b_{n}\cdots b_{1}a^{-1}_{n}
%=\beta_{n}a^{-1}_{n}.
%\end{array}\]
Hence $a_{n}\beta^{2}_{n}=\beta_{n}a^{-1}_{n}\beta_{n}=\beta^{2}_{n}a_{n}$, and applying~(\ref{it:puras5}) with $i=n$ twice, we obtain:
\begin{equation*}
b_{n}\beta^{2}_{n}=\beta_{n}b_{n}C_{1,n}\beta_{n}=\beta_{n} a_{n}b_{n}a_{n} \beta_{n}=\beta_{n}^{2} a_{n}^{-1}b_{n}C_{1,n} a_{n}^{-1}=\beta^{2}_{n}b_{n}.
\end{equation*}
Thus $\beta_{n}^{2}$ commutes with $a_{n}$ and $b_{n}$, and so with $a_{i}$ and $b_{i}$ for all $i\in \{1,\ldots,n\}$.
%By relations~(\ref{it:puras6}) and~(\ref{it:puras8}) of Theorem~\ref{puras}, $b_{n}C_{i+1,n}b_{i}=b_{i}b_{n}C_{i,n}$ for all $i=1,\ldots,n-1$, and so: \comj{same remark about $\vdots$}
%\[\begin{array}{ll}b_{n}\beta_{n}&=b_{n}(\underbrace{b_{n}b_{n-1}}b_{n-2}\cdots b_{1})=b_{n}(b_{n-1}\underbrace{b_{n}C_{n-1,n})b_{n-2}}\cdots b_{1})\\
%&=b_{n}b_{n-1}(b_{n-2}\underbrace{b_{n}C_{n-2,n})b_{n-2}}\cdots b_{1}\\
%&\vdots\\
%&=b_{n}\cdots b_{3}(b_{2}\underbrace{b_{n}C_{2,n})b_{1}}=b_{n}\cdots b_{2}(b_{1}b_{n}C_{1,n}).\\
%\end{array}\]
%Then $b_{n}\beta_{n}=\beta_{n}\underbrace{b_{n}C_{1,n}}_{(\ref{it:puras5})}=\beta_{n}(a_{n}b_{n}a_{n})$, and so:
%\[b_{n}\beta^{2}_{n}=\beta_{n}(a_{n}b_{n}a_{n})\beta_{n}=\beta^{2}\underbrace{a^{-1}_{n}b_{n}C_{1,n}a^{-1}_{n}}_{(\ref{it:puras5})}=\beta^{2}_{n}b_{n}.\]
Finally, by relation~(\ref{it:puras5}), $C_{1,n}=b^{-1}_{n}a_{n}b_{n}a_{n}$ and $C_{i+1,n}=C_{i,n}a^{-1}_{n}b^{-1}_{n}a^{-1}_{i}b_{n}a_{i}a_{n}$ by relation~(\ref{it:puras2}). Since $\beta^{2}_{n}$ commutes with $a_{i}$ and $b_{i}$ for all $1\leq i \leq n$, it follows that $\beta^{2}_{n}$ commutes with $C_{1,n}$ by relation~(\ref{it:puras5}), and by induction that $\beta^{2}_{n}$ commutes with $C_{i,n}$ for all $1\leq i <n$.
% \comj{initial step in the induction?}. 
So by~(\ref{eq:commbeta2bis}), $\beta^{2}_{n}$ commutes with $C_{i,j}$ for all $1\leq i<j\leq n$, and therefore commutes with all of the elements of a generating set of $P_{n}(\mathbb{K})$, hence $Z_{n}= Z(P_{n}(\mathbb K))$.

To complete the proof of the proposition, it remains to show that $Z_{n}\subset Z(B_{n}(\mathbb K))$. To do so, from above, it suffices to prove that $\sigma_{i}$ commutes with $\beta^{2}_{n}$ for all $i=1,\ldots,n-1$. 
One may see by Figure~\ref{fig:geradores} that $\sigma^{2}_{i}=C_{i,i+1}$. Further: 
\begin{equation}\label{sigmab}
\sigma^{-1}_{i}b_{j}\sigma_{i}=\begin{cases}
b_{i+1}\sigma^{2}_{i} & \text{if $j=i$}\\
\sigma^{-2}_{i}b_{i} & \text{if $j=i+1$}\\
b_{j} & \text{otherwise.}
\end{cases}
%\left\{\begin{array}{ll},\\ \sigma^{-2}_{i}b_{i},&j=i+1,\\b_{j},&\mbox{otherwise}. \end{array}\right.
\end{equation}
The case $j=i$ is illustrated in Figure~\ref{fig:bj}, and if $j=i+1$ then 
$\sigma^{-1}_{i} b_{i+1}\sigma_{i}=\sigma^{-1}_{i}\ldotp \sigma^{-1}_{i}b_{i}\sigma_{i}\ldotp\sigma^{-2}_{i}\ldotp \sigma_{i}=\sigma^{-2}_{i}b_{i}$ using the case $j=i$.
\begin{figure}[h!]%[!h]
\hfill
\begin{tikzpicture}[scale=1, very thick]

{\draw[line width=2pt][->>] (0,1)-- (-1,0.5);}
{\draw[line width=2pt][-] (0,1)-- (-2,0);}
{\draw[line width=2pt][->>] (0,-1) -- (1,-0.5);}
{\draw[line width=2pt][-] (2,0) -- (0,-1);}
{\draw[line width=2pt][-<] (0,1) -- (1,0.5);}
{\draw[line width=2pt][-] (2,0) -- (0,1);}
{\draw[line width=2pt][>-] (-1,-0.5) -- (-2,0);}
{\draw[line width=2pt][-] (0,-1) -- (-2,0);}

%
%{\draw[line width=2pt][->>] (0,1) -- (-2,0);}
%{\draw[line width=2pt][-<<] (2,0) -- (0,-1);}
%{\draw[line width=2pt][->] (2,0) -- (0,1);}
%{\draw[line width=2pt][->] (0,-1) -- (-2,0);}
{\draw (0.75,0) circle (0.2mm);}
{\draw (-0.75,0) circle (0.2mm);}
{\draw (0.5,0) circle (0.2mm);}
{\draw (1,0) circle (0.2mm);}
{\draw (1.25,0) circle (0.2mm);}
{\draw (-1.25,0) circle (0.2mm);}
{\draw (-0.5,0) circle (0.2mm);}
{\draw (-1,0) circle (0.2mm);}

\draw[->>] (0.45,0.26) .. controls (0,0.5) .. (-0.51,0.11);
%\draw[->>][cyan] (-1.25,-0.38) .. controls (-0.57,0.09) and (0,-0.29) .. (0.44,-0.09);
%\draw[white, line width=3pt] (0.5,0) .. controls (-1,-0.29) and (-0.12,0.23) .. (0.75,0.63);
%{\draw[blue,line width=2pt][->] (2,0) -- (0,1);}
\draw[->>] (0.5,0) .. controls (-1,-0.29) and (-0.12,0.23) .. (0.75,-0.63);
\draw[white, line width=3pt] (-1.26,0.37).. controls (-0.27,0.48).. (0.4,0.05);
{\draw[line width=2pt][-] (0,1) -- (-2,0);}
\draw[->>] (-1.26,0.37).. controls (-0.27,0.48).. (0.4,0.05);
\draw[white, line width=3pt] (-0.5,0) .. controls (-0.12,0.23) and (0.68,0.11) .. (0.45,0.26);
\draw (-0.5,0) .. controls (-0.12,0.23) and (0.68,0.11) .. (0.45,0.26);
{\draw (-0.5,0) circle (0.2mm);}

\node at (0,-1.5) {$\sigma^{-1}_{i}b_{i}\sigma_{i}$};

{\draw[line width=2pt][->>] (5,1)-- (4,0.5);}
{\draw[line width=2pt][-] (5,1)-- (3,0);}
{\draw[line width=2pt][->>] (5,-1) -- (6,-0.5);}
{\draw[line width=2pt][-] (7,0) -- (5,-1);}
{\draw[line width=2pt][-<] (5,1) -- (6,0.5);}
{\draw[line width=2pt][-] (7,0) -- (5,1);}
{\draw[line width=2pt][>-] (4,-0.5) -- (3,0);}
{\draw[line width=2pt][-] (5,-1) -- (3,0);}
%
%
%{\draw[line width=2pt][->] (5,1) -- (3,0);}
%{\draw[line width=2pt][->] (7,0) -- (5,-1);}
%{\draw[line width=2pt][->] (7,0) -- (5,1);}
%{\draw[line width=2pt][->] (5,-1) -- (3,0);}
{\draw (5.75,0) circle (0.2mm);}
{\draw (4.25,0) circle (0.2mm);}
{\draw (5.5,0) circle (0.2mm);}
{\draw (6,0) circle (0.2mm);}
{\draw (6.25,0) circle (0.2mm);}
{\draw (3.75,0) circle (0.2mm);}
{\draw (4.5,0) circle (0.2mm);}
{\draw (4,0) circle (0.2mm);}
{\draw (4.25,0) circle (0.2mm);}

\draw[->>] (5.5,0)--(6.25,-0.38);
\draw[->>] (4.25,0.63) .. controls (4.3,-0.39) and (4.62,-0.26) .. (5.44,0.09);

%\node at (5,0.25) {$i$};
\node at (5,-1.5) {$b_{i+1}\sigma^{2}_{i}$};

%{\draw[verde,line width=2pt][->] (10,1) -- (8,0);}
%{\draw[verde,line width=2pt][->] (12,0) -- (10,-1);}
%{\draw[blue,line width=2pt][->] (12,0) -- (10,1);}
%{\draw[blue,line width=2pt][->] (10,-1) -- (8,0);}
%{\draw (10,0) circle (0.2mm);}
%{\draw (10.5,0) circle (0.2mm);}
%{\draw (11,0) circle (0.2mm);}
%{\draw (9.5,0) circle (0.2mm);}
%{\draw (9,0) circle (0.2mm);}
\end{tikzpicture}
\hspace*{\fill}
\caption{The relation $\sigma^{-1}_{i}b_{i}\sigma_{i}=b_{i+1}\sigma^{2}_{i}$.}\label{fig:bj}
\end{figure}
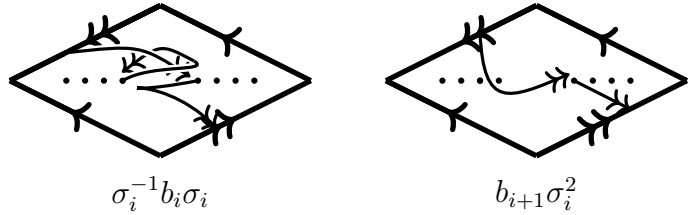
For $i=1,\ldots, n-1$, using (\ref{sigmab}) and relation~(\ref{it:puras6}) of Theorem~\ref{puras}, we have: 
\begin{align*}
\sigma_{i}^{-1} (b_{n}\cdots b_{1})\sigma_{i}= b_{n}\cdots b_{i+2} \cdot C_{i,i+1}^{-1} \underbrace{b_{i} \cdot b_{i+1} C_{i,i+1}}\cdot b_{i-1}\cdots b_{1}= C_{i,i+1}^{-1} b_{n}\cdots b_{1}=\sigma_{i}^{-2} b_{n}\cdots b_{1},
\end{align*}
%using (\ref{sigmab}) and relation~(\ref{it:puras6}) of Theorem~\ref{puras}, we have:
%\begin{equation}\label{sigmaibi}b_{i+1}\underbrace{b_{i}\cdot\sigma_{i}}=\underbrace{b_{i+1}(\sigma_{i}}b_{i+1}\sigma^{2}_{i})=(\sigma^{-1}_{i}\underbrace{b_{i})b_{i+1}C_{i,i+1}}=\sigma^{-1}_{i}\cdot b_{i+1}b_{i},\end{equation}
%So, using~(\ref{sigmab}) and (\ref{sigmaibi}),
%\begin{align*}(b_{n}\cdots b_{1})\sigma_{i} &=b_{n}\cdots b_{i+2}(b_{i+1}b_{i}\sigma_{i}) b_{i-1}\cdots b_{1}=b_{n}\cdots b_{i+2} (\sigma^{-1}_{i}b_{i+1}b_{i}) b_{i-1}\cdots b_{1}\\
%&=\sigma^{-1}_{i}(b_{n}\cdots b_{1}).\end{align*}
from which it follows that $(b_{n}\cdots b_{1})^{2}\sigma_{i}=\sigma_{i}(b_{n}\cdots b_{1})^{2}$ as required.
\end{proof} %\hfill{$\blacksquare$} \\

\begin{remark}\label{P2} For $n=2$, we modify slightly the presentation of $P_{2}(\mathbb{K})$ given by Theorem~\ref{puras} by removing the generator $C_{1,2}$ using relation~(\ref{it:puras5}), so $C_{1,2}=b^{-1}_{2}a_{2}b_{2}a_{2}$. Hence $P_{2}(\mathbb{K})$ is generated by $a_{1},a_{2},b_{1}$ and $b_{2}$, subject to the relations:

\begin{enumerate}
	\item $a^{-1}_{1}a_{2}a_{1}=a_{2}$;
	
	\item $a^{-1}_{1}b_{2}a_{1}=a^{-1}_{2}b_{2}a^{-1}_{2}$;
	
	\item $b^{-1}_{1}a_{2}b_{1}=a_{2}b_{2}a^{-1}_{2}b^{-1}_{2}a^{-1}_{2}$;
	
	\item $b^{-1}_{1}b_{2}b_{1}=a_{2}b_{2}a_{2}$;
	
	\item $b^{-1}_{2}a_{2}b_{2}a_{2}=b_{1}a^{-1}_{1}b^{-1}_{1}a^{-1}_{1}$.
\end{enumerate}
\end{remark}

Using Proposition~\ref{sectionPn} and the Fadell-Neuwirth short exact sequence~(\ref{seqFNklein}), $P_{n+1}(\mathbb{K})$ may be written as a semi-direct product of the free group $\pi_{1}(\mathbb{K}\setminus\left\{x_{1},\ldots,x_{n}\right\})$ by $s(P_{n}(\mathbb{K}))$ for all $n\in \mathbb{N}$. In particular, if $n=1$ then:
\begin{equation}\label{eq:semiP2K}
P_{2}(\mathbb{K})\cong\pi_{1}(\mathbb{K} \setminus{\left\{x_{1}\right\}})\rtimes_{\varphi} s(P_{1}(\mathbb{K})),
\end{equation}
where $\pi_{1}(\mathbb{K} \setminus{\left\{x_{1}\right\}})=\left\langle a_{2}, b_{2}\right\rangle$ is a free group of rank $2$, \[s(P_{1}(\mathbb{K}))=\left\langle a_{1}a_{2}, b_{2}b_{1}\,:\,(b_{2}b_{1})(a_{1}a_{2})=(a_{1}a_{2})^{-1}(b_{2}b_{1})\right\rangle\]
by~(\ref{presP1K}), and where the action $\varphi\colon \thinspace s(P_{1}(\mathbb{K}))\longrightarrow \operatorname{\text{Aut}}(\pi_{1}(\mathbb{K} \setminus{\left\{x_{1}\right\}}))$ is given by:
%\[\varphi\colon \thinspace s(P_{1}(\mathbb{K}))\longrightarrow \operatorname{\text{Aut}}(\pi_{1}(\mathbb{K} \setminus{\left\{x_{1}\right\}}))\]
\begin{equation}\label{acaoP2} 
\left\{\begin{aligned}
\varphi(a_{1}a_{2}):& \begin{cases}
a_{2}\longmapsto a_{2}\\ 
b_{2}\longmapsto a^{-2}_{2}b_{2}
\end{cases}\\
%\left\{\begin{array}{l} \end{array}\right.\\
\varphi(b_{2}b_{1}):& %\left\{\begin{array}{l}
\begin{cases}
 a_{2}\longmapsto a^{-1}_{2}\\ b_{2}\longmapsto a_{2}b_{2}a_{2},
\end{cases}
% {array}\right.
\end{aligned}\right.
\end{equation}
using Remark~\ref{P2}.

\subsection{The lower central series of $P_{2}(\mathbb{K})$}\label{sec:lcsP2} 

In this section, we use Theorem~\ref{serie} to calculate explicitly the lower central series of $P_{2}(\mathbb{K})$. This will enable us show that $P_{2}(\mathbb{K})$ is residually nilpotent.

\begin{teo} \label{GammaP2} Let $n\geq 2$. With the notation of Theorem~\ref{puras}, we have: 
\[\Gamma_{n}(P_{2}(\mathbb{K}))= \bigl\langle \!\bigl\langle a^{2^{n-1}}_{2}, x^{2^{n-i}}\,:\,x\in \Gamma_{i}(\pi_{1}(\mathbb{K}\setminus\left\{x_{1}\right\})),\,2\leq i \leq n\bigr\rangle\!\bigr\rangle_{\pi_{1}(\mathbb{K}\setminus\left\{x_{1}\right\})}\rtimes_{\varphi}\bigl\langle (a_{1}a_{2})^{2^{n-1}}\bigr\rangle,\] 
where
%$\bigl\langle \!\bigl\langle a^{2^{n-1}}_{2}, x^{2^{n-i}}\,:\,x\in \Gamma_{i}(\pi_{1}(\mathbb{K}\setminus\left\{x_{1}\right\})),\,2\leq i \leq n\bigr\rangle\!\bigr\rangle$ is the normal closure of $\bigl\{ a^{2^{n-1}}_{2}, x^{2^{n-i}}\,:\,x\in \Gamma_{i}(\pi_{1}(\mathbb{K}\setminus\left\{x_{1}\right\})),\,2\leq i \leq n\bigr\}$ in $\pi_{1}(\mathbb{K}\setminus\left\{x_{1}\right\})$, and 
$\varphi$ is as defined in equation~(\ref{acaoP2}). Consequently, $P_{2}(\mathbb{K})$ is residually nilpotent.
\end{teo}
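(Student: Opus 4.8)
The plan is to realise $P_{2}(\mathbb{K})$ as the semi-direct product $H\rtimes_{\varphi}G$ of equation~(\ref{eq:semiP2K}), where $H=\pi_{1}(\mathbb{K}\setminus\{x_{1}\})=\langle a_{2},b_{2}\rangle$ is free of rank two and $G=s(P_{1}(\mathbb{K}))\cong\pi_{1}(\mathbb{K})$ is generated by $a_{1}a_{2}$ and $b_{2}b_{1}$ with the action $\varphi$ of equation~(\ref{acaoP2}), and then to read off $\Gamma_{n}(P_{2}(\mathbb{K}))$ from Theorem~\ref{serie}(\ref{it:lcspart1}), namely $\Gamma_{n}(P_{2}(\mathbb{K}))=L_{n}\rtimes_{\varphi}\Gamma_{n}(G)$. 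Thus there are two things to compute: the factor $\Gamma_{n}(G)$ and the subgroups $L_{n}\triangleleft H$. The first is elementary: writing $\alpha=a_{1}a_{2}$ and $\beta=b_{2}b_{1}$, the relation in~(\ref{presP1K}) gives $\beta\alpha\beta^{-1}=\alpha^{-1}$, so $\langle\alpha\rangle$ is normal in $G$, $[\beta,\alpha^{2^{k}}]=\alpha^{-2^{k+1}}$, and an easy induction yields $\Gamma_{n}(G)=\langle\alpha^{2^{n-1}}\rangle=\langle(a_{1}a_{2})^{2^{n-1}}\rangle$, which is the right-hand factor in the statement.

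The bulk of the work is to prove, by induction on $n$, that $L_{n}=N_{n}$, where
\[N_{n}:=\bigl\langle\!\bigl\langle a_{2}^{2^{n-1}},\,x^{2^{n-i}}:x\in\Gamma_{i}(H),\,2\le i\le n\bigr\rangle\!\bigr\rangle_{H}.\]
For the base case $n=2$ one computes directly from~(\ref{acaoP2}) that $K_{2}$ is the normal closure of $a_{2}^{-2}$ and of $a_{2}b_{2}a_{2}b_{2}^{-1}\equiv[a_{2},b_{2}]\pmod{\langle\!\langle a_{2}^{2}\rangle\!\rangle_{H}}$, that $H_{2}=K_{2}$ and $[H,L_{1}]=\Gamma_{2}(H)$, whence $L_{2}=\langle\!\langle a_{2}^{2},\Gamma_{2}(H)\rangle\!\rangle_{H}=N_{2}$; this also recovers~\cite[Proposition~3.3]{GG}. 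For the inductive step I would first use Lemma~\ref{geradoreslema} to replace each of $K_{n}$, $H_{n}$ and $[H,L_{n-1}]$ by a finite normal generating set indexed by the generators $a_{1}a_{2},b_{2}b_{1}$ of $G$, the generators $a_{2},b_{2}$ of $H$, and the normal generators of $N_{n-1}$. Since $\varphi((a_{1}a_{2})^{2^{n-2}})$ fixes $a_{2}$ and sends $b_{2}\mapsto a_{2}^{-2^{n-1}}b_{2}$, one gets $K_{n}=\langle\!\langle a_{2}^{2^{n-1}}\rangle\!\rangle_{H}$, which supplies the $a_{2}$-generator of $N_{n}$; and because $\Gamma_{n-1}(H)\subseteq L_{n-1}$ by induction (the $i=n-1$ term of $N_{n-1}$), the top term $\Gamma_{n}(H)=[H,\Gamma_{n-1}(H)]\subseteq[H,L_{n-1}]\subseteq L_{n}$ of $N_{n}$ is accounted for.

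The heart of the argument, and the main obstacle, is matching the intermediate generators $x^{2^{n-i}}$ with $2\le i<n$, and here the exact $2$-adic bookkeeping of exponents is essential; this is precisely what Lemma~\ref{colchete} is designed to handle, since it rewrites $[z^{2^{k}},h]$ as a product of iterated commutators whose weight-$j$ part is a prescribed $2$-power of a basic commutator in $\Gamma_{j}(H)$. For the inclusion $N_{n}\subseteq L_{n}$ I would argue by descending induction on $i$: starting from $z\in\Gamma_{i-1}(H)$, the element $z^{2^{n-i}}$ lies in $L_{n-1}=N_{n-1}$, so $[h,z^{2^{n-i}}]\in[H,L_{n-1}]\subseteq L_{n}$, and Lemma~\ref{colchete} shows its leading term is $[h,z]^{2^{n-i}}$ with $[h,z]\in\Gamma_{i}(H)$, the higher-weight correction terms being absorbed by the induction; letting $h,z$ vary produces all of $\Gamma_{i}(H)^{2^{n-i}}$. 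For the reverse inclusion $L_{n}\subseteq N_{n}$ I would check that each generator of $K_{n}$, $H_{n}$ and $[H,L_{n-1}]$ lies in $N_{n}$: the commutators $[h,w]$ are handled by the same expansion, while the twists $\varphi(g)(w)w^{-1}$ in $H_{n}$ only ever produce \emph{higher} powers of $2$, thanks to the factor $a_{2}^{\pm2}$ appearing in $\varphi(a_{1}a_{2})$ and the inversion $a_{2}\mapsto a_{2}^{-1}$ in $\varphi(b_{2}b_{1})$, and hence remain inside $N_{n}$. Throughout, the normality assertions of Lemma~\ref{Lnnormal} are used freely to discard conjugating factors. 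Verifying that for each weight $j$ the exponent produced is exactly $2^{n-j}$ at the governing weight and at least that much on the tail is the delicate point; carrying this out in both directions completes the induction.

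Finally, residual nilpotence follows formally. Since $a_{2}^{2^{n-1}}\in\gamma^{2}_{n}(H)$ and, for $x\in\Gamma_{i}(H)\subseteq\gamma^{2}_{i}(H)$, the power $x^{2^{n-i}}\in\gamma^{2}_{n}(H)$, and $\gamma^{2}_{n}(H)$ is normal in $H$, we obtain $L_{n}=N_{n}\subseteq\gamma^{2}_{n}(H)$; as $H$ is free it is residually $2$-finite, so $\bigcap_{n\ge1}L_{n}\subseteq\bigcap_{n\ge1}\gamma^{2}_{n}(H)=\{1\}$. Together with $\bigcap_{n\ge1}\Gamma_{n}(G)=\bigcap_{n\ge1}\langle\alpha^{2^{n-1}}\rangle=\{1\}$ (since $\alpha$ has infinite order) and the decomposition $\Gamma_{n}(P_{2}(\mathbb{K}))=L_{n}\rtimes_{\varphi}\Gamma_{n}(G)$, any element of $\bigcap_{n\ge1}\Gamma_{n}(P_{2}(\mathbb{K}))$ has both coordinates trivial, so $\bigcap_{n\ge1}\Gamma_{n}(P_{2}(\mathbb{K}))=\{1\}$ and $P_{2}(\mathbb{K})$ is residually nilpotent.
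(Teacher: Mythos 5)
You follow the same route as the paper: the splitting~(\ref{eq:semiP2K}), Theorem~\ref{serie}(\ref{it:lcspart1}), the computation $\Gamma_{n}(G)=\bigl\langle (a_{1}a_{2})^{2^{n-1}}\bigr\rangle$ of~(\ref{eq:pi1}), the identification of $K_{n}$ (the paper's Lemma~\ref{lemma1}), the observation that $H_{n}$ contributes nothing new (the paper's Lemma~\ref{lemma2}), and the plan of proving $L_{n}=N_{n}$, where your $N_{n}$ is exactly the paper's $\widetilde{W}_{n}$ of~(\ref{eq:defwntilde}); your base case and your deduction of residual nilpotence from $N_{n}\subseteq\gamma^{2}_{n}(H)$ are also in order. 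The difficulty is the mechanism you propose for the key inclusion $N_{n}\subseteq L_{n}$, which has a concrete failure.

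Your descending induction starts from ``$z\in\Gamma_{i-1}(H)$, so $z^{2^{n-i}}$ lies in $L_{n-1}=N_{n-1}$''. This is correct for $i\geq 3$ (it is the weight-$(i-1)$ term of $N_{n-1}$), but it fails at $i=2$: there $z$ ranges over $\Gamma_{1}(H)=H$, and $N_{n-1}$ contains no $2^{n-2}$-th powers of weight-one elements other than those coming from $a_{2}^{2^{n-2}}$ --- for instance $b_{2}^{2^{n-2}}\notin N_{n-1}$, as one sees already in $H^{\text{Ab}}$, where the image of $N_{n-1}$ is generated by the class of $a_{2}^{2^{n-2}}$. So your scheme reaches $[h,a_{2}]^{2^{n-2}}$ but cannot reach $[h,z]^{2^{n-2}}$ for arbitrary $h,z\in H$; i.e.\ it misses precisely the weight-two generators $x^{2^{n-2}}$, $x\in\Gamma_{2}(H)$, of $N_{n}$. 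A second, related gap: even for $i\geq3$, varying $h$ and $z$ only yields $2^{n-i}$-th powers of single commutators $[h,z]$, whereas $N_{n}$ demands $x^{2^{n-i}}$ for \emph{every} $x\in\Gamma_{i}(H)$, and powers do not distribute over products; one needs a congruence of the form $(uv)^{2^{k}}\equiv u^{2^{k}}v^{2^{k}}$ modulo controlled higher terms. The paper closes both holes with an idea absent from your proposal: the auxiliary filtration $W_{2}=L_{2}$, $W_{m}=\left\langle \Gamma_{m}(H),\,x^{2}:x\in W_{m-1}\right\rangle$ of~(\ref{eq:defwn}), for which ``squaring raises the index by one'' holds by construction, and the simultaneous proof that $L_{m}=W_{m}=\widetilde{W}_{m}$ (Proposition~\ref{lemma3}), with the $2$-adic bookkeeping you defer carried out once and for all in Lemma~\ref{lemaprinc} via the subgroups $\mathcal{E}_{l,m}$ and the expansion~(\ref{produto}). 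Some such device --- equivalently, a proof that $x\in L_{m}$ implies $x^{2}\in L_{m+1}$, which is not at all immediate from the definition of $L_{m}$ --- must be added before your induction can close.
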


The idea of the proof is to apply Theorem~\ref{serie}(\ref{it:lcspart1}) to the semi-direct product structure of $P_{2}(\mathbb{K})$ given by~(\ref{eq:semiP2K}). From now on, we shall make use of the notation of that theorem, taking $H=\pi_{1}(\mathbb{K}\setminus\left\{x_{1}\right\})$, $G=s(P_{1}(\mathbb{K}))$ and $P_{2}(\mathbb{K})=H\rtimes_{\varphi} G$.  In the lemmas that follow, we first compute the subgroups $K_{n}$, $H_{n}$ and $L_{n}$ for these choices of $G$ and $H$, and in Proposition~\ref{lemma3}, we calculate the subgroup $L_{n}$, which will enable us to prove Theorem~\ref{GammaP2}.
%is a semi-direct product, due the split Fadell-Neuwirth short exact sequence~(\ref{seqFN}) for the Klein bottle, and  the method described in . 
We first we need to know the lower central series of $P_{1}(\mathbb{K})$. Applying Theorem~\ref{serie} to the semi-direct product structure $\langle a_{1}\rangle \rtimes \langle b_{1}\rangle$ of $\pi_{1}(\mathbb{K})$ given by~(\ref{presP1K}), it is straightforward to see that $K_{2}=H_{2}=L_{2}=\left\langle a^{2}_{1}\right\rangle$, and if $n\geq3$, $K_{n}=\left\{1\right\}$ and $H_{n}=L_{n}=\bigl\langle a^{2^{n-1}}_{1}\bigr\rangle$, therefore the lower central series of $s(P_{1}(\mathbb{K}))$ is given by:
\begin{equation}\label{eq:pi1}\Gamma_{n}(s(P_{1}(\mathbb{K})))=\bigl\langle (a_{1}a_{2})^{2^{n-1}}\bigr\rangle\end{equation} for all $n>1$. We now turn to the case of $P_{2}(\mathbb{K})$, and we first determine $K_{n}$.

\begin{lem}\label{lemma1} With the notation of Theorem~\ref{serie}, and taking $H=\pi_{1}(\mathbb{K}\setminus\left\{x_{1}\right\})$, $G=s(P_{1}(\mathbb{K}))$ and $P_{2}(\mathbb{K})=H\rtimes_{\varphi}G$, the subgroup $K_{n}$ is equal to 
%the normal closure 
$\left\langle \!\left\langle a^{2}_{2}, \Gamma_{2}(H)\right\rangle \! \right\rangle_{H}$ 
%of $\left\{a^{2}_{2}\right\}\cup\Gamma_{2}(H)$ in $H$ 
if $n=2$, and to 
%the normal closure 
$\bigl\langle \!\bigl\langle a^{2^{n-1}}_{2}\bigr\rangle\!\bigr\rangle_{H}$ 
%of $\bigl\{a^{2^{n-1}}_{2}\bigr\}$ in $H$ 
if $n\geq 3$. 
\end{lem}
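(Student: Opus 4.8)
The plan is to compute $K_n$ directly from its definition in Theorem~\ref{serie}, namely $K_n=\langle\varphi(g)(h).h^{-1}\,:\,g\in\Gamma_{n-1}(G),\,h\in H\rangle$, by exploiting Lemma~\ref{geradoreslema}(\ref{geradoreslema1}). Since $K_n$ is normal in $H$ by Lemma~\ref{Lnnormal}, that lemma permits us to replace the defining generating set by the finite set obtained by letting $g$ range over a generating set $X$ of $\Gamma_{n-1}(G)$ and $h$ over the free basis $Y=\{a_2,b_2\}$ of $H$, and then taking the normal closure in $H$. Thus everything reduces to evaluating $\varphi(g)(h).h^{-1}$ on these finitely many pairs by means of the explicit action~(\ref{acaoP2}).

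For $n=2$ we have $\Gamma_1(G)=G=\langle a_1a_2,\,b_2b_1\rangle$, so $X=\{a_1a_2,\,b_2b_1\}$. Evaluating the four products $\varphi(g)(h).h^{-1}$ from~(\ref{acaoP2}) yields two trivial elements together with $a_2^{-2}$ (arising twice) and $a_2b_2a_2b_2^{-1}$. The first is, up to inverse, the generator $a_2^2$. For the second, I would rewrite $a_2b_2a_2b_2^{-1}=a_2^2\ldotp[a_2^{-1},b_2]$, so that, modulo $a_2^2$, it contributes the commutator $[a_2^{-1},b_2]$. The key observation is then that, since $H$ is free of rank $2$, killing $[a_2^{-1},b_2]$ forces $a_2$ and $b_2$ to commute, whence $H/\langle\!\langle[a_2^{-1},b_2]\rangle\!\rangle_H$ is abelian and therefore $\langle\!\langle[a_2^{-1},b_2]\rangle\!\rangle_H=\Gamma_2(H)$. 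Combining these facts, the normal closure of $\{a_2^2,\,a_2b_2a_2b_2^{-1}\}$ equals $\langle\!\langle a_2^2,\Gamma_2(H)\rangle\!\rangle_H$, as claimed.

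For $n\geq 3$, equation~(\ref{eq:pi1}) gives $\Gamma_{n-1}(G)=\langle(a_1a_2)^{2^{n-2}}\rangle$, so $X$ consists of the single element $(a_1a_2)^{2^{n-2}}$. From~(\ref{acaoP2}), $\varphi(a_1a_2)$ fixes $a_2$ and sends $b_2\mapsto a_2^{-2}b_2$; a short induction on the exponent then shows that $\varphi((a_1a_2)^{k})(a_2)=a_2$ and $\varphi((a_1a_2)^{k})(b_2)=a_2^{-2k}b_2$ for all $k\geq 1$. Taking $k=2^{n-2}$, the two generators of $K_n$ are $1$ (coming from $a_2$) and $a_2^{-2^{n-1}}$ (coming from $b_2$), whence $K_n=\langle\!\langle a_2^{2^{n-1}}\rangle\!\rangle_H$.

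The only genuinely non-routine point is the $n=2$ case: recognising that, once the square $a_2^2$ is already present, the commutator contribution $a_2b_2a_2b_2^{-1}$ generates as a normal subgroup the whole of $\Gamma_2(H)$. This rests on the standard fact that the commutator subgroup of a free group of rank two is the normal closure of a single commutator; the remainder of the argument is a direct evaluation of the action $\varphi$ together with the reductions furnished by Lemmas~\ref{Lnnormal} and~\ref{geradoreslema}.
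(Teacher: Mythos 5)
Your proof is correct and follows essentially the same route as the paper's: reduce to finitely many generators via Lemma~\ref{Lnnormal} and Lemma~\ref{geradoreslema}(\ref{geradoreslema1}), evaluate the action~(\ref{acaoP2}) on the resulting pairs, rewrite $a_{2}b_{2}a_{2}b_{2}^{-1}=a_{2}^{2}[a_{2}^{-1},b_{2}]$, and invoke $\Gamma_{2}(H)=\bigl\langle\!\bigl\langle [a_{2}^{-1},b_{2}]\bigr\rangle\!\bigr\rangle_{H}$ (a standard free-group fact that the paper uses without comment and you justify explicitly), with the $n\geq 3$ case handled by the same induction on the exponent of $a_{1}a_{2}$. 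The only slip is cosmetic: the four products in the case $n=2$ yield one trivial element (not two), $a_{2}^{-2}$ twice, and $a_{2}b_{2}a_{2}b_{2}^{-1}$, which does not affect the argument.
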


\begin{proof} First suppose that $n=2$. By~(\ref{acaoP2}), we have:
\begin{equation}\label{acaok2} 
\left\{\begin{aligned}\varphi(a_{1}a_{2})(a_{2}).a^{-1}_{2}&= 1\\
\varphi(a_{1}a_{2})(b_{2}).b^{-1}_{2}&= \varphi(b_{2}b_{1})(a_{2}).a^{-1}_{2}=a^{-2}_{2}\\
%&= a^{-2}_{2}\\
\varphi(b_{2}b_{1})(b_{2}).b^{-1}_{2}&= a_{2}b_{2}a_{2}b^{-1}_{2}=a^{2}_{2}[a^{-1}_{2},b_{2}],\\\end{aligned}\right.
\end{equation}
and so $\left\{a^{2}_{2},[a^{-1}_{2},b_{2}]\right\}\subset K_{2}$. Since $K_{2}$ is normal in $H$ by Lemma~\ref{Lnnormal}, $\left\langle \!\left\langle a^{2}_{2},[a^{-1}_{2},b_{2}]\right\rangle \! \right\rangle_{H}$
%the normal closure of $\left\{a^{2}_{2},[a^{-1}_{2},b_{2}]\right\}$ in $H$ 
is a subgroup of $K_{2}$, and therefore $\left\langle \!\left\langle a^{2}_{2}, \Gamma_{2}(H)\right\rangle \! \right\rangle_{H} \subset K_{2}$ because $\Gamma_{2}(H)=\left\langle \!\left\langle [a^{-1}_{2},b_{2}]\right\rangle \! \right\rangle_{H}$. For the other inclusion, (\ref{acaok2}) implies that $\varphi(g)(h).h^{-1}\in\left\langle \!\left\langle a^{2}_{2},\Gamma_{2}(H)\right\rangle \! \right\rangle_{H}$ for all $h$ (resp.\ all $g$) belonging to the generating set $\left\{a_{2},b_{2}\right\}$ (resp.\ $\left\{a_{1}a_{2},b_{2}b_{1}\right\}$) of $H$ (resp.\ of $G$).
%and all $g$ belonging to the generating set $\left\{a_{1}a_{2},b_{2}b_{1}\right\}$ of $G$. 
The inclusion $K_{2}\subset \left\langle \!\left\langle a^{2}_{2}, \Gamma_{2}(H)\right\rangle \! \right\rangle_{H}$ then follows from Lemma~\ref{geradoreslema}(\ref{geradoreslema1}). This proves the result for $n=2$.

Now suppose that $n\geq 3$. Then $\Gamma_{n-1}(G)=\bigl\langle (a_{1}a_{2})^{2^{n-2}}\bigr\rangle$ by~(\ref{eq:pi1}). Using (\ref{acaoP2}), we have $\varphi((a_{1}a_{2})^{2^{n-2}})(a_{2}).a^{-1}_{2}=1$ and: 
\[\varphi((a_{1}a_{2})^{2})(b_{2})=\varphi(a_{1}a_{2})(a^{-2}_{2}b_{2})= a^{-2}_{2}(a^{-2}_{2}b_{2})=a^{-4}_{2}b_{2}.\] By induction, suppose that $\varphi((a_{1}a_{2})^{j})(b_{2})=a^{-2j}_{2} b_{2}$, for some $j\geq 2$. Then:
\[\varphi((a_{1}a_{2})^{j+1})(b_{2})=\varphi(a_{1}a_{2})\varphi((a_{1}a_{2})^{j})(b_{2})=\varphi(a_{1}a_{2})(a^{-2j}_{2}b_{2})=a^{-2j}_{2}(a^{-2}_{2}b_{2})=a^{-2(j+1)}_{2}b_{2}.\]
In particular, if $j=2^{n-2}$, we have $\varphi((a_{1}a_{2})^{2^{n-2}})(b_{2}).b^{-1}_{2}=(a^{-2(2^{n-2})}_{2}b_{2}).b^{-1}_{2}=a^{-2^{(n-1)}}_{2}$, and hence $\bigl\langle \!\bigl\langle a^{2^{n-1}}_{2} \bigr\rangle\! \bigr\rangle_{H}\subset K_{n}$. %The other inclusion follows by using Lemma~\ref{geradoreslema} \comj{Give some details?}.
Conversely, taking $\widetilde{G}=\{(a_{1}a_{2})^{2^{n-2}}\}$ and $\widetilde{H}=\{a_{2},b_{2}\}$ in Lemma~\ref{geradoreslema}(\ref{geradoreslema1}) and using~(\ref{eq:pi1}), we obtain $K_{n}\subset \left\langle \!\left\langle Z\right\rangle \! \right\rangle_{H}$, where:
\[Z=\Bigl\{\varphi(g)(h).h^{-1}\,:\,g\in\big\{(a_{1}a_{2})^{2^{n-2}}\big\},\,h\in\left\{a_{2},b_{2}\right\}\Bigr\}=\bigl\{a^{-2^{(n-1)}}_{2}\bigr\},\]
% implies that $K_{n}\subset \left\langle \!\left\langle Z\right\rangle \! \right\rangle_{H}$, where: \comj{To obtain this, are we using `generating set of $\Gamma_{n-1}(G)$' in that lemma to mean a set of normal generators?} \[Z=\Bigl\{\varphi(g)(h).h^{-1}\,:\,g\in\big\{(a_{1}a_{2})^{2^{n-2}}\big\},\,h\in\left\{a_{2},b_{2}\right\}\Bigr\}=\bigl\{1, a^{-2^{(n-1)}}_{2}\bigr\}.\]
whence the inclusion $K_{n}\subset \bigl\langle \!\bigl\langle a^{2^{n-1}}_{2} \bigr\rangle\! \bigr\rangle_{H}$, and this proves the lemma.
\end{proof} %\hfill{$\blacksquare$} \\

If $\beta\in H$ then $\beta a^{2}_{2}\beta^{-1}=[\beta,a^{2}_{2}]a^{2}_{2}\in\left\langle a^{2}_{2}, \Gamma_{2}(H)\right\rangle$, and since $\Gamma_{2}(H)$ is normal in $H$, it follows that the subgroup $\left\langle a^{2}_{2}, \Gamma_{2}(H)\right\rangle$ is also normal in $H$, so $\left\langle a^{2}_{2}, \Gamma_{2}(H)\right\rangle= \left\langle\!\left\langle a^{2}_{2},\Gamma_{2}(H)\right\rangle\! \right\rangle_{H}$. Using the relations $K_{2}=H_{2}$ and $[H,L_{1}]=\Gamma_{2}(H)$, it follows from Lemma~\ref{lemma1} that: 
\begin{equation}\label{L2} L_{2}=\left\langle a^{2}_{2}, \Gamma_{2}(H)\right\rangle=\left\langle\!\left\langle a^{2}_{2},[a_{2},b_{2}]\right\rangle\! \right\rangle_{H}.
\end{equation}
%Note that, if $\beta\in H$, then $\beta a^{2}_{2}\beta^{-1}=[\beta,a^{2}_{2}]a^{2}_{2}\in\left\langle a^{2}_{2}, \Gamma_{2}(H)\right\rangle$ and $\Gamma_{2}(H)$ is a normal subgroup of $H$, therefore $\left\langle a^{2}_{2}, \Gamma_{2}(H)\right\rangle$ is normal in $H$ and it is equal to his normal closure.
Let $W_{2}=\widetilde{W}_{2}=L_{2}$, and for $n\geq 3$, define:
\begin{align}
W_{n}&=\left\langle  \Gamma_{n}(H),\,x^{2}\,:\,x\in W_{n-1}\right\rangle\label{eq:defwn}\\
\widetilde{W}_{n}&=\bigl\langle \!\bigl\langle a^{2^{n-1}}_{2},\,x^{2^{n-i}}\,:\,x\in\Gamma_{i}(H),\,2\leq i\leq n\bigr\rangle \! \bigr\rangle_{H}.\label{eq:defwntilde}
\end{align}
Note that $W_{n}$ is normal in $H$ for all $n\geq2$. This follows from the fact that $\Gamma_{n}(H)$ is normal in $H$ for all $n\geq 2$ and arguing by induction on $n$ as follows. If $n=2$ then $W_{2}=L_{2}$ by Lemma~\ref{Lnnormal}, so suppose that $n\geq 3$, and that $W_{n-1}$ is normal in $H$. Then $hxh^{-1}\in W_{n-1}$
% (which is true for $W_{2}=L_{2}$ by Lemma~\ref{Lnnormal}) follows that 
for all $x\in W_{n-1}$ and $h\in H$, thus
% we have that $hxh^{-1}\in W_{n-1}$ then 
$hx^{2}h^{-1}=(hxh^{-1})^{2}\in W_{n}$, and $W_{n}$ is normal in $H$ as claimed. If $x\in \Gamma_{i}(H)$ and $2\leq i\leq n$, we will refer to the elements $a^{2^{n-1}}_{2}$ and $x^{2^{n-i}}$ as generators of $\widetilde{W}_{n}$. In order to prove Theorem~\ref{GammaP2}, we will show in Proposition~\ref{lemma3} that $L_{n}=W_{n}=\widetilde{W}_{n}$ for all $n\geq 2$. Supposing this to be the case, the following lemma implies that to determine the lower central series of $P_{2}(\mathbb{K})$ using Theorem~\ref{serie}, it is not necessary to calculate the subgroups $H_{n}$.

\begin{lem}\label{lemma2} Suppose that $L_{i}=W_{i}=\widetilde{W}_{i}$ for all $2\leq i\leq n$. Then $H_{n+1}\subset \left\langle K_{n+1},[H,L_{n}]\right\rangle$. In particular, $L_{n+1}=\left\langle K_{n+1},[H,L_{n}]\right\rangle$.
\end{lem}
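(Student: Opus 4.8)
The plan is to reduce the inclusion $H_{n+1}\subseteq\langle K_{n+1},[H,L_n]\rangle$ to a finite check on the normal generators of $L_n$, and then to exploit the fact that the action $\varphi$ is trivial modulo $2$ on every graded piece of the lower central series of $H$. The \emph{in particular} assertion is immediate: by the definition of $L_{n+1}$ in Theorem~\ref{serie} we have $L_{n+1}=\langle K_{n+1},H_{n+1},[H,L_n]\rangle$, so once $H_{n+1}\subseteq\langle K_{n+1},[H,L_n]\rangle$ is established, the equality $L_{n+1}=\langle K_{n+1},[H,L_n]\rangle$ follows at once.

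First I would invoke Lemma~\ref{geradoreslema}(\ref{geradoreslema2}). Using the hypothesis $L_n=\widetilde W_n$ together with~(\ref{eq:defwntilde}), take $W=\{a_2^{2^{n-1}}\}\cup\{x^{2^{n-i}}: x\in\Gamma_i(H),\,2\le i\le n\}$ as a set of normal generators of $L_n$, and take $X=\{a_1a_2,b_2b_1\}$, $Y=\{a_2,b_2\}$ as generating sets of $G$ and $H$. Since $K_{n+1}$ is normal in $H$ by Lemma~\ref{Lnnormal} and $[H,L_n]$ is normal in $H$ because $L_n$ is, the subgroup $\langle K_{n+1},[H,L_n]\rangle$ is normal in $H$; hence, modulo conjugation and elements of $[H,L_n]$, it suffices to show that $\varphi(g)(w)w^{-1}\in\langle K_{n+1},[H,L_n]\rangle$ for every $g\in X$ and every $w\in W$.

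The generator $w=a_2^{2^{n-1}}$ is handled directly from~(\ref{acaoP2}): since $\varphi(a_1a_2)(a_2)=a_2$ the corresponding defect is trivial, while $\varphi(b_2b_1)(a_2)=a_2^{-1}$ gives $\varphi(b_2b_1)(w)w^{-1}=a_2^{-2^n}$, which lies in $K_{n+1}=\langle\!\langle a_2^{2^n}\rangle\!\rangle_H$ by Lemma~\ref{lemma1}. For a generator $w=x^{2^{n-i}}$ with $x\in\Gamma_i(H)$, writing $r=\varphi(g)(x)x^{-1}\in\Gamma_i(H)$ I would first telescope $\varphi(g)(w)w^{-1}=\varphi(g)(x)^{2^{n-i}}x^{-2^{n-i}}=\prod_{j=0}^{2^{n-i}-1}x^{j}rx^{-j}$. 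The crux is that both $\varphi(a_1a_2)$ and $\varphi(b_2b_1)$ act on $H^{\mathrm{Ab}}$ by integer matrices congruent to the identity modulo $2$ (immediate from~(\ref{acaoP2})); by functoriality of the free Lie ring $\mathrm{gr}(H)=\bigoplus_i\Gamma_i(H)/\Gamma_{i+1}(H)$, the induced action on each graded piece $\Gamma_i(H)/\Gamma_{i+1}(H)$ is again congruent to the identity modulo $2$. Consequently $r\equiv s^{2}\pmod{\Gamma_{i+1}(H)}$ for some $s\in\Gamma_i(H)$, and since conjugation by $H$ is trivial on $\Gamma_i(H)/\Gamma_{i+1}(H)$ the telescoped product satisfies $\varphi(g)(w)w^{-1}\equiv s^{2^{n-i+1}}\pmod{\Gamma_{i+1}(H)}$.

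It then remains to show that this leading term, and the $\Gamma_{i+1}(H)$-correction, both lie in $\langle K_{n+1},[H,L_n]\rangle$. The key computation, which I expect to be the main obstacle, is to prove that $2^{\,n-i+1}\cdot\bigl(\Gamma_i(H)/\Gamma_{i+1}(H)\bigr)$ is contained in the image of $[H,L_n]$: for a generator $c^{2^{n-j}}$ of $\widetilde W_n=L_n$ with $c\in\Gamma_j(H)$, Lemma~\ref{colchete} gives $[h,c^{2^{n-j}}]\equiv[h,c]^{2^{n-j}}$ modulo deeper commutators, and as $c$ and $h$ range over generating sets the elements $[h,c]$ generate $\Gamma_{j+1}(H)$ modulo $\Gamma_{j+2}(H)$; taking $j=i-1$ identifies $s^{2^{n-i+1}}$, up to $\Gamma_{i+1}(H)$, with such an element of $[H,L_n]$. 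Feeding the resulting $\Gamma_{i+1}(H)$-remainder back into the same argument yields a downward induction on the commutator degree $i$, which terminates because $\Gamma_{n+1}(H)=[H,\Gamma_n(H)]\subseteq[H,L_n]$ (using $\Gamma_n(H)\subseteq\widetilde W_n=L_n$). The delicate point throughout is the bookkeeping needed to guarantee that every factor produced by Lemma~\ref{colchete} is either a conjugate of $a_2^{2^n}$ (hence in $K_{n+1}$), a commutator $[h,\ell]$ with $\ell\in L_n$ (hence in $[H,L_n]$), or a term of strictly higher commutator degree to be absorbed at the next stage of the induction.
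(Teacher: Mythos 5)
Your strategy is genuinely different from the paper's, and its first half is sound: the reduction via Lemma~\ref{geradoreslema}(\ref{geradoreslema2}) to the normal generators of $L_{n}=\widetilde{W}_{n}$, the direct treatment of $a_{2}^{2^{n-1}}$, the observation that $\varphi(a_{1}a_{2})$ and $\varphi(b_{2}b_{1})$ act trivially modulo $2$ on each quotient $\Gamma_{i}(H)/\Gamma_{i+1}(H)$ (this does follow from triviality mod $2$ on $H^{\mathrm{Ab}}$, since the graded map on the free Lie ring is determined by degree one and base change to $\mathbb{F}_{2}$ is exact on free Lie rings), and the claim that $2^{n-i+1}\bigl(\Gamma_{i}(H)/\Gamma_{i+1}(H)\bigr)$ lies in the image of $[H,L_{n}]$ are all correct. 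For contrast, the paper proves the lemma by induction on $n$: by the previous stage $L_{n}=\left\langle K_{n},[H,L_{n-1}]\right\rangle$, so its generators are $a_{2}^{2^{n-1}}$ and commutators $[h,l]$ with $l\in L_{n-1}$; expanding $\varphi(g)([h,l])[h,l]^{-1}$ reduces everything to the inclusion $[L_{2},L_{n-1}]\subset[H,L_{n}]$, which follows in a few lines from the identity~(\ref{itercomm1}) and the hypothesis $L_{n-1}=W_{n-1}$, which gives $l^{2}\in W_{n}=L_{n}$. That route uses no graded or Lie-theoretic input at all; notably, the hypothesis $L_{i}=W_{i}$ (the ``squares'' description~(\ref{eq:defwn})), which your argument never invokes, is precisely what lets the paper keep this proof short.

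The genuine gap is in your final step, ``feeding the $\Gamma_{i+1}(H)$-remainder back into the same argument.'' After your two reductions you are left with an element $\gamma\in\Gamma_{i+1}(H)$, and you propose to absorb it at the next stage of an induction on commutator degree, terminating at $\Gamma_{n+1}(H)\subset[H,L_{n}]$. But the statement you would be inducting on is too weak to close: an arbitrary element of $\Gamma_{j}(H)$ with $j\leq n$ does \emph{not} lie in $\left\langle K_{n+1},[H,L_{n}]\right\rangle$. Indeed this subgroup is contained in $L_{n+1}=\widetilde{W}_{n+1}$ (Proposition~\ref{lemma3}), whose image in $\Gamma_{2}(H)/\Gamma_{3}(H)\cong\mathbb{Z}$ is contained in $2^{n-1}\mathbb{Z}$ by the form of the generators in~(\ref{eq:defwntilde}); so for instance $[a_{2},b_{2}]\in\Gamma_{2}(H)$ is never absorbed when $n\geq 2$. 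For the induction to work you must carry a quantitative invariant: every remainder produced at degree $j$ has to be shown congruent, modulo $\Gamma_{j+1}(H)$, to a $2^{\,n-j+1}$-st power. Verifying that this divisibility survives all the corrections your argument generates --- the telescoping errors, the error in $r\equiv s^{2}$, the Hall--Petrescu-type error in replacing a power of a product by the product of powers, and the deeper terms of Lemma~\ref{colchete} --- is exactly the bookkeeping that the paper packages into the filtrations $\mathcal{E}_{l,m}$, $\widetilde{\mathcal{E}}_{l,m}$ and Lemma~\ref{lemaprinc}. The divisibility does in fact propagate (the relevant binomial coefficients have sufficient $2$-adic valuation), so your approach could likely be completed, but as written the crux of the proof is asserted rather than proved, and your own closing sentence acknowledges as much.
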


\begin{proof} We prove the given inclusion by induction on $n$. If $n=2$, using (\ref{acaoP2}) notice that:
\begin{itemize}
	\item $\varphi(a_{1}a_{2})(a^{2}_{2})a^{-2}_{2}=1$.
	\item $\varphi(b_{2}b_{1})(a^{2}_{2})a^{-2}_{2}=a^{-4}_{2}\in K_{3}$ by Lemma~\ref{lemma1}.
	\item $\varphi(a_{1}a_{2})([a_{2},b_{2}])[b_{2},a_{2}]=[a^{-2}_{2},a_{2}b_{2}a^{-1}_{2}b^{-1}_{2}a^{-1}_{2}]\in[L_{2},H]$ by~(\ref{L2}).
	\item $\varphi(b_{2}b_{1})([a_{2},b_{2}])[b_{2},a_{2}]=\underbrace{[b_{2},a^{2}_{2}]}_{\in [H,L_{2}]}\underbrace{[a_{2},[a_{2},b_{2}]]}_{\in \Gamma_{3}(H)}\in [L_{2},H]$ by~(\ref{L2}).
\end{itemize}
Using these calculations and the description of $L_{2}$ given in~(\ref{L2}), it follows from the first part of Lemma~\ref{geradoreslema}(\ref{geradoreslema2}) and Lemma~\ref{lemma1} that: 
\[H_{3}= \left\langle \!\left\langle [H,L_{2}],\,\varphi(g)(w).w^{-1}\,:\,g\in\left\{a_{1}a_{2},b_{2}b_{1}\right\}, w\in\left\{a^{2}_{2},[a_{2},b_{2}]\right\} \right\rangle\!\right\rangle_{H}\subset \left\langle K_{3}, [H,L_{2}]\right\rangle,\] which proves the result if $n=2$.

Suppose now that the given inclusion holds for $n-1$ for some $n\geq 3$, and assume that $L_{i}=W_{i}=\widetilde{W}_{i}$ for all $2\leq i\leq n$. Let $\varphi(g)(x).x^{-1}$ be an element of $H_{n+1}$, where $g\in G$ and $x\in L_{n}=\left\langle K_{n},[H,L_{n-1}]\right\rangle$ by the induction hypothesis. We wish to show that $\varphi(g)(x).x^{-1}\in\left\langle K_{n+1},[H,L_{n}]\right\rangle$. By Lemma~\ref{geradoreslema}(\ref{geradoreslema1}), we only need to check the following two possibilities:
\begin{itemize}
\item $x=a^{2^{n-1}}_{2}$. Then $x\in K_{n}$ by Lemma~\ref{lemma1}, $\varphi(a_{1}a_{2})(x).x^{-1}=1$ and $\varphi(b_{2}b_{1})(x)x^{-1}=a^{-2^{n}}\in K_{n+1}$ by Lemma~\ref{lemma1} and (\ref{acaok2}).
	
\item $x=[h,l]\in[H,L_{n-1}]$, where $h\in H$ and $l\in L_{n-1}$. Then $\varphi(g)([h,l])[h,l]^{-1}$ may be written in the following form:
\begin{equation}\label{eq:comutador}
\Bigl(\varphi(g)(h).l\underbrace{[(\varphi(g)(l^{-1}).l)^{-1},\varphi(g)(h^{-1})]}_{\in [L_{n},H]}l^{-1}\varphi(g)(h^{-1})\Bigr) h\underbrace{\left[(\varphi(g)(h^{-1}).h)^{-1},l\right]}_{\in[L_{2},L_{n-1}]}h^{-1}.
\end{equation}
\end{itemize}
To complete the proof, it suffices to show that the subgroup $[L_{2},L_{n-1}]$ is contained in $[H,L_{n}]$. To do so, first note that $[L_{2},L_{n-1}]$ is normal in $H$ because $L_{j}$ is normal in $H$ for all $j\geq 2$ by Lemma~\ref{Lnnormal}. Using the fact that $L_{2}=\left\langle a^{2}_{2},\Gamma_{2}(H)\right\rangle$ by~(\ref{L2}), it suffices to show that the following elements belong to $[H,L_{n}]$:
\begin{itemize}
	\item $\left[[y,z],l\right]\in \left[\Gamma_{2}(H),L_{n-1}\right]$, where $y,z\in H$ and $l\in L_{n-1}$. Then: 
\begin{equation*}
[[y,z],l]=y\underbrace{\left[z,[y^{-1},l]\right]}_{\in [H,L_{n}]}y^{-1}.lyz\underbrace{\left[[z^{-1},l^{-1}],y^{-1}\right]}_{\in[H,L_{n}]}z^{-1}y^{-1}l^{-1},
\end{equation*}
because $[H,L_{n-1}]\subset L_{n}$. Thus $\left[[y,z],l\right]\in[H,L_{n}]$ because $[H,L_{n}]$ is a normal subgroup of $H$, and therefore $[\Gamma_{2}(H),L_{n-1}]\subset [H,L_{n}]$.
		
\item $[a^{2}_{2},l]\in\left[\left\langle a^{2}_{2}\right\rangle, L_{n-1}\right]$,  where $l\in L_{n-1}$.  Then using~(\ref{itercomm1}), we have:
\begin{equation}\label{eq:a22l}
[a^{2}_{2},l]=\underbrace{[a_{2},[a_{2},l]]}_{\in [H,L_{n}]}[a_{2},l]^{2},
\end{equation}
since $[H,L_{n-1}]\subset L_{n}$. Further $[a_{2},l]^{2} \in [H,L_{n}]$ because $l\in L_{n-1}=W_{n-1}$ by hypothesis, so $l^{2}\in W_{n}=L_{n}$, and \[\underbrace{[l^{2},a_{2}]}_{\in[L_{n},H]}=\underbrace{[l,[l,a_{2}]]}_{\in[H,L_{n}]}[l,a_{2}]^{2}\]	
\end{itemize}
by~(\ref{itercomm1}). So $[l,a_{2}]^{2}\in [H,L_{n}]$, and thus $[a^{2}_{2},l]\in [H,L_{n}]$ by~(\ref{eq:a22l}). This shows that $[L_{2},L_{n-1}]\subset [H,L_{n}]$, hence $H_{n+1}\subset \left\langle K_{n+1},[H,L_{n}]\right\rangle$ as desired, which concludes the proof of the first part of the statement. The second part follows from the first part and the definition of $L_{n+1}$.
%Theorem~\ref{serie}, which states that $L_{n+1}=\left\langle K_{n+1}, H_{n+1}, [H,L_{n}]\right\rangle$, from which we see that $L_{n+1}=\left\langle K_{n+1}, [H,L_{n}]\right\rangle$. 
\end{proof} %\hfill{$\blacksquare$} \\

In order to prove Proposition~\ref{lemma3}, we shall require a couple of intermediate results. Let $A$  be either the empty set or a normal subgroup of $H$, and for $m\geq1$, let: 
\[B_{m}=\left\langle [x_{1},\ldots,x_{i}]^{2^{m-i-k}}\in\Gamma_{i}(H)\,:\begin{array}{r}\exists 1\leq j_{1}<\cdots<j_{k}\leq i\,:\,x_{j_{1}},\ldots,x_{j_{k}}\in A,\\ \text{$0\leq k \leq m-i$ and $i=1,\ldots,m$}\end{array}\right\rangle.\]
%If we take $k=0$ in the definition of $B_{m}$ then we recover 
For $1\leq l \leq m$, let:
\begin{equation}\label{eq:defekn}
\mathcal{E}_{l,m}=\left\langle\!  [x_{1},\ldots,x_{i}]^{2^{m-i-k}}\in\Gamma_{i}(H)\,:\!\!\begin{array}{r}\exists 1\leq j_{1}<\cdots<j_{k}\leq i\,:\,x_{j_{1}},\ldots,x_{j_{k}}\in A,\\ \text{$0\leq k \leq m-i$ and $l\leq i\leq m$}\end{array}\!\right\rangle.
\end{equation}
In the case that $A=\varnothing$, we will denote the subgroup $\mathcal{E}_{l,m}$ by $\widetilde{\mathcal{E}}_{l,m}$. This corresponds to taking $k=0$ in~(\ref{eq:defekn}), so:
\begin{equation}\label{eq:defekn1}
\widetilde{\mathcal{E}}_{l,m}=\bigl\langle  [x_{1},\ldots,x_{i}]^{2^{m-i}}\in\Gamma_{i}(H)\,:\, l\leq i\leq m\bigr\rangle.
\end{equation}
%, we define $\mathcal{E}_{l,m}$, and for $2\leq l \leq m$, $\widetilde{\mathcal{E}}_{l,m}$ by: 
%\begin{equation}\label{eq:defekn}
%\mathcal{E}_{l,m}=\left\langle  [x_{1},\ldots,x_{i}]^{2^{m-i-k}}\in\Gamma_{i}(H)\,:\!\!\begin{array}{r}\exists 1\leq j_{1}<\cdots<j_{k}\leq i\,:\,x_{j_{1}},\ldots,x_{j_{k}}\in A,\\ \text{$0\leq k \leq m-i$ and $l\leq i\leq m$}\end{array}\right\rangle.
%\end{equation}
%\begin{equation}\label{eq:defekn1}
%\widetilde{\mathcal{E}}_{l,m}=\left\langle  [x_{1},\ldots,x_{i}]^{2^{m-i}}\in\Gamma_{i}(H)\,:\, l\leq i\leq m\right\rangle.
%\end{equation}
%In fact, when we take $A=\emptyset$ we are considering only the elements of $\mathcal{E}_{l,m}$ such that $k=0$.%If we take $k=0$ \comj{and $l\geq 2$?}, the corresponding elements of $\mathcal{E}_{l,m}$ also belong to $\widetilde{\mathcal{E}}_{l,m}$. 
If $x=[x_{1},\ldots, x_{i}]\in \Gamma_{i}(H)$, the elements $x^{2^{m-i-k}}$ (resp.\ $x^{2^{m-i}}$) of $\mathcal{E}_{l,m}$ (resp.\ of $\widetilde{\mathcal{E}}_{l,m}$) given in~(\ref{eq:defekn}) (resp.\ in~(\ref{eq:defekn1})) will be termed \emph{generators} of $\mathcal{E}_{l,m}$ (resp.\ of $\widetilde{\mathcal{E}}_{l,m}$).
% only with elements of $\Gamma_{i}(H)$, where $l\leq i \leq m$. 
Note that $\Gamma_{m}(H) = \mathcal{E}_{m,m} \subset \mathcal{E}_{l+1,m}\subset \mathcal{E}_{l,m} \subset \mathcal{E}_{1,m}=B_{m}$
by~\cite[Problem~3, Section~5.3, p.~297]{MKS} for all $l=1,\ldots,m-1$, and that for all $1\leq l \leq m$, $\mathcal{E}_{l,m}$ is normal in $H$, since if $h\in H$ and $x^{2^{m-i-k}}$ is a generator of $\mathcal{E}_{l,m}$, where $x=[x_{1},\ldots,x_{i}]\in \Gamma_{i}(H)$, then $h[x_{1},\ldots,x_{i}]^{2^{m-i-k}}h^{-1}=[hx_{1}h^{-1},\ldots,hx_{i}h^{-1}]^{2^{m-i-k}}\in \mathcal{E}_{l,m}$ because $A$ is normal in $H$ or is empty. In particular, taking $A=\varnothing$, we have:
\begin{equation}\label{eq:inclElm}
\Gamma_{m}(H) = \widetilde{\mathcal{E}}_{m,m} \subset \widetilde{\mathcal{E}}_{l+1,m}\subset \widetilde{\mathcal{E}}_{l,m} \subset \widetilde{\mathcal{E}}_{1,m}=B_{m},
\end{equation}
and that $\widetilde{\mathcal{E}}_{l,m}$ is normal in $H$ for all $1\leq l \leq m$.
%We have analogous results for $\widetilde{\mathcal{E}}_{l,m}$. 
%Note that if $m\geq 2$ then $\widetilde{W}_{m}=\bigl\langle \!\bigl\langle a^{2^{m-1}}_{2},\,\widetilde{\mathcal{E}}_{2,m}\bigr\rangle \! \bigr\rangle_{H}$ \comj{Is `$\subset$' clear? \emph{i.e.}\ if $x\in \Gamma_{i}(H)$, is it clear that $x^{2^{m-i}}$ can be written as a product of generators of $\widetilde{\mathcal{E}}_{2,m}$?}.
%\comj{Perhaps we don't require the following anymore?} This also shows that any conjugate of a generator of $\mathcal{E}_{l,m}$ is also a generator.
%$l=2,\ldots,m$ and $\mathcal{E}_{l+1,m}\subset \mathcal{E}_{l,n}$ for  all $l=2,\ldots,m-1$. Besides, $\mathcal{E}_{l,m}$ is normal in $H$ for all $2\leq l \leq m$, since if $h\in H$ and $x=[x_{1},\ldots,x_{i}]\in \Gamma_{i}(H)$, then $h[x_{1},\ldots,x_{i}]^{2^{m-i-k}}h^{-1}=[hx_{1}h^{-1},\ldots,hx_{i}h^{-1}]^{2^{m-i-k}}\in \mathcal{E}_{l,m}$ because $A$ is normal in $H$, for all $i\geq1$. If $x=[x_{1},\ldots, x_{i}]\in \Gamma_{i}(H)$, the elements $x^{2^{m-i-k}}$ will be termed \emph{generators} of $\mathcal{E}_{l,m}$. Observe that any conjugate of a generator of $\mathcal{E}_{l,m}$ is also a generator.

\begin{lem}\label{lemaprinc} Let $y\in H$, let $m\geq 1$, and let $1\leq i \leq m$. If $x=[x_{1},\ldots, x_{i}]$ is an element of $\Gamma_{i}(H)$ for which $x^{2^{m-i-k}}$ is a generator of $\mathcal{E}_{i,m}$, where $0\leq k \leq m-i$ (resp.\ $x^{2^{m-i}}$ is a generator of $\widetilde{\mathcal{E}}_{i,m}$), then:
\begin{align}\label{cong}[x^{2^{m-i-k}},y] &\equiv[x,y]^{2^{m-i-k}} \bmod{\mathcal{E}_{i+1,m+1}}\; \text{and}\\
%\end{equation}
%and
%\begin{equation}
\label{congWn}[x^{2^{m-i}},y]&\equiv[x,y]^{2^{m-i}} \bmod{\widetilde{\mathcal{E}}_{i+1,m+1}}
\end{align}
respectively. %In particular, the congruence~(\ref{cong}) (resp.~(\ref{congWn})) is valid modulo $B_{m+1}$ (resp.\ modulo $\widetilde{W}_{m+1}$).
\end{lem}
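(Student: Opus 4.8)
The plan is to obtain the congruence~(\ref{congWn}) as the special case $A=\varnothing$ (equivalently $k=0$) of~(\ref{cong}), so that it suffices to prove~(\ref{cong}). I would argue by induction on $N:=m-i-k\geq 0$. The case $N=0$ is immediate, both sides being equal to $[x,y]$. For the inductive step I would apply the case $n=1$ of Lemma~\ref{colchete}, that is identity~(\ref{itercomm1}), to the element $u=x^{2^{N-1}}$, which gives
\[[x^{2^{N}},y]=[x^{2^{N-1}},x^{2^{N-1}},y]\cdot[x^{2^{N-1}},y]^{2}.\]
Since $x^{2^{N-1}}$ is again a generator of $\mathcal{E}_{i,m-1}$ (same weight $i$ and same $k$ entries in $A$, with $N-1=(m-1)-i-k$), the induction hypothesis applies to it and yields $[x^{2^{N-1}},y]=[x,y]^{2^{N-1}}c$ for some $c\in\mathcal{E}_{i+1,m}$.

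I would then treat the two factors separately. Squaring the second factor and using the normality of $\mathcal{E}_{i+1,m}$ in $H$ expresses $[x^{2^{N-1}},y]^{2}$ as $[x,y]^{2^{N}}$ multiplied by $c^{2}$ and by a commutator of the shape $[c^{-1},[x,y]^{2^{N-1}}]$. For the first factor, the hypothesis also shows that $[x^{2^{N-1}},y]$ itself lies in $\mathcal{E}_{i+1,m}$ (via collection, $[x,y]^{2^{N-1}}$ is congruent to a product of generators of $\mathcal{E}_{i+1,m}$), so $[x^{2^{N-1}},x^{2^{N-1}},y]$ is a commutator of an element of $\mathcal{E}_{i,m-1}$ with an element of $\mathcal{E}_{i+1,m}$. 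The whole step therefore reduces to two structural properties of the doubly-indexed filtration $\{\mathcal{E}_{l,m}\}$: that $c\in\mathcal{E}_{i+1,m}$ implies $c^{2}\in\mathcal{E}_{i+1,m+1}$, and that a commutator of elements from two levels of the filtration lies in a level of strictly larger weight index and larger $m$, in particular inside $\mathcal{E}_{i+1,m+1}$.

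I would establish both properties by the basic commutator collection process of~\cite[Section~5.3]{MKS}, together with the normality of each $\mathcal{E}_{l,m}$ and the inclusions~(\ref{eq:inclElm}). Squaring a generator $[z_{1},\ldots,z_{j}]^{2^{m-j-k'}}$ of $\mathcal{E}_{i+1,m}$ produces the generator $[z_{1},\ldots,z_{j}]^{2^{(m+1)-j-k'}}$ of $\mathcal{E}_{i+1,m+1}$; squaring or commutating a product introduces cross terms that are commutators of weight at least $2(i+1)$, and collection rewrites each such term as a higher $2$-power of a deeper basic commutator, which is absorbed by a nested downward induction on weight terminating at $\Gamma_{m+1}(H)=\mathcal{E}_{m+1,m+1}\subset\mathcal{E}_{i+1,m+1}$. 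Granting these, the first factor and both correction terms $c^{2}$ and $[c^{-1},[x,y]^{2^{N-1}}]$ belong to $\mathcal{E}_{i+1,m+1}$, and the induction closes. I expect the genuine difficulty to lie entirely in this last bookkeeping: one must follow the weight index, the $2$-adic exponent and the exact number $k$ of $A$-entries simultaneously through collection, and in particular handle the fact that $[x,y]=[[x_{1},\ldots,x_{i}],y]$ is not itself right-normed and must first be rewritten in the basic commutators defining $\mathcal{E}_{i+1,m}$. The $\widetilde{\mathcal{E}}$ case $A=\varnothing$ follows the identical scheme with lighter notation.
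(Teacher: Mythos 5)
Your reduction of~(\ref{congWn}) to~(\ref{cong}), the base case $N=0$, the use of the $n=1$ case of Lemma~\ref{colchete}, and the observation that $[x,y]^{2^{N-1}}$ is (the inverse of) the generator $[y,x]^{2^{m-(i+1)-k}}$ of $\mathcal{E}_{i+1,m}$ are all sound. The gap lies in the two ``structural properties'' to which you reduce the inductive step, namely that $c\in\mathcal{E}_{i+1,m}$ implies $c^{2}\in\mathcal{E}_{i+1,m+1}$, and that commutators between levels of the filtration land in $\mathcal{E}_{i+1,m+1}$. These are not consequences of the collection process of~\cite{MKS} together with normality and~(\ref{eq:inclElm}): a cross term such as $[u^{2^{a}},v^{2^{b}}]$, with $u^{2^{a}},v^{2^{b}}$ generators of $\mathcal{E}_{i+1,m}$, lies in $\Gamma_{2(i+1)}(H)$ by weight considerations, but the inclusion $\Gamma_{2(i+1)}(H)\subset\mathcal{E}_{i+1,m+1}$ is only available from the definitions when $2(i+1)\geq m+1$; in general one must track the $2$-adic exponents, i.e.\ one needs precisely a congruence $[u^{2^{a}},v^{2^{b}}]\equiv[u,v]^{2^{a+b}}$ modulo deeper terms --- which is the very statement being proved, applied at higher weight. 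Your sentence ``collection rewrites each such term as a higher $2$-power of a deeper basic commutator'' is exactly the unproved claim. Indeed, in the paper these properties appear as Corollary~\ref{corolario} and as the claims inside the proof of Lemma~\ref{lemaprinc}, and they are deduced \emph{from} the lemma, not used to prove it; as written, your argument defers the entire difficulty to this step.

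Moreover, the gap cannot be closed within your induction scheme in the general case $A\neq\varnothing$. The natural way to obtain the structural properties is to apply the congruence to the generators of $\mathcal{E}_{i+1,m}$: such a generator is $w^{2^{m-j-k'}}$ with $w\in\Gamma_{j}(H)$, $j\geq i+1$ and $0\leq k'\leq m-j$, so its induction parameter is $N'=m-j-k'$, which can be as large as $m-i-1$ (take $j=i+1$, $k'=0$). Whenever $k\geq 1$ this satisfies $N'\geq m-i-k=N$, so the instances you need are \emph{not} covered by the hypothesis ``the congruence holds for all parameters with smaller $N$''. This is exactly why the paper runs a double induction: an outer induction on the level $m$ (induction hypothesis~1) and an inner \emph{reverse} induction on the weight $i$ at the fixed level $m+1$ (induction hypothesis~2), so that when the correction terms $\alpha_{s}$ --- generators of $\mathcal{E}_{i,m+1}$ of arbitrary weight $l\geq i$ and arbitrary number of $A$-entries --- appear, their congruences are already available. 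Your ``nested downward induction on weight'' gestures at this, but the weight induction must be the inner loop of an induction on the level, not a subroutine inside an induction on $N$. (Restricted to $A=\varnothing$ your scheme can in fact be repaired, since every generator of $\widetilde{\mathcal{E}}_{i+1,m}$ then has $N'=m-j\leq m-i-1=N-1$, so the structural facts do follow from your induction hypothesis; but your stated plan is to prove the general case~(\ref{cong}) first and specialise, and there the obstruction is real.)
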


\begin{remark}\label{rem:inclWn}
Let $m\geq 1$, and let $1\leq i \leq m$. Since $\mathcal{E}_{i+1,m+1}\subset B_{m+1}$ (resp.\ $\widetilde{\mathcal{E}}_{i+1,m+1}\subset \widetilde{\mathcal{E}}_{2,m+1} \subset \widetilde{W}_{m+1}$ by~(\ref{eq:defwntilde})), the congruence~(\ref{cong}) (resp.~(\ref{congWn})) is also valid modulo $B_{m+1}$ (resp.\ modulo $\widetilde{W}_{m+1}$).
\end{remark}

\begin{proof} 
It suffices to prove~(\ref{cong}), since then the congruence~(\ref{congWn}) follows by taking $A=\varnothing$.
%We will prove only~(\ref{cong}), because the congruence~(\ref{cong}) follows consdering $A=\emptyset$.
We will do so by induction on $m$. If $m=1$ then $i=1$ and $k=0$, and the congruence is in fact an equality. If $m=2$, we consider two cases:
\begin{itemize}
\item if $i=2$, or if $i=k=1$, then $x\in \Gamma_{2}(H)$, and the two sides of~(\ref{cong}) are equal.
%(then $k=0$), so $x\in \Gamma_{2}(H)$. Then the two sides of~(\ref{cong}) are equal.
	
\item if $i=1$ and $k=0$ then $x\in \Gamma_{1}(H)=H$. Thus $[x^{2},y]=[x,x,y][x,y]^{2}$ by~(\ref{itercomm1}), and $[x,x,y]\in \Gamma_{3}(H)$, so $[x,x,y]\in \mathcal{E}_{2,3}$, and we obtain~(\ref{cong}). 
%If $k=1$, then $m-i-k=0$ and the two sides of~(\ref{cong}) are equal.
%(then $k=0$ or $k=1$). If $k=0$, then $x\in \Gamma_{1}(H)=H$. Then $[x^{2},y]=[x,x,y][x,y]^{2}$ by~(\ref{itercomm1}), and $[x,x,y]\in \Gamma_{3}(H)$, so $[x,x,y]\in \mathcal{E}_{2,3}$, and we obtain~(\ref{cong}). If $k=1$, then $m-i-k=0$ and the two sides of~(\ref{cong}) are equal.
\end{itemize}
This proves the result if $m=2$. We now consider the general case. %\comj{In the induction hypotheses, should $x$ be a generator (and not just an element of $\Gamma_{i}(H)$ or $\Gamma_{j}(H)$)?}

\smallskip

\noindent
\textbf{Induction hypothesis 1:} suppose that the congruence~(\ref{cong}) holds for some $m\geq 1$. 

Let us show by induction that the result holds for $m+1$ \emph{i.e.}\ if $1\leq i \leq m+1$,
%$i\in\left\{2, \ldots, m+1\right\}$, 
$x\in \Gamma_{i}(H)$, $x^{2^{m+1-i-k}}$ is a generator of $\mathcal E_{i,m+1}$, $0\leq k \leq m+1-i$ and $y\in H$, then: 
\begin{equation}\label{cong1}
[x^{2^{m+1-i-k}},y]\equiv [x,y]^{2^{m+1-i-k}} \bmod{\mathcal{E}_{i+1,m+2}}.
\end{equation}
%\textbf{Induction hypothesis 1:} Suppose that the lemma is true for $n$, i.e., if $y\in H$ and $x\in \Gamma_{i}(H)$, where $1\leq i\leq n$, then:
%\[[x^{2^{n-i}},y]\equiv[x,y]^{2^{n-i}}\bmod{\,\mathcal{E}_{i+1,n+1}.\]
%
%ou equivalentemente, existem $\alpha_{1},\ldots,\alpha_{t}$ geradores de $\mathcal{E}_{i+1,n+1}$, tal que $[x^{2^{n-i}},y]=[x,y]^{2^{n-i}}\alpha_{1}\cdots\alpha_{t}$.
%
%Let us show that the result holds for $n+1$, i.e., if $x\in \Gamma_{i}(H)$, where $i=1,\ldots,n+1$, we will show that \begin{equation}\label{resultado}[x^{2^{n+1-i}},y]\equiv[x,y]^{2^{n+1-i}}\,\,\,\mbox{mod}\,\mathcal{E}_{i+1,n+2}.\end{equation} 
This will be achieved by making a second induction hypothesis as follows.

\smallskip

\noindent
\textbf{Induction hypothesis 2:} let $1\leq i \leq m+1$ be such that: 
\begin{equation}\label{cong2}
[x^{2^{m+1-j-k}},y]\equiv [x,y]^{2^{m+1-j-k}} \bmod{\mathcal{E}_{j+1,m+2}}
\end{equation}
for all $i\leq j\leq m+1$, where $x\in \Gamma_{j}(H)$, $x^{2^{m+1-j-k}}$ is a generator of $\mathcal E_{j,m+1}$, $0\leq k \leq m+1-j$ and $y\in H$. If $i=m+1$ then~(\ref{cong2}) holds trivially. So suppose that~(\ref{cong2}) is valid for some $2\leq i \leq m+1$, and let us prove by reverse induction on $i$ that it also holds for $i-1$. By induction hypothesis~2, it suffices to prove~(\ref{cong2}) for $j=i-1$. Let $y\in H$, and let $x= [x_{1},\ldots, x_{i-1}]\in \Gamma_{i-1}(H)$, where $x^{2^{m+1-(i-1)-k}}$ is a generator of $\mathcal E_{i-1,m+1}$ and $0\leq k \leq m+1-(i-1)$. Then $[x^{2^{(m+1)-(i-1)-k}},y]=[x^{2^{m+2-i-k}},y]$. If $k=m+2-i$ 
%then $[x^{2^{(m+1)-(i-1)-k}},y]$ is equal to $[x, y]$  so 
then~(\ref{cong2}) is an equality. So assume that $0\leq k\leq m+1-i$. By Proposition~\ref{colchete}, $[x^{2^{(m+1)-(i-1)-k}},y]$ is equal to: 
%Suponha que o resultado (\ref{resultado}) seja válido para todo $j=i,\ldots,n+1$, ($2\leq i \leq n$). Provemos que o mesmo \'e válido para $i-1$. Ou seja, temos a seguinte hipótese de indução:
%\textbf{Induction hypothesis 2:} For all $j$, $i\leq j \leq n+1$, if $x\in \Gamma_{j}(H)$, then \[[x^{2^{n+1-j}},y]\equiv[x,y]^{2^{n+1-j}}\,\,\,\mbox{mod}\,\mathcal{E}_{j+1,n+2}.\]
%
%We shall prove that the result holds for $j=i-1$. If $i=n+1$, the result (\ref{resultado}) is trivial. Let $x\in \Gamma_{i-1}(H)$. Then:
\begin{equation}\label{[x,y]}
%\begin{array}{l}[x^{2^{(n+1)-(i-1)}},y]=[x^{2^{n+2-i}},y]\\
\underbrace{[x,x,x^{2},\ldots,x^{2^{m+1-i-k}},y][x,x^{2},\ldots,x^{2^{m+1-i-k}},y]^{2}\cdots[x^{2^{m-i-k}},x^{2^{m+1-i-k}},y]^{2}}_{(\ast)}\underbrace{[x^{2^{m+1-i-k}},y]^{2}}_{(\ast\ast)}.
%\end{array}
\end{equation}
Using induction hypotheses~1 and~2, we will first show that the expression~$(\ast\ast)$ is congruent to $[x,y]^{2^{m+2-i-k}}$ modulo $\mathcal{E}_{i,m+2}$, and then that the expression~$(\ast)$ belongs to $\mathcal{E}_{i,m+2}$, from which we will conclude by induction that $[x^{2^{n+2-i-k}},y]\equiv [x,y]^{2^{n+2-i-k}}$ modulo $\mathcal{E}_{i,m+2}$ for all $i=1,\ldots,m+1$.

To show that $[x^{2^{m+1-i-k}},y]^{2}\equiv[x,y]^{2^{m+2-i-k}}$ modulo $\mathcal{E}_{i,m+2}$, since $x=[x_{1},\ldots,x_{i-1}]\in \Gamma_{i-1}(H)$ and $x^{2^{m+1-(i-1)-k}}$ is a generator of $\mathcal E_{i-1,m+1}$, there exist $1\leq j_{1}<\cdots<j_{k}\leq i-1$ such that $x_{j_{1}},\ldots, x_{j_{k}}\in A$, and therefore $[x_{1},\ldots,x_{i-1}]^{2^{m-(i-1)-k}}$ is a generator of $\mathcal E_{i-1,m}$.
%note that $x=[x_{1},\ldots,x_{i-1}]\in \Gamma_{i-1}(H)$, and exists $1\leq j_{1}<\cdots<j_{k}\leq i-1$ such that $x_{j_{1}},\ldots, x_{j_{k}}\in A$, therefore $[x_{1},\ldots,x_{i-1}]^{2^{m-(i-1)-k}}\in B_{m}$, so $x^{2^{m+1-i-k}}\in B_{m}$ . 
Applying induction hypothesis~1, we have:
\[[x^{2^{m+1-i-k}},y]\equiv[x,y]^{2^{m+1-i-k}}\bmod{\mathcal{E}_{i,m+1}}.\]
It follows that there exist generators $\alpha_{1},\ldots,\alpha_{t}$ 
%\comj{as defined, perhaps these should be conjugates, but since $\mathcal{E}_{i,n+1}$ is normal, probably this isn't necessary.} 
of $\mathcal{E}_{i,m+1}$ and $\delta_{1},\ldots, \delta_{t}\in\left\{1,-1\right\}$ for which $[x^{2^{m+1-i-k}},y]=[x,y]^{2^{m+1-i-k}}\alpha^{\delta_{1}}_{1}\cdots\alpha^{\delta_{t}}_{t}$, and so:
%troca j por i
%\[\begin{array}{rl}[x^{2^{n+1-i}},y]=&[x,y]^{2^{n+1-i}}\alpha_{1}\cdots\alpha_{t},\,\mbox{so}\,\\
%\begin{align*}
%[x^{2^{n+1-i}},y]^{2}&=([x,y]^{2^{n+1-i}}\alpha_{1}\cdots\alpha_{t})^{2}\\
%&=[x,y]^{2^{n+2-i}}\left[[x,y]^{-2^{n+1-i}},\alpha_{1}\cdots\alpha_{t}\right]\alpha^{2}_{1}\left[\alpha^{-1}_{1},\alpha_{2}\cdots\alpha_{t}\right]\alpha^{2}_{2}\cdots\left[\alpha^{-1}_{t-1},\alpha_{t}\right]\alpha^{2}_{t}.
%\end{align*}
\begin{align}
[x^{2^{m+1-i-k}},y]^{2}=&([x,y]^{2^{m+1-i-k}}\alpha^{\delta_{1}}_{1}\cdots\alpha^{\delta_{t}}_{t})^{2} 
%\notag\\
=[x,y]^{2^{m+2-i-k}}\bigl[[x,y]^{-2^{n+1-i-k}},\alpha^{\delta_{1}}_{1}\cdots\alpha^{\delta_{t}}_{t}\bigr]\cdot\notag\\
&\alpha^{2{\delta_{1}}}_{1} [\alpha^{-{\delta_{1}}}_{1},\alpha^{\delta_{2}}_{2}\cdots\alpha^{\delta_{t}}_{t}]\alpha^{2{\delta_{2}}}_{2}\cdots [\alpha^{-{\delta_{t-1}}}_{t-1},\alpha^{\delta_{t}}_{t}]\alpha^{2{\delta_{t}}}_{t}.\label{congg}
\end{align}
%\end{array}\]
We claim that:
\begin{enumerate}
\item\label{it:claim1} $\alpha^{2{\delta_{1}}}_{1},\ldots,\alpha^{2{\delta_{t}}}_{t}\in \mathcal{E}_{i,m+2}$.
	
\item\label{it:claim2} $[\alpha_{s},\theta],[\alpha^{-1}_{s},\theta]\in \mathcal{E}_{i,m+2}$ for all $s=1,\ldots,t$ and all $\theta\in H$.
%\end{enumerate}

%We prove this \comj{what? I think you mean (2). If so then don't break the paragraph, and delete the \texttt{end\{enumerate\}} and the following \texttt{begin\{enumerate\}}.} as follows. Since $\alpha^{-1}_{s}\in \mathcal{E}_{i,n+1}$, for each $s=1,\ldots, t$, we have $\alpha_{s}=\beta^{2^{n+1-l_{s}}}$, where $\beta\in \Gamma_{l}(H)$ and $l_{s}\geq i$. From  induction hypothesis~2, \[[\alpha^{-1}_{s},\theta]=[\beta^{-(2^{n+1-l_{s}})},\theta]\equiv[\beta^{-1},\theta]^{2^{n+1-l_{s}}}\,\,\,\mbox{mod}\,  \mathcal{E}_{l_{s}+1,n+2}.\] Due to the fact that $\mathcal{E}_{l_{s}+1,n+2}\subset \mathcal{E}_{i,n+2}$, this congruence is also valid modulo $\mathcal{E}_{i,n+2}$. Moreover, $[\beta,\theta]$ belongs to $\Gamma_{l_{s}+1}(H)$ is raised to the power $2^{(n+2)-(l_{s}+1)}$, thus $[\beta,\theta]^{2^{n+1-l_{s}}}\in \mathcal{E}_{l_{s}+1,n+2}$, and since $l_{s}+1\geq i$, $[\alpha^{-1}_{s},\theta]\in \mathcal{E}_{i,n+2}$ for all $s=1,\ldots,t$.

%\begin{enumerate}
%\setcounter{enumi}{2}
\item\label{it:claim3} $\bigl[[x,y]^{-2^{m+1-i-k}},\theta\bigr]\in \mathcal{E}_{i,m+2}$ for all $\theta\in H$. 
\end{enumerate}
Claim~(\ref{it:claim1}) follows from~(\ref{eq:defekn}). To prove~(\ref{it:claim2}), let $s\in\{1,\ldots, t\}$. Since $\alpha_{s}$ is a generator of $\mathcal{E}_{i,m+1}$, $\alpha_{s}=\beta^{2^{m+1-l-\widetilde{k}}}$, where $\beta=[\beta_1,\ldots,\beta_l]\in \Gamma_{l}(H)$, $\beta_{j_{1}},\ldots,\beta_{j_{\widetilde{k}}}\in A$, $i\leq l\leq m+1$ and $0\leq \widetilde{k}\leq m+1-l$.
%where $\beta^{2^{n+1-l-\tilde{k}}}\in B_{m+1}$ and , \emph{i.e.} there exist $\beta_{j_{1}},\ldots,\beta_{j_{\tilde{k}}}\in A_{n}$. 
By induction hypothesis~2,
\begin{equation}\label{eq:alsth}
[\alpha_{s},\theta]=[\beta^{2^{m+1-l-\widetilde{k}}},\theta]\equiv[\beta,\theta]^{2^{m+1-l-\widetilde{k}}}\bmod{\mathcal{E}_{l+1,m+2}}.
\end{equation}
Moreover, $[\theta,\beta]=[\theta,\beta_{1},\ldots,\beta_{l}]\in\Gamma_{l+1}(H)$, where $\beta_{j_{1}},\ldots,\beta_{j_{\widetilde{k}}}\in A$, so $[\theta,\beta]^{2^{(m+2)-(l+1)-\widetilde{k}}}\in \mathcal{E}_{l+1,m+2}$ since $i\leq l+1\leq m+2$ and $0\leq \widetilde{k}\leq (m+2)-(l+1)$. Hence:
\begin{equation*}
[\beta,\theta]^{2^{m+1-l-\widetilde{k}}}= \bigl([\theta,\beta]^{2^{m+1-l-\widetilde{k}}}\bigr)^{-1}\in\mathcal{E}_{l+1,m+2}.
\end{equation*}
The fact that $\mathcal{E}_{l+1,m+2}\subset \mathcal{E}_{i,m+2}$ implies that the congruence~(\ref{eq:alsth}) is also valid modulo $\mathcal{E}_{i,m+2}$, from which it follows using~(\ref{eq:alsth}) that $[\alpha_{s},\theta]\in \mathcal{E}_{l+1,m+2} \subset \mathcal{E}_{i,m+2}$.
%, as $[\beta,\theta]=[\theta,\beta]^{-1}$, follows that $[\beta,\theta]^{2^{m+1-l-\tilde{k}}}\in\mathcal{E}_{l+1,m+2}$, since $m+2\geq l+1\geq i$, $[\alpha_{s},\theta]\in \mathcal{E}_{i,m+2}$. 
Further, $[\alpha^{-1}_{s},\theta]=\alpha^{-2}_{s}[\alpha_{s},\theta](\theta \alpha^{2}_{s}\theta^{-1}) \in \mathcal{E}_{i,m+2}$, using also~(\ref{it:claim1}) and the fact that $\mathcal{E}_{i,m+2}$ is normal in $H$, which proves~(\ref{it:claim2}).
%, from the fact that $\mathcal{E}_{i,m+2}$ is a normal subgroup of $H$, the previous case and~(\ref{it:claim1}) follows that $[\alpha^{-1}_{s},\theta]\in\mathcal{E}_{i,m+2}$ which proves~(\ref{it:claim2}). 
To prove~(\ref{it:claim3}), since $x^{2^{m+1-(i-1)-k}}\in \mathcal{E}_{i-1,m+1}$, where $x=[x_{1},\ldots,x_{i-1}]\in \Gamma_{i-1}(H)$ and $0\leq k\leq m+1-i$, and there exist $1\leq j_{1}<\cdots<j_{k}\leq i-1$ such that $x_{j_{1}},\ldots, x_{j_{k}}\in A$, it follows that $[y,x]=[y,x_{1},\ldots,x_{i-1}]\in \Gamma_{i}(H)$ and $[y,x]^{2^{(m+1)-i-k}}\in \mathcal{E}_{i,m+1}$. 
%$x\in\Gamma_{i-1}(H)$ with $x^{2^{m+1-(i-1)-k}}\in B_{m+1}$, $x=[x_{1},\ldots,x_{i-1}]\in \Gamma_{i-1}(H)$, and exists $1\leq j_{1}<\cdots<j_{k}\leq i-1$ such that $x_{j_{1}},\ldots, x_{j_{k}}\in A$, it follows that $[y,x]=[y,x_{1},\ldots,x_{i-1}]\in \Gamma_{i}(H)$
%%, and has at least $k$ elements, $x_{j_{1}},\ldots, x_{j_{k}}$ in $A$, then 
%and $[y,x]^{2^{(m+1)-i-k}}\in B_{m+1}$. 
So applying induction hypothesis~2, we have:
\begin{equation}\label{eq:ei1n2}
\bigl[[x,y]^{-2^{m+1-i-k}},\theta\bigr]=\bigl[[y,x]^{2^{m+1-i-k}},\theta\bigr]\equiv\bigl[[y,x],\theta\bigr]^{2^{m+1-i-k}}\bmod{\mathcal{E}_{i+1,m+2}}.
\end{equation} 
Now
\begin{equation*}
\left[[y,x],\theta\right]=[\theta, y,x]^{-1}= \left[[y,x],\theta\right]=[\theta, y,x_{1},\ldots,x_{i-1}]^{-1},
\end{equation*}
and since $[\theta, y,x_{1},\ldots,x_{i-1}]^{2^{(m+2)-(i+1)-k}}\in \mathcal{E}_{i+1,m+2}$, we conclude that $\bigl[[x,y]^{-2^{m+1-i-k}},\theta\bigr]\in \mathcal{E}_{i+1,m+2} \subset \mathcal{E}_{i,m+2}$, and this proves claim~(\ref{it:claim3}).
%Now $\mathcal{E}_{i+1,m+2}\subset \mathcal{E}_{i,m+2}$, so the congruence~(\ref{eq:ei1n2}) is also valid modulo $\mathcal{E}_{i,m+2}$. Further, $\left[[y,x],\theta\right]=[\theta, y,x]^{-1}$, and $[\theta,y,x]\in \Gamma_{i+1}(H)$ and has at least $k$ elements, $x_{j_{1}},\ldots, x_{j_{k}}$ in $A$, therefore $[\theta,y,x]^{2^{(m+2)-(i+1)-k}}\in \mathcal{E}_{i+1,m+2}$, and thus $\bigl[[x,y]^{-2^{m+1-i-k}},\theta\bigr]\in \mathcal{E}_{i,m+2}$, which proves claim~(\ref{it:claim3}).
Thus it follows from~(\ref{congg}) and claims~(\ref{it:claim1}),~(\ref{it:claim2}) and~(\ref{it:claim3}) that: \[[x^{2^{m+1-i-k}},y]^{2}\equiv[x,y]^{2^{m+2-i-k}}\bmod{\mathcal{E}_{i,m+2}},\] which proves that the expression~$(\ast\ast)$ is congruent to $[x,y]^{2^{m+2-i-k}}$ modulo $\mathcal{E}_{i,m+2}$.

To see that the expression~$(\ast)$ belongs to $\mathcal{E}_{i,m+2}$, notice that each of its terms is a commutator, so can be written as a product of conjugates of the element $\bigl[x^{2^{m-i-k}},[x^{2^{m+1-i-k}},y]\bigr]$ or its inverse. Since $\mathcal{E}_{i,m+2}$ is normal in $H$, it thus suffices to show that $\bigl[\theta,[x^{2^{m+1-i-k}},y]\bigr]^{-1}= \bigl[[x^{2^{m+1-i-k}},y],\theta\bigr]\in \mathcal{E}_{i,m+2}$ %\comj{Doesn't this follow from claim~(\ref{it:claim3}) (there is just a minus sign to deal with, but it suffices to note that an expression of the form $[z^{-1},\theta]$ may be written as $z^{-1}\theta[z,\theta^{-1}]\theta^{-1}z$), or am I missing some cases?} 
for all $y,\theta\in H$ and for all $x=[x_{1},\ldots, x_{i-1}]\in \Gamma_{i-1}(H)$ for which $x^{2^{m+1-(i-1)-k}}\in \mathcal{E}_{i-1,m+1}$, where $i\geq 3$, $0\leq k \leq m+1-i$, $x_{j_{1}},\ldots,x_{j_{k}}\in A$ and $1\leq j_{i}<\cdots< j_{k}\leq i-1$. To do so, note that $x^{2^{m+1-i-k}} =x^{2^{m-(i-1)-k}}\in \mathcal{E}_{i-1,m}$, so $[x^{2^{m+1-i-k}},y]\equiv[x,y]^{2^{m+1-i-k}}\bmod{\mathcal{E}_{i,m+1}}$ by induction hypothesis~1.
%, \[\bigl[[x^{2^{n+1-i}},y],\theta\bigr]\in \mathcal{E}_{i,n+2},\]
%The result will then follow from the fact  that $\mathcal{E}_{i,m+2}$ is a normal subgroup of $H$. 
%To see that $\bigl[[x^{2^{m+1-i-k}},y],\theta\bigr]\in \mathcal{E}_{i,m+2}$, since $x\in \Gamma_{i-1}(H)$, an exists $1\leq j_{i}<\cdots< j_{k}\leq i-1$, such that $x_{j_{1}},\ldots,x_{j_{k}}\in A$, follows that $x^{2^{m-(i-1)-k}}=x^{2^{m+1-i-k}}\in B_{m}$, so we have $[x^{2^{m+1-i-k}},y]\equiv[x,y]^{2^{m+1-i-k}}\bmod{\mathcal{E}_{i,m+1}}$
%by induction hypothesis~1. 
Thus there exist generators $\alpha_{1},\ldots,\alpha_{t}$ of $\mathcal{E}_{i,m+1}$, and $\delta_{1},\ldots, \delta_{t}\in \left\{1,-1\right\}$ such that $[x^{2^{m+1-i-k}},y]=[x,y]^{2^{m+1-i-k}}\alpha^{\delta_{1}}_{1}\cdots\alpha^{\delta_{t}}_{t}$, and hence:
%\comj{what does $\equiv$ mean here?}
%
%\[\begin{array}{rl}[x^{2^{n+1-i}},y]&=[x,y]^{2^{n+1-i}}\alpha_{1}\cdots\alpha_{t},\,\mbox{so}\\
\begin{align}
\bigl[[x^{2^{m+1-i-k}},y], \theta\bigr]&=\bigl[[x,y]^{2^{m+1-i-k}}\alpha^{\delta_{1}}_{1}\cdots\alpha^{\delta_{t}}_{t},\theta\bigr]\notag\\
&=\bigl([x,y]^{2^{m+1-i-k}}[\alpha^{\delta_{1}}_{1}\cdots\alpha^{\delta_{t}}_{t},\theta][x,y]^{-2^{m+1-i-k}}\bigr)\bigl[[x,y]^{2^{m+1-i-k}},\theta\bigr].\label{eq:xytheta}
%&\equiv\left[[x,y]^{2^{n+1-i}},\theta\right]\left[\alpha_{1},\theta\right]\ldots\left[\alpha_{t},\theta\right]\quad\text{up to conjugacy.}
\end{align}
Further, $[\alpha^{\delta_{1}}_{1}\cdots\alpha^{\delta_{t}}_{t},\theta]$ belongs to the normal closure of $\bigl\{ \left[\alpha^{\delta_{1}}_{1},\theta\right],\ldots,\left[\alpha^{\delta_{t}}_{t},\theta\right]\bigr\}$ in $H$. This may be seen by applying reverse induction on $1\leq s \leq t$, and by noting that for $s\geq 2$, 
\begin{equation}\label{eq:conjalphat}
[\alpha^{\delta_{s-1}}_{s-1}\alpha^{\delta_{s}}_{s}\cdots\alpha^{\delta_{t}}_{t},\theta]=(\alpha^{\delta_{s-1}}_{s-1}[\alpha^{\delta_{s}}_{s}\cdots\alpha^{\delta_{t}}_{t},\theta]\alpha^{-\delta_{s-1}}_{s-1})[\alpha^{\delta_{s-1}}_{s-1},\theta].
\end{equation}
%Notice that 
%\begin{align*}
%[\alpha_{1}\cdots\alpha_{t},\theta]&=\alpha_{1}([\alpha_{2}\cdots\alpha_{t},\theta])\alpha^{-1}_{1}[\alpha_{1},\theta]\\
%&=\alpha_{1}\Bigg(\alpha_{2}\Big(\cdots\big(\alpha_{t-1}[\alpha_{t},\theta]\alpha^{-1}_{t-1}[\alpha_{t-1},\theta]\big)\cdots\Big)\alpha^{-1}_{2}[\alpha_{2},\theta]\Bigg)\alpha^{-1}_{1}[\alpha_{1},\theta]
%\end{align*}
%By~(\ref{eq:xytheta}) and~(\ref{eq:conjalphat}), we see
%We can see 
Then $\bigl[[x^{2^{n+1-i-k}},y], \theta\bigr]\in \bigl\langle\!\bigl\langle \bigl[[x,y]^{2^{n+1-i-k}},\theta\bigr],\left[\alpha^{\delta_{1}}_{1},\theta\right],\ldots,\left[\alpha^{\delta_{t}}_{t},\theta\right] \bigr\rangle\!\bigr\rangle_{H}$ by~(\ref{eq:xytheta}) and~(\ref{eq:conjalphat}). 
%belongs to the normal closure of $\bigl\{\bigl[[x,y]^{2^{n+1-i}},\theta\bigr],\left[\alpha_{1},\theta\right],\ldots,\left[\alpha_{t},\theta\right]\bigr\}$ in $H$. 
Now by claims~(\ref{it:claim2}) and~(\ref{it:claim3}), the elements
$[\alpha^{\delta_{1}}_{1},\theta],\ldots,[\alpha^{\delta_{t}}_{t},\theta]$ and $\bigl[[x,y]^{2^{m+1-i-k}},\theta\bigr]$ belong to $\mathcal{E}_{i,m+2}$, and
%, and by~(\ref{it:claim3}), we see that $\bigl[[x,y]^{2^{n+1-i}},\theta\bigr]\in \mathcal{E}_{i,n+2}$. 
since $\mathcal{E}_{i,m+2}$ is normal in $H$, we conclude that the expression~$(\ast)$ belongs to $\mathcal{E}_{i,m+2}$. This completes the proof of~(\ref{cong2}) for $i-1$, and so by induction,~(\ref{cong2}) holds for all $1\leq i \leq m+1$, which is exactly~(\ref{cong1}), By induction, we conclude that~(\ref{cong}) holds, and this completes the proof of the lemma.
\end{proof}

Lemma~\ref{lemaprinc} has the following consequences. 

\begin{coro}\label{corolario} For all $1\leq l \leq m$, $[\mathcal{E}_{l,m}, H]\subset \mathcal{E}_{l+1,m+1}$ and $[\widetilde{\mathcal{E}}_{l,m},H]\subset \widetilde{\mathcal{E}}_{l+1,m+1}$.
\end{coro}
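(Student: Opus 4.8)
The plan is to reduce the corollary to the congruences established in Lemma~\ref{lemaprinc}, and then to perform the index bookkeeping needed to place the resulting power of a commutator inside $\mathcal{E}_{l+1,m+1}$ (resp.\ $\widetilde{\mathcal{E}}_{l+1,m+1}$). I would treat the two assertions in parallel, deducing the statement for $\widetilde{\mathcal{E}}$ from that for $\mathcal{E}$ by taking $A=\varnothing$, exactly as in the proof of Lemma~\ref{lemaprinc}.

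First I would reduce to generators. Since $\mathcal{E}_{l,m}$ is normal in $H$ and is generated by the elements $x^{2^{m-i-k}}$ of~\eqref{eq:defekn}, and since $\mathcal{E}_{l+1,m+1}$ is itself normal in $H$, the commutator expansions $[g_1g_2,y]=g_1[g_2,y]g_1^{-1}[g_1,y]$ and $[g^{-1},y]=g^{-1}[g,y]^{-1}g$ show that $[\mathcal{E}_{l,m},H]$ equals the normal closure in $H$ of the elements $[x^{2^{m-i-k}},y]$, where $x^{2^{m-i-k}}$ ranges over the generators of $\mathcal{E}_{l,m}$ and $y$ over $H$. As $\mathcal{E}_{l+1,m+1}$ is normal in $H$, it therefore suffices to prove that each such $[x^{2^{m-i-k}},y]$ lies in $\mathcal{E}_{l+1,m+1}$.

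Next I would fix such a generator, with $x=[x_1,\ldots,x_i]\in\Gamma_i(H)$, $l\le i\le m$, $0\le k\le m-i$ and $x_{j_1},\ldots,x_{j_k}\in A$ for some $1\le j_1<\cdots<j_k\le i$, and take $y\in H$. By Lemma~\ref{lemaprinc}, congruence~\eqref{cong} gives $[x^{2^{m-i-k}},y]\equiv[x,y]^{2^{m-i-k}}\bmod{\mathcal{E}_{i+1,m+1}}$. The key point is that $[x,y]=[y,x]^{-1}$ and $[y,x]=[y,x_1,\ldots,x_i]\in\Gamma_{i+1}(H)$ is a nested commutator of length $i+1$ whose entries at the positions $j_1+1,\ldots,j_k+1$ lie in $A$ (prepending $y$ shifts the $A$-positions by one but preserves their number $k$). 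Consequently $[y,x]^{2^{m-i-k}}=[y,x_1,\ldots,x_i]^{2^{(m+1)-(i+1)-k}}$ satisfies the defining conditions of a generator of $\mathcal{E}_{i+1,m+1}$ in~\eqref{eq:defekn}: the commutator length $i+1$ lies in the range $[\,i+1,m+1\,]$ since $i\le m$, the number of $A$-entries is $k$ with $0\le k\le(m+1)-(i+1)$, and the exponent is $2^{(m+1)-(i+1)-k}$. Hence $[x,y]^{2^{m-i-k}}=\bigl([y,x]^{2^{m-i-k}}\bigr)^{-1}\in\mathcal{E}_{i+1,m+1}$.

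Finally I would assemble the inclusions. Because $l\le i$, the nesting recorded before the statement gives $\mathcal{E}_{i+1,m+1}\subset\mathcal{E}_{l+1,m+1}$. Thus both the error modulus $\mathcal{E}_{i+1,m+1}$ of the congruence and the element $[x,y]^{2^{m-i-k}}$ lie in $\mathcal{E}_{l+1,m+1}$, whence $[x^{2^{m-i-k}},y]\in\mathcal{E}_{l+1,m+1}$, as required. The statement for $\widetilde{\mathcal{E}}_{l,m}$ follows verbatim from congruence~\eqref{congWn} and the chain~\eqref{eq:inclElm} upon setting $A=\varnothing$. I do not expect a genuine obstacle; the only delicate step is the bookkeeping of the three indices (commutator length $i+1$, number $k$ of $A$-entries, exponent $2^{(m+1)-(i+1)-k}$) confirming that $[y,x]^{2^{m-i-k}}$ is a generator of $\mathcal{E}_{i+1,m+1}$, together with the remark that passing from $[x,y]$ to $[y,x]$ merely shifts the $A$-positions.
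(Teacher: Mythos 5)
Your proposal is correct and follows essentially the same route as the paper's own proof: reduce to generators of $\mathcal{E}_{l,m}$ via normality and the conjugation expansion, apply the congruence~(\ref{cong}) of Lemma~\ref{lemaprinc}, observe that $[y,x]=[y,x_{1},\ldots,x_{i}]\in\Gamma_{i+1}(H)$ yields a generator $[y,x]^{2^{(m+1)-(i+1)-k}}$ of $\mathcal{E}_{i+1,m+1}\subset\mathcal{E}_{l+1,m+1}$, and obtain the $\widetilde{\mathcal{E}}$ statement by taking $A=\varnothing$. Your explicit bookkeeping of the shifted $A$-positions is a welcome clarification of a point the paper leaves implicit, but the argument is the same.
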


\begin{proof}
Let $l\leq i \leq m$. First assume that $x=[x_1,\ldots,x_{i}]\in\Gamma_{i}(H)$ is such that $x^{2^{m-i-k}}$ is a generator of $\mathcal{E}_{l,m}$, and let $y\in H$. Then by~(\ref{cong}), $[x^{2^{m-i-k}},y]\equiv [x,y]^{2^{m-i-k}}$ mod $\mathcal{E}_{l+1,m+1}$. Since $[y,x]=[y,x_{1},\ldots,x_{i}]\in\Gamma_{i+1}(H)$, it follows that $[y,x]^{2^{(m+1)-(i+1)-k}}\in \mathcal{E}_{l+1,m+1}$, so $[x^{2^{m-i-k}},y]\in\mathcal{E}_{l+1,m+1}$. 

Now suppose that $x=\alpha^{\delta_{1}}_{1}\cdots\alpha^{\delta_{t}}_{t}$, where for all $i=1,\ldots, t$, $\alpha_{i}$ is a generator of $\mathcal{E}_{l,m}$, and $\delta_{i}\in\left\{1,-1\right\}$. Then $[x,y]$ belongs to the normal closure of $\left\{[\alpha^{\delta_{i}},y], i=1,\ldots,t\right\}$ in $H$ by (\ref{eq:conjalphat}), and so $[x,y]\in\mathcal{E}_{l+1,m+1}$ for all $x\in\mathcal{E}_{l,m}$, $y\in H$ by the first paragraph of the proof and the fact that $\mathcal{E}_{l+1,m+1}$ is normal in $H$. Once more, the result for $\widetilde{\mathcal{E}}_{l,m}$ is obtained from that for $\mathcal{E}_{l,m}$ by taking $A=\varnothing$.
\end{proof}

\begin{coro}\label{corolario1} If $m\geq 2$, then $\widetilde{W}_{m}=\bigl\langle \!\bigl\langle a^{2^{m-1}}_{2},\,\widetilde{\mathcal{E}}_{2,m}\bigr\rangle \! \bigr\rangle_{H}$.
\end{coro}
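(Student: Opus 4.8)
The plan is to prove the two inclusions separately, reducing the substantial one to the claim
\[(\star)\qquad x^{2^{m-i}}\in \widetilde{\mathcal{E}}_{i,m}\quad\text{for all }2\le i\le m\text{ and all }x\in\Gamma_i(H),\]
which I would establish by induction on $m$. The inclusion $\langle\!\langle a_2^{2^{m-1}},\widetilde{\mathcal{E}}_{2,m}\rangle\!\rangle_H\subseteq \widetilde{W}_m$ is immediate from the definition~(\ref{eq:defwntilde}) of $\widetilde{W}_m$: the element $a_2^{2^{m-1}}$ is one of the defining generators of $\widetilde{W}_m$, and each generator $[x_1,\ldots,x_i]^{2^{m-i}}$ of $\widetilde{\mathcal{E}}_{2,m}$ (with $2\le i\le m$) is of the form $x^{2^{m-i}}$ with $x=[x_1,\ldots,x_i]\in\Gamma_i(H)$, hence also lies in $\widetilde{W}_m$; normality of $\widetilde{W}_m$ in $H$ then gives the inclusion. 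For the reverse inclusion, granting $(\star)$, every defining generator $x^{2^{m-i}}$ of $\widetilde{W}_m$ with $2\le i\le m$ lies in $\widetilde{\mathcal{E}}_{i,m}\subseteq\widetilde{\mathcal{E}}_{2,m}$ by~(\ref{eq:inclElm}), while $a_2^{2^{m-1}}$ is a generator of the right-hand side; hence $\widetilde{W}_m\subseteq\langle\!\langle a_2^{2^{m-1}},\widetilde{\mathcal{E}}_{2,m}\rangle\!\rangle_H$. So everything reduces to $(\star)$.

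To prove $(\star)$ I would induct on $m$. The base case $m=2$ is trivial, since then $2^{m-i}=1$ and $\widetilde{\mathcal{E}}_{2,2}=\Gamma_2(H)$ by~(\ref{eq:inclElm}). For the inductive step, assume $(\star)$ for some $m\ge 2$, fix $2\le i\le m+1$ and $x\in\Gamma_i(H)$, and aim to show $x^{2^{m+1-i}}\in\widetilde{\mathcal{E}}_{i,m+1}$. The case $i=m+1$ is immediate because $x^{2^0}=x\in\Gamma_{m+1}(H)=\widetilde{\mathcal{E}}_{m+1,m+1}$, so I may assume $2\le i\le m$ and use the induction hypothesis to write $x^{2^{m-i}}=g_1^{\delta_1}\cdots g_t^{\delta_t}$, where $\delta_s\in\{1,-1\}$ and each $g_s=[x_1,\ldots,x_{k_s}]^{2^{m-k_s}}$ is a generator of $\widetilde{\mathcal{E}}_{i,m}$ with $i\le k_s\le m$. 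Then $x^{2^{m+1-i}}=(x^{2^{m-i}})^2=(g_1^{\delta_1}\cdots g_t^{\delta_t})^2$. Here the key observation is that squaring a single generator lands one level deeper: $g_s^{2}=[x_1,\ldots,x_{k_s}]^{2^{m+1-k_s}}$ is a generator of $\widetilde{\mathcal{E}}_{i,m+1}$ (as $i\le k_s\le m+1$), so the formal product $g_1^{2\delta_1}\cdots g_t^{2\delta_t}$ already lies in $\widetilde{\mathcal{E}}_{i,m+1}$.

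It thus remains to show that squaring the product introduces only an error in $\widetilde{\mathcal{E}}_{i+1,m+1}$, namely that
\[(g_1^{\delta_1}\cdots g_t^{\delta_t})^2\equiv g_1^{2\delta_1}\cdots g_t^{2\delta_t}\pmod{\widetilde{\mathcal{E}}_{i+1,m+1}};\]
since $\widetilde{\mathcal{E}}_{i+1,m+1}\subseteq\widetilde{\mathcal{E}}_{i,m+1}$ by~(\ref{eq:inclElm}), this gives $x^{2^{m+1-i}}\in\widetilde{\mathcal{E}}_{i,m+1}$ and closes the induction. This is the step where Corollary~\ref{corolario} is indispensable: for any $a,b\in\widetilde{\mathcal{E}}_{i,m}$ one has the elementary identity $(ab)^2=\bigl(a[b,a]a^{-1}\bigr)a^2b^2$, and since $a,b\in\widetilde{\mathcal{E}}_{i,m}$, Corollary~\ref{corolario} yields $[b,a]\in[\widetilde{\mathcal{E}}_{i,m},H]\subseteq\widetilde{\mathcal{E}}_{i+1,m+1}$, whence $a[b,a]a^{-1}\in\widetilde{\mathcal{E}}_{i+1,m+1}$ by normality. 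Thus $(ab)^2\equiv a^2b^2\pmod{\widetilde{\mathcal{E}}_{i+1,m+1}}$, and taking $a=g_1^{\delta_1}$, $b=g_2^{\delta_2}\cdots g_t^{\delta_t}$ and inducting on $t$ delivers the displayed congruence.

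The step I expect to require the most care is precisely this squaring congruence: controlling the commutator corrections produced when a product of generators is squared. The whole argument is engineered so that these corrections are governed by Corollary~\ref{corolario} (which pushes $[\widetilde{\mathcal{E}}_{i,m},H]$ into $\widetilde{\mathcal{E}}_{i+1,m+1}$), together with the monotonicity $\widetilde{\mathcal{E}}_{i+1,m+1}\subseteq\widetilde{\mathcal{E}}_{i,m+1}$ and the fact that the square of a generator of $\widetilde{\mathcal{E}}_{i,m}$ is a generator of $\widetilde{\mathcal{E}}_{i,m+1}$. Once these three facts are in place the induction runs smoothly, and in particular no appeal to the full Hall--Petrescu collection machinery is needed.
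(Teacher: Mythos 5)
Your proof is correct and follows essentially the same route as the paper's: the paper likewise reduces the corollary to showing that $\bigl\langle x^{2^{m-i}}\,:\,x\in\Gamma_{i}(H),\,2\leq i\leq m\bigr\rangle=\widetilde{\mathcal{E}}_{2,m}$, proved by induction on $m$ with exactly your device --- write $x^{2^{m-i}}$ as a product of generators using the inductive hypothesis, square it, and absorb the commutator cross-terms into $\widetilde{\mathcal{E}}_{3,m+1}\subset\widetilde{\mathcal{E}}_{2,m+1}$ via Corollary~\ref{corolario} and the normality of these subgroups. The only (harmless) difference is that you track the finer index, proving $x^{2^{m-i}}\in\widetilde{\mathcal{E}}_{i,m}$, where the paper only records membership in $\widetilde{\mathcal{E}}_{2,m}$, and you package the squaring expansion as an induction on the identity $(ab)^{2}=\bigl(a[b,a]a^{-1}\bigr)a^{2}b^{2}$ rather than writing out the interleaved product explicitly.
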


\begin{proof} It suffices to prove that 
\begin{equation}\label{eq:E2m}
\widetilde{\mathcal{E}}_{2,m}=\bigl\langle x^{2^{m-i}}\,:\,x\in\Gamma_{i}(H),\, 2\leq i \leq m\bigr\rangle
\end{equation}
for all $m\geq 2$. If $m=2$,~(\ref{eq:E2m}) follows from~(\ref{L2}) and~(\ref{eq:defekn1}). Suppose by induction on $m$ that~(\ref{eq:E2m}) holds for some $m\geq 2$. It is clear from~(\ref{eq:defekn1}) that $\widetilde{\mathcal{E}}_{2,m+1}\subset \bigl\langle x^{2^{m-i}}\,:\,x\in\Gamma_{i}(H),\, 2\leq i \leq m\bigr\rangle$. To prove the converse, let $y=x^{2^{m+1-i}}$, where $x\in\Gamma_{i}(H)$ and $2\leq i \leq m+1$. If $i=m+1$ then $y\in\Gamma_{m+1}(H)\subset \widetilde{\mathcal{E}}_{2,m+1}$ by~(\ref{eq:inclElm}). So suppose that $2\leq i \leq m$. Then $x^{2^{m-i}}\in\widetilde{\mathcal{E}}_{2,m}$ by the induction hypothesis, so $x^{2^{m-i}}=\alpha^{\delta_{1}}_{1}\cdots\alpha^{\delta_{t}}_{t}$, where for all $i=1,\ldots, t$, $\alpha_{i}$ is a generator of $\widetilde{\mathcal{E}}_{2,m}$ and $\delta_{i}\in\left\{1,-1\right\}$. Hence:
\begin{equation*}
y\!=\!(x^{2^{m-i}})^{2}\!=\!(\alpha^{\delta_{1}}_{1}\cdots\alpha^{\delta_{t}}_{t})^{2}=\alpha^{2\delta_{1}}_{1}[\alpha^{-\delta_{1}}_{1},\alpha^{\delta_{2}}_{2}\cdots\alpha^{\delta_{t}}_{t}]\alpha^{2\delta_{2}}_{2}[\alpha^{-\delta_{2}}_{2},\alpha^{\delta_{3}}_{3}\cdots\alpha^{\delta_{t}}_{t}]\cdots[\alpha^{-\delta_{t-1}}_{t-1},\alpha^{\delta_{t}}_{t}]\alpha^{2\delta_{t}}_{t}.
\end{equation*}
So $y\in\widetilde{\mathcal{E}}_{2,m+1}$ because for all $i=1,\ldots,t$, $\alpha^{2\delta_{i}}_{i}\in\widetilde{\mathcal{E}}_{2,m+1}$ by~(\ref{eq:defekn1}), and $[\alpha^{-\delta_{i}}_{i},\theta]\in[\widetilde{\mathcal{E}}_{2,m},H]\subset \widetilde{\mathcal{E}}_{3,m+1}\subset \widetilde{\mathcal{E}}_{2,m+1}$ by Corollary~\ref{corolario} and~(\ref{eq:inclElm}). The inclusion $\bigl\langle x^{2^{m-i}}\,:\,x\in\Gamma_{i}(H),\, 2\leq i \leq m\bigr\rangle \subset \widetilde{\mathcal{E}}_{2,m+1}$ then follows.
\end{proof}

\begin{coro}\label{coro}
For all $m\geq 2$, $[H,\widetilde{W}_{m}]\subset \widetilde{W}_{m+1}$.
\end{coro}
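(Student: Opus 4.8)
The plan is to reduce the statement to a centrality assertion in a quotient. Since $\widetilde{W}_{m}$ and $\widetilde{W}_{m+1}$ are both normal in $H$, the desired inclusion $[H,\widetilde{W}_{m}]\subset\widetilde{W}_{m+1}$ is equivalent to saying that the image of $\widetilde{W}_{m}$ in $\overline{H}=H/\widetilde{W}_{m+1}$ lies in the centre $Z(\overline{H})$. By Corollary~\ref{corolario1}, $\widetilde{W}_{m}=\bigl\langle\!\bigl\langle a^{2^{m-1}}_{2},\,\widetilde{\mathcal{E}}_{2,m}\bigr\rangle\!\bigr\rangle_{H}$, so the image of $\widetilde{W}_{m}$ in $\overline{H}$ is generated by the images of the $H$-conjugates of $a^{2^{m-1}}_{2}$ and of the elements of $\widetilde{\mathcal{E}}_{2,m}$. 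Because $Z(\overline{H})$ is a subgroup and the image of any $H$-conjugate of a central element is again that same central element, it suffices to prove that the image of $a^{2^{m-1}}_{2}$ is central and that the image of $\widetilde{\mathcal{E}}_{2,m}$ is central; that is, $[h,a^{2^{m-1}}_{2}]\in\widetilde{W}_{m+1}$ for all $h\in H$, and $[\widetilde{\mathcal{E}}_{2,m},H]\subset\widetilde{W}_{m+1}$.

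The second of these is immediate from what has already been proved: by Corollary~\ref{corolario} we have $[\widetilde{\mathcal{E}}_{2,m},H]\subset\widetilde{\mathcal{E}}_{3,m+1}$, and by~(\ref{eq:inclElm}) together with Corollary~\ref{corolario1} we have $\widetilde{\mathcal{E}}_{3,m+1}\subset\widetilde{\mathcal{E}}_{2,m+1}\subset\widetilde{W}_{m+1}$, so this family lies in $\widetilde{W}_{m+1}$ at once.

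For the generator $a^{2^{m-1}}_{2}$ the idea is to trade the power on $a_{2}$ for a power on a commutator via Lemma~\ref{lemaprinc}. Taking $x=a_{2}\in\Gamma_{1}(H)$ and $i=1$, the element $a^{2^{m-1}}_{2}$ is a generator of $\widetilde{\mathcal{E}}_{1,m}$, so the congruence~(\ref{congWn}) gives $[a^{2^{m-1}}_{2},h]\equiv[a_{2},h]^{2^{m-1}}\bmod\widetilde{\mathcal{E}}_{2,m+1}$, and by Remark~\ref{rem:inclWn} this congruence also holds modulo $\widetilde{W}_{m+1}$. Since $[a_{2},h]\in\Gamma_{2}(H)$, the element $[a_{2},h]^{2^{m-1}}=[a_{2},h]^{2^{(m+1)-2}}$ is a generator of $\widetilde{\mathcal{E}}_{2,m+1}$, hence lies in $\widetilde{W}_{m+1}$. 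Therefore $[a^{2^{m-1}}_{2},h]\in\widetilde{W}_{m+1}$, and consequently $[h,a^{2^{m-1}}_{2}]=[a^{2^{m-1}}_{2},h]^{-1}\in\widetilde{W}_{m+1}$. Combining the two families shows that the image of $\widetilde{W}_{m}$ in $\overline{H}$ is central, which is what we wanted.

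The only genuinely delicate point is the treatment of the degree-one generator $a^{2^{m-1}}_{2}$: unlike the elements of $\widetilde{\mathcal{E}}_{2,m}$, it is a power of an element of $H=\Gamma_{1}(H)$ rather than of a commutator, so Corollary~\ref{corolario} does not apply to it directly. One must pass through the iterated-commutator expansion encoded in Lemma~\ref{lemaprinc} in order to convert the factor $2^{m-1}$ on $a_{2}$ into the factor $2^{m-1}$ on the commutator $[a_{2},h]$, which raises the commutator weight by one and places the result inside $\widetilde{\mathcal{E}}_{2,m+1}\subset\widetilde{W}_{m+1}$.
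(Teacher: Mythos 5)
Your proposal is correct and follows essentially the same route as the paper's own proof: reduction to the generators $a^{2^{m-1}}_{2}$ and $\widetilde{\mathcal{E}}_{2,m}$ via Corollary~\ref{corolario1}, then Corollary~\ref{corolario} for the elements of $\widetilde{\mathcal{E}}_{2,m}$, and Lemma~\ref{lemaprinc} (congruence~(\ref{congWn}) with $i=1$, $x=a_{2}$) together with Remark~\ref{rem:inclWn} and~(\ref{eq:defwntilde}) for $a^{2^{m-1}}_{2}$. Your recasting of the inclusion as centrality of the image of $\widetilde{W}_{m}$ in $H/\widetilde{W}_{m+1}$ is only a cosmetic repackaging of the same argument.
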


\begin{proof}
Since $[\widetilde{\mathcal{E}}_{2,m},H]\subset \widetilde{\mathcal{E}}_{2,m+1}$ using Corollary~\ref{corolario}, by Corollary~\ref{corolario1}, it suffices to prove that $[h,a^{2^{m-1}}_{2}]\in \widetilde{W}_{m+1}$ for all $h\in H$. To see this, observe that $[h,a^{2^{m-1}}_{2}]\equiv [h,a_{2}]^{2^{m-1}}\bmod{\widetilde{W}_{m+1}}$ by taking $i=1$ and $x=a_{2}$ in~(\ref{congWn}) and using Remark~\ref{rem:inclWn}. So there exists $\widetilde{w}\in \widetilde{W}_{m+1}$ such that $[h,a^{2^{m-1}}_{2}]=[h,a_{2}]^{2^{m-1}}\ldotp \widetilde{w}$. Now $[h,a_{2}]\in\Gamma_{2}(H)$, hence $[h,a_{2}]^{2^{m-1}}\in\widetilde{W}_{m+1}$ by~(\ref{eq:defwntilde}), and we conclude that $[h,a^{2^{m-1}}_{2}]\in\widetilde{W}_{m+1}$ as required.%If $w=x^{2^{n-i}}$, where $x\in \Gamma_{i}(H)$, then $[h,w]\cong[h,x]^{2^{n-i}}$ mod $\widetilde{W}_{n+1}$ by Lemma~\ref{lemaprinc}, then $[h,w]=[h,x]^{2^{n-i}}\cdot \tilde{w}$, with $\tilde{w}\in \widetilde{W}_{n+1}$, and $[h,x]\in\Gamma_{i+1}(H)$, then $[h,x]^{2^{n-i}}\in\widetilde{W}_{n+1}$, therefore $[h,w]\in\widetilde{W}_{n+1}$.
\end{proof}

The following result will enable us to obtain the explicit characterisation of $\Gamma_{n}(P_{2}(\mathbb{K}))$ given in Theorem~\ref{GammaP2}.
%%%%falar um poko da demonstração

\begin{prop}\label{lemma3}
For all $n\geq 2$, $L_{n}=W_{n}=\widetilde{W}_{n}$.
\end{prop}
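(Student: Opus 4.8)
The plan is to argue by induction on $n$, the base case $n=2$ being immediate since $W_2=\widetilde W_2=L_2$ by definition and \eqref{L2}. So assume $L_i=W_i=\widetilde W_i$ for all $2\le i\le n$. The hypotheses of Lemma~\ref{lemma2} are then satisfied, so $L_{n+1}=\bigl\langle K_{n+1},[H,L_n]\bigr\rangle$, and I would prove the three inclusions $L_{n+1}\subseteq\widetilde W_{n+1}\subseteq W_{n+1}\subseteq L_{n+1}$, which together force equality.

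The inclusion $L_{n+1}\subseteq\widetilde W_{n+1}$ is the easy one: by Lemma~\ref{lemma1} the subgroup $K_{n+1}=\langle\!\langle a_2^{2^{n}}\rangle\!\rangle_H$ is the normal closure of $a_2^{2^{n}}$, which is a generator of $\widetilde W_{n+1}$ by \eqref{eq:defwntilde}, while $[H,L_n]=[H,\widetilde W_n]\subseteq\widetilde W_{n+1}$ by Corollary~\ref{coro}; as $\widetilde W_{n+1}$ is a subgroup, the inclusion follows. For $\widetilde W_{n+1}\subseteq W_{n+1}$ I would combine Corollary~\ref{corolario1} with \eqref{eq:E2m}: each generator of $\widetilde W_{n+1}$ is either $a_2^{2^{n}}=(a_2^{2^{n-1}})^2$, the square of the generator $a_2^{2^{n-1}}$ of $\widetilde W_n=W_n$, or is of the form $x^{2^{n+1-i}}$ with $x\in\Gamma_i(H)$; for $i=n+1$ this lies in $\Gamma_{n+1}(H)\subseteq W_{n+1}$, and for $2\le i\le n$ it equals $(x^{2^{n-i}})^2$, a square of an element of $\widetilde W_n=W_n$, hence lies in $W_{n+1}$ by \eqref{eq:defwn}. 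Normality of $W_{n+1}$ in $H$ then promotes this to the inclusion of normal closures.

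The substantial inclusion is $W_{n+1}\subseteq L_{n+1}$. Since $W_{n+1}=\bigl\langle\Gamma_{n+1}(H),x^2:x\in W_n\bigr\rangle$ and $\Gamma_{n+1}(H)=[H,\Gamma_n(H)]\subseteq[H,L_n]\subseteq L_{n+1}$, everything reduces to showing $x^2\in L_{n+1}$ for all $x\in W_n=\widetilde W_n$. Expanding $x$ as a product of conjugates of generators of $\widetilde W_n$ and their inverses, a commutator-collecting computation gives $x^2\equiv\prod_j(c_jw_j^{\pm1}c_j^{-1})^2\ \bmod\ [\widetilde W_n,\widetilde W_n]$; since $[\widetilde W_n,\widetilde W_n]\subseteq[H,L_n]\subseteq L_{n+1}$ and $(c_jw_j^{\pm1}c_j^{-1})^2=c_jw_j^{\pm2}c_j^{-1}$, the problem collapses, using normality of $L_{n+1}$ in $H$ (Lemma~\ref{Lnnormal}), to proving $w^2\in L_{n+1}$ for each generator $w$ of $\widetilde W_n$. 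For $w=a_2^{2^{n-1}}$ this is clear, since $w^2=a_2^{2^{n}}\in K_{n+1}$. The remaining case is $w=y^{2^{n-i}}$ with $y=[y_1,\ldots,y_i]\in\Gamma_i(H)$ and $2\le i\le n$, where $w^2=y^{2^{n+1-i}}$, and this is the genuinely hard point.

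My proposed route for this last step is a reverse induction on the commutator weight $i$, descending from $i=n+1$ to $i=2$, with the goal of establishing $\widetilde{\mathcal E}_{i,n+1}\subseteq L_{n+1}$ at each stage; the base $i=n+1$ is $\Gamma_{n+1}(H)\subseteq[H,L_n]$. For $i\ge3$ I would write $y=[y_1,y']$ with $y'=[y_2,\ldots,y_i]\in\Gamma_{i-1}(H)$, so that $(y')^{2^{n+1-i}}$ is a generator of $\widetilde{\mathcal E}_{i-1,n}\subseteq\widetilde W_n=L_n$; Lemma~\ref{lemaprinc}, in the form \eqref{congWn}, then identifies the leading term of $[(y')^{2^{n+1-i}},y_1]$ as $[y',y_1]^{2^{n+1-i}}=y^{-2^{n+1-i}}$, while the commutator itself lies in $[H,L_n]\subseteq L_{n+1}$, thereby expressing $y^{2^{n+1-i}}$, up to correction terms, by an element of $L_{n+1}$. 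The \emph{main obstacle} is exactly the bookkeeping of these corrections: \eqref{congWn} only places them in $\widetilde{\mathcal E}_{i,n+1}$ rather than in $\widetilde{\mathcal E}_{i+1,n+1}$, so to close the reverse induction I must show — via \eqref{itercomm1}, Corollary~\ref{corolario}, and the hypothesis $\widetilde{\mathcal E}_{i+1,n+1}\subseteq L_{n+1}$ — that the weight-$i$ part of each correction has already been absorbed. I expect the weight-$2$ base of this secondary induction to be the most delicate, since there $y'\in H$ and $(y')^{2^{n-1}}$ need not lie in $L_n$; here I would use that $H$ is free of rank two and that $a_2^{2^{n-1}}\in L_n$, reducing $[y_1,y_2]^{2^{n-1}}$ modulo $\Gamma_{n+1}(H)$ to powers of $[a_2,b_2]^{2^{n-1}}$ that are controlled by $[H,a_2^{2^{n-1}}]\subseteq[H,L_n]$.
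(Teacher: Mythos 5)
Your overall scaffolding is the same as the paper's: induction on $n$, the reduction of $L_{n+1}$ to $\bigl\langle K_{n+1},[H,L_{n}]\bigr\rangle$ via Lemma~\ref{lemma2}, and the two inclusions $L_{n+1}\subseteq\widetilde{W}_{n+1}$ (via Lemma~\ref{lemma1} and Corollary~\ref{coro}) and $\widetilde{W}_{n+1}\subseteq W_{n+1}$ are all correct and match the paper. The genuine gap is exactly where you flag it, in the inclusion $W_{n+1}\subseteq L_{n+1}$, and the route you propose cannot close it. Your reverse induction on the weight $i$ assumes $\widetilde{\mathcal{E}}_{i+1,n+1}\subseteq L_{n+1}$ and tries to deduce $\widetilde{\mathcal{E}}_{i,n+1}\subseteq L_{n+1}$ by applying~(\ref{congWn}) to $y'=[y_{2},\ldots,y_{i}]\in\Gamma_{i-1}(H)$; but since $y'$ has weight $i-1$ and sits at level $m=n$, the congruence places the corrections in $\widetilde{\mathcal{E}}_{(i-1)+1,n+1}=\widetilde{\mathcal{E}}_{i,n+1}$ -- precisely the subgroup you are trying to prove lies in $L_{n+1}$ at the current stage, and one whose generators include \emph{arbitrary} weight-$i$ elements $z^{2^{n+1-i}}$, i.e.\ elements of exactly the type under consideration. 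The descending hypothesis (weights $\geq i+1$) therefore never applies to the corrections, and the argument is circular; nothing in Lemma~\ref{lemaprinc} as stated lets you push corrections up one weight, and strengthening it would mean redoing its long proof. The same circularity hits your weight-$2$ endpoint: $[a_{2}^{2^{n-1}},b_{2}]\equiv[a_{2},b_{2}]^{2^{n-1}}$ only modulo $\widetilde{\mathcal{E}}_{2,n+1}$, and moreover passing from $[y_{1},y_{2}]\equiv[a_{2},b_{2}]^{k}\bmod\Gamma_{3}(H)$ to a congruence between their $2^{n-1}$-st powers modulo $\Gamma_{n+1}(H)$ needs precisely the kind of filtration control that is at issue.

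The fix -- and the paper's actual argument -- is to abandon the $\widetilde{W}_{n}$-generating set for this step. Since $y^{2^{n+1-i}}=\bigl(y^{2^{n-i}}\bigr)^{2}$ with $y^{2^{n-i}}\in\widetilde{W}_{n}=L_{n}$ by the induction on $n$, it suffices to prove that squares of elements of $L_{n}$ lie in $L_{n+1}$, and for \emph{that} one uses the description of $L_{n}$ coming from Lemma~\ref{lemma2}: its generators are $a_{2}^{2^{n-1}}$, whose square lies in $K_{n+1}$ by Lemma~\ref{lemma1}, or commutators $[h,l]$ with $h\in H$ and $l\in L_{n-1}=W_{n-1}$. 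For the latter, the identity $[h,l]^{2}=[h,l^{2}]\,\bigl[l,[l,h]\bigr]$ does all the work: $l^{2}\in W_{n}=L_{n}$ (by~(\ref{eq:defwn}) and the induction on $n$) and $[l,h]\in L_{n}$, so both factors lie in $[H,L_{n}]\subseteq L_{n+1}$; arbitrary elements of $L_{n}$ are then handled by the expansion~(\ref{produto}) and normality of $L_{n+1}$ in $H$ (with a separate small computation at $n=2$ using~(\ref{L2})). This exploits the recursive commutator structure of $L_{n}$ rather than its power-filtration description, and it bypasses Lemma~\ref{lemaprinc} entirely at this point -- the paper needs that lemma only for Corollaries~\ref{corolario}--\ref{coro}, i.e.\ for the inclusions you already handle correctly.
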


\begin{proof} If $n=2$, the statement is true by definition. So suppose by induction that $L_{n}=W_{n}=\widetilde{W}_{n}$ for some $n\geq 2$. Then we have the following inclusions:

\begin{itemize}
\item $\widetilde{W}_{n+1}\subset W_{n+1}$. To see this, let $y\in \widetilde{W}_{n+1}$. If $y$ is a generator of $\widetilde{W}_{n+1}$, then by~(\ref{eq:defwntilde}), $y$ either belongs to $\Gamma_{n+1}(H)$, and so belongs to $W_{n+1}$ by~(\ref{eq:defwn}), or is of the form $x^{2^{n+1-i}}$, where $1\leq i\leq n$, $x\in \Gamma_{i}(H)$, and $x=a_{2}$ if $i=1$. Hence $x^{2^{n-i}}\in \widetilde{W}_{n}=W_{n}$ by induction, and thus $y=x^{2^{n+1-i}}=(x^{2^{n-i}})^{2}\in W_{n+1}$ by~(\ref{eq:defwn}). If $y$ is an arbitrary element of $\widetilde{W}_{n+1}$, it may be written as a product of conjugates of generators and their inverses, so it belongs to $W_{n+1}$ because $W_{n+1}$ is normal in $H$.

%or is $a^{2^{n}}_{2}$ or $x^{2^{n+1-i}}$, where $x\in \Gamma_{i}(H)$. Now $a^{2^{n}}_{2}=(a^{2^{n-1}}_{2})^{2}$ and $x^{2^{n+1-i}}=(x^{2^{n-i}})^{2}$, so  $a^{2^{n}}_{2}$ and $x^{2^{n+1-i}}$ are squares of elements of $\widetilde{W}_{n}=W_{n}$ by the induction hypothesis, and belong to $W_{n+1}$ by~(\ref{eq:defwn}).
	
\item $L_{n+1}\subset \widetilde{W}_{n+1}$, since $K_{n+1}\subset \widetilde{W}_{n+1}$ by Lemma~\ref{lemma1} and~(\ref{eq:defwntilde}), and $[H,L_{n}]=[H,\widetilde{W}_{n}]\subset \widetilde{W}_{n+1}$ by induction and Corollary~\ref{coro}, so $L_{n+1}=\left\langle K_{n+1}, [H,L_{n}]\right\rangle\subset \widetilde{W}_{n+1}$
using Lemma~\ref{lemma2}.
%, $L_{n+1}=\left\langle K_{n+1}, [H,L_{n}]\right\rangle$, $K_{n+1}\subset \widetilde{W}_{n+1}$ by Lemma~\ref{lemma1} and $[H,L_{n}]=[H,\widetilde{W}_{n}]\subset \widetilde{W}_{n+1}$ by induction and Corollary~\ref{coro}.
	
\item $W_{n+1}\subset L_{n+1}$. First, $\Gamma_{n+1}(H)=[H,\Gamma_{n}(H)] \subset [H,W_{n}]=[H,L_{n}]\subset L_{n+1}$ by~(\ref{eq:defwn}) and induction. Secondly, let $x^{2}\in W_{n+1}$, where $x\in W_{n}=L_{n}$ is a generator. If $n=2$, by (\ref{L2}), $x=a^{2}_{2}$ or $x=[b_{2},a_{2}]$, then $x^{2}=a^{4}_{2}\in K_{3}\subset L_{3}$ by Lemma \ref{lemma1}, or $x^{2}=[b_{2},\underbrace{a^{2}_{2}}_{\in L_{2}}][a_{2},\underbrace{[a_{2},b_{2}]}_{\in L_{2}}]\in [H,L_{2}]\subset L_{3}$. So assume that $n\geq 3$. By Lemma~\ref{lemma2}, there are two possibilities for $x$:
%where $x\in W_{n}=L_{n}$, by Lemma~\ref{lemma2} there are two possibilities of generators \comj{or `where $x\in W_{n}=L_{n}$ is a generator, by Lemma~\ref{lemma2} there are two possibilities for $x$'?}:
\begin{itemize}
\item if $x\in K_{n}$, then  $x=a_{2}^{2^{n-1}}$ by Lemma~\ref{lemma1}, and $x^{2}=a_{2}^{2^{n}}\in K_{n+1}\subset L_{n+1}$.
\item if $x=[h,l]\in [H,L_{n-1}]$, where $h\in H$ and $l\in L_{n-1}=W_{n-1}$, then $x^{2}=[h,l]^{2}=[h,l^{2}][l,[l,h]]$. Now $l^{2}\in W_{n}=L_{n}$ by induction and $[l,h]\in L_{n}$, so $[h,l^{2}]$ and $[l,[l,h]]$ belong to $[H,L_{n}]$, which is contained in $L_{n+1}$.
%$x^{2}=[h,l]^{2}=\underbrace{\left[h,\underbrace{l^{2}}_{\in W_{n}=L_{n}}\right]}_{\in[H,L_{n}]\subset L_{n+1}}\underbrace{\left[l,\underbrace{[l,h]}_{\in L_{n}}\right]}_{\in[H,L_{n}]\subset L_{n+1}}$.
\end{itemize}
Finally, let $x^{2}\in W_{n+1}$, where $x$ is an arbitrary element of $W_{n}$. Then there exists $q\in \mathbb{N}$ such that $x=x_{1}\cdots x_{q}$, where for all $i=1,\ldots, q$, $x_{i}$ is a generator of $W_{n}=L_{n}$. Then as in~(\ref{congg}), we have:
\begin{equation}\label{produto}
x^{2}=x^{2}_{1} [x^{-1}_{1},x_{2}\cdots x_{q}] x^{2}_{2} [x^{-1}_{2},x_{3}\cdots x_{q}]\cdots [x^{-1}_{q-1},x_{q}] x^{2}_{q}.
%x^{2}=x^{2}_{1}\underbrace{[x^{-1}_{1},x_{2}\cdots x_{q}]}_{\in [L_{n},H]\subset L_{n+1}}x^{2}_{2}\underbrace{[x^{-1}_{2},x_{3}\cdots x_{q}]}_{\in [L_{n},H]\subset L_{n+1}}\cdots\underbrace{[x^{-1}_{q-1},x_{q}]}_{\in [L_{n},H]\subset L_{n+1}}x^{2}_{q}.
\end{equation}
From the second case above, for all $i=1,\ldots, q$, $x_{i}^{2}\in L_{n+1}$. Further, for all $i=1,\ldots, q-1$, $[x_{i}^{-1},x_{i+1}\cdots x_{q}]\in [L_{n},H]\subset L_{n+1}$. It then follows from~(\ref{produto}) that  $x^{2}\in L_{n+1}$.
\end{itemize}
It follows from these three inclusions that $L_{n+1}=W_{n+1}=\widetilde{W}_{n+1}$.
\end{proof} %\hfill{$\blacksquare$} \\

%\textit{Proof of Theorem~\ref{GammaP2}.} 
\begin{proof}[Proof of Theorem~\ref{GammaP2}]
The result is a consequence of Theorem~\ref{serie}(\ref{it:lcspart1}), Proposition~\ref{lemma3}, and equations~(\ref{eq:semiP2K}),~(\ref{eq:pi1}) and~(\ref{eq:defwntilde}).\end{proof} %\hfill{$\blacksquare$} \\

\subsection{The lower $\mathbb{F}_{2}$-linear central filtration of $P_{2}(\mathbb{K})$}\label{sec:lcfP2}

Now that we have a good description of $\Gamma_{n}(P_{2}(\mathbb{K}))$, we may obtain the following decomposition of $\gamma^{2}_{n}(P_{2}(\mathbb{K}))$.

\begin{teo}\label{gammaP2} Let $n\geq 2$. With the notation of Theorem~\ref{puras}, $\gamma^{2}_{n}P_{2}(\mathbb{K})$ is equal to: \[ \bigl\langle \!\bigl\langle a^{2^{n-1}}_{2}, b^{2^{n-1}}_{2}, x^{2^{n-i}}\,:\,x\in \Gamma_{i}(\pi_{1}(\mathbb{K}\setminus\left\{x_{1}\right\})),\,2\leq i \leq n\bigr\rangle \! \bigr\rangle_{\pi_{1}(\mathbb K\setminus\left\{x_{1}\right\})}\rtimes\bigl\langle (a_{1}a_{2})^{2^{n-1}}, (b_{2}b_{1})^{2^{n-1}}\bigr\rangle.\] Consequently, $P_{2}(\mathbb{K})$ is residually $2$-finite.%We are now in conditions to prove Theorem~\ref{gammaP2}.\\
\end{teo}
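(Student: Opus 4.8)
The plan is to mirror the proof of Theorem~\ref{GammaP2}, tracking the extra $2$-power generators forced by the definition of the filtration $\gamma^2_{\ast}$. Write $H=\pi_{1}(\mathbb{K}\setminus\{x_{1}\})=\langle a_{2},b_{2}\rangle$, $G=s(P_{1}(\mathbb{K}))\cong\pi_{1}(\mathbb{K})$ and $P_{2}(\mathbb{K})=H\rtimes_{\varphi}G$ as in~(\ref{eq:semiP2K}), and set
\[\widehat{W}_{n}=\bigl\langle\!\bigl\langle a_{2}^{2^{n-1}},\,b_{2}^{2^{n-1}},\,x^{2^{n-i}}\,:\,x\in\Gamma_{i}(H),\,2\le i\le n\bigr\rangle\!\bigr\rangle_{H},\qquad \Lambda_{n}=\gamma^{2}_{n}(G).\]
Thus $\widehat{W}_{n}$ is obtained from $\widetilde{W}_{n}$ of~(\ref{eq:defwntilde}) by adjoining $b_{2}^{2^{n-1}}$, so that its defining generators are exactly the $x^{2^{n-i}}$ with $x\in\Gamma_{i}(H)$ for all $1\le i\le n$, the level $i=1$ now ranging over all of $H$. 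In particular $\widehat{W}_{n}$ is a verbal, hence fully invariant, subgroup of $H$, and a short induction of the type used in Corollaries~\ref{corolario}--\ref{coro} (applied with $A=\varnothing$, treating $a_{2}$ and $b_{2}$ symmetrically) shows $\widehat{W}_{n}=\gamma^{2}_{n}(H)$. The goal is to prove, for all $n\ge2$, that $\gamma^{2}_{n}(P_{2}(\mathbb{K}))=\widehat{W}_{n}\rtimes_{\varphi}\Lambda_{n}$; since $\varphi(G)$ preserves the fully invariant subgroup $\widehat{W}_{n}$, this semi-direct product is well defined.

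First I would compute $\Lambda_{n}=\gamma^{2}_{n}(G)$. Using~(\ref{presP1K}) and the isomorphism $G\cong\langle a,b\mid bab^{-1}=a^{-1}\rangle$ with $a=a_{1}a_{2}$, $b=b_{2}b_{1}$, one checks directly that $[a^{2},b]=a^{4}$, $[b^{2},a]=1$ and $b^{2}a^{2}b^{-2}=a^{2}$, whence by induction $\gamma^{2}_{n}(G)=\langle(a_{1}a_{2})^{2^{n-1}},(b_{2}b_{1})^{2^{n-1}}\rangle$ for all $n\ge1$, as required for the $G$-factor. Next I would prove the semi-direct decomposition by induction on $n$, the case $n=2$ following as for~(\ref{L2}) after noting that the squares $z^{2}$, $z\in P_{2}(\mathbb{K})$, contribute exactly the extra generators $b_{2}^{2}$ and $(b_{2}b_{1})^{2}$.

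For the inductive step I would expand $\gamma^{2}_{n+1}(P_{2}(\mathbb{K}))=\bigl\langle[\gamma^{2}_{n}(P_{2}(\mathbb{K})),P_{2}(\mathbb{K})],\,z^{2}:z\in\gamma^{2}_{n}(P_{2}(\mathbb{K}))\bigr\rangle$ using $\gamma^{2}_{n}(P_{2}(\mathbb{K}))=\widehat{W}_{n}\rtimes_{\varphi}\Lambda_{n}$. The commutator part is handled exactly as in Theorems~\ref{serie} and~\ref{GammaP2}: writing a commutator via~(\ref{eq:hgxy}), its $G$-component lies in $[\Lambda_{n},G]\subseteq\Lambda_{n+1}$, and its $H$-component is a product of $K$-type terms $\varphi(g)(h)h^{-1}$ with $g\in\Lambda_{n}$ and of elements of $[H,\widehat{W}_{n}]$, all of which lie in $\widehat{W}_{n+1}$ by the analogues of Lemma~\ref{lemma1} and Corollary~\ref{coro} for $\widehat{W}$. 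For the squares I would use $z^{2}=(x\,\varphi(y)(x),y^{2})$ when $z=(x,y)$: the component $y^{2}$ with $y\in\Lambda_{n}$ gives $\Lambda_{n}^{2}\subseteq\Lambda_{n+1}$, while the $H$-component equals $x^{2}$ times a commutator term, both lying in $\widehat{W}_{n+1}$ because $(\widehat{W}_{n})^{2}\subseteq\widehat{W}_{n+1}$ and $[H,\widehat{W}_{n}]\subseteq\widehat{W}_{n+1}$, the exponents being kept under control by the congruence~(\ref{congWn}) and the expansion~(\ref{produto}). This yields the inclusion $\subseteq$. For the reverse inclusion, squaring the generators $a_{2}^{2^{n-1}},b_{2}^{2^{n-1}}$ of $\widehat{W}_{n}$ and the generators of $\Lambda_{n}$ produces $a_{2}^{2^{n}},b_{2}^{2^{n}},(a_{1}a_{2})^{2^{n}},(b_{2}b_{1})^{2^{n}}$, while $[H,\widehat{W}_{n}]$ produces the commutator-power generators $\Gamma_{i+1}(H)^{2^{n-i}}=\Gamma_{i+1}(H)^{2^{(n+1)-(i+1)}}$; together these exhaust the generators of $\widehat{W}_{n+1}\rtimes\Lambda_{n+1}$, closing the induction.

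Finally, residual $2$-finiteness follows from the formula. Since $P_{2}(\mathbb{K})$ is finitely generated, by~\cite[Proposition~2.3(2)]{Pa} it suffices to show $\bigcap_{n\ge1}\gamma^{2}_{n}(P_{2}(\mathbb{K}))=\{1\}$. An element of this intersection projects into $\bigcap_{n}\Lambda_{n}=\bigcap_{n}\langle a^{2^{n-1}},b^{2^{n-1}}\rangle=\{1\}$ in $G$, hence lies in $H\cap\bigcap_{n}\gamma^{2}_{n}(P_{2}(\mathbb{K}))=\bigcap_{n}\widehat{W}_{n}=\bigcap_{n}\gamma^{2}_{n}(H)$, which is trivial because the free group $H$ is residually $2$-finite. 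I expect the main obstacle to be the reverse inclusion in the inductive step: one must verify that the delicate squaring-versus-commutator congruences of Lemma~\ref{lemaprinc} (with $A=\varnothing$) produce the power generators of $\widehat{W}_{n+1}$ with exactly the right exponents and with all error terms absorbed into $\widehat{W}_{n+1}$, which is precisely where the technical machinery of Section~\ref{sec:lcsP2} is reused.
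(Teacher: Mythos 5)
Your skeleton is the same as the paper's: the paper also proceeds by induction on $n$, identifying $\gamma^{2}_{n}(P_{2}(\mathbb{K}))$ with the subgroup $U_{n}$ of Remark~\ref{Un} (which is exactly your $\widehat{W}_{n}\rtimes_{\varphi}\Lambda_{n}$), splitting the inductive step into a commutator part and a squares part, and deducing residual $2$-finiteness from the triviality of the intersection. Your computation $\Lambda_{n}=\gamma^{2}_{n}(G)=\bigl\langle(a_{1}a_{2})^{2^{n-1}},(b_{2}b_{1})^{2^{n-1}}\bigr\rangle$, your reverse inclusion, and your closing residual argument are all correct.

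The genuine gap is in the commutator part of the inductive step. After applying~(\ref{eq:hgxy}), the $H$-component of $[(h,g),(x,y)]$ with $(x,y)\in\widehat{W}_{n}\rtimes\Lambda_{n}$ is \emph{not} a product only of $K$-type terms (with $g'\in\Lambda_{n}$, $h'\in H$) and elements of $[H,\widehat{W}_{n}]$: as in the paper's expression for $\rho$, it also contains the $H$-type term $\varphi(g)(x)x^{-1}$ with $g\in G$ \emph{arbitrary} and $x\in\widehat{W}_{n}$, and these omitted terms are exactly where the difficulty sits. For instance $[(a_{2}a_{1})^{-1},b_{2}^{2^{n-1}}]=\varphi\bigl((a_{1}a_{2})^{-1}\bigr)\bigl(b_{2}^{2^{n-1}}\bigr)b_{2}^{-2^{n-1}}$ is such a term, and showing it lies in $\widehat{W}_{n+1}$ is precisely case~(3) of the paper's proof (the sub-induction establishing $[b_{2}^{2^{n-1}},x]\in L_{n+1}$ for $x\in\{a_{2},(a_{2}a_{1})^{-1},b_{2},(b_{2}b_{1})^{-1}\}$, which uses the explicit action~(\ref{acaoP2}) and Proposition~\ref{lemma3}); the paper also invokes Proposition~\ref{centro} (centrality of $(b_{2}b_{1})^{2}$) to dispose of the other delicate generator. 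The lemmas you cite do not cover these terms: Lemma~\ref{lemma1} only treats $\varphi(g)(h)h^{-1}$ with $g\in\Gamma_{n-1}(G)$ and $h\in H$, and Corollary~\ref{coro} only controls $[H,\widetilde{W}_{n}]$; the true analogue is Lemma~\ref{lemma2}, whose proof is a genuine computation rather than a formal consequence. The gap is fillable inside your framework: granting $\widehat{W}_{n}=\gamma^{2}_{n}(H)$, it suffices to observe from~(\ref{acaoP2}) that every $\varphi(g)$ acts trivially on $H/\gamma^{2}_{2}(H)$, and that an automorphism trivial modulo $\gamma^{2}_{2}(H)$ acts trivially on each quotient $\gamma^{2}_{n}(H)/\gamma^{2}_{n+1}(H)$. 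But that last fact --- like your preliminary identification $\widehat{W}_{n}=\gamma^{2}_{n}(H)$ itself, which is the free-group formula for the lower exponent-$2$ central series --- rests on Hall--Petrescu-type commutator collection, i.e.\ on work of the same weight as Lemma~\ref{lemaprinc}; neither is the ``short induction'' you describe, and as written the key step of your induction is unjustified.
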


\begin{remark}\label{Un} 
%\comj{I suggest writing this as follows: As we shall see in the proof of Theorem~\ref{gammaP2}, $\gamma^{2}_{n}P_{2}(\mathbb{K})$ is also equal to $\bigl\langle \! \bigl\langle \Gamma_{n}(P_{2}(\mathbb{K})),\, b^{2^{n-1}}_{2},(b_{2}b_{1})^{2^{n-1}}\bigr\rangle \!\bigr\rangle_{P_{2}(\mathbb K)}$.}
Using the action given by~(\ref{acaoP2}) and the description of $\Gamma_{n}(P_{2}(\mathbb K))$ given by Theorem~\ref{GammaP2}, it is straightforward to see that: \[ \bigl\langle \!\bigl\langle a^{2^{n-1}}_{2}, b^{2^{n-1}}_{2}, x^{2^{n-i}}\,:\,x\in \Gamma_{i}(\pi_{1}(\mathbb{K}\setminus\left\{x_{1}\right\})),\,2\leq i \leq n\bigr\rangle \! \bigr\rangle_{\pi_{1}(\mathbb K\setminus\left\{x_{1}\right\})}\rtimes\bigl\langle (a_{1}a_{2})^{2^{n-1}}, (b_{2}b_{1})^{2^{n-1}}\bigr\rangle.\] is equal to $U_{n}$, where: 
\begin{equation}\label{eq:defUn}
U_{n}=\bigl\langle \! \bigl\langle \Gamma_{n}(P_{2}(\mathbb{K})),\, b^{2^{n-1}}_{2},(b_{2}b_{1})^{2^{n-1}}\bigr\rangle \!\bigr\rangle_{P_{2}(\mathbb K)}.
\end{equation}
%Another way to describe the group $\gamma^{2}_{n}P_{2}(\mathbb K)$ is to see that the semi-direct product above is equal to \[\bigl\langle \! \bigl\langle \Gamma_{n}(P_{2}(\mathbb{K})),\, b^{2^{n-1}}_{2},(b_{2}b_{1})^{2^{n-1}}\bigr\rangle \!\bigr\rangle_{P_{2}(\mathbb K)},\] to see that use the action that follows from (\ref{acaoP2}) and the the description to $\Gamma_{n}(P_{2}(\mathbb K))$ given by Theorem \ref{GammaP2}.
\end{remark}

\begin{proof}[Proof of Theorem~\ref{gammaP2}] 
Let $n\geq 2$. By Remark~\ref{Un}, it suffices to prove by induction on $n$ that $U_{n}=\gamma^{2}_{n}P_{2}(\mathbb{K})$, where $U_{n}$ is defined by~(\ref{eq:defUn}).
%By induction on $n\geq 2$. 
%Let $n\geq 2$, and let $U_{n}$ be as defined in~(\ref{eq:defUn}).
%%\[U_{n}=\bigl\langle \! \bigl\langle \Gamma_{n}(P_{2}(\mathbb{K})),\, b^{2^{n-1}}_{2},(b_{2}b_{1})^{2^{n-1}}\bigr\rangle \!\bigr\rangle_{P_{2}(\mathbb K)}.\] 
%We claim that $U_{n}=\gamma^{2}_{n}P_{2}(\mathbb{K})$, and the result follows from Remark~\ref{Un}. 
If $n=2$, $U_{2}\subset\gamma^{2}_{2}P_{2}(\mathbb{K})$ because $b^{2}_{2}$ and $(b_{2}b_{1})^{2}$ belong to $\left\{x^{2}\,:\,x\in P_{2}(\mathbb{K})\right\}$, which is contained in $ \gamma^{2}_{2}P_{2}(\mathbb{K})$, $\Gamma_{2}(P_{2}(\mathbb{K}))$ is contained in $\gamma^{2}_{2}P_{2}(\mathbb{K})$, and $\gamma^{2}_{2}P_{2}(\mathbb K)$ is normal in $P_{2}(\mathbb K)$. 
%$b^{2}_{2}$ and $(b_{2}b_{1})^{2}$ belong to $\left\{x^{2}\,:\,x\in P_{2}(\mathbb{K})\right\}$, which is contained in $ \gamma^{2}_{2}P_{2}(\mathbb{K})$, and $\Gamma_{2}(P_{2}(\mathbb{K}))\subset \gamma^{2}_{2}P_{2}(\mathbb{K})$, and the fact that $\gamma^{2}_{2}P_{2}(\mathbb K)$ is a normal subgroup of $P_{2}(\mathbb K)$ implies that the conjugates of these elements by elements of $P_{2}(\mathbb K)$ also belong to $\gamma^{2}_{2}P_{2}(\mathbb K)$. 
For the converse inclusion, we know that $\Gamma_{2}(P_{2}(\mathbb{K}))\subset U_{2}$, that $P_{2}(\mathbb{K})$ is generated by $\left\{a_{2},a_{2}a_{1}, b_{2},b_{2}b_{1}\right\}$ by Remark~(\ref{P2}), and that the square of each element of this set belongs to $U_{2}$, since by Theorem~\ref{GammaP2}, $a^{2}_{2}\in \Gamma_{2}(P_{2}(\mathbb{K}))$ and $(a_{2}a_{1})^{2}\in\Gamma_{2}(s(\pi_{1}(\mathbb{K})))\subset\Gamma_{2}(P_{2}(\mathbb{K}))$. Further, if $x=x_{1}\cdots x_{q}$, where for $i=1,\ldots, q$, $x_{i}\in \left\{a^{\pm1}_{2},(a_{2}a_{1})^{\pm1}, b^{\pm1}_{2},(b_{2}b_{1})^{\pm1}\right\}$, then using the decomposition given in~(\ref{produto}), we have $x^{2}\in U_{2}$ because $x_{i}^{2}\in U_{2}$ for all $1\leq i\leq q$ and $[x^{-1}_{i},x_{i+1}\cdots x_{q}]\in \Gamma_{2}(P_{2}(\mathbb{K}))\subset U_{2}$ for all $1\leq i\leq q-1$.
%as similiar with : 
%\begin{equation*}x^{2}=\underbrace{x^{2}_{1}}_{\in U_{2}}\underbrace{[x^{-1}_{1},x_{2}\cdots x_{q}]}_{\in \Gamma_{2}(P_{2}(\mathbb{K}))\subset U_{2}}\underbrace{x^{2}_{2}}_{\in U_{2}}\underbrace{[x^{-1}_{2},x_{3}\cdots x_{q}]}_{\in \Gamma_{2}(P_{2}(\mathbb K))}\cdots\underbrace{[x^{-1}_{q-1},x_{q}]}_{\in \Gamma_{2}(P_{2}(\mathbb K))}\underbrace{x^{2}_{q}}_{\in U_{2}},\end{equation*} so follows that $x^{2}\in U_{2}$. 
Thus $\gamma^{2}_{2}P_{2}(\mathbb{K})\subset U_{2}$, and therefore $\gamma^{2}_{2}P_{2}(\mathbb{K})=U_{2}$. 

Now assume that $n\geq 2$, and suppose by induction that $U_{i}=\gamma^{2}_{i}P_{2}(\mathbb{K})$ for all $2\leq i \leq n$. Then $U_{n+1}\subset \gamma^{2}_{n+1}P_{2}(\mathbb{K})$ since $\Gamma_{n+1}(P_{2}(\mathbb{K})) \subset \gamma^{2}_{n+1}P_{2}(\mathbb{K})$, $\Gamma_{n+1}(P_{2}(\mathbb{K})) $ and $\gamma^{2}_{n+1}P_{2}(\mathbb{K})$ are normal in $P_{2}(\mathbb{K})$, and if $x\in \{b_{2}, b_{2}b_{1}\}$ then $x^{2^{n}}=(x^{2^{n-1}})^{2}$, where $x^{2^{n-1}}\in U_{n}=\gamma^{2}_{n}P_{2}(\mathbb{K})$ by the induction hypothesis, so $x^{2^{n}}\in \gamma^{2}_{n+1}P_{2}(\mathbb{K})$. 
%%%%%%%%%%%%%%%%%%%%%%%%

To prove that $\gamma^{2}_{n+1} P_{2}(\mathbb K) \subset U_{n+1}$, using the induction hypothesis and the fact that $\gamma^{2}_{n+1} P_{2}(\mathbb K)$ is generated by $[P_{2}(\mathbb K), \gamma^{2}_{n} P_{2}(\mathbb K)]\cup \left\{x^{2}\,:\,x\in \gamma^{2}_{n} P_{2}(\mathbb K)\right\}$, it suffices to show that $[P_{2}(\mathbb K), U_{n}]\subset U_{n+1}$, and that $x^{2}\in U_{n+1}$ for all $x\in U_{n}$. We first show that $[P_{2}(\mathbb{K}),U_{n}]\subset U_{n+1}$.  Let $x\in P_{2}(\mathbb{K})$, and let $u\in U_{n}$. 
\begin{enumerate}
	\item If $u\in\Gamma_{n}(P_{2}(\mathbb{K}))$ then $[x,u]\in U_{n+1}$. 
	
	\item If $u=(b_{2}b_{1})^{2^{n-1}}$ then $[x,u]=1$ because $(b_{2}b_{1})^{2^{n-1}}\in Z(P_{2}(\mathbb{K}))$ by Proposition~\ref{centro}.
	
	\item\label{it:ub2} If $u=b^{2^{n-1}}_{2}$, we claim that $[b^{2^{n-1}}_{2},x]\in L_{n+1}$ for all $x\in\left\{a_{2},(a_{2}a_{1})^{-1}, b_{2},(b_{2}b_{1})^{-1}\right\}$. The result is clear if $x=b_{2}$, so we consider the three other cases. We proceed by induction on $n$. Suppose first that $n=2$. 
%, in what follows, we show that $[b^{2}_{2},x]\in L_{3}$. 
If $x=a_{2}$ then by~(\ref{itercomm1}),~(\ref{eq:defwn}) and Proposition~\ref{lemma3}, we have: \[[u,x]=[b^{2}_{2},a_{2}]=\underbrace{[b_{2},b_{2},a_{2}]}_{\in W_{3}=L_{3}}\underbrace{[b_{2},a_{2}]^{2}}_{\in W_{3}=L_{3}}\in L_{3}.\] %Sabemos que $[b_{2},a_{2}]^{2}\in L_{3}=W_{3}$ pois \'e um elemento de $\Gamma_{2}(P_{2}(\mathbb{K}))\subset L_{2}=W_{2}$ elevado ao quadrado, logo $[u,x]\in L_{3}$.
In the remaining two cases, by~(\ref{acaoP2}),~(\ref{L2}),~(\ref{eq:defwntilde}) and Proposition~\ref{lemma3}, we have:
\begin{align*}
[(a_{2}a_{1})^{-1},u]&=[(a_{2}a_{1})^{-1},b^{2}_{2}]=(a^{-2}_{2}b_{2})(a^{-2}_{2}b_{2})b^{-2}_{2}=\underbrace{a^{-4}_{2}}_{\in \widetilde{W}_{3} = L_{3}}\underbrace{[a^{2}_{2},b_{2}]}_{\in [L_{2},H]\subset L_{3}}\in L_{3}\\
[(b_{2}b_{1})^{-1},u] &=[(b_{2}b_{1})^{-1},b^{2}_{2}]=(a_{2}b_{2}a_{2})(a_{2}b_{2}a_{2})b^{-2}_{2}\\
&=\underbrace{[a_{2}b_{2},a^{2}_{2}]}_{\in [H,L_{2}]\subset L_{3}}\underbrace{a^{4}_{2}}_{\in \widetilde{W}_{3}= L_{3}}\underbrace{[a^{-1}_{2},b_{2}]^{2}}_{\in \widetilde{W}_{3}=L_{3}}\underbrace{\left[[b_{2},a^{-1}_{2}],b_{2}\right]}_{\in \Gamma_{3}(H)\subset L_{3}} \in L_{3},
\end{align*}
which proves the claim in the case $n=2$. Now suppose that $[b^{2^{j-1}}_{2},x]\in L_{j+1}$ for all $2\leq j \leq n$ and $x\in\left\{a_{2},(a_{2}a_{1})^{-1}, b_{2},(b_{2}b_{1})^{-1}\right\}$. Then by Proposition~\ref{lemma3} and~(\ref{eq:defwn}), $[b^{2^{n-1}}_{2},x]\in L_{n+1}=W_{n+1}$, and hence $[b^{2^{n-1}}_{2},x]^{2}\in W_{n+2}=L_{n+2}$. So by~(\ref{itercomm1}), we have:
\[[b^{2^{n}}_{2},x]=\underbrace{\Bigl[b^{2^{n-1}}_{2},\underbrace{[b^{2^{n-1}}_{2},x]}_{\in L_{n+1}}\Bigr]}_{\in[H,L_{n+1}]\subset L_{n+2}}\underbrace{[b^{2^{n-1}}_{2},x]^{2}}_{\in L_{n+2}} \in L_{n+2},\] which proves the claim for all $n\geq 2$. Now let $x$ be an arbitrary element of $P_{2}(\mathbb{K})$. Since the set $\left\{a_{2},(a_{2}a_{1})^{-1}, b_{2},(b_{2}b_{1})^{-1}\right\}$ generates $P_{2}(\mathbb{K})$, for some $t\geq 0$, there exist $x_{1}, \ldots, x_{t}\in \left\{a_{2},(a_{2}a_{1})^{-1}, b_{2},(b_{2}b_{1})^{-1}\right\}$ and $\varepsilon_{1},\ldots, \varepsilon_{t}\in\left\{1,-1\right\}$ for which $x=x^{\varepsilon_{1}}_{1}x^{\varepsilon_{2}}_{2}\cdots x^{\varepsilon_{t}}_{t}$. 
%Then $x=x^{\varepsilon_{1}}_{1}x^{\varepsilon_{2}}_{2}\cdots x^{\varepsilon_{t}}_{t}$, where $\epsilon_{i}\in\left\{1,-1\right\}$ and $x_{i}\in \left\{a_{2},(a_{2}a_{1})^{-1}, b_{2},(b_{2}b_{1})^{-1}\right\}$ for all $1\leq i \leq t$  because this set generates $P_{2}(\mathbb{K})$.
%, if $x=x_{1}x_{2}\cdots x_{t}$, where $x_{i}\in \left\{a_{2},(a_{2}a_{1})^{-1}, b_{2},(b_{2}b_{1})^{-1}\right\}$ for all $1\leq i \leq t$, 
As in~(\ref{eq:conjalphat}), we have the following relation:
\begin{equation}\label{x1xt}
[u,x]=[u,x^{\varepsilon_{1}}_{1}]\left(x^{\varepsilon_{1}}_{1}[u,x^{\varepsilon_{2}}_{2}]x^{-\varepsilon_{1}}_{1}\right)\cdots \left(x^{\varepsilon_{1}}_{1}x^{\varepsilon_{2}}_{2}\cdots x^{\varepsilon_{t-1}}_{t-1}[u,x^{\varepsilon_{t}}_{t}]x^{-\varepsilon_{t-1}}_{t-1}\cdots x^{-\varepsilon_{2}}_{2}x^{-\varepsilon_{1}}_{1}\right).
\end{equation}
Using the fact that $L_{n+1}$ is normal in $H$ by Lemma~\ref{Lnnormal}, it follows from~(\ref{x1xt}) that $[b^{2^{n-1}}_{2},x]\in L_{n+1}$, and since $L_{n+1}\subset \Gamma_{n+1}(P_{2}(\mathbb{K}))$ by Theorem~\ref{serie}, we deduce that $[b^{2^{n-1}}_{2},x]\in U_{n+1}$.
\end{enumerate} 
This concludes the proof of the inclusion $[P_{2}(\mathbb{K}), U_{n}]\subset U_{n+1}$. It remains to prove that $\left\{x^{2}\,:\,x\in U_{n}\right\}\subset U_{n+1}$. If $x=b^{2^{n-1}}_{2}$ or $x=(b_{2}b_{1})^{2^{n-1}}$ then clearly $x^{2}\in U_{n+1}$. Using Theorems~\ref{serie}(\ref{it:lcspart1}) and~\ref{GammaP2}, if $x\in \Gamma_{n}(P_{2}(\mathbb{K}))=L_{n}\rtimes \bigl\langle (a_{1}a_{2})^{2^{n-1}}\bigr\rangle$, then in terms of this semi-direct product, $x=(x_{1},x_{2})$, where $x_{1}\in L_{n}$ and $x_{2}\in \bigl\langle (a_{1}a_{2})^{2^{n-1}}\bigr\rangle$, and $x^{2}=(x_{1},x_{2})(x_{1},x_{2})=(x_{1}.\varphi(x_{2})(x_{1}),x^{2}_{2})$.
Now $x^{2}_{2}\in \bigl\langle (a_{1}a_{2})^{2^{n}}\bigr\rangle$, and since $x_{1}\in L_{n}=W_{n}$, we have $x^{2}_{1}\in W_{n+1}=L_{n+1}$ by Proposition~\ref{lemma3}, and therefore $x_{1}.\varphi(x_{2})(x_{1})=x^{2}_{1}(\varphi(x_{2})(x^{-1}_{1}).x_{1})^{-1}\in L_{n+1}$ and $x^{2}\in L_{n+1}\rtimes \left\langle (a_{1}a_{2})^{2^{n}}\right\rangle = \Gamma_{n+1}(P_{2}(\mathbb{K}))\subset U_{n+1}$.
If $x$ is a product of conjugates of generators of $U_{n}$ then $x^{2}\in U_{n+1}$ using~(\ref{produto}). This shows that $\left\{x^{2}\,:\,x\in U_{n}\right\}\subset U_{n+1}$. It follows that $\gamma^{2}_{n+1}(P_{2}(\mathbb{K}))\subset U_{n+1}$, so $\gamma^{2}_{n+1}(P_{2}(\mathbb{K}) = U_{n+1}$. Using the explicit characterisation of $\Gamma_{n}(P_{2}(\mathbb{K}))$ given by Theorem~\ref{GammaP2} and Remark \ref{Un}, the first part of the statement follows. The second part follows from the fact that $\bigcap_{n\geq 2}U_{n}=\left\{1\right\}$ using the first part.
\end{proof}
%\hfill{$\blacksquare$}

\subsection{The derived series of $P_{n}(\mathbb{K})$}\label{sec:derivP2K}

For $n\geq 3$, the situation is more complicated, and we did not succeed in computing the lower central series of $P_{n}(\mathbb{K})$ as we did in Section~\ref{sec:lcsP2} in the case $n=2$. Due to the complexity of the lower central series of $P_{2}(\mathbb{K})$, we were not able to find a suitable formula for $\varphi(g)(h).h^{-1}$ for $g\in\Gamma_{m}(P_{n}(\mathbb{K}))$ in order to apply Theorem~\ref{serie}(\ref{it:lcspart1}), the idea being to use the Fadell-Neuwirth short exact sequence~(\ref{seqFNklein}) and to calculate recursively $\Gamma_{m}(P_{n}(\mathbb K)$ for $m\geq1$ and $n\geq3$. Nevertheless, in this section, we will use Theorem~\ref{serie}(\ref{it:lcspart2}) to estimate the derived series of $P_{n}(\mathbb{K})$, and to show in Theorem~\ref{soluvel} that this group is residually soluble for all $n\geq 1$. 

With the notation of  Theorem~\ref{serie} and equation~(\ref{seqFNklein}), we may write $P_{n+1}(\mathbb{K})=H\rtimes_{\varphi} G$, where $H=\pi_{1}(\mathbb{K}\setminus\left\{x_{1},\ldots, x_{n}\right\})$, $G=s(P_{n}(\mathbb{K}))$, and $s$ is the section for $p_{\ast}$ given by Proposition~\ref{sectionPn}. In what follows, we will take $G$ to be equipped with the generating set: 
\begin{align}
X &=\bigl\{ s(z) \;:\; z\in \{ a_{i},b_{i}, C_{j,k} \;:\; \text{$1\leq i\leq n$ and $1\leq j<k\leq n$}\} \bigr\}\notag\\
&= \bigl\{a_{i},b_{i},C_{i,k}, a_{n}a_{n+1}, b_{n}b_{n+1}C_{n,n+1}, C_{i,n}C_{i,n+1}C^{-1}_{n,n+1} \;:\; 1\leq i\leq k\leq n-1\bigr\}.\label{eq:defX}
\end{align}
In $P_{n+1}(\mathbb{K})$, for $1\leq j \leq n$, let $D_{j}=C^{-1}_{j,n+1}C_{j+1,n+1}$. Since $C_{j,n+1}=D^{-1}_{n}\cdots D^{-1}_{j+1}D^{-1}_{j}$ for all $j=1,\ldots,n$, the set:
\begin{equation}\label{eq:defY}
Y=\left\{a_{n+1}, b_{n+1}, D_{j}\,:\,j=1,\ldots, n\right\}
\end{equation}
generates $H$,
% , instead of on $C_{j,n+1}$. This is just to simplify the notation, and does not change the result 
%\[1\longrightarrow \pi_{1}(\mathbb{K}\setminus\left\{x_{1},\ldots, x_{n}\right\})\longrightarrow P_{n+1}(\mathbb{K})\longrightarrow P_{n}(\mathbb{K})\longrightarrow 1.\] 
and using Theorem~\ref{puras}, the action $\varphi\colon\thinspace s(P_{n}(\mathbb{K}))\longrightarrow \operatorname{\text{Aut}}(\pi_{1}(\mathbb{K}\setminus \left\{x_{1},\ldots,x_{n}\right\}))$ is given by:% \comj{the following formulae have been reformatted. Perhaps we could already introduce the notation $D_{j}$ and use it in these formulae, since this is the notation that is used in what follows?}
\begin{equation}\label{acaoPn+1}
\left\{\begin{aligned}
\varphi(a_{i})(z) & \!=\!\begin{cases}
a_{n+1} & \!\!\!\text{if $z=a_{n+1}$}\\
b_{n+1}a_{n+1}D_{i}a^{-1}_{n+1} & \!\!\!\text{if $z\!=\!b_{n+1}$}\\
\alpha_{i,j} D_{j}\alpha^{-1}_{i,j} & \!\!\!\text{if $z\!=\!D_{j}$}
\end{cases}\\
\varphi(b_{i})(z) & \!=\!\begin{cases} 
a_{n+1}b_{n+1}C_{i,n+1}D_{i}C^{-1}_{i,n+1}b^{-1}_{n+1} & \!\!\!\text{if $z\!=\!a_{n+1}$}\\ 
b_{n+1}C_{i,n+1}D^{-1}_{i}C^{-1}_{i,n+1} & \!\!\!\text{if $z\!=\!b_{n+1}$}\\
\beta_{i,i} D^{-1}_{i} \beta^{-1}_{i,i} & \!\!\!\text{if $z\!=\!D_{i}$}\\
\beta_{i,j} D_{j} \beta^{-1}_{i,j} & \!\!\!\text{if $z\!=\!D_{j}$, $j\neq i$}
\end{cases}\\
\varphi(C_{i,k})(z) & \!=\!\begin{cases}
a_{n+1} & \!\!\!\text{if $z\!=\!a_{n+1}$}\\
b_{n+1} & \!\!\!\text{if $z\!=\!b_{n+1}$}\\
\delta_{i,j,k} D_{j}\delta^{-1}_{i,j,k} & \!\!\!\text{if $z\!=\!D_{j}$}
\end{cases}\\
\varphi(a_{n}a_{n+1})(z) & \!=\!\begin{cases}
a_{n+1} & \!\!\!\text{if $z\!=\!a_{n+1}$}\\
a^{-1}_{n+1}b_{n+1}a_{n+1}D_{n} & \!\!\!\text{if $z\!=\!b_{n+1}$}\\
\widetilde{\alpha}_{j} D_{j}\widetilde{\alpha}^{-1}_{j} & \!\!\!\text{if $z\!=\!D_{j}$}
\end{cases}\\
\varphi(b_{n}b_{n+1}C_{n,n+1})(z) & \!=\!\begin{cases}
D_{n}b^{-1}_{n+1}a_{n+1}b_{n+1} & \!\!\!\text{if $z\!=\!a_{n+1}$}\\ 
b_{n+1}D^{-1}_{n} & \!\!\!\text{if $z\!=\!b_{n+1}$}\\
D^{-1}_{n} & \!\!\!\text{if $z\!=\!D_{n}$}\\
b^{-1}_{n+1}D_{j}b_{n+1} & \!\!\!\text{if $z\!=\!D_{j}$, $j\neq n$}
\end{cases}\\
\varphi(C_{i,n}C_{i,n+1}C_{n,n+1}^{-1})(z) & \!=\!\begin{cases}
C_{n,n+1}C^{-1}_{i,n+1}a_{n+1}C_{i,n+1}C^{-1}_{n,n+1} & \!\!\!\!\text{if $z\!=\!a_{n+1}$}\\
C_{n,n+1}C^{-1}_{i,n+1}b_{n+1}C_{i,n+1}C^{-1}_{n,n+1} & \!\!\!\!\text{if $z\!=\!b_{n+1}$}\\
\widetilde{\delta}_{i,j} D_{j}\widetilde{\delta}^{-1}_{i,j} & \!\!\!\!\text{if $z\!=\!D_{j}$,}
\end{cases}\\
\end{aligned}\right.
\end{equation}
where:
\begin{align*}
\alpha_{i,j}&=\begin{cases}
1 & \text{if $i<j$}\\ 
C^{-1}_{j+1,n+1}a_{n+1}& \text{if $i=j$}\\
C^{-1}_{i+1,n+1}C_{i,n+1} & \text{if $i>j$}
\end{cases} &
\beta_{i,j}&=\begin{cases}
1 & \text{if $i<j$}\\ 
b_{n+1}C_{i,n+1} & \text{if $i=j$}\\
b_{n+1}C_{i,n+1}C^{-1}_{i+1,n+1}b^{-1}_{n+1} & \text{if $i>j$}
\end{cases}\\
\delta_{i,j,k}&=\begin{cases}
1 & \text{if $k<j$ or $i>j$}\\ 
C^{-1}_{j+1,n+1}C_{i,n+1} & \text{if $k=j$}\\
C^{-1}_{k+1,n+1}C_{k,n+1} & \text{if $k>j\geq i$}
\end{cases} & &
\end{align*}
%${\alpha_{i,j}=\left\{\begin{array}{l}1,\,\mbox{\,if\,}(i<j)\\ C^{-1}_{j+1,n+1}a_{n+1},\,\mbox{\,if\,}(i=j)\\ C^{-1}_{i+1,n+1}C_{i,n+1},\,\mbox{\,if\,}(i>j), \end{array}\right.}$
%
%${\beta_{i,j}=\left\{\begin{array}{l}1,\,\mbox{\,if\,}(i<j)\\b_{n+1}C_{i,n+1},\,\mbox{\,if\,}(i=j)\\b_{n+1}C_{i,n+1}C^{-1}_{i+1,n+1}b^{-1}_{n+1},\,\mbox{\,if\,}(i>j),\end{array}\right.}{\delta_{i,j,k}=\left\{\begin{array}{l}1,\,\mbox{\,if\,}(k<j)\,\mbox{or}\,\mbox{\,if\,}(i> j)\\C^{-1}_{j+1,n+1}C_{i,n+1},\,\mbox{\,if\,}(k=j)\\C^{-1}_{k+1,n+1}C_{k,n+1},\,\mbox{\,if\,}(k>j\geq i), \end{array}\right.}$
%
for all $1\leq i\leq k \leq n-1$ and  $1\leq j \leq n$, and where $\widetilde{\alpha}_{j}=a^{-1}_{n+1}\alpha_{n,j}$ and $\widetilde{\delta}_{i,j}=C_{n,n+1}C^{-1}_{i,n+1}\delta_{i,j,n}$. 
%Notice that we calculate the action of $\varphi$ on $D_{j}=C^{-1}_{j,n+1}C_{j+1,n+1}$ for $1\leq j \leq n$, instead of on $C_{j,n+1}$. This is just to simplify the notation, and does not change the result since we can write the element $C_{j,n+1}$ as $D^{-1}_{n}\cdots D^{-1}_{j+1}D^{-1}_{j}$, so the set $\left\{a_{n+1}, b_{n+1}, D_{j}\,:\,j=1,\ldots, n\right\}$ generates $\pi_{1}(\mathbb{K}\setminus\left\{x_{1},\ldots, x_{n}\right\})$.

Our aim is to calculate the subgroups $V_{m}$ that were defined in the statement of Theorem~\ref{serie} for all $m\geq 2$. For $i\geq 0$, let
\begin{equation}\label{eq:defA2in}
A^{2^i}_{n}=\bigl\langle \!\bigl\langle D^{2^i}_{j}\,:\,j=1,\ldots,n\bigr\rangle\!\bigr\rangle_{H}.
\end{equation}
%be the normal closure of $\bigl\{D^{2^{i}}_{j}\,:\, j=1,\ldots, n\bigr\}$ in $H$
If $i=0$, we write $A_{n}=A^{1}_{n}$. %Using Lemma~\ref{geradoreslema} \comj{Is $\widetilde{H}_{n}$ normal in $H$? In any case, it would be good to give the list of normal subgroups in the statement of Theorem~\ref{serie}.} and the action $\varphi$ given by~(\ref{acaoPn+1}), we see that $A_{n} \subset \widetilde{H}_{2}\subset \left\langle \Gamma_{2}(H), A_{n}\right\rangle$ \comj{for the first inclusion, give relations that show that a conjugate of $D_{j}\in \widetilde{H}_{2}$ for all $j$? Explain briefly why the second inclusion holds (by looking at the form of the elements $\varphi(g)(h).h^{-1}$\ldots)}, and therefore $V_{2}=\left\langle \Gamma_{2}(H), A_{n}\right\rangle$.
To compute $\widetilde{H}_{2}$, first notice that by definition, $\widetilde{H}_{2}=K_{2}$, so $\widetilde{H}_{2}$ is normal in $H$ by Lemma \ref{Lnnormal}.
%, however this is not necessarily the case if $n\geq3$.
%, $\widetilde{H}_{n}$ is not necessarily normal in $H$. 
Using~(\ref{acaoPn+1}), observe that $\varphi(a_{i})(b_{n+1})b^{-1}_{n+1}=b_{n+1}a_{n+1}D_{i}a^{-1}_{n+1}b^{-1}_{n+1}$ for all $1\leq i \leq n-1$, and $\varphi(b_{n}b_{n+1}C_{n,n+1})(b_{n+1})b^{-1}_{n+1}=b_{n+1}D^{-1}_{n}b^{-1}_{n+1}$. So by normality of $\widetilde{H}_{2}$, $D_{j}\in\widetilde{H}_{2}$ for all $1\leq j\leq n$, and
%\begin{itemize}
%	\item $\varphi(a_{i})(b_{n+1})b^{-1}_{n+1}=b_{n+1}a_{n+1}D_{i}a^{-1}_{n+1}b^{-1}_{n+1}$, for all $1\leq i \leq n-1$, and by the normality of $\widetilde{H}_{2}$, it follows that $D_{1},\ldots, D_{n-1}\in\widetilde{H}_{2}$.
%
%\item $\varphi(b_{n}b_{n+1}C_{n,n+1})(b_{n+1})b^{-1}_{n+1}=b_{n+1}D^{-1}_{n}b^{-1}_{n+1}$, by the normality of $\widetilde{H}_{2}$, follows that $D_{n}\in\widetilde{H}_{2}$.
%\end{itemize}
therefore $A_{n}\subset \widetilde{H}_{2}$ by~(\ref{eq:defA2in}). Moreover, $\varphi(b_{n}b_{n+1}C_{n,n+1})(a_{n+1})a^{-1}_{n+1}=D_{n}b^{-1}_{n+1}a_{n+1}b_{n+1}a^{-1}_{n+1}$, so $[b^{-1}_{n+1},a_{n+1}]\in \widetilde{H}_{2}$. Applying Lemma~\ref{geradoreslema}(\ref{geradoreslema1}) with $X$ and $Y$ as defined in~(\ref{eq:defX}) and~(\ref{eq:defY}), we see that $\widetilde{H}_{2}=\bigl\langle \!\bigl\langle A_{n},[a_{n+1},b_{n+1}]\bigr\rangle\!\bigr\rangle_{H}$, and therefore:
\begin{equation}\label{eq:V2}
V_{2}=\left\langle \Gamma_{2}(H), A_{n}\right\rangle
\end{equation}

%%%%%%%%%%%%%%%%%%%%%%%%%%%%%%%%%%rearrumando
Let $Y_{1}=V_{1}=H$, and for $m\geq 2$, let:
\begin{equation}\label{eq:defym}
Y_{m}= \bigl\langle A^{2^{m-2}}_{n},\, [Y_{i},Y_{k}] \;:\; \text{$1\leq i \leq k<m$, $i+k=m$}\bigr\rangle.
\end{equation}
In what follows, we will refer to the elements of the set
\begin{equation*}
\bigl\{  D^{2^{m-2}}_{j},\, [y_{i},y_{k}] \;:\; j=1,\ldots,n,\,  \text{$y_{i}\in Y_{i}$, $y_k\in Y_{k}$ and $i+k=m$}\bigr\}
\end{equation*}
as \emph{generators} of $Y_{m}$.
%will mean the elements $D^{2^{m-2}}_{j}$, $j=1,\ldots,n$ and the elements of the form $[y_{i},y_{k}]$, where $y_{i}\in Y_{i}$, $y_k\in Y_{k}$ and $i+k=m$. 
Since $(P_{n+1}(\mathbb{K}))^{(m)}\subset V_{m+1}\rtimes (P_{n}(\mathbb{K}))^{(m)}$ by Theorem~\ref{serie}(\ref{it:lcspart2}), to prove that $P_{n+1}(\mathbb{K})$ is residually soluble, it suffices to show that $V_{m}\subset Y_{m}$ for all $m\geq 2$, and then that $\bigcap_{m\geq1}Y_{m}=\left\{1\right\}$.

\begin{remark}\label{ym}
For all $m\geq 1$, $Y_{m}$ is a normal subgroup of $H$. Further, we claim that $\varphi(g)(Y_{i})\subset Y_{i}$ for all $i\geq 1$ and $g\in G$. To see this, note that $\varphi(g)(D_j)$ is a conjugate of $D_{j}$ or $D_{j}^{-1}$ for all $j=1,\ldots,n$ by~(\ref{acaoPn+1}), so $\varphi(g)(A^{2^{i}}_{n})\subset A^{2^{i}}_{n}$ for all $i\geq 1$. Also, if $y_{k}\in Y_{k}$ and $y_{l}\in Y_{l}$ then $\varphi(g)([y_{k},y_{l}])=[\varphi(g)(y_{k}),\varphi(g)(y_{l})]$, and the claim follows by induction on $i$.
%%%%%%%%%%%%%%%%%%%%%%%
\end{remark}

\begin{lem}\label{VmYm}
For all $m\geq 2$, $V_{m}\subset Y_{m}$.  
\end{lem}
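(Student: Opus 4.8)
The plan is to prove the inclusion by induction on $m$, the decisive ingredient being an auxiliary ``cocycle'' statement about the action $\varphi$ that I would establish first. Before that, I would record two structural properties of the family $\{Y_m\}$. Directly from the definition~(\ref{eq:defym}) one has the \emph{nesting property} $[Y_a,Y_b]\subset Y_{a+b}$ for all $a,b\geq 1$: putting $i=\min(a,b)$ and $k=\max(a,b)$, the pair $(i,k)$ satisfies $i+k=a+b$ and $1\leq i\leq k<a+b$, so $[Y_a,Y_b]=[Y_i,Y_k]$ is one of the generating families of $Y_{a+b}$. Next I would prove the \emph{decreasing property} $Y_{m+1}\subset Y_m$ for all $m\geq 1$ by induction: the generators $D_j^{2^{m-1}}=(D_j^{2^{m-2}})^{2}$ of $A_n^{2^{m-1}}$ already lie in $Y_m$, and for a generator $[y_i,y_k]$ with $i+k=m+1$ and $k\geq 2$, the hypothesis $Y_k\subset Y_{k-1}$ together with $i+(k-1)=m$ gives $[Y_i,Y_k]\subset[Y_i,Y_{k-1}]\subset Y_m$. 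Both properties, together with the facts from Remark~\ref{ym} that each $Y_m$ is normal in $H$ and $\varphi$-invariant, will be used throughout.

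The heart of the argument is the following claim, proved by induction on $m\geq 1$: for every $g\in X$ and every $h\in Y_m$ one has $\varphi(g)(h).h^{-1}\in Y_{m+1}$. Since $h\mapsto\varphi(g)(h).h^{-1}$ satisfies the cocycle identity and $Y_{m+1}$ is normal, it suffices to check the claim when $h$ runs through the two types of generators of $Y_m$. For $m=1$ this is precisely the already established inclusion $\widetilde{H}_2=\langle\!\langle A_n,[a_{n+1},b_{n+1}]\rangle\!\rangle_H\subset\langle \Gamma_2(H),A_n\rangle=V_2=Y_2$ coming from~(\ref{eq:V2}). For $m\geq 2$, the generators $D_j^{2^{m-2}}$ are straightforward: by~(\ref{acaoPn+1}) the element $\varphi(g)(D_j)$ is always a conjugate $c D_j^{\varepsilon}c^{-1}$ of $D_j^{\pm1}$, so $\varphi(g)(D_j^{2^{m-2}}).D_j^{-2^{m-2}}$ equals $[c,D_j^{2^{m-2}}]$ when $\varepsilon=1$ and $[c,D_j^{-2^{m-2}}].D_j^{-2^{m-1}}$ when $\varepsilon=-1$; both lie in $Y_{m+1}$, because $D_j^{2^{m-2}}\in Y_m$ forces $[H,D_j^{2^{m-2}}]\subset[Y_1,Y_m]\subset Y_{m+1}$ and $D_j^{2^{m-1}}\in A_n^{2^{m-1}}\subset Y_{m+1}$.

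The main obstacle, and where the inductive hypothesis of the claim is spent, is a generator $[y_i,y_k]$ with $i+k=m$, $1\leq i\leq k<m$. Writing $r_i=\varphi(g)(y_i).y_i^{-1}$ and $r_k=\varphi(g)(y_k).y_k^{-1}$, the claim at levels $i,k<m$ yields $r_i\in Y_{i+1}$ and $r_k\in Y_{k+1}$, while Remark~\ref{ym} gives $\varphi(g)(y_i)=r_iy_i\in Y_i$ and $\varphi(g)(y_k)=r_ky_k\in Y_k$. Expanding $\varphi(g)([y_i,y_k])=[r_iy_i,r_ky_k]$ with the identities $[ab,c]=a[b,c]a^{-1}[a,c]$ and $[a,bc]=[a,b].b[a,c]b^{-1}$, every contribution other than $[y_i,y_k]$ itself is a commutator lying in one of $[Y_{i+1},Y_k]$, $[Y_i,Y_{k+1}]$, $[Y_m,Y_{i+1}]$ or $[Y_m,Y_{k+1}]$; by the nesting and decreasing properties each of these is contained in $Y_{m+1}$, so $\varphi(g)([y_i,y_k])\equiv[y_i,y_k]\bmod Y_{m+1}$, that is $\varphi(g)([y_i,y_k]).[y_i,y_k]^{-1}\in Y_{m+1}$. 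This completes the induction establishing the claim, and I expect this commutator bookkeeping to be the only delicate point of the whole proof.

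Finally I would deduce $V_m\subset Y_m$ by induction on $m$. The base case $m=2$ is the equality $V_2=Y_2$ noted above. For the inductive step, $V_{m+1}=\langle\widetilde{H}_{m+1},[H,V_m]\rangle$ by the construction in Theorem~\ref{serie}; from $V_m\subset Y_m$ and the nesting property one gets $[H,V_m]\subset[Y_1,Y_m]\subset Y_{m+1}$, and the claim gives $\varphi(g)(h).h^{-1}\in Y_{m+1}$ for all $g\in X$ and all $h\in V_m\subset Y_m$, so by Lemma~\ref{geradoreslema}(\ref{geradoreslema1}) and the normality of $Y_{m+1}$ the entire subgroup $\widetilde{H}_{m+1}$ is contained in $Y_{m+1}$. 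Hence $V_{m+1}\subset Y_{m+1}$, which closes the induction and proves the lemma.
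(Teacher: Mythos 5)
Your proof is correct and follows essentially the same route as the paper's: induction on $m$, reduction to the two types of generators of $Y_m$ via Lemma~\ref{geradoreslema} and normality of $Y_{m+1}$, with the $D_j^{2^{m-2}}$-generators handled through the conjugacy structure of the action~(\ref{acaoPn+1}) and the commutator generators handled through nesting of the subgroups $[Y_i,Y_k]$. The only difference is organisational: by isolating the cocycle statement ``$\varphi(g)(h)\ldotp h^{-1}\in Y_{m+1}$ for all $g\in X$, $h\in Y_m$'' as a standalone induction, you make explicit (and watertight) a point the paper treats implicitly, since its justification that $\varphi(g)(y_k^{-1})\ldotp y_k\in Y_{k+1}$ is phrased for elements of $V_k$, whereas $y_k$ is only known to lie in $Y_k$.
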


\begin{proof} We argue by induction on $m$. If $m=2$, we saw above that $V_{2}=\left\langle \Gamma_{2}(H), A_{n}\right\rangle$, and it then follows from~(\ref{eq:defym}) that $V_{2}=Y_{2}$.
%it is easy to see that $V_{2}=Y_{2}$ \comj{explain a little more, perhaps using some of the relations above?}. 
So assume that $m\geq 2$, and suppose that $V_{i}\subset Y_{i}$ for all $i=2,\ldots,m$. By definition, $V_{m+1}=\bigl\langle \widetilde{H}_{m+1},[H,V_{m}]\bigr\rangle$. From the induction hypothesis, it follows that $[H,V_{m}]\subset [H,Y_{m}]= [Y_{1},Y_{m}]\subset Y_{m+1}$. Thus it suffices to show that $\widetilde{H}_{m+1}\subset Y_{m+1}$. To do so, suppose that $x=\varphi(g)(h).h^{-1}\in \widetilde H_{m+1}$, where $g\in G$ and $h\in V_{m}$, so $h\in Y_{m}$. By Lemma~\ref{geradoreslema}(\ref{geradoreslema2}), we need only study the following cases where $g\in X$ and $h$ is a generator of $Y_{m}$:
%We need to consider only the cases that $g$ is a generator of $G$, \emph{i.e.},  $g\in\left\{a_{i},b_{i},C_{i,k},(a_{n}a_{n+1}),(b_{n}b_{n+1}C_{n,n+1}),(C_{i,n}C_{i,n+1}C^{-1}_{n,n+1}),\,1\leq i\leq k\leq n-1\right\}$, by Lemma~\ref{geradoreslema}. 
%The result will follow by considering the following possibilities:
\begin{itemize}
\item $h=D^{2^{m-2}}_{j}$, where $j\in \{1,\ldots,n\}$. By~(\ref{acaoPn+1}), if $j\in \{1,\ldots,n-1\}$ then: \[\varphi(b_{j})(D^{2^{m-2}}_{j}).D^{-2^{m-2}}_{j}=[\beta_{j,j},D^{-2^{m-2}}_{j}]D^{-2^{m-1}}_{j}\in Y_{m+1},\]
because $D^{2^{m-2}}_{j}\in Y_{m}$, and if $j=n$ then: \[\varphi(b_{n}b_{n+1}C_{n,n+1})(D^{2^{m-2}}_{n}).D^{-2^{m-2}}_{n}=D^{-2^{m-1}}_{n} \in Y_{m+1}.\]
Similarly, by~(\ref{acaoPn+1}), one may check that if $g$ is any other element of $X$ then $\varphi(g)(D^{2^{m-2}}_{j}).D^{-2^{m-2}}_{j}$ is a commutator of $D^{2^{m-2}}_{n}$ with an element of $H$, and so belongs to $[H, Y_{m}]$, which is contained in $Y_{m+1}$. %If $g=g^{\varepsilon_{1}}_{1}\cdots g^{\varepsilon_{p}}_{p}$, where for all $1\leq i \leq p$, $\epsilon_{i}\in\left\{1,-1\right\}$ and $g_{i}$ is a generator of $G$, then using~(\ref{eq:varphigh1}) \comj{do we also use Remark~\ref{ym}?}, we conclude that $\varphi(g)(h).h^{-1}\in Y_{m+1}$.
		
\item $h=[y_{i},y_{k}]$, where $1\leq  i \leq k< m$, $i+k=m$, $y_{i}\in Y_{i}$ and $y_{k}\in Y_{k}$. If $1\leq j < m$, $x\in V_{j}$ and $g\in X$, then $\varphi(g)(x).x^{-1}\in \widetilde{H}_{j+1}\subset V_{j+1}\subset Y_{j+1}$ using the induction hypothesis, %, and by Remark~\ref{ym} \comj{Is this used in this sentence, or the following sentence? If it is the following sentence, then I would move it there.}, 
and $\varphi(g)(Y_{l})\subset Y_{l}$ for all $l\geq 1$ by Remark \ref{ym}. So $\left[(\varphi(g)(y^{-1}_{k}).y_{k})^{-1},\varphi(g)(y^{-1}_{i})\right]\in[Y_{k+1},Y_{i}]\subset Y_{m+1}$ and $[(\varphi(g)(y^{-1}_{i}).y_{i})^{-1},y_{k}]\in[Y_{i+1},Y_{k}]\subset Y_{m+1}$ by~(\ref{eq:defym}), and writing $\varphi(g)([y_{i},y_{k}]).[y_{k},y_{i}]$ in the form of~(\ref{eq:comutador}), where we replace $l$ by $y_{k}$ and $h$ by $y_{i}$, and using the fact that $Y_{m+1}$ is normal in $H$, it follows that $\varphi(g)([y_{i},y_{k}]).[y_{k},y_{i}]\in Y_{m+1}$.

%Now $\varphi(g)([x_{i},x_{k}]).[x_{k},x_{i}]$ may be written as similas as~(\ref{eq:comutador}) replacing ``$l$'' by $x_{k}$ and ``$h$'' by $x_{i}$, then $\left[(\varphi(g)(x^{-1}_{k}).x_{k})^{-1},\varphi(g)(x^{-1}_{i})\right]\in[Y_{k+1},Y_{i}]\subset Y_{m+1}$ and $[(\varphi(g)(x^{-1}_{i}).x_{i})^{-1},x_{k}]\in[Y_{i+1},Y_{k}]\subset Y_{m+1}$. Since $Y_{m+1}$ is a normal subgroup of $H$, the element $\varphi(g)([x_{i},x_{k}]).[x_{k},x_{i}]$ thus belongs to $Y_{m+1}$.
%\item \comj{In view of what was said above about Lemma~\ref{geradoreslema}, do we need this case?} $h=h^{\delta_1}_{1}\cdots h^{\delta_{l}}_{l}$, where for all $1\leq i \leq l$, $\delta_{i}\in\left\{1,-1\right\}$ and $h_{i}$ is a generator of $Y_{m}$. Since  $Y_{m+1}$ is normal in $H$, for all $g\in X$, $\varphi(g)(h).h^{-1}\in Y_{m+1}$ by~(\ref{eq:varphigh2}) and by the previous cases.

%\item If $g=g^{\varepsilon_{1}}_{1}\cdots g^{\varepsilon_{p}}_{p}$, where for all $1\leq i \leq p$, $\epsilon_{i}\in\left\{1,-1\right\}$ and $g_{i}\in X$, then using~(\ref{eq:varphigh1}), Remark~\ref{ym} and the above cases, we conclude that $\varphi(g)(h).h^{-1}\in Y_{m+1}$ for all $h\in Y_{m}$.
\end{itemize}
Thus $\widetilde{H}_{m+1}\subset Y_{m+1}$, and hence $V_{m+1}\subset Y_{m+1}$ as required.\end{proof} %\hfill{$\blacksquare$} \\

It remains to prove that $\bigcap_{m\geq1}Y_{m}=\left\{1\right\}$. To do so, we define two families $(Z_{m})_{m\geq 1}$ and $(\widetilde{Z}_{m})_{m\geq 1}$ of subgroups of $H$ as follows. Let $Z_{1}=\widetilde{Z}_{1}=H$, $Z_{2}=V_{2}=Y_{2}$, and if $m\geq 3$, let:
\begin{equation}\label{eq:defZm}
\text{$Z_{m}= \bigl\langle \!\bigl\langle \left\{x^{2}\,:\,x\in Z_{m-1}\right\}\cup X_{m}\bigr\rangle\!\bigr\rangle_{H}$ and 
$\widetilde{Z}_{m-1}= \bigl\langle \!\bigl\langle A^{2^{m-3}}_{n}\cup\widetilde{X}_{m-1}\bigr\rangle\!\bigr\rangle_{H}$, where:}
\end{equation}
%$Z_{m}$ is the normal closure in $H$ of $\left\{x^{2}\,:\,x\in Z_{m-1}\right\}\cup X_{m}$, 
%where: %\comj{define $[x_{1},\ldots,x_{i}]$ (this should have been done earlier)}
\begin{align*}
X_{m}& \!=\!\left\{\hspace*{-1.5mm}\begin{array}{l}[x_{1},\ldots,x_{i}]\in\Gamma_{i}(H)\!:\exists\, 1\leq j_{1}<\ldots<j_{m-i}\leq i,\,x_{j_{1}},\ldots,x_{j_{m-i}}\in A_{n}\\
\hfill \text{for all $i=2,\ldots, m$}\end{array}\hspace*{-1.5mm}\right\} \,\text{for all $m\geq 3$}\\
\widetilde{X}_{m}&\!=\!\left\{\hspace*{-1.7mm}\begin{array}{l}[x_{1},\ldots,x_{i}]^{2^{m-i-k}}\!\!\in\Gamma_{i}(H)\!:\exists\, 1\leq j_{1}<\ldots<j_{k}\leq i,\,x_{j_{1}},\ldots,x_{j_{k}}\in A_{n}\\
\hfill  \text{for all $0\leq k\leq m-i$, and $i=2,\ldots, m$}\end{array}\hspace*{-2mm}\right\} \text{for all $m\geq 2$.}
\end{align*}
Note that if $i=m$ (resp.\ $k=0$), the elements $x_{1},\ldots,x_{i}$ of $H$ that appear in the definition of $X_{m}$ (resp.\ of $\widetilde{X}_{m}$) are arbitrary.
If $m\geq 3$ (resp.\ $m\geq 2$), we will refer to the elements of $X_{m}\cup\left\{x^{2}\,:\,x\in Z_{m-1}\right\}$ (resp.\ of $\widetilde{X}_{m}\cup\bigl\{D^{2^{m-2}}_{j}\,:\,j=1,\ldots, n\bigr\}$) as \emph{generators} of $Z_{m}$ (resp.\ of $\widetilde{Z}_{m}$).
%as generators of $Z_{m}$, and to the elements of $\widetilde{X}_{m}\cup\left\{D^{2^{m-3}}_{j}\,:\,j=1,\ldots, n\right\}$ as generators of $\widetilde{Z}_{m}$.

\begin{prop}\label{acima} 
Let $m\geq 1$. Then $\widetilde{Z}_{m}\subset \gamma^{2}_{\lceil{m/2}\rceil}(H)$, where $\lceil x\rceil$ denotes the least integer greater than or equal to $x$. In particular, $\bigcap_{m\geq 1} \widetilde{Z}_{m}=\left\{1\right\}$.
\end{prop}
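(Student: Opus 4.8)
The statement to prove is $\widetilde{Z}_{m}\subset\gamma^{2}_{\lceil m/2\rceil}(H)$, where $\gamma^{2}$ is the lower $\mathbb{F}_{2}$-linear central filtration of $H=\pi_{1}(\mathbb{K}\setminus\{x_{1},\ldots,x_{n}\})$. Since $\widetilde{Z}_{m}$ is the normal closure of $A_{n}^{2^{m-2}}$ together with the generators in $\widetilde{X}_{m}$, and $\gamma^{2}_{\lceil m/2\rceil}(H)$ is a normal (indeed verbal) subgroup of $H$, it suffices to show that each \emph{generator} of $\widetilde{Z}_{m}$ lies in $\gamma^{2}_{\lceil m/2\rceil}(H)$. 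So the plan is to check the two types of generators separately against the membership criterion for the $\mathbb{F}_{2}$-filtration.

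The key tool is the elementary containment that governs $\gamma^{2}$: since $\gamma^{2}_{i+1}(H)=\bigl\langle[\gamma^{2}_{i}(H),H],\,x^{2}:x\in\gamma^{2}_{i}(H)\bigr\rangle$, one has both $\Gamma_{i}(H)\subset\gamma^{2}_{i}(H)$ and, crucially, that squaring raises the filtration degree: if $x\in\gamma^{2}_{i}(H)$ then $x^{2}\in\gamma^{2}_{i+1}(H)$, while if $x\in\gamma^{2}_{i}(H)$ and $y\in H$ then $[x,y]\in\gamma^{2}_{i+1}(H)$. I would first record these as the two inequalities I will iterate. Now consider a generator $x^{2^{m-i-k}}$ of $\widetilde{X}_{m}$ of the form $[x_{1},\ldots,x_{i}]^{2^{m-i-k}}$ with $k$ of the entries lying in $A_{n}$. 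The commutator $[x_{1},\ldots,x_{i}]\in\Gamma_{i}(H)\subset\gamma^{2}_{i}(H)$ contributes degree $i$; each of the $m-i-k$ squarings raises the degree by one, giving $\gamma^{2}_{i+(m-i-k)}(H)=\gamma^{2}_{m-k}(H)$. It thus remains to absorb the $k$ entries in $A_{n}$: I would argue that each element of $A_{n}$, being a normal-closure generator $D_{j}=C_{j,n+1}^{-1}C_{j+1,n+1}$, already sits in $\gamma^{2}_{2}(H)$ because $D_{j}$ is a \emph{square} modulo $\Gamma_{2}(H)$ in the abelianisation of $H$ — this is the point where the defining relations of $P_{n+1}(\mathbb{K})$, and the computation $\widetilde{H}_{2}=\langle\!\langle A_{n},[a_{n+1},b_{n+1}]\rangle\!\rangle_{H}$ from~(\ref{eq:V2}), enter. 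Each such entry therefore contributes an extra unit of $\gamma^{2}$-degree, yielding total degree $m-k+k=m\geq\lceil m/2\rceil$, so $x^{2^{m-i-k}}\in\gamma^{2}_{m}(H)\subset\gamma^{2}_{\lceil m/2\rceil}(H)$.

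For the second family of generators, the $D_{j}^{2^{m-2}}\in A_{n}^{2^{m-2}}$, the same bookkeeping gives $D_{j}\in\gamma^{2}_{2}(H)$ and $m-2$ further squarings, so $D_{j}^{2^{m-2}}\in\gamma^{2}_{m}(H)$, which is comfortably inside $\gamma^{2}_{\lceil m/2\rceil}(H)$. The only subtlety is that the counting must never undershoot $\lceil m/2\rceil$: the worst case is $i=m$, $k=0$ (a pure iterated commutator of weight $m$ with no squarings and no $A_{n}$-entries), which lands in $\Gamma_{m}(H)\subset\gamma^{2}_{m}(H)$, still well above the target. Hence every generator lies in $\gamma^{2}_{m}(H)$, and a fortiori in $\gamma^{2}_{\lceil m/2\rceil}(H)$; since $H$ is finitely generated and residually $2$-finite (it is free, hence residually $2$-finite, so $\bigcap_{i\geq1}\gamma^{2}_{i}(H)=\{1\}$), the intersection $\bigcap_{m\geq1}\widetilde{Z}_{m}$ is contained in $\bigcap_{m\geq1}\gamma^{2}_{\lceil m/2\rceil}(H)=\{1\}$, giving the final assertion.

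The main obstacle I anticipate is the claim that $D_{j}\in\gamma^{2}_{2}(H)$, i.e.\ that each generator of $A_{n}$ is a square modulo $\Gamma_{2}(H)$. This is not automatic from the free-group structure of $H$ alone; it rests on the specific form of the $D_{j}$ and on how $A_{n}$ arose inside $\widetilde{H}_{2}=K_{2}$ via the action~(\ref{acaoPn+1}). I would verify it by examining the images of $D_{j}$ in $H^{\mathrm{Ab}}\otimes\mathbb{F}_{2}$ using the relations of Theorem~\ref{puras}, and in fact the cleaner route may be to prove directly that $A_{n}\subset\gamma^{2}_{2}(H)$ by noting that every $D_{j}$ appears, through~(\ref{acaoPn+1}), as a factor of some $\varphi(g)(h)h^{-1}$ whose $\mathbb{F}_{2}$-homology class vanishes. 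Once that one inclusion is secured, the degree-counting argument above is entirely routine.
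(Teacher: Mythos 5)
Your overall strategy (reduce to generators of $\widetilde{Z}_{m}$ via normality of $\gamma^{2}_{\lceil m/2\rceil}(H)$, then count filtration degrees using $\Gamma_{i}(H)\subset\gamma^{2}_{i}(H)$ and the fact that squaring raises $\gamma^{2}$-degree by one) is sound and is essentially the paper's, but the step you yourself flagged as the main obstacle is in fact false: no $D_{j}$ lies in $\gamma^{2}_{2}(H)$. Using relation~(\ref{it:puras5}) of Theorem~\ref{puras} to write $C_{1,n+1}=b_{n+1}^{-1}a_{n+1}b_{n+1}a_{n+1}$, the group $H$ is free on $\{a_{n+1},b_{n+1},C_{2,n+1},\ldots,C_{n,n+1}\}$, and $\gamma^{2}_{2}(H)$ is precisely the kernel of $H\longrightarrow H^{\text{Ab}}\otimes\mathbb{F}_{2}$. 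The mod-$2$ classes of the $D_{j}=C_{j,n+1}^{-1}C_{j+1,n+1}$ are $[D_{1}]\equiv[C_{2,n+1}]$, $[D_{j}]\equiv[C_{j,n+1}]+[C_{j+1,n+1}]$ for $2\leq j\leq n-1$, and $[D_{n}]\equiv[C_{n,n+1}]$, all non-zero, so $A_{n}\not\subset\gamma^{2}_{2}(H)$. The membership $A_{n}\subset\widetilde{H}_{2}$ coming from~(\ref{eq:V2}) only places $A_{n}$ inside $V_{2}=\langle\Gamma_{2}(H),A_{n}\rangle$, which is \emph{not} contained in $\gamma^{2}_{2}(H)$ --- precisely because of the $D_{j}$. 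Moreover, your bookkeeping, if valid, would prove $\widetilde{Z}_{m}\subset\gamma^{2}_{m}(H)$, and that stronger statement is genuinely false: for $n\geq3$, the element $[D_{2},D_{3}]$ lies in $\widetilde{X}_{4}\subset\widetilde{Z}_{4}$ (take $i=2$, $k=2=m-i$), yet $\{a_{n+1},b_{n+1},D_{2},\ldots,D_{n}\}$ is a free basis of $H$, and the homomorphism sending $D_{2}\mapsto s$, $D_{3}\mapsto r$ and every other basis element to $1$, with values in the dihedral group $\langle r,s \mid r^{4}=s^{2}=1,\,srs^{-1}=r^{-1}\rangle$ of order $8$ (which satisfies $\gamma^{2}_{3}=\{1\}$ but $[s,r]=r^{2}\neq1$), shows that $[D_{2},D_{3}]\notin\gamma^{2}_{3}(H)$, let alone $\gamma^{2}_{4}(H)$. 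The exponent $\lceil m/2\rceil$ in the statement exists exactly because the $A_{n}$-entries contribute nothing.

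The repair is to extract no degree at all from the $A_{n}$-entries and to use instead the numerical constraints on $k$: a generator $[x_{1},\ldots,x_{i}]^{2^{m-i-k}}$ of $\widetilde{X}_{m}$ satisfies $k\leq i$ (there are only $i$ entries) and $k\leq m-i$, hence $2k\leq m$; therefore it lies in $\gamma^{2}_{i+(m-i-k)}(H)=\gamma^{2}_{m-k}(H)\subset\gamma^{2}_{\lceil m/2\rceil}(H)$, and similarly $D_{j}^{2^{m-2}}\in\gamma^{2}_{1+(m-2)}(H)=\gamma^{2}_{m-1}(H)\subset\gamma^{2}_{\lceil m/2\rceil}(H)$ (the correct starting degree of $D_{j}$ is $1$, not $2$). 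This is in substance what the paper does, packaged as an induction on $m$: a generator of $\widetilde{X}_{m+1}$ with $k<m+1-i$ is the square of a generator of $\widetilde{X}_{m}$, handled by the induction hypothesis, while in the saturated case $k=m+1-i$ the constraint $k\leq i$ forces $i\geq\lceil(m+1)/2\rceil$, so the pure commutator already lies in $\Gamma_{i}(H)\subset\gamma^{2}_{\lceil(m+1)/2\rceil}(H)$. Your reduction to generators and your final step (residual $2$-finiteness of the finitely generated free group $H$, giving $\bigcap_{m\geq1}\gamma^{2}_{m}(H)=\{1\}$) are fine as written. One last caveat: even if $A_{n}\subset\gamma^{2}_{2}(H)$ had been true, the assertion that ``each such entry contributes an extra unit of degree'' would require a compatibility of the form $[\gamma^{2}_{p}(H),\gamma^{2}_{q}(H)]\subset\gamma^{2}_{p+q}(H)$, which is not part of the definition of the filtration and is established nowhere in the paper.
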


\begin{proof}
If $m\in \{1,2\}$ then $\lceil{m/2}\rceil=1$, $\gamma^{2}_{1}(H)=H$ and thus $\widetilde{Z}_{m}\subset \gamma^{2}_{1}(H)$. So suppose by induction on $m$ that $\widetilde{Z}_{i}\subset \gamma^{2}_{\lceil{i/2}\rceil}(H)$ for some $m\geq 2$ and all $1\leq i \leq m$. Since $\gamma^{2}_{\lceil{(m+1)/2}\rceil}(H)$ is normal in $H$, by~(\ref{eq:defA2in}) and~(\ref{eq:defZm}), it suffices to show that $\{ D^{2^{m-1}}_{j}\,:\,j=1,\ldots,n \} \cup \widetilde{X}_{m+1}\subset \gamma^{2}_{\lceil{(m+1)/2}\rceil}(H)$. If $j=1,\ldots,n$ then $D^{2^{m-1}}_{j}=(D^{2^{m-2}}_{j})^{2}$, and since $D^{2^{m-2}}_{j}\in A^{2^{m-2}}_{n}$ and $A^{2^{m-2}}_{n}\subset \widetilde{Z}_{m}$, it follows by the induction hypothesis that $D^{2^{m-2}}_{j}\in \gamma^{2}_{\lceil{m/2}\rceil}(H)$, and hence $D^{2^{m-1}}_{j}\in \gamma^{2}_{\lceil{m/2}\rceil+1}(H)$. The fact that $\lceil{m/2}\rceil+1\geq \lceil{(m+1)/2}\rceil$ implies that $\gamma^{2}_{\lceil{m/2}\rceil+1}(H)\subset \gamma^{2}_{\lceil{(m+1)/2}\rceil}(H)$, whence $D^{2^{m-1}}_{j}\in \gamma^{2}_{\lceil{(m+1)/2}\rceil}(H)$. Now let $x\in\widetilde{X}_{m+1}$, and let $x=[x_{1},\ldots, x_{i}]^{2^{m+1-i-k}}\in \Gamma_{i}(H)$, where $2\leq i \leq m+1$ and there exist $1\leq j_{1}<\cdots < j_{k}\leq i$, such that $x_{j_{1}},\ldots, x_{j_{k}}\in A_{n}$, for all $0\leq k\leq m+1-i$. If $i=m+1$ then $x\in \gamma^{2}_{m+1}(H)\subset \gamma^{2}_{\lceil{(m+1)/2}\rceil}(H)$ because $m+1\geq \lceil{(m+1)/2}\rceil$. So suppose that $2\leq i \leq m$. If $0\leq k \leq m-i$ then $x=\bigl([x_{1},\ldots, x_{i}]^{2^{m-i-k}}\bigr)^{2}$, where $[x_{1},\ldots, x_{i}]^{2^{m-i-k}} \in \widetilde{X}_{m}\subset \gamma^{2}_{\lceil{m/2}\rceil}(H)$ using the induction hypothesis, and thus $x\in \gamma^{2}_{\lceil{m/2}\rceil+1}(H) \subset \gamma^{2}_{\lceil{(m+1)/2}\rceil}(H)$. Finally, if $k=m+1-i$ then $x=[x_{1},\ldots,x_{i}]\in \Gamma_{i}(H)$. Since $k\leq i$, we have $i\geq (m+1)/2\geq \lceil{(m+1)/2}\rceil$, and hence $x\in \Gamma_{i}(H)\subset \Gamma_{\lceil{(m+1)/2}\rceil}(H)\subset \gamma^{2}_{\lceil{(m+1)/2}\rceil}(H)$ as required, and this completes the proof of the inclusion $\{ D^{2^{m-1}}_{j}\,:\,j=1,\ldots,n \} \cup \widetilde{X}_{m+1}\subset \gamma^{2}_{\lceil{(m+1)/2}\rceil}(H)$.

To finish, since $H$ is finitely generated free group, $H$ is residually $2$-finite and $\bigcap_{m\geq1} \gamma^{2}_{m}(H)=\left\{1\right\}$, then $\bigcap_{m\geq 1}\widetilde{Z}_{m}=\left\{1\right\}$.
\end{proof}

The aim now is to prove that $Y_{m}=Z_{m}=\widetilde{Z}_{m}$ for all $m\geq 2$, from which we will conclude that $\bigcap_{m\geq1}Y_{m}=\left\{1\right\}$ and that $P_{n+1}(\mathbb{K})$ is residually soluble.

\begin{remark}\label{zm} Let $m\geq 2$. Taking $A=A_{n}$ in~(\ref{eq:defekn}), we have $\bigl\langle \widetilde{X}_{m}\bigr\rangle=\mathcal{E}_{2,m}$. Further, since $D_{j}\in A_{n}$ for all $j=1,\ldots,n$, we see that $D_{j}^{2^{m-2}}\in \mathcal{E}_{1,m}$, where we take $i=k=1$, so $A^{2^{m-2}}_{n}\subset\mathcal{E}_{1,m}$. Therefore $\widetilde{Z}_{m}\subset \mathcal{E}_{1,m}$, and Corollary~\ref{corolario} then implies that \[[H,\widetilde{Z}_{m}]\subset[H,\mathcal{E}_{1,m}]\subset \mathcal{E}_{2,m+1}=\bigl\langle \widetilde{X}_{m+1}\bigr\rangle \subset \widetilde{Z}_{m+1}.\]
Note also that if $m=1$ then $[H,\widetilde{Z}_{1}]=\Gamma_{2}(H)=\bigl\langle \widetilde{X}_{2}\bigr\rangle\subset \widetilde{Z}_{2}$.
%If $z=D^{2^{m-3}}_{j}\in A^{2^{m-3}}$, then $z\in B_{m-1}$, because is an element of $\Gamma_{1}(H)$ that $D_{j}\in A_{n}$ (so $i=1$ and $k=1$), then $D^{2^{(m-1)-1-1}}_{j}=D^{2^{m-3}}_{j}\in B_{m-1}$. Follows from Lemma~\ref{lemaprinc}, that $[h,D^{2^{m-3}}_{j}]=[h,D_{j}]^{2^{m-3}}\theta$, with $\theta\in\mathcal E_{2,m}=\left\langle \widetilde{X}_{m}\right\rangle\subset \widetilde{Z}_{m}$, and since $[h,D_{j}]\in\Gamma_{2}(H)$ and has at least one element in $A_{n}$, follows that $[h,D_{j}]^{2^{m-2-1}}=[h,D_{j}]^{2^{m-3}}\in\widetilde{Z}_{m}$. Then follows that $[H,A^{2^{m-3}}_{n}]\subset \widetilde{Z}_{m}$, by normality, and therefore $[H,\widetilde{Z}_{m-1}]\subset \widetilde{Z}_{m}$,  for $m\geq 3$. 
%
%Notice that $\widetilde{X}_{m}$ is similar to $B_{m}$ replacing the subgroup $A$ by $A_{n}$. \comj{If we replace $A$ by $A_{n}$ then the generating set of $B_{m}$ is that of $\widetilde{X}_{m}$. But $\widetilde{Z}_{m}$ has other generators, namely the elements of $A^{2^{m-2}}_{n}$. How do they fit into the framework of Lemma~\ref{lemaprinc}?} Applying Lemma~\ref{lemaprinc} and Corollary~\ref{coro} in this case, we see that $[H,\widetilde{Z}_{m-1}]\subset \widetilde{Z}_{m}$ for all $m\geq 3$.
%is that $[H,\widetilde{Z}_{m-1}]\subset \widetilde{Z}_{m}$ for all $m\geq 3$, which mirrors the result of  Corollary~\ref{coro}.%Then $Z_{m}=\widetilde{Z}_{m}$ for all $m\geq 2$.
\end{remark}

\begin{lem}\label{ymzm}
For all $m\geq 1$, $Y_{m}=Z_{m}=\widetilde{Z}_{m}$.
\end{lem}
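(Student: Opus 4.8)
The plan is to prove the three equalities simultaneously by induction on $m$. The cases $m=1,2$ are immediate from the definitions: for $m=2$ one has $Y_2=Z_2=V_2=\langle\Gamma_2(H),A_n\rangle$ by~(\ref{eq:V2}), while $\widetilde Z_2=\bigl\langle\!\bigl\langle A_n,\widetilde X_2\bigr\rangle\!\bigr\rangle_H=\bigl\langle\!\bigl\langle A_n,\Gamma_2(H)\bigr\rangle\!\bigr\rangle_H$, and these coincide. Assuming $Y_i=Z_i=\widetilde Z_i$ for all $i\le m$, I would deduce the case $m+1$ from the chain $Y_{m+1}\subseteq\widetilde Z_{m+1}\subseteq Z_{m+1}\subseteq Y_{m+1}$. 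Before doing so I would record three elementary facts about $(Y_m)$, each proved by a short induction from~(\ref{eq:defym}) and the normality of the $Y_m$ (Remark~\ref{ym}): that $[Y_a,Y_b]\subseteq Y_{a+b}$ (immediate, since for $a\le b$ the pair $(a,b)$ occurs among the generators of $Y_{a+b}$), that $Y_{m+1}\subseteq Y_m$, and the \emph{weight estimate} that a simple commutator $[x_1,\dots,x_i]$ at least $k$ of whose entries lie in $A_n$ belongs to $Y_{i+k}$ (induction on $i$, using $A_n\subseteq Y_2$, $H=Y_1$ and $[Y_a,Y_b]\subseteq Y_{a+b}$).

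The inclusion $\widetilde Z_{m+1}\subseteq Z_{m+1}$ I would verify generator by generator, exactly as in the proof of Proposition~\ref{lemma3}. A generator $D_j^{2^{m-1}}$ of $A_n^{2^{m-1}}$ is the square of $D_j^{2^{m-2}}\in A_n^{2^{m-2}}\subseteq Z_m$, hence lies in $Z_{m+1}$; a generator $[x_1,\dots,x_i]^{2^{m+1-i-k}}$ of $\widetilde X_{m+1}$ with $m+1-i-k\ge 1$ is the square of a generator of $\widetilde X_m\subseteq\widetilde Z_m=Z_m$, while if $m+1-i-k=0$ it is the bare commutator $[x_1,\dots,x_i]$ with $m+1-i$ entries in $A_n$, i.e.\ a generator of $X_{m+1}\subseteq Z_{m+1}$. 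Normality of $Z_{m+1}$ then gives the inclusion.

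For $Y_{m+1}\subseteq\widetilde Z_{m+1}$ the generator $A_n^{2^{m-1}}$ is part of the defining generating set of $\widetilde Z_{m+1}$, so the real content is the commutator estimate $[\widetilde Z_i,\widetilde Z_k]\subseteq\widetilde Z_{i+k}$ for $i+k=m+1$ (using $Y_i=\widetilde Z_i$). By Remark~\ref{zm} one has $\widetilde Z_i\subseteq\mathcal E_{1,i}$ and $\mathcal E_{2,i+k}=\bigl\langle\widetilde X_{i+k}\bigr\rangle\subseteq\widetilde Z_{i+k}$, so it suffices to prove $[\mathcal E_{1,i},\mathcal E_{1,k}]\subseteq\mathcal E_{2,i+k}$; this is a two-variable refinement of Corollary~\ref{corolario}, obtained by applying the congruence~(\ref{cong}) of Lemma~\ref{lemaprinc} to each factor, so that the commutator of a generator $x^{2^a}$ ($x\in\Gamma_p(H)$) of $\mathcal E_{1,i}$ with a generator $y^{2^b}$ ($y\in\Gamma_q(H)$) of $\mathcal E_{1,k}$ is congruent to $[x,y]^{2^{a+b}}$, where $[x,y]\in\Gamma_{p+q}(H)$ carries the combined $A_n$-entries, forcing it into $\mathcal E_{2,i+k}$. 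The reverse inclusion $Z_{m+1}\subseteq Y_{m+1}$ splits into the $X_{m+1}$-generators, which lie in $Y_{m+1}$ by the weight estimate, and the squares $\{x^2:x\in Z_m=Y_m\}$, for which I would prove a \emph{power lemma}: for $j\ge 2$, $x\in Y_j$ implies $x^2\in Y_{j+1}$. For a generator $[y_a,y_b]$ of $Y_j$ ($a\le b$) this follows from the identity $[y_a,y_b]^2=[y_a,y_b^2]\,[y_b,[y_b,y_a]]$ together with $[Y_b,Y_j]\subseteq Y_{j+1}$ and the inductive $y_b^2\in Y_{b+1}$; products are handled by~(\ref{produto}) and normality.

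The main obstacle is the base case $j=2$ of the power lemma, namely $\Gamma_2(H)^2\subseteq Y_3$, since $Y_3=\langle A_n^2,[H,Y_2]\rangle$ only sees the part of $\Gamma_2(H)$ lying in $\bigl\langle[H,A_n],\Gamma_3(H)\bigr\rangle$. Writing $[y,z]^2=[y,[y,z]]^{-1}[y^2,z]$ via~(\ref{itercomm1}), the term $[y,[y,z]]\in\Gamma_3(H)$ is harmless, and $[y^2,z]$ lands in $\bigl\langle[H,A_n],\Gamma_3(H)\bigr\rangle\subseteq Y_3$ as soon as $y^2\in A_n\cdot\Gamma_2(H)$. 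This is precisely where the non-orientability of $\mathbb K$ enters: the abelianised quotient $H^{\text{Ab}}/\overline{A_n}$ is isomorphic to $H_1(\mathbb K)\cong\mathbb Z\oplus\mathbb Z_2$ (filling in the $n$ punctures kills the loops $D_j$), whence $\wedge^2(H^{\text{Ab}}/\overline{A_n})$ is $2$-torsion and $2(\overline y\wedge\overline z)\in\overline{A_n}\wedge H^{\text{Ab}}$ for all $y,z$; this is exactly $\Gamma_2(H)^2\subseteq Y_3$ modulo $\Gamma_3(H)$, and the corresponding computation would fail for the torus. Granting this, the three inclusions close up to give $Y_{m+1}=Z_{m+1}=\widetilde Z_{m+1}$, completing the induction. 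Combined with $V_m\subseteq Y_m$ (Lemma~\ref{VmYm}) and $\bigcap_{m\ge1}\widetilde Z_m=\{1\}$ (Proposition~\ref{acima}), this yields $\bigcap_{m\ge1}Y_m=\{1\}$ and hence the residual solubility of $P_{n+1}(\mathbb K)$.
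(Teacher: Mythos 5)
Your induction scheme and the cyclic chain $Y_{m+1}\subseteq\widetilde{Z}_{m+1}\subseteq Z_{m+1}\subseteq Y_{m+1}$ follow the paper's broad strategy, and two of your three inclusions are sound, but the step $Y_{m+1}\subseteq\widetilde{Z}_{m+1}$ has a genuine gap. Your reduction (via Remark~\ref{zm}) to the claim $[\mathcal{E}_{1,i},\mathcal{E}_{1,k}]\subseteq\mathcal{E}_{2,i+k}$ is valid, but this claim is not a ``two-variable refinement of Corollary~\ref{corolario}'' obtainable by applying the congruence~(\ref{cong}) to each factor. Two things break. First, the moduli do not compose: applying~(\ref{cong}) to a generator $x^{2^{a}}$ of $\mathcal{E}_{p,i}$ (with $x\in\Gamma_{p}(H)$) produces an error term in $\mathcal{E}_{p+1,i+1}$, and $\mathcal{E}_{p+1,i+1}$ is \emph{not} contained in $\mathcal{E}_{2,i+k}$ once $k\geq2$, because the filtration $\mathcal{E}_{l,m}$ is not monotone in its second index: enlarging $m$ raises the $2$-power exponent demanded of every generator. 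Concretely, $[x_{1},x_{2}]^{2^{i-1}}$ (no $A_{n}$-entries) lies in $\mathcal{E}_{2,i+1}$, but its image in $\Gamma_{2}(H)/\Gamma_{3}(H)\cong\wedge^{2}H^{\text{Ab}}$ is $2^{i-1}(\overline{x}_{1}\wedge\overline{x}_{2})$, whereas the image of $\mathcal{E}_{2,i+k}\cap\Gamma_{2}(H)$ is contained in $2^{i+k-2}\,\wedge^{2}H^{\text{Ab}}+2^{i+k-3}\bigl(H^{\text{Ab}}\wedge\overline{A_{n}}\bigr)$, so the inclusion fails for $k\geq 2$; the same exponent deficit recurs when you raise the inner congruence $[x,y^{2^{b}}]\equiv[x,y]^{2^{b}}\bmod\mathcal{E}_{q+1,k+1}$ to the power $2^{a}$. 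Second, even granting $[x^{2^{a}},y^{2^{b}}]\equiv[x,y]^{2^{a+b}}$, the element $[x,y]$ is a commutator of two simple commutators, hence not a simple right-normed commutator, so it is not a generator of $\mathcal{E}_{2,i+k}$; placing $[x,y]^{2^{a+b}}$ there requires a Hall--Witt/Jacobi expansion into right-normed commutators while tracking the $A_{n}$-entries through all higher-weight corrections. Repairing either point amounts to proving a new two-variable analogue of Lemma~\ref{lemaprinc}, with a double induction of comparable length; it is not a one-line consequence.

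The paper avoids both difficulties by never descending to the $\mathcal{E}$'s here: it proves $[\widetilde{Z}_{i},\widetilde{Z}_{j}]\subseteq\widetilde{Z}_{i+j}$ by induction on $i$ at the level of the subgroups themselves, treating generators coming from $[Y_{l},Y_{k}]$ with the three subgroups lemma \cite[Theorem~5.2]{MKS} (which replaces $[[\widetilde{Z}_{l},\widetilde{Z}_{k}],\widetilde{Z}_{j}]$ by $[\widetilde{Z}_{l},[\widetilde{Z}_{k},\widetilde{Z}_{j}]]\cdot[\widetilde{Z}_{k},[\widetilde{Z}_{j},\widetilde{Z}_{l}]]$), and generators in $A_{n}^{2^{i-2}}$ by a separate induction showing $[A_{n},Z_{j}]\subseteq Z_{j+2}$ when $i=2$, together with~(\ref{itercomm1}) when $i\geq3$; only the one-variable facts of Remark~\ref{zm} and Corollary~\ref{corolario} are ever invoked. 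You would need to import this subgroup-level argument (or prove the two-variable congruence honestly) to close your proof. On the positive side, your inclusion $Z_{m+1}\subseteq Y_{m+1}$ is correct and genuinely different from the paper's (weight estimate plus a power lemma for the $Y_{j}$), and your homological treatment of its base case is a real contribution: the observation that $\wedge^{2}\bigl(H^{\text{Ab}}/\overline{A_{n}}\bigr)\cong\wedge^{2}H_{1}(\mathbb{K})$ is $2$-torsion, so that $[y,z]^{2}\in[A_{n},H]\cdot\Gamma_{3}(H)\subseteq Y_{3}$, is exactly what is needed there. Indeed, the paper's own justification of this step, namely $x_{i}^{2}\in Z_{i+1}$, is only available for $i\geq2$ (since $Z_{2}=V_{2}$ does not contain the squares of elements of $H$), so your wedge argument correctly handles the case $i=1$, $m=2$ that the paper's wording glosses over.
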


\begin{proof}
If $m=1$, the given equality holds by definition. If $m=2$ then $\bigl\langle \widetilde{X}_{2}\bigr\rangle=\Gamma_{2}(H)$, and $\widetilde{Z}_{2}=\bigl\langle \!\bigl\langle A_{n}\cup\Gamma_{2}(H)\bigr\rangle\!\bigr\rangle_{H}=V_{2}$ using~(\ref{eq:V2}) and the fact that $V_{2}$ is normal in $H$ by Lemma~\ref{Lnnormal}.
%, and the verification of the case $m=2$ is straightforward. 
So suppose by induction that: 
\begin{equation}\label{eq:IH1}
\text{$Y_{i}=Z_{i}=\widetilde{Z}_{i}$ for some $m\geq 2$ and all  $1\leq i \leq m$.}
\end{equation}
To prove that $\widetilde{Z}_{m+1}\subset Z_{m+1}$, let us show that $A^{2^{m-1}}_{n}$ and $\widetilde{X}_{m+1}$ are contained in $Z_{m+1}$. For all $j=1,\ldots,n$, we have $D^{2^{m+1-2}}_{j}=(D^{2^{m-2}}_{j})^{2}$ and $D^{2^{m-2}}_{j}\in \widetilde{Z}_{m}=Z_{m}$ by induction, so $A^{2^{m-1}}_{n}\subset Z_{m+1}$ by~(\ref{eq:defZm}). Now suppose that $[x_{1},\ldots, x_{i}]^{2^{m+1-i-k}}\in \widetilde{X}_{m+1}$, so $k$ elements of $\{ x_{1},\ldots, x_{i}\}$ belong to $A_{n}$, where $0\leq k\leq m+1-i$. If $m+1-i>k$ then $[x_{1},\ldots, x_{i}]^{2^{m-i-k}}$ belongs to $\widetilde{Z}_{m}=Z_{m}$ by induction, so $[x_{1},\ldots, x_{i}]^{2^{m+1-i-k}}=([x_{1},\ldots, x_{i}]^{2^{m-i-k}})^{2}$ belongs to $Z_{m+1}$. If $m+1-i=k$ then $[x_{1},\ldots, x_{i}]^{2^{m+1-i-k}}=[x_{1},\ldots, x_{i}]\in \Gamma_{i}(H)$, where $k=(m+1)-i$ elements of this commutator belong to $A_{n}$, so $[x_{1},\ldots, x_{i}] \in Z_{m+1}$. Hence $\widetilde{X}_{m+1}\subset Z_{m+1}$, and thus $\widetilde{Z}_{m+1}\subset Z_{m+1}$.

%If $m+1-i>k$ then $[x_{1},\ldots, x_{i}]^{2^{m+1-i-k}}=([x_{1},\ldots, x_{i}]^{2^{m-i-k}})^{2}$ and $[x_{1},\ldots, x_{i}]^{2^{m-i-k}}$ belongs to $\widetilde{Z}_{m}=Z_{m}$, if $m+1-i=k$, then $[x_{1},\ldots, x_{i}]^{2^{m+1-i-k}}=[x_{1},\ldots, x_{i}]\in \Gamma_{i}(H)$ with $k=(m+1)-i$ elements of this commutator in $A_{n}$, so it is a element of $Z_{m+1}$ by definition, thus $\widetilde{X}_{m+1}\subset Z_{m+1}$.

We now show that $Z_{m+1}\subset \widetilde{Z}_{m+1}$. First note that $X_{m+1}\subset \widetilde{X}_{m+1}$, for if the commutator $[x_{1},\ldots,x_{i}]$ belongs to $X_{m+1}$ then $m+1-i$ of its elements belong to $A_{n}$, and so $[x_{1},\ldots,x_{i}]=[x_{1},\ldots,x_{i}]^{2^{m+1-i-k}}\in\widetilde{X}_{m+1}$, where $k=m+1-i$.
% it may be written in the form $[x_{1},\ldots,x_{i}]^{2^{m+1-i-k}}\in\widetilde{X}_{m+1}$, where $k=m+1-i$, so it belongs to $\widetilde{X}_{m+1}$. 
Now let $y\in Z_{m+1}$ be of the form $y=x^{2}$, where $x\in Z_{m}=\widetilde{Z}_{m}$ by induction. If $x$ is a generator of $\widetilde{Z}_{m}$ then $x^{2}\in \widetilde{Z}_{m+1}$ from the definition of $\widetilde{Z}_{m+1}$. If $x=x_{1}\cdots x_{l}$, where $x_{i}$ is a generator of $\widetilde{Z}_{m}$ for all $1\leq i \leq l$, then making use of a decomposition of $x$ analogous to that of~(\ref{produto}),
%because the element $[x_{1},\ldots,x_{i}]\in X_{m+1}$ can be written as $[x_{1},\ldots,x_{i}]^{2^{m+1-i-k}}\in\widetilde{X}_{m+1}$ with $k=m+1-i$. To see that $\left\{x^{2}\,:\,x\in Z_{m}\right\}\subset \widetilde{Z}_{m+1}$, we use the induction hypothesis. It is easy to check that if $x$ is a generator of $\widetilde{Z}_{m}$ then $x^{2}\in \widetilde{Z}_{m+1}$. If $x=x_{1}\cdots x_{l}$, where $x_{i}$ a generator of $\widetilde{Z}_{m+1}$ for $1\leq i \leq l $, 
%then: \[y=x^{2}=x^{2}_{1}[x^{-1}_{2},x_{2}\cdots x_{l}]x^{2}_{2}[x^{-1}_{2},x_{3}\cdots x_{l}]\cdots[x^{-1}_{l-1},x_{l}]x^{2}_{l},\]
%and using 
the previous sentence and the fact that $[H,\widetilde{Z}_{m}]\subset \widetilde{Z}_{m+1}$ by Remark~\ref{zm}, it follows that $y\in \widetilde{Z}_{m+1}$. We conclude that $Z_{m+1}\subset \widetilde{Z}_{m+1}$, and hence $Z_{m+1}= \widetilde{Z}_{m+1}$.

%. If follows that $\left\{x^{2}\,:\,x\in Z_{m}\right\}\subset \widetilde{Z}_{m+1}$, so $Z_{m+1}=\widetilde{Z}_{m+1}$.

To show that $Z_{m+1}\subset Y_{m+1}$, let $y\in Z_{m+1}$. We first consider the following two possibilities:
\begin{enumerate}[(i)]
\item\label{it:zym1i} $y=x^{2}$, where $x\in Z_{m}=Y_{m}$ by induction. If $x=D^{2^{m-2}}_{j}$ for some $j=1,\ldots,n$, it follows from the definition of $Y_{m+1}$ that $y=x^{2}=D^{2^{m-1}}_{j}\in Y_{m+1}$. If $x=[x_{i},x_{j}]$, where $1\leq i\leq j <m$, $i+j=m$, $x_{i}\in Y_{i}$ and $x_{j}\in Y_{j}$, then:
\[ y=[x_{i},x_{j}]^{2}=\underbrace{\Bigl[x_{i},\underbrace{[x_{j},x^{-1}_{i}]}_{\in[Y_{i},Y_{j}]\subset Y_{m}}\Bigr]}_{\in[H,Y_{m}]\subset Y_{m+1}}x_{j}x^{-1}_{i}\underbrace{\Bigl[x^{-1}_{j},\underbrace{x^{2}_{i}}_{\in Z_{i+1}=Y_{i+1}}\Bigr]}_{\in [Y_{j},Y_{i+1}]\subset Y_{m+1}}x_{i}x^{-1}_{j}.\]
Note that to obtain $x_{i}^{2}\in Y_{i+1}$, we have $i<m$, so $Y_{i}=Z_{i}$ and $Y_{i+1}=Z_{i+1}$ by the induction hypothesis.
%notice that $i<m$, so we used that $Y_{i+1}=Z_{i+1}$ by induction hypothesis to conclude that $x^{2}_{i}\in Y_{i+1}$. 
Therefore $y\in Y_{m+1}$ since $Y_{m+1}$ is normal in $H$ by Remark~\ref{ym}.
	
\item\label{it:zym1ii} $y=[x_{1},\ldots,x_{i}]\in\Gamma_{i}(H)$, where $2\leq i \leq m+1$, $x_{1},\ldots,x_{i}\in H$, and there exist $1\leq j_{1}<\ldots<j_{m+1-i}\leq i$ such that $x_{j_{1}},\ldots,x_{j_{m+1-i}}\in A_{n}$. If $j_{1}=1$, \emph{i.e.}\ $x_{1}\in A_{n}$, then $x_{1}\in Y_{2}$, $[x_{2},\ldots,x_{i}]\in \Gamma_{i-1}(H)$, and $m-i=(m-1)-(i-1)$ elements of $\left\{x_{2},\ldots,x_{i}\right\}$ belong to $A_{n}$, so $[x_{2},\ldots,x_{i}]\in Z_{m-1}=Y_{m-1}$ by induction. Hence: \[y=\left[x_{1},[x_{2},\ldots,x_{i}]\right]\in[Y_{2},Y_{m-1}]\subset Y_{m+1}.\] If $j_{1}>1$ then $[x_{2},\ldots,x_{i}]\in \Gamma_{i-1}(H)$ and $m+1-i=m-(i-1)$ elements of $\left\{x_{2},\ldots,x_{i}\right\}$ belong to $A_{n}$. Therefore $[x_{2},\ldots,x_{i}]\in Z_{m}=Y_{m}$ by induction, and thus: \[y=\left[x_{1},[x_{2},\ldots,x_{i}]\right]\in[H,Y_{m}]\subset Y_{m+1}.\]
\end{enumerate}
For the general case, if $y$ is a product of conjugates of the two types of elements described in~(\ref{it:zym1i}) and~(\ref{it:zym1ii}) above, then $y\in Y_{m+1}$ because $Y_{m+1}$ is normal in $H$ by Remark~\ref{ym}, and we conclude that $Z_{m+1}\subset Y_{m+1}$.

To complete the proof, it remains to see that $Y_{m+1}\subset \widetilde{Z}_{m+1}$. From the definition of $\widetilde{Z}_{m+1}$ given in~(\ref{eq:defZm}), it is clear that $A^{2^{m-1}}_{n} \subset \widetilde{Z}_{m+1}$. Applying the induction hypothesis, we have $[Y_{i},Y_{j}]= [\widetilde{Z}_{i},\widetilde{Z}_{j}]$ for all $1\leq i,j\leq m$ for which $i+j=m+1$, so it suffices to show that $[\widetilde{Z}_{i},\widetilde{Z}_{j}]\subset \widetilde{Z}_{i+j}$. We shall prove by induction on $i$ that $[\widetilde{Z}_{i},\widetilde{Z}_{j}]\subset \widetilde{Z}_{i+j}$ for all $1\leq i \leq m$ and $j\geq 1$. The inclusion that we require then follows as a special case.
%, and in particular follows the case that we need. 
So let $j\geq 1$. If $i=1$ then $[\widetilde{Z}_{1},\widetilde{Z}_{j}] =[H,\widetilde{Z}_{j}]\subset \widetilde{Z}_{j+1}$ by Remark~\ref{zm}.
%and the fact that $\Gamma_{2}(H)\subset \widetilde{Z}_{2}$. 
So assume that $1<i\leq m$, and suppose by induction that:
\begin{equation}\label{eq:IH2}
\text{$[\widetilde{Z}_{k},\widetilde{Z}_{j}]\subset \widetilde{Z}_{k+j}$ for all $j\geq 1$ and $1\leq k<i\leq m$.}
\end{equation}
The aim is to prove that $[\widetilde{Z}_{i},\widetilde{Z}_{j}]\subset \widetilde{Z}_{i+j}$. Let $x\in \widetilde{Z}_{i}$ and $y\in \widetilde{Z}_{j}$. Then $x\in Y_{i}$ by the induction hypothesis~(\ref{eq:IH1}) and the fact that $1<i\leq m$. Assume first that $x$ is a generator of $Y_{i}$, so that one of the following conditions holds: %\comj{in general, won't $x$ be a product of elements of these types?}
%We first take a look in the case of the generators of $\widetilde{Z}_{i}$, that by the (first) induction hypothesis is the same generators of $Y_{i}$, since $i\leq m$. We consider the following cases:
\begin{enumerate}
\item\label{it:xyyzz1} $x\in  [Y_{l},Y_{k}]$, where $1\leq l\leq k< i\leq m$ and $l+k=i$. Then $x\in [\widetilde{Z}_{l},\widetilde{Z}_{k}]$ by the induction hypothesis~(\ref{eq:IH1}).
\item\label{it:xyyzz2} $x\in A^{2^{i-2}}_{n}$,
\end{enumerate}
and let us prove that $[x,y]\in \widetilde{Z}_{i+j}$.
%To prove the result 
In case~(\ref{it:xyyzz1}),~\cite[Theorem~5.2]{MKS} implies that:
\begin{equation}\label{eq:MKS}
[[\widetilde{Z}_{l},\widetilde{Z}_{k}],\widetilde{Z}_{j}]= [\widetilde{Z}_{j},[\widetilde{Z}_{l},\widetilde{Z}_{k}]] \subset[\widetilde{Z}_{l},[\widetilde{Z}_{k},\widetilde{Z}_{j}]] \ldotp [\widetilde{Z}_{k},[\widetilde{Z}_{j},\widetilde{Z}_{l}]].
\end{equation}
By the induction hypothesis~(\ref{eq:IH2}) and the fact that $k,l<m$, we have: 
\begin{equation}\label{eq:ZLkj}
\left\{\begin{aligned}
{[\widetilde{Z}_{l},[\widetilde{Z}_{k},\widetilde{Z}_{j}]]} &\subset [\widetilde{Z}_{l},\widetilde{Z}_{k+j}]\subset \widetilde{Z}_{l+k+j}=\widetilde{Z}_{i+j}\\
[\widetilde{Z}_{k},[\widetilde{Z}_{j},\widetilde{Z}_{l}]]&=[\widetilde{Z}_{k},[\widetilde{Z}_{l},\widetilde{Z}_{j}]] \subset[\widetilde{Z}_{k},\widetilde{Z}_{l+j}]\subset \widetilde{Z}_{k+l+j}=\widetilde{Z}_{i+j},
\end{aligned}\right.
\end{equation}
and thus $[x,y]\in\widetilde{Z}_{i+j}$ for all $y\in \widetilde{Z}_{j}$ using~(\ref{eq:MKS}) and~(\ref{eq:ZLkj}).
%To prove the result 
In case~(\ref{it:xyyzz2}), let $x=D^{2^{i-2}}_{k}$, where $k\in \{1,\ldots, n\}$. We consider the following cases:
\begin{itemize}
\item $i=2$. Then $x\in A_{n}$. Let us show by induction on $j$ that $[A_{n},Z_{j}]\subset Z_{j+2}$, for all $j\geq 1$. Suppose first that $j=1$, let $x\in A_{n}$, and let $y\in Z_{1}=H$. Then $[x,y]\in \Gamma_{2}(H)$, and the set $\{ x,y \}$ has one element, $x$, in $A_{n}$, therefore $[x,y]\in X_{3}\subset Z_{3}$, and thus $[A_{n},Z_{1}]\subset Z_{3}$. Now suppose that $j>1$ and that $[A_{n},Z_{j-1}]\subset Z_{j+1}$ by induction. Let $x\in A_{n}$, and let $y\in Z_{j}$. First assume that $y$ is a generator of $Z_{j}$. If $y=[y_{1},\ldots,y_{l}]\in\Gamma_{l}(H)$ belongs to $X_{j}$ then $j-l$ elements of $\left\{y_{1},\ldots,y_{l}\right\}$ belong to $A_{n}$, therefore $[x,y]\in \Gamma_{l+1}(H)$, where $j-l+1=(j+2)-(l+1)$ elements of $\left\{x,y_{1},\ldots ,y_{l}\right\}$ belong to $A_{n}$. So $[x,y]\in X_{j+2} \subset Z_{j+2}$. If $y=z^{2}$,
% is a generator of $Z_{j}$ , 
where $z\in Z_{j-1}$, then: 
\[[x,y]=[x,z^{2}]=[x,z]^{2}\left[[z,x],z\right],\] 
by~(\ref{itercomm1}). Since $[x,z]\in[A_{n},Z_{j-1}]\subset Z_{j+1}$ by induction, it follows that $[x,z]^{2}\in Z_{j+2}$ from the definition of $Z_{j+2}$ in~(\ref{eq:defZm}), and $\left[[z,x],z\right]\in[Z_{j+1},H]\subset Z_{j+2}$ by Remark~\ref{zm} and the fact that $\widetilde{Z}_{l}=Z_{l}$ for all $l\geq 1$. Therefore $[x,y]\in Z_{j+2}$. Now if $y=(\alpha_{1}y^{\delta_{1}}_{1}\alpha^{-1}_{1})\cdots (\alpha_{s}y^{\delta_{s}}_{s}\alpha^{-1}_{s})$, where for all $k=1,\ldots,s$, $y_{k}$ is a generator of $Z_{j}$, $\alpha_{k}\in H$ and $\delta_{k}\in\left\{1,-1\right\}$, then applying~(\ref{eq:conjalphat}) and induction on $s$, we see that $[x,y]$ may be written as a product of conjugates of the $[x,y^{\delta_{k}}_{k}]$. Then $[x,y]\in \widetilde{Z}_{j+2}$ using the above computations, the normality of $Z_{j+2}$ in $H$, and the fact $\widetilde{Z}_{l}=Z_{l}$ for all $l\geq 1$ from the first part of the proof. 

%: \[[x,y]=[x,y_{1}]\left(y_{1}[x,y_{2}]y^{-1}_{1}\right)\cdots \left(y_{1}y_{2}\cdots y_{s-1}[x,y_{s}]y^{-1}_{s-1}\cdots y^{-1}_{2}y^{-1}_{1}\right).\]
%The result follows from the normality of $Z_{j+2}$ in $H$.	
	
\item $i\geq 3$. Then:
\[ [x,y]=[D^{2^{i-2}}_{k},y]=\bigl[D^{2^{i-3}}_{k},[D^{2^{i-3}}_{k},y]\bigr] [D^{2^{i-3}}_{k},y]^{2}\] by~(\ref{itercomm1}). Now $[D^{2^{i-3}}_{k},y] \in[\widetilde{Z}_{i-1},\widetilde{Z}_{j}]\subset\widetilde{Z}_{i-1+j}$ using~(\ref{eq:IH2}), and thus $[D^{2^{i-3}}_{k}, [D^{2^{i-3}}_{k},y]]\in [H,\widetilde{Z}_{i-1+j}]\subset\widetilde{Z}_{i+j}$ by Remark~\ref{zm}. Since $[D^{2^{i-3}}_{k},y]\in\widetilde{Z}_{i-1+j}=Z_{i-1+j}$, we have $[D^{2^{i-3}}_{k},y]^{2}\in Z_{i+j}=\widetilde{Z}_{i+j}$, so $[x,y]\in \widetilde{Z}_{i+j}$. 
\end{itemize}

If now $x\in Y_{i}$ is a product of conjugates of generators of $Y_{i}$, then $[x,y]\in \widetilde{Z}_{i+j}$ using the above computations,~(\ref{eq:conjalphat}), and the normality of $\widetilde{Z}_{i+j}$. This shows that $[\widetilde{Z}_{i},\widetilde{Z}_{j}]= [Y_{i},\widetilde{Z}_{j}]\subset \widetilde{Z}_{i+j}$ as claimed, and so $Y_{m+1}\subset \widetilde{Z}_{m+1}$ as required.
%
%\comj{Do we need this final paragraph? Haven't we proved what we needed?} Recall that $Y_{m+1}$ is generated by $A^{2^{m-1}}_{n}$ and the $[Y_{i},Y_{k}]$, where $1\leq i\leq k\leq m$ and $i+k=m+1$. Now $A^{2^{m-1}}_{n}\subset \widetilde{Z}_{m+1}$, and using the induction hypothesis and the above computations, we have $[Y_{i},Y_{k}]= [\widetilde{Z}_{i}, \widetilde{Z}_{k}] \subset \widetilde{Z}_{i+k}=\widetilde{Z}_{m+1}$, from which we deduce that $Y_{m+1}\subset \widetilde{Z}_{m+1}$. We conclude by induction that $Y_{i}=Z_{i}=\widetilde{Z}_{i}$ for all $i\geq 1$.
%, and those subsets belongs to $\widetilde{Z}_{m+1}$, follows that $Y_{m+1}\subset \widetilde{Z}_{m+1}$, and hence $Y_{i}=Z_{i}=\widetilde{Z}_{i}$ for all $i\geq 2$.
\end{proof} %\hfill{$\blacksquare$} \\

\begin{teo}\label{soluvel} If $n\geq 3$, then $(P_{n+1}(\mathbb{K}))^{(m)}\subset Z_{m+1}\rtimes (P_{n}(\mathbb{K}))^{(m)}$ for all $m\geq1$. In particular, $P_{n}(\mathbb{K})$ is residually soluble for all $n\geq1$.
\end{teo}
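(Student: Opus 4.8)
The plan is to assemble the asserted inclusion from the structural results already in place, and then to deduce residual solubility by an induction on $n$ that exploits the semi-direct product decomposition. First I would recall that $P_{n+1}(\mathbb{K})=H\rtimes_{\varphi}G$, where $H=\pi_{1}(\mathbb{K}\setminus\{x_{1},\dots,x_{n}\})$ is free and $G=s(P_{n}(\mathbb{K}))$, and that $p_{\ast}$ restricts to an isomorphism $G\xrightarrow{\sim}P_{n}(\mathbb{K})$ carrying $G^{(m)}$ onto $(P_{n}(\mathbb{K}))^{(m)}$. Applying Theorem~\ref{serie}(\ref{it:lcspart2}) to this decomposition (with the index shift sending its ``$n$'' to $m+1$) gives $(P_{n+1}(\mathbb{K}))^{(m)}\subset V_{m+1}\rtimes_{\varphi}G^{(m)}$ for all $m\geq1$. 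Then Lemma~\ref{VmYm} yields $V_{m+1}\subset Y_{m+1}$ and Lemma~\ref{ymzm} gives $Y_{m+1}=Z_{m+1}$; combining these three facts produces exactly $(P_{n+1}(\mathbb{K}))^{(m)}\subset Z_{m+1}\rtimes_{\varphi}(P_{n}(\mathbb{K}))^{(m)}$, which is the first assertion.

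For the residual solubility statement I would argue by induction on $n$. The base cases are $P_{1}(\mathbb{K})=\pi_{1}(\mathbb{K})$, which is soluble by~(\ref{presP1K}) and hence residually soluble, and $P_{2}(\mathbb{K})$, which is residually nilpotent by Theorem~\ref{GammaP2} and therefore residually soluble. For the inductive step, assuming $P_{n}(\mathbb{K})$ is residually soluble, I would take $g\in\bigcap_{m\geq0}(P_{n+1}(\mathbb{K}))^{(m)}$ and write $g=(h,k)$ with $h\in H$ and $k\in G$ relative to $P_{n+1}(\mathbb{K})=H\rtimes_{\varphi}G$. The inclusion just obtained forces $h\in\bigcap_{m\geq1}Z_{m+1}$ and $k\in\bigcap_{m\geq0}(P_{n}(\mathbb{K}))^{(m)}$. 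By Lemma~\ref{ymzm} and Proposition~\ref{acima} the first intersection equals $\bigcap_{m\geq1}\widetilde{Z}_{m+1}=\{1\}$, so $h=1$; and by the induction hypothesis the second intersection is trivial, so $k=1$. Hence $g=1$, giving residual solubility of $P_{n+1}(\mathbb{K})$.

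The conceptual point that makes the induction succeed is that the \emph{fibre factor} $Z_{m+1}$ collapses to the identity for reasons internal to the free group $H$: Proposition~\ref{acima} bounds $\widetilde{Z}_{m}$ inside the lower $\mathbb{F}_{2}$-linear central filtration of $H$, where the intersection over $m$ vanishes, independently of the quotient. Since the genuinely hard content, namely the estimate $V_{m}\subset Y_{m}$ together with the identification $Y_{m}=Z_{m}=\widetilde{Z}_{m}$, has already been established in the preceding lemmas, this final step is essentially a synthesis. I expect the only real subtleties to be bookkeeping ones: correctly tracking into which factor of the semi-direct product each component of $g$ lands, and confirming that the machinery of Lemmas~\ref{VmYm} and~\ref{ymzm} and Proposition~\ref{acima}, although presented in the regime $n\geq3$, remains valid for $n=2$ so that the induction can be launched from $P_{2}(\mathbb{K})$ in order to reach $P_{3}(\mathbb{K})$.
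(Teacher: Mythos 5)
Your proposal is correct and takes essentially the same route as the paper: the inclusion is obtained exactly by combining Theorem~\ref{serie}(\ref{it:lcspart2}) (with the index shift), Lemma~\ref{VmYm} and Lemma~\ref{ymzm}, and residual solubility follows by the same induction on $n$ (base cases $n=1,2$ residually nilpotent, inductive step via the split Fadell--Neuwirth sequence and the triviality of $\bigcap_{m\geq 2}Z_{m}$ coming from Proposition~\ref{acima}). Your componentwise analysis of $g=(h,k)$ and your remark that the machinery must be (and is) valid for $n=2$ in order to reach $P_{3}(\mathbb{K})$ merely make explicit what the paper's terse proof leaves implicit.
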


\begin{proof} If $n=1,2$ then by~(\ref{eq:pi1}) and Theorem~\ref{GammaP2}, $P_{n}(\mathbb K)$ is residually nilpotent, and so is residually soluble. Suppose by induction on $n$ that $P_{n}(\mathbb{K})$ is residually soluble for some $n\geq 2$, and consider the Fadell-Neuwirth split short exact sequence~(\ref{seqFNklein}). The result follows from Theorem~\ref{serie}, Lemmas~\ref{VmYm} and~\ref{ymzm}, and the fact that $\bigcap_{m\geq 2}Z_{m}=\{ 1\}$. \end{proof} %\hfill{$\blacksquare$} \\

\subsection{The case of the braid group $B_{n}(\mathbb{K})$}\label{sec:seriesBnK}

In this section, we prove Theorem~\ref{klein}. We start by giving two propositions in the cases where $B_{n}(\mathbb{K})$ is not residually nilpotent or is not residually soluble.

\begin{prop}\label{klein.nilp} If $n\geq 3$ then $B_{n}(\mathbb{K})$ is not residually nilpotent. Further, for all $i\geq 3$, $\Gamma_{2}(B_{n}(\mathbb{K}))=\Gamma_{i}(B_{n}(\mathbb{K}))=\left\langle \!\left\langle \sigma^{-1}_{2}\sigma_{1}\right\rangle \! \right\rangle_{B_{n}(\mathbb{K})}$.
\end{prop}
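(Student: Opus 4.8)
The plan is to sandwich $\Gamma_{2}(B_{n}(\mathbb{K}))$ between the normal closure $N:=\langle\!\langle\sigma_{2}^{-1}\sigma_{1}\rangle\!\rangle_{B_{n}(\mathbb{K})}$ and $\Gamma_{3}(B_{n}(\mathbb{K}))$, following the strategy of Gorin--Lin~\cite{GL} and of the torus case (Proposition~\ref{toro2}). Since the $\sigma_{i}$ all have the same image in $B_{n}(\mathbb{K})^{\mathrm{Ab}}$ by Remark~\ref{Ab}, we have $\sigma_{2}^{-1}\sigma_{1}\in\Gamma_{2}(B_{n}(\mathbb{K}))$, so $N\subseteq\Gamma_{2}(B_{n}(\mathbb{K}))$. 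I would then prove the two inclusions $N\subseteq\Gamma_{3}(B_{n}(\mathbb{K}))$ and $\Gamma_{2}(B_{n}(\mathbb{K}))\subseteq N$; together with the trivial $\Gamma_{3}\subseteq\Gamma_{2}$ these force $N=\Gamma_{3}(B_{n}(\mathbb{K}))=\Gamma_{2}(B_{n}(\mathbb{K}))$.

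For the first inclusion I would work in the class-$2$ nilpotent quotient $Q:=B_{n}(\mathbb{K})/\Gamma_{3}(B_{n}(\mathbb{K}))$, in which $\Gamma_{2}(Q)$ is central. Writing $u,v$ for the images of $\sigma_{1},\sigma_{2}$ and $d:=uv^{-1}$, the element $d$ lies in $\Gamma_{2}(Q)$ and is therefore central. Substituting $u=dv$ into the image of the braid relation~(\ref{it:full1}), $uvu=vuv$, and using centrality of $d$ gives $d^{2}v^{3}=dv^{3}$, whence $d=1$ in $Q$. Thus $\sigma_{1}\sigma_{2}^{-1}\in\Gamma_{3}(B_{n}(\mathbb{K}))$, and since $\sigma_{2}^{-1}\sigma_{1}=\sigma_{2}^{-1}(\sigma_{1}\sigma_{2}^{-1})\sigma_{2}$ is a conjugate of $\sigma_{1}\sigma_{2}^{-1}$ and $\Gamma_{3}$ is normal, $N\subseteq\Gamma_{3}(B_{n}(\mathbb{K}))$. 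This step is short and uses only relation~(\ref{it:full1}), and hence requires $n\geq3$ so that $\sigma_{2}$ exists.

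For the second inclusion it suffices to show that $B_{n}(\mathbb{K})/N$ is abelian, since this is equivalent to $\Gamma_{2}(B_{n}(\mathbb{K}))\subseteq N$. In this quotient $\sigma_{1}=\sigma_{2}$; combining relation~(\ref{it:full2}) (which makes $\sigma_{i-1}$ and $\sigma_{i+1}$ commute) with the braid relation~(\ref{it:full1}) and an induction on $i$, one checks that all the $\sigma_{i}$ collapse to a single generator $t$ (the case $n=3$ being immediate). Relations~(\ref{it:full3}) and~(\ref{it:full4}) then say that $a$ and $b$ commute with $t$, so it only remains to see that $a$ and $b$ commute. This is where the Klein-bottle-specific relations enter: reducing relation~(\ref{it:full5}) modulo $N$, using that $t$ is central, yields $t^{2}=[b^{-1},a]$, while reducing relation~(\ref{it:full7}) yields $t^{2}=1$; together they give $[a,b]=1$. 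Hence $B_{n}(\mathbb{K})/N$ is abelian (in fact its reduced presentation is exactly that of $B_{n}(\mathbb{K})^{\mathrm{Ab}}$ from Remark~\ref{Ab}, relation~(\ref{it:full8}) collapsing to $a^{2}=1$), so $\Gamma_{2}(B_{n}(\mathbb{K}))\subseteq N$. Having established $\Gamma_{2}(B_{n}(\mathbb{K}))=\Gamma_{3}(B_{n}(\mathbb{K}))=N$, a straightforward induction shows $\Gamma_{i}(B_{n}(\mathbb{K}))=\Gamma_{2}(B_{n}(\mathbb{K}))$ for all $i\geq3$; and since the image of $\sigma_{2}^{-1}\sigma_{1}$ under $B_{n}(\mathbb{K})\to S_{n}$ is a nontrivial $3$-cycle, $N\neq\{1\}$, so $\bigcap_{i\geq1}\Gamma_{i}(B_{n}(\mathbb{K}))=\Gamma_{2}(B_{n}(\mathbb{K}))\neq\{1\}$ and $B_{n}(\mathbb{K})$ is not residually nilpotent. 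I expect the main obstacle to be the bookkeeping in this second inclusion: correctly collapsing the $\sigma_{i}$ and, above all, tracking the signs in the Klein-bottle relations~(\ref{it:full5}) and~(\ref{it:full7}) so as to extract $[a,b]=1$ cleanly.
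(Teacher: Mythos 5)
Your proposal is correct, and although it uses the same relations of Theorem~\ref{total} as the paper does, it is organised along a genuinely different and more self-contained route. The paper proves the two equalities separately: it first shows $\Gamma_{2}(B_{n}(\mathbb{K}))=\Gamma_{3}(B_{n}(\mathbb{K}))$ via the short exact sequence $1\to\Gamma_{2}/\Gamma_{3}\to B_{n}(\mathbb{K})/\Gamma_{3}\to B_{n}(\mathbb{K})/\Gamma_{2}\to 1$, \emph{citing}~\cite{GGjktr} and~\cite{BGG} for the fact that all the $\sigma_{i}$ coincide modulo $\Gamma_{3}$, and then checking with relations~(\ref{it:full3})--(\ref{it:full5}),~(\ref{it:full7}) and~(\ref{it:full8}) that $B_{n}(\mathbb{K})/\Gamma_{3}$ reproduces the abelianisation; it then proves $\Gamma_{2}(B_{n}(\mathbb{K}))=\left\langle\!\left\langle \sigma^{-1}_{2}\sigma_{1}\right\rangle\!\right\rangle_{B_{n}(\mathbb{K})}$ by explicit commutator manipulations inside the group, showing that $[\sigma_{i},\sigma_{i+1}]$, $[a,\sigma_{1}]$, $[b,\sigma_{1}]$, $\sigma_{1}^{2}$ and $[b,a]$ all lie in the normal closure. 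Your sandwich $\Gamma_{2}\subseteq N\subseteq\Gamma_{3}\subseteq\Gamma_{2}$ yields both statements simultaneously, and its two halves are more economical: the class-two-quotient trick (with $d=uv^{-1}$ central in $B_{n}(\mathbb{K})/\Gamma_{3}$, substituting $u=dv$ into $uvu=vuv$ forces $d=1$) is a self-contained replacement for the collapse argument the paper imports from the literature --- note that you only need $\sigma_{1}\equiv\sigma_{2}$ modulo $\Gamma_{3}$, not the collapse of all the $\sigma_{i}$ there, since the full collapse is only used modulo $N$, where it follows from your elementary induction with relations~(\ref{it:full1}) and~(\ref{it:full2}). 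Your check that $B_{n}(\mathbb{K})/N$ is abelian is in substance the paper's second computation rewritten in quotient language, where the sign-tracking you worried about becomes painless because $t$ is already central: your $t^{2}=[b^{-1},a]$ from relation~(\ref{it:full5}) and $t^{2}=1$ from relation~(\ref{it:full7}) correspond exactly to the paper's steps $bab^{-1}a^{-1}\in N$ and $\sigma_{1}^{2}\in N$. What your route buys is a unified, self-contained argument; what the paper's buys is the explicit identification of $B_{n}(\mathbb{K})/\Gamma_{3}(B_{n}(\mathbb{K}))$ with $B_{n}(\mathbb{K})^{\text{Ab}}$, a pattern it then repeats for the derived series (Proposition~\ref{klein.sol}) and for higher-genus surfaces (Theorem~\ref{not}).
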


\begin{proof} In this proof, the relation numbers will refer to those of Theorem~\ref{total}. Let $n\geq 3$, and consider the following short exact sequence:
\begin{equation}\label{eq:bgamma3K}
1\longrightarrow \frac{\Gamma_{2}(B_{n}(\mathbb{K}))}{\Gamma_{3}(B_{n}(\mathbb{K}))}\longrightarrow \frac{B_{n}(\mathbb{K})}{\Gamma_{3}(B_{n}(\mathbb{K}))}\stackrel{p_{\ast}}{\longrightarrow} \frac{B_{n}(\mathbb{K})}{\Gamma_{2}(B_{n}(\mathbb{K}))}\longrightarrow 1,
\end{equation}
where $p_{\ast}$ is the canonical projection. Using relations~(\ref{it:full1}) and~(\ref{it:full2}), and arguing as in~\cite[p.~680]{GGjktr} or~\cite[Proposition~3]{BGG}, it follows that the $\Gamma_{3}(B_{n}(\mathbb{K}))$-cosets of $\sigma_{1},\ldots,\sigma_{n-1}$ in $B_{n}(\mathbb{K})/\Gamma_{3}(B_{n}(\mathbb{K}))$ are all identified to a single element,  which we denote by $\sigma$.
 %(a similar computation was carried out for orientable surfaces in~\cite{BGG}). 
Since $n\geq 3$, it follows from relations~(\ref{it:full3}) and~(\ref{it:full4}) that the $\Gamma_{3}(B_{n}(\mathbb{K}))$-cosets of $a$ and $b$ commute with $\sigma$. By relation~(\ref{it:full7}) and~(\ref{eq:bgamma3K}), $\sigma$ is of order $2$, by relation~(\ref{it:full5}), the $\Gamma_{3}(B_{n}(\mathbb{K}))$-coset of $a$ commutes with that of $b$, and from relation~(\ref{it:full8}), the $\Gamma_{3}(B_{n}(\mathbb{K}))$-coset of $a$ is of order $2$ in $B_{n}(\mathbb{K})/ \Gamma_{3}(B_{n}(\mathbb{K}))$. By Remark~\ref{Ab}, $\overline{\sigma}$ and the $\Gamma_{2}(B_{n}(\mathbb{K}))$-cosets of $a$ and $b$ are non trivial in $B_{n}(\mathbb{K})/ \Gamma_{2}(B_{n}(\mathbb{K}))$, therefore $\sigma$ and the $\Gamma_{3}(B_{n}(\mathbb{K}))$-cosets of $a$ and $b$ are also non trivial in $B_{n}(\mathbb{K})/ \Gamma_{2}(B_{n}(\mathbb{K}))$ and  satisfy the same relations in $B_{n}(\mathbb{K})/ \Gamma_{3}(B_{n}(\mathbb{K}))$ as their images in $B_{n}(\mathbb{K})/ \Gamma_{2}(B_{n}(\mathbb{K}))$ under $p_{\ast}$. Hence $p_{\ast}$ is a isomorphism, and $\Gamma_{2}(B_{n}(\mathbb{K}))=\Gamma_{3}(B_{n}(\mathbb{K}))$, so $\Gamma_{2}(B_{n}(\mathbb{K}))=\Gamma_{i}(B_{n}(\mathbb{K}))$ for all $i\geq 3$. Since $\Gamma_{2}(B_{n}(\mathbb{K}))$ is non trivial, we see that $B_{n}(\mathbb{K})$ is not residually nilpotent.

It remains to show that $\Gamma_{2}(B_{n}(\mathbb{K}))=\left\langle\! \left\langle \sigma^{-1}_{2}\sigma_{1}\right\rangle \!\right\rangle_{B_{n}(\mathbb{K})}$. From relations~(\ref{it:full1}) and~(\ref{it:full2}), for all $i=1,\ldots, n-2$, we have $[\sigma_{i},\sigma_{i+1}]=\sigma_{i}\sigma_{i+1}\sigma^{-1}_{i}\sigma^{-1}_{i+1}=\sigma^{-1}_{i+1}\sigma_{i}$, in particular,  $\left\langle \!\left\langle \sigma^{-1}_{2}\sigma_{1}\right\rangle\!\right\rangle_{B_{n}(\mathbb{K})} \subset \Gamma_{2}(B_{n}(\mathbb{K}))$. To prove the other inclusion, for all $i=2,\ldots, n-2$, note that:
\[
\begin{array}{ll}[\sigma_{i},\sigma_{i+1}]& =\sigma^{-1}_{i+1}\sigma_{i}=\sigma^{-1}_{i+1}\underbrace{\sigma_{i}(\sigma_{i-1}\sigma_{i}}_{(\ref{it:full1})}\sigma^{-1}_{i}\sigma^{-1}_{i-1})=\underbrace{\sigma^{-1}_{i+1}(\sigma_{i-1}}_{(\ref{it:full2})}\sigma_{i}\sigma_{i-1})\sigma^{-1}_{i}\sigma^{-1}_{i-1}\\&=(\sigma_{i-1}\underbrace{\sigma^{-1}_{i+1})\sigma_{i}}_{(\ref{it:full1})}\sigma_{i-1}\sigma^{-1}_{i}\sigma^{-1}_{i-1}=\sigma_{i-1}(\sigma_{i}\sigma_{i+1}\sigma^{-1}_{i}\underbrace{\sigma^{-1}_{i+1})\sigma_{i-1}}_{(\ref{it:full2})}\sigma^{-1}_{i}\sigma^{-1}_{i-1} \\&= \sigma_{i-1}\sigma_{i}\sigma_{i+1}(\sigma^{-1}_{i}\sigma_{i-1})\sigma^{-1}_{i+1}\sigma^{-1}_{i}\sigma^{-1}_{i-1}. \end{array}\] 
It follows by induction on $i$ that $[\sigma_{i},\sigma_{i+1}]\in\left\langle \!\left\langle \sigma^{-1}_{2}\sigma_{1}\right\rangle\!\right\rangle_{B_{n}(\mathbb{K})}$ for all $i=1,\ldots, n-2$. Further, 
\begin{equation}\label{eq:comasig}
[a,\sigma_{1}]=a\sigma_{1}(\sigma^{-1}_{2}\underbrace{\sigma_{2})a^{-1}}_{(\ref{it:full3})}\sigma^{-1}_{1}=a(\sigma_{2}\sigma^{-1}_{1})^{-1}a^{-1}\ldotp(\sigma_{2}\sigma^{-1}_{1})\in\left\langle \!\left\langle \sigma^{-1}_{2}\sigma_{1}\right\rangle \! \right\rangle_{B_{n}(\mathbb{K})},
\end{equation}
and similarly, $[b,\sigma_{1}]\in\left\langle \!\left\langle \sigma^{-1}_{2}\sigma_{1}\right\rangle \! \right\rangle_{B_{n}(\mathbb{K})}$ using relation~(\ref{it:full4}). To see that $[b,a]\in \left\langle \!\left\langle \sigma^{-1}_{2}\sigma_{1}\right\rangle \! \right\rangle_{B_{n}(\mathbb{K})}$, first note that:
\begin{equation}\label{eq:comsig2}
\sigma^{2}_{1}=\sigma_{1}b^{-1}\sigma_{1}b^{-1}\sigma^{-1}_{1}b\sigma^{-1}_{1}b= (\sigma_{1}b^{-1}[\sigma_{1},b^{-1}]b\sigma^{-1}_{1})[\sigma_{1},b^{-1}]\in\left\langle \!\left\langle \sigma^{-1}_{2}\sigma_{1}\right\rangle \! \right\rangle_{B_{n}(\mathbb{K})}
\end{equation}
using relation~(\ref{it:full7}), and:
\[\begin{array}{ll}bab^{-1}a^{-1}&=ba(\sigma^{-1}_{1}\underbrace{\sigma_{1})b^{-1}(\sigma_{1}}_{(\ref{it:full5})}\sigma^{-1}_{1})a^{-1}=ba\sigma^{-1}_{1}(a^{-1}\sigma^{-1}_{1}b^{-1}\sigma_{1}a)\sigma^{-1}_{1}a^{-1}\\
&=b[a,\sigma^{-1}_{1}]\sigma^{-2}_{1}b^{-1}[\sigma_{1},a] \in \left\langle \!\left\langle \sigma^{-1}_{2}\sigma_{1}\right\rangle \! \right\rangle_{B_{n}(\mathbb{K})},\end{array}\]
by~(\ref{eq:comasig}) and~(\ref{eq:comsig2}). Since the result is valid for the generators of $B_{n}(\mathbb K)$, the result follows for an arbitrary element of $\Gamma_{2}(B_{n}(\mathbb K))$ using the formula given by~(\ref{eq:conjalphat}) and by the normality of $\left\langle \!\left\langle \sigma^{-1}_{2}\sigma_{1}\right\rangle \! \right\rangle_{B_{n}(\mathbb{K})}$. 
We conclude that $\Gamma_{2}(B_{n}(\mathbb{K}))\subset \left\langle \!\left\langle \sigma^{-1}_{2}\sigma_{1}\right\rangle \! \right\rangle_{B_{n}(\mathbb{K})}$, and hence that $\Gamma_{2}(B_{n}(\mathbb{K}))=\left\langle \!\left\langle \sigma^{-1}_{2}\sigma_{1}\right\rangle \! \right\rangle_{B_{n}(\mathbb{K})}$.
\end{proof}
%\hfill{$\blacksquare$}

\begin{prop}\label{klein.sol} If $n\geq5$ then $B_{n}(\mathbb{K})$ is not residually soluble. Further, $(B_{n}(\mathbb{K}))^{(1)}=(B_{n}(\mathbb{K}))^{(i)}$ for all $i\geq 2$.
\end{prop}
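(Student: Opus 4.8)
The plan is to show that for $n\geq 5$ the first derived subgroup $(B_{n}(\mathbb{K}))^{(1)}=\Gamma_{2}(B_{n}(\mathbb{K}))$ is perfect, from which every assertion of the proposition follows at once. The decisive input is Proposition~\ref{klein.nilp}, which identifies $\Gamma_{2}(B_{n}(\mathbb{K}))$ with the normal closure $\langle\!\langle \sigma^{-1}_{2}\sigma_{1}\rangle\!\rangle_{B_{n}(\mathbb{K})}$; note in particular that the computation there already places $[a,\sigma_1]$, $[b,\sigma_1]$ and $[b,a]$ inside this normal closure, which is exactly the feature that distinguishes $\mathbb{K}$ from $\mathbb{T}$ (for the torus $[b,a]=\sigma^{-2}$ survives, which is why there one obtains only $(B_{n}(\mathbb{T}))^{(2)}=(B_{n}(\mathbb{T}))^{(3)}$). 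Consequently it suffices to prove the single containment $\sigma^{-1}_{2}\sigma_{1}\in (B_{n}(\mathbb{K}))^{(2)}$: since $(B_{n}(\mathbb{K}))^{(2)}$ is normal, this yields $\langle\!\langle \sigma^{-1}_{2}\sigma_{1}\rangle\!\rangle_{B_{n}(\mathbb{K})}\subset (B_{n}(\mathbb{K}))^{(2)}$, whence $(B_{n}(\mathbb{K}))^{(1)}\subset(B_{n}(\mathbb{K}))^{(2)}$, and together with the automatic reverse inclusion $(B_{n}(\mathbb{K}))^{(2)}\subset(B_{n}(\mathbb{K}))^{(1)}$ we conclude $(B_{n}(\mathbb{K}))^{(1)}=(B_{n}(\mathbb{K}))^{(2)}$.

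To establish $\sigma^{-1}_{2}\sigma_{1}\in (B_{n}(\mathbb{K}))^{(2)}$ I would argue as follows. As observed in the proof of Proposition~\ref{klein.nilp}, the Artin relations~(\ref{it:full1}) and~(\ref{it:full2}) give $\sigma^{-1}_{2}\sigma_{1}=[\sigma_{1},\sigma_{2}]\in(B_{n}(\mathbb{K}))^{(1)}$. Since the relations~(\ref{it:full1}) and~(\ref{it:full2}) hold in $B_{n}(\mathbb{K})$, the assignment $\sigma_{i}\mapsto\sigma_{i}$ defines a homomorphism $\iota\colon B_{n}\longrightarrow B_{n}(\mathbb{K})$ from the Artin braid group (induced geometrically by the inclusion of a disc into $\mathbb{K}$), which carries $(B_{n})^{(2)}$ into $(B_{n}(\mathbb{K}))^{(2)}$ and sends $[\sigma_{1},\sigma_{2}]$ to $\sigma^{-1}_{2}\sigma_{1}$. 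Now by the theorem of Gorin and Lin~\cite{GL}, for $n\geq 5$ one has $(B_{n})^{(1)}=(B_{n})^{(2)}$, so in particular $[\sigma_{1},\sigma_{2}]\in(B_{n})^{(2)}$; applying $\iota$ gives $\sigma^{-1}_{2}\sigma_{1}\in(B_{n}(\mathbb{K}))^{(2)}$, as required. Alternatively, one may invoke directly the ``standard procedure'' recalled at the start of the proof of Proposition~\ref{toro2}, which the authors note applies verbatim to the braid groups of non-orientable surfaces, and which for $n\geq 5$ shows that the $(B_{n}(\mathbb{K}))^{(2)}$-cosets of all the $\sigma_{i}$ coincide, i.e.\ that $\sigma^{-1}_{2}\sigma_{1}\in(B_{n}(\mathbb{K}))^{(2)}$.

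Granting the equality $(B_{n}(\mathbb{K}))^{(1)}=(B_{n}(\mathbb{K}))^{(2)}$, a routine induction gives $(B_{n}(\mathbb{K}))^{(i)}=(B_{n}(\mathbb{K}))^{(1)}$ for all $i\geq 1$: if $(B_{n}(\mathbb{K}))^{(i)}=(B_{n}(\mathbb{K}))^{(1)}$, then $(B_{n}(\mathbb{K}))^{(i+1)}=[(B_{n}(\mathbb{K}))^{(1)},(B_{n}(\mathbb{K}))^{(1)}]=(B_{n}(\mathbb{K}))^{(2)}=(B_{n}(\mathbb{K}))^{(1)}$. Finally, since $B_{n}(\mathbb{K})$ is non-abelian (it surjects onto $S_{n}$, or simply $\Gamma_{2}(B_{n}(\mathbb{K}))=\langle\!\langle\sigma^{-1}_{2}\sigma_{1}\rangle\!\rangle_{B_{n}(\mathbb{K})}$ is non-trivial), we get $\bigcap_{i\geq 0}(B_{n}(\mathbb{K}))^{(i)}=(B_{n}(\mathbb{K}))^{(1)}\neq\{1\}$, so $B_{n}(\mathbb{K})$ is not residually soluble. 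The only genuine difficulty is the containment $\sigma^{-1}_{2}\sigma_{1}\in(B_{n}(\mathbb{K}))^{(2)}$, and this is precisely where the hypothesis $n\geq 5$ is indispensable: for $n\leq 4$ the Gorin--Lin relations in $(B_{n})^{(1)}/(B_{n})^{(2)}$ do not collapse, consistent with $B_{n}(\mathbb{K})$ remaining residually soluble in that range. One should also record, as a routine but necessary check, that $\iota$ is a well-defined homomorphism carrying the Artin generators to the generators $\sigma_{i}$ of Theorem~\ref{total}.
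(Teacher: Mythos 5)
Your proof is correct, but it takes a genuinely different route from the paper's. The paper's proof runs the same scheme as Proposition~\ref{toro2}: it first invokes the standard procedure of~\cite{GG} to identify all the $(B_{n}(\mathbb{K}))^{(2)}$-cosets of the $\sigma_{i}$ for $n\geq 5$, and then re-examines relations~(\ref{it:full3})--(\ref{it:full5}),~(\ref{it:full7}) and~(\ref{it:full8}) of Theorem~\ref{total} inside $B_{n}(\mathbb{K})/(B_{n}(\mathbb{K}))^{(2)}$ to show that the coset of $b$ commutes with $\sigma$ and with the coset of $a$, that $\sigma^{2}=1$, and that the coset of $a$ has order $2$; this forces $B_{n}(\mathbb{K})/(B_{n}(\mathbb{K}))^{(2)}\cong B_{n}(\mathbb{K})^{\text{Ab}}$ (via Remark~\ref{Ab}), whence $(B_{n}(\mathbb{K}))^{(1)}=(B_{n}(\mathbb{K}))^{(2)}$. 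You bypass all of that relation-checking by reusing Proposition~\ref{klein.nilp}, which packages those computations once and for all in the statement $\Gamma_{2}(B_{n}(\mathbb{K}))=\left\langle\!\left\langle\sigma^{-1}_{2}\sigma_{1}\right\rangle\!\right\rangle_{B_{n}(\mathbb{K})}$; this reduces the whole proposition to the single containment $\sigma^{-1}_{2}\sigma_{1}\in(B_{n}(\mathbb{K}))^{(2)}$, which you import from the Artin group via the homomorphism $\iota\colon B_{n}\longrightarrow B_{n}(\mathbb{K})$ and the Gorin--Lin equality $(B_{n})^{(1)}=(B_{n})^{(2)}$ for $n\geq 5$~\cite{GL}. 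This use of Proposition~\ref{klein.nilp} is not circular, since that result is established independently and beforehand. Both arguments rest on the same $n\geq 5$ input (the collapse of the $\sigma_{i}$ modulo the second derived subgroup), but your version makes the dependence on~\cite{GL} explicit and isolates exactly what distinguishes $\mathbb{K}$ from $\mathbb{T}$, namely that for the Klein bottle $\Gamma_{2}$ is normally generated by $\sigma^{-1}_{2}\sigma_{1}$ alone, whereas for the torus $[b,a]=\sigma^{-2}$ survives in the quotient, which is why only $(B_{n}(\mathbb{T}))^{(2)}=(B_{n}(\mathbb{T}))^{(3)}$ holds there. What the paper's longer computation buys in exchange is the explicit identification of the quotient $B_{n}(\mathbb{K})/(B_{n}(\mathbb{K}))^{(2)}$ with $\mathbb{Z}\oplus\mathbb{Z}_{2}\oplus\mathbb{Z}_{2}$, a by-product your argument does not produce.
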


\begin{proof} Let $n\geq 5$. Once more, the relation numbers will refer to those of Theorem~\ref{total}. As in the case of the torus (Proposition~\ref{toro2}), first consider the following short exact sequence: \[1\longrightarrow \frac{(B_{n}(\mathbb{K}))^{(1)}}{(B_{n}(\mathbb{K}))^{(2)}}\stackrel{i}{\longrightarrow} \frac{B_{n}(\mathbb{K})}{(B_{n}(\mathbb{K}))^{(2)}}\stackrel{p_{\ast}}{\longrightarrow} B_{n}(\mathbb{K})^{\text{Ab}}\longrightarrow 1,\] where $p_{\ast}$ is the canonical projection. Using relations~(\ref{it:full1}) and~(\ref{it:full2}), for all $i=1,\ldots,n-1$, the $\sigma_{i}$ belong to the same $(B_{n}(\mathbb{K}))^{(2)}$-coset, denoted by $\sigma$, in ${B_{n}(\mathbb{K})}/{(B_{n}(\mathbb{K}))^{(2)}}$ (the hypothesis that $n\geq 5$ is used here, see~\cite[Theorem~1.4, p.~3389]{GG}). By Remark~\ref{Ab}, $\sigma$ and the $(B_{n}(\mathbb K)^{(2)}$-cosets of $a$ and $b$ are non trivial. From relations~(\ref{it:full3}) and~(\ref{it:full4}), the $(B_{n}(\mathbb{K}))^{(2)}$-cosets of $a$ and $b$ commute with $\sigma$. Relation~(\ref{it:full7}) implies that $\sigma$ is of order $2$, and so from relation~(\ref{it:full5}), the $(B_{n}(\mathbb{K}))^{(2)}$-coset of $a$ commutes with that of $b$. By relation~(\ref{it:full8}), the $(B_{n}(\mathbb{K}))^{(2)}$-coset of $a$ is of order $2$. So $p_{\ast}$ is a isomorphism, and $(B_{n}(\mathbb{K}))^{(1)}=(B_{n}(\mathbb{K}))^{(2)}$. The second part then follows.
\end{proof}
%\hfill{$\blacksquare$}

\begin{proof}[Proof of Theorem~\ref{klein}]
By Proposition~\ref{klein.nilp} (resp.\ Proposition~\ref{klein.sol}), if $n\geq 3$ (resp.\ $n\geq 5$), $B_{n}(\mathbb{K})$ is not residually nilpotent (resp.\ not residually soluble). Conversely, using Theorem~\ref{TG}, the short exact sequence~(\ref{seq1}), 
%\[1\longrightarrow P_{n}(\mathbb{K})\longrightarrow B_{n}(\mathbb{K})\longrightarrow S_{n}\longrightarrow 1, \]
and the fact that $P_{2}(\mathbb{K})$ is residually $2$-finite by Theorem~\ref{gammaP2}, $B_{2}(\mathbb{K})$ is residually $2$-finite. In particular, $B_{2}(\mathbb{K})$ is residually nilpotent. By~(\ref{eq:pi1}), $B_{1}(\mathbb{K})=\pi_{1}(\mathbb K)$ is residually nilpotent. 
%It is already know that $B_{1}(\mathbb{K})=\pi_{1}(\mathbb K)$ is residually nilpotent by (\ref{eq:pi1}). 
The fact that $P_{n}(\mathbb{K})$ is residually soluble for all $n\geq1$ by Theorem~\ref{soluvel} implies that $B_{n}(\mathbb{K})$ is residually soluble for all $n\leq4$ using Theorem~\ref{TG}. 
%The rest of the proof of Theorem~\ref{klein} is a consequence of the following results.
\end{proof}

\section{The case of non-orientable surfaces of higher genus}\label{sec:higher}

%Let $M$ be a compact, connected non-orientable surface without boundary and of genus $g\geq 3$.

%:
In this short section, we prove Theorem~\ref{final}, by generalising Propositions~\ref{klein.nilp} and~\ref{klein.sol} to non-orientable surfaces of higher genus.

\begin{teo}\label{not} Let $M$ be a compact, connected non-orientable surface without boundary and of genus $g\geq 3$. Then $B_{n}(M)$ is not residually nilpotent if $n\geq 3$, and is not residually soluble if $n\geq5$.
\end{teo}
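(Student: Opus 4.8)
The plan is to follow the strategy of Propositions~\ref{klein.nilp} and~\ref{klein.sol}, replacing the presentation of Theorem~\ref{total} by Bellingeri's presentation of $B_{n}(M)$ given in Theorem~\ref{Bel} (here $M$ is $N_{g}$ with $g\geq3$). The first step is to record the Abelianisation. Abelianising relations~(1)--(6) of Theorem~\ref{Bel}, relations~(1) and~(2) identify all the $\sigma_{i}$ to a single generator $\sigma$, relation~(4) yields $\sigma^{2}=1$, relation~(6) yields $(a_{1}\cdots a_{g})^{2}=1$, while relations~(3) and~(5) become trivial, so that
\[
B_{n}(M)^{\text{Ab}}=\bigl\langle \sigma,a_{1},\ldots,a_{g}\,:\,\sigma^{2}=(a_{1}\cdots a_{g})^{2}=1,\ \text{all generators commute}\bigr\rangle\cong \mathbb{Z}^{g-1}\oplus\mathbb{Z}_{2}\oplus\mathbb{Z}_{2},
\]
in which $\sigma$ and the class of $a_{1}\cdots a_{g}$ are the two non-trivial torsion elements and each $a_{r}$ has infinite order. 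I would record this as a preliminary observation.

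For the non residual nilpotence when $n\geq3$, I would consider the short exact sequence
\[
1\longrightarrow \frac{\Gamma_{2}(B_{n}(M))}{\Gamma_{3}(B_{n}(M))}\longrightarrow \frac{B_{n}(M)}{\Gamma_{3}(B_{n}(M))}\stackrel{p_{\ast}}{\longrightarrow} B_{n}(M)^{\text{Ab}}\longrightarrow 1.
\]
By the standard argument using relations~(1) and~(2) (as in \cite[p.~680]{GGjktr} or \cite[Proposition~3]{BGG}, which uses only the Artin relations), valid for $n\geq3$, the $\Gamma_{3}$-cosets of $\sigma_{1},\ldots,\sigma_{n-1}$ all coincide with a single element $\sigma$. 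Since $n\geq3$, relation~(3) applied with an index $i\geq2$ shows that each $a_{r}$ commutes with $\sigma$ in $B_{n}(M)/\Gamma_{3}(B_{n}(M))$. Using this commutativity, I would then read off, exactly as in the Abelianisation, that relation~(4) gives $\sigma^{2}=1$, relation~(5) gives $[a_{s},a_{r}]=1$, and relation~(6) gives $\sigma^{2n-2}=1$ and hence $(a_{1}\cdots a_{g})^{2}=1$, all in $B_{n}(M)/\Gamma_{3}(B_{n}(M))$. Thus the generators of $B_{n}(M)/\Gamma_{3}(B_{n}(M))$ satisfy precisely the defining relations of $B_{n}(M)^{\text{Ab}}$, so there is a surjection $B_{n}(M)^{\text{Ab}}\to B_{n}(M)/\Gamma_{3}(B_{n}(M))$ whose composition with $p_{\ast}$ is the identity; hence $p_{\ast}$ is an isomorphism and $\Gamma_{2}(B_{n}(M))=\Gamma_{3}(B_{n}(M))$. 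As $\Gamma_{2}(B_{n}(M))$ is non-trivial, it follows that $\Gamma_{2}(B_{n}(M))=\Gamma_{i}(B_{n}(M))$ for all $i\geq3$, and $B_{n}(M)$ is not residually nilpotent.

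For the non residual solubility when $n\geq5$, I would run the identical computation in the metabelian quotient, via
\[
1\longrightarrow \frac{(B_{n}(M))^{(1)}}{(B_{n}(M))^{(2)}}\longrightarrow \frac{B_{n}(M)}{(B_{n}(M))^{(2)}}\stackrel{p_{\ast}}{\longrightarrow} B_{n}(M)^{\text{Ab}}\longrightarrow 1.
\]
Here the hypothesis $n\geq5$ enters exactly as in Proposition~\ref{klein.sol}: the same standard procedure (\cite[Theorem~1.4, p.~3389]{GG}, which as noted there applies to non-orientable surfaces) identifies all the $\sigma_{i}$ modulo $(B_{n}(M))^{(2)}$ to a single $\sigma$. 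Since $(B_{n}(M))^{(1)}/(B_{n}(M))^{(2)}$ is abelian and $a_{r}$ commutes with $\sigma$ in the quotient by relation~(3), the manipulations of relations~(4),~(5) and~(6) go through verbatim, so $B_{n}(M)/(B_{n}(M))^{(2)}$ again satisfies precisely the relations of $B_{n}(M)^{\text{Ab}}$, forcing $p_{\ast}$ to be an isomorphism and $(B_{n}(M))^{(1)}=(B_{n}(M))^{(2)}$; hence $(B_{n}(M))^{(1)}=(B_{n}(M))^{(i)}$ for all $i\geq2$ and the group is not residually soluble. The computations with relations~(4)--(6) are routine once $\sigma$ is central modulo the relevant term of the series; the only genuinely delicate point, and the step I expect to be the main obstacle, is the identification of all the $\sigma_{i}$ in the two quotients, i.e.\ the assertions imported from \cite{GG,GGjktr,BGG} with their thresholds $n\geq3$ and $n\geq5$. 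I would therefore verify carefully that these arguments rely only on the Artin relations~(1) and~(2), which are common to all the presentations, so that they transfer without change to $B_{n}(M)$.
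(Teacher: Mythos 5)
Your proposal is correct and follows essentially the same route as the paper's proof of Theorem~\ref{not}: the same use of Bellingeri's presentation (Theorem~\ref{Bel}), the same identification of all the $\sigma_{i}$ modulo $\Gamma_{3}(B_{n}(M))$ (resp.\ modulo $(B_{n}(M))^{(2)}$) via the Artin relations imported from \cite{GGjktr,BGG} (resp.\ \cite{GG}) with the thresholds $n\geq3$ and $n\geq5$, and the same verification that relations~(3)--(6) then force $B_{n}(M)/\Gamma_{3}(B_{n}(M))$ (resp.\ $B_{n}(M)/(B_{n}(M))^{(2)}$) to coincide with $B_{n}(M)^{\text{Ab}}$, whence $\Gamma_{2}=\Gamma_{3}$ (resp.\ $(B_{n}(M))^{(1)}=(B_{n}(M))^{(2)}$). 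If anything, your handling of relation~(4) is more careful than the paper's, which asserts that it ``does not give any new information'' and omits the resulting relation $\sigma^{2}=1$ from its displayed presentation of $B_{n}(M)^{\text{Ab}}$, whereas your version, with $\sigma^{2}=1$ included and $B_{n}(M)^{\text{Ab}}\cong\mathbb{Z}^{g-1}\oplus\mathbb{Z}_{2}\oplus\mathbb{Z}_{2}$, is the consistent one.
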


\begin{proof}
The relation numbers will refer to those of Theorem~\ref{Bel}. Using Theorem~\ref{Bel} it is easy to see that\[B_{n}(M)^{\text{Ab}}=\left\langle \sigma,a_{i},i=1,\ldots,g\,:\,a^{2}_{1}\cdots a^{2}_{g}=\sigma^{2(n-1)}, [\sigma,a_{i}]=[a_{i},a_{j}]=1, i,j=1,\ldots,n\right\rangle,\] where $\sigma$ denotes the $\Gamma_{2}(B_{n}(M))$-coset of $\sigma_{i}$ for all $i=1,\ldots,n-1$. As in the proof of Proposition~\ref{klein.nilp} (resp.\ Proposition~\ref{klein.sol}), using relations~(\ref{it:full1}) and~(\ref{it:full2}), one may show that for all $i=1,\ldots, n-1$, the $\sigma_{i}$ belong to the same $\Gamma_{3}(B_{n}(M))$-coset (resp.\ $(B_{n}(M))^{(2)}$-coset), which we also denote by $\sigma$, in $B_{n}(M)/\Gamma_{3}(B_{n}(M))$ (resp.\ in $B_{n}(M)/(B_{n}(M))^{(2)}$). If $n\geq 3$ (resp.\ $n\geq 5$), by relation~(\ref{it:full3}), the $\Gamma_{3}(B_{n}(M))$-coset (resp.\ $(B_{n}(M))^{(2)}$-coset) of  $a_{r}$ commutes with $\sigma$ for all $1\leq r \leq g$. By relation~(\ref{it:full5}), the $\Gamma_{3}(B_{n}(M))$-coset (resp.\ $(B_{n}(M))^{(2)}$-coset) of $a_{r}$ commutes with that of $a_{s}$ for all $1\leq r,s\leq g$. By~(\ref{it:full6}), $a^{2}_{1}\cdots a^{2}_{g}=\sigma^{2(n-1)}$, and relation~(\ref{it:full4}) does not give any new information. Thus $B_{n}(M)/\Gamma_{3}(B_{n}(M))$  (resp.\ $B_{n}(M)/(B_{n}(M))^{(2)}$) is isomorphic to $B_{n}(M)^{\text{Ab}}$. Hence $B_{n}(M)$ is not residually nilpotent if $n\geq 3$ (resp.\ is not residually soluble if $n\geq5$).
\end{proof}
%\hfill{$\blacksquare$}

%No caso que $N_{g}=\mathbb RP^{2}$ \'e o plano projetivo, temos a sequência de Fadell-Neuwirth (\ref{eq:fn}) apenas para $n\geq 2$~\cite{VB}: \[1\longrightarrow P_{m}(\mathbb RP^{2}\setminus\left\{x_{1},\ldots,x_{n}\right\})\longrightarrow P_{m+n}(\mathbb RP^{2})\longrightarrow P_{n}(\mathbb RP^{2})\longrightarrow 1.\] Al\'em disso, a sequência acima não cinde, mas em compensação $\pi_{1}(\mathbb RP^{2})=\mathbb Z_{2}$ e $P_{2}(\mathbb RP^{2})=Q_{8}$~\cite{VB} (o grupo dos quat\'ernios) são $2$-finitos. Agora podemos usar o Teorema \ref{TG} de Gruenberg e o resultado de Bellingeri e Gervais~\cite{BG} que $P_{m}(\mathbb RP^{2}\setminus\left\{x_{1},\ldots,x_{n}\right\})$ \'e residualmente $2$-finito para concluir que $P_{n}(\mathbb RP^{2})$ \'e residualmente $2$-finito para $n\geq 3$. Com isto, concluímos que $B_{4}(\mathbb RP^{2})$ \'e residualmente solúvel (único caso que faltava para o plano projetivo~\cite{GG4}).
%\bigskip

%\textit{Proof of Theorem~\ref{final}.} 

\begin{proof}[Proof of Theorem~\ref{final}]
If $M=\mathbb{K}$, the result follows from Theorem~\ref{klein}, and if $M$ is a compact surface without boundary of genus $g\geq 3$, the conclusion follows from~\cite{BG} and from Theorem~\ref{not}. If $M=\mathbb RP^{2}$, by~\cite{GG1}, $B_{n}(\mathbb RP^{2})$ is residually nilpotent if $n\leq3$, and if $n\neq4$, $B_{n}(\mathbb RP^{2})$ is residually soluble if $n<4$.

The case $n=4$ may be obtained by using Theorem~\ref{TG} and the following Fadell-Neuwirth short exact sequence:\[1\longrightarrow P_{m}(\mathbb RP^{2}\setminus\left\{x_{1},\ldots,x_{n}\right\})\longrightarrow P_{m+n}(\mathbb RP^{2})\longrightarrow P_{n}(\mathbb RP^{2})\longrightarrow 1,\] where $n\geq 2$. Note that if $n=2$, $P_{2}(\mathbb RP^{2})$ is the quaternion group of order $8$~\cite{VB}, which is $2$-finite, and by~\cite{BG}, $P_{m}(\mathbb RP^{2}\setminus\left\{x_{1},\ldots,x_{n}\right\})$ is residually $2$-finite. Therefore $P_{m+2}(\mathbb RP^{2})$ is residually $2$-finite for all $m\geq 1$, in particular $P_{4}(\mathbb RP^{2})$ is residually soluble. Applying Theorem~\ref{TG} to the short exact sequence~(\ref{seq1}), we see that $B_{4}(\mathbb RP^{2})$ is residually soluble.
\end{proof}

\appendix
\section*{Appendix}

\renewcommand{\theequation}{A\arabic{equation}}

\setcounter{teo}{0}
\renewcommand{\theteo}{A\arabic{teo}}

Let $M$ be the M\"obius band, and let $n\geq 1$. The braid groups of $M$ are those of $\mathbb{R}P^{2}$ with a single point removed~\cite[proof of Theorem~2(a)]{GG2}, and so $P_{n}(M)$ is the group $\Gamma_{n,1}(\mathbb{R}P^{2})$ of~\cite[Proposition~11]{GG3}. We use the notation and results of that proposition in what follows. In particular, $P_{n}(M)$ is generated by the set $\{ A_{i,j}, \rho_{j} \, \mid \, \text{$1\leq i<j$ and $2\leq j\leq n+1$}\}$.

\begin{prop}\label{prop:centremobius}
Let $n\geq 1$. Then $Z(P_{n}(M))=Z(B_{n}(M))$ is infinite cyclic, generated by $\rho_{2}$ if $n=1$, and by the full twist $\Delta^{2}_{n+1}$ if $n\geq 2$. 
\end{prop}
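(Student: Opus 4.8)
The plan is to follow closely the strategy of Proposition~\ref{centro} and ultimately that of~\cite[Proposition~4.2]{PR}, working throughout with the open surface $N=\mathbb{RP}^{2}\setminus\{x_{0}\}$, for which $P_{n}(M)=P_{n}(N)$ and $B_{n}(M)=B_{n}(N)$. Since $N$ is an open surface, the Fadell--Neuwirth fibration applies and yields, for each $n\geq 2$, a short exact sequence
\[
1\longrightarrow \pi_{1}\bigl(\mathbb{RP}^{2}\setminus\{x_{0},\ldots,x_{n-1}\}\bigr)\stackrel{}{\longrightarrow} P_{n}(N)\stackrel{p_{\ast}}{\longrightarrow}P_{n-1}(N)\longrightarrow 1,
\]
whose kernel $F=\pi_{1}(\mathbb{RP}^{2}\setminus\{n\text{ points}\})$ is free of rank $n$, because $\chi(\mathbb{RP}^{2})=1$ gives rank $1-\chi=1-(1-n)=n$. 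I would prove three facts: (a) $Z(B_{n}(M))\subseteq P_{n}(M)$, so that $Z(B_{n}(M))\subseteq Z(P_{n}(M))$; (b) $Z(P_{n}(M))$ is as claimed; and (c) the full twist $\Delta^{2}_{n+1}$ lies in $Z(B_{n}(M))$. Granting these, the chain $Z(B_{n}(M))\subseteq Z(P_{n}(M))=\langle\Delta^{2}_{n+1}\rangle\subseteq Z(B_{n}(M))$ forces equality for $n\geq 2$, and the same argument with $\rho_{2}$ in place of $\Delta^{2}_{n+1}$ settles $n=1$.

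For part~(b) I would argue by induction on $n$. The base case is immediate: $P_{1}(M)=\pi_{1}(M)\cong\mathbb{Z}=\langle\rho_{2}\rangle$ is abelian, so its centre is the whole group. For the inductive step, assume $Z(P_{n-1}(M))=\langle\Delta^{2}_{n}\rangle$ and let $g\in Z(P_{n}(M))$. Then $p_{\ast}(g)\in Z(P_{n-1}(M))=\langle\Delta^{2}_{n}\rangle$, and using the standard geometric fact that forgetting the last string carries the full twist on $n$ strings to the full twist on $n-1$ strings, namely $p_{\ast}(\Delta^{2}_{n+1})=\Delta^{2}_{n}$, there exists $h\in\langle\Delta^{2}_{n+1}\rangle$ with $p_{\ast}(h)=p_{\ast}(g)$. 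Then $g'=gh^{-1}$ lies in the free kernel $F$ by exactness and remains central in $P_{n}(M)$; in particular $g'$ commutes with all of $F$, so $g'\in Z(F)$. Since $n\geq 2$, $F$ has rank $n\geq 2$ and is nonabelian, whence $Z(F)=\{1\}$ and $g=h\in\langle\Delta^{2}_{n+1}\rangle$. Finally $\Delta^{2}_{n+1}$ has infinite order (either because $P_{n}(N)$ is torsion-free as $N$ is open, or because $p_{\ast}$ maps it to $\Delta^{2}_{n}$, which is of infinite order by induction), so $\langle\Delta^{2}_{n+1}\rangle\cong\mathbb{Z}$, completing~(b).

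It remains to establish the two structural inputs. For~(c), using the presentation of $P_{n}(M)$ recalled from~\cite[Proposition~11]{GG3} together with the Artin generators $\sigma_{1},\ldots,\sigma_{n-1}$ and the surface generators, one checks directly that $\Delta^{2}_{n+1}$ commutes with every generator of $B_{n}(M)$; this is the usual verification that the full twist is central, transcribed into the present generating set. For~(a), the reduction $Z(B_{n}(M))\subseteq P_{n}(M)$ follows for $n\geq 3$ from the permutation homomorphism $B_{n}(M)\to S_{n}$: a central element has central image, hence maps into $Z(S_{n})=\{1\}$ and is pure. The case $n=2$, where $Z(S_{2})=S_{2}$ is nontrivial, must be handled separately, for instance by showing directly from the structure of $B_{2}(M)$ that no element projecting to the transposition can be central. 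This last point, together with the explicit identification $p_{\ast}(\Delta^{2}_{n+1})=\Delta^{2}_{n}$ and the centrality of $\Delta^{2}_{n+1}$ in the chosen coordinates, is where the genuine computational work lies and is the main obstacle; the remainder of the argument is formal once these are in place.
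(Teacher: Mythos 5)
Your outline coincides with the paper's proof in every respect except the one place where the real work lies, and there it fails. For $n\geq 3$ everything you propose is what the paper does: centrality of $\Delta^{2}_{n+1}$ checked on the generators, the reduction $Z(B_{n}(M))\subset P_{n}(M)$ via $Z(S_{n})=\{1\}$, and the inductive Fadell--Neuwirth argument using the triviality of the centre of the free kernel. The genuine gap is the case $n=2$ of your part~(b), and it occurs at a point you do not flag: the passage of your induction from $n=1$ to $n=2$. Your inductive step assumes $Z(P_{n-1}(M))=\langle\Delta^{2}_{n}\rangle$, but your base case gives $Z(P_{1}(M))=\langle\rho_{2}\rangle$, and these do not match: by the surface relation of \cite[Proposition~11, relation~(V)]{GG3} (used in~(\ref{eq:deltan1})), $\Delta^{2}_{2}=A_{1,2}=\rho_{2}^{-2}$, so $\langle\Delta^{2}_{2}\rangle=\langle\rho_{2}^{2}\rangle$ is a proper (index~$2$) subgroup of $Z(P_{1}(M))$. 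Consequently, for $g\in Z(P_{2}(M))$ you only know that $p_{\ast}(g)\in\langle\rho_{2}\rangle$, while $p_{\ast}(\langle\Delta^{2}_{3}\rangle)=\langle\rho_{2}^{2}\rangle$; the element $h\in\langle\Delta^{2}_{3}\rangle$ with $p_{\ast}(h)=p_{\ast}(g)$ that your argument requires simply does not exist if $p_{\ast}(g)$ is an odd power of $\rho_{2}$. Ruling out that possibility is not formal: it is, together with the case $n=2$ of your reduction~(a) (which you explicitly defer), precisely the content of most of the paper's proof. Note that this issue is invisible in the model you are imitating, Proposition~\ref{centro}, because for the Klein bottle $Z(\pi_{1}(\mathbb{K}))=\langle b_{1}^{2}\rangle$ equals $Z_{1}$, so there the lifting step $p_{\ast}(Z_{n})=Z_{n-1}$ already works at the bottom of the induction.

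The paper closes this gap by passing to the index-$2$ subgroup $G=p^{-1}(\langle\rho_{2}^{2}\rangle)$ of $P_{2}(M)$: by~(\ref{eq:reldelta3}), $\Delta_{3}^{-2}$ is a lift of $\rho_{2}^{2}$, so $G$ splits as the internal direct product $\operatorname{Ker}(p)\times\langle\Delta^{2}_{3}\rangle$ and $Z(G)=\langle\Delta^{2}_{3}\rangle$; then central elements of $B_{2}(M)$ lying outside $G$ are excluded by normal-form computations with respect to the decomposition $z=a\rho_{2}^{i}\sigma_{2}^{j}$, $a\in G$, using the basis $u=A_{2,3}\rho_{3}$, $v=\rho_{3}$ of $\operatorname{Ker}(p)$. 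In particular, a central element of $P_{2}(M)$ of the form $a\rho_{2}$ is impossible because conjugation by $\rho_{2}$ swaps the classes of $u$ and $v$ in the abelianisation of the free group $\operatorname{Ker}(p)$, whereas conjugation by $a^{-1}$ acts trivially there; this is exactly the step that kills the odd powers of $\rho_{2}$ your induction cannot handle. Once $n=2$ is established, the induction you propose does carry the result to all $n\geq 3$, as the paper itself indicates by referring back to Proposition~\ref{centro}. So your plan is recoverable, but as written its first inductive step is the hard case of the proposition, and it is missing.
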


\begin{proof}
If $n=1$ then $P_{1}(M)=B_{1}(M)$ is infinite cyclic, generated by $\rho_{2}$, and the result follows in this case. So suppose that $n\geq 2$. We consider the following short exact sequence:
\begin{equation}\label{eq:sespnbn}
1\longrightarrow P_{n}(M)\longrightarrow B_{n}(M)\longrightarrow S_{n}\longrightarrow 1. 
\end{equation}
Equation~(\ref{eq:sespnbn}) implies that $B_{n}(M)$ is generated by $\{ \sigma_{2},\ldots, \sigma_{n}, \rho_{2},\ldots, \rho_{n+1} \}$. The braid $\Delta^{2}_{n+1}$ generates the centre of $B_{n+1}$, thus $\sigma_{i}$ commutes with $\Delta^{2}_{n+1}$ for all $i=2,\ldots,n$. Further, using~\cite[Proposition~11, relation~(V)]{GG3}, $\Delta^{2}_{n+1}$ may be written in the following form:
\begin{align}
\Delta^{2}_{n+1}&=(A_{1,2})(A_{1,3}A_{2,3})\cdots(A_{1,n}A_{2,n}\cdots A_{n-1,n})(A_{1,n+1}A_{2,n+1}\cdots A_{n,n+1})\notag\\
&=(A_{1,2})(A_{1,3}A_{2,3})\cdots(A_{1,n}A_{2,n}\cdots A_{n-1,n})\rho^{-2}_{n+1}\label{eq:deltan1}
\end{align}
Since $\rho_{n+1}$ commutes with $A_{i,j}$ for all $1\leq i<j<n+1$~\cite[Proposition~11, relation~(II)]{GG3}, it follows that $\rho_{n+1}$ commutes with $\Delta^{2}_{n+1}$. Now the relation $\rho_{i+1}=\sigma_{i}^{-1} \rho_{i}\sigma_{i}^{-1}$ of~\cite[p.~83]{VB} for $B_{n+1}(\mathbb{R}P^{2})$ also holds in $B_{n}(M)$ for all $i=2,\ldots,n$, so $\rho_{i}=\sigma_{i}\cdots \sigma_{n}\rho_{n+1}\sigma_{n} \cdots \sigma_{i}$, from which we conclude that $\rho_{i}$ commutes with $\Delta^{2}_{n+1}$. Thus $\Delta^{2}_{n+1}$ commutes with all of the elements of a generating set of $B_{n}(M)$, hence $\left\langle \Delta^{2}_{n+1}\right\rangle\subset Z(B_{n}(M))$, and $\left\langle \Delta^{2}_{n+1}\right\rangle\subset Z(P_{n}(M))$ since $\Delta^{2}_{n+1}\in P_{n}(M)$. We now  consider the following two cases:
\begin{enumerate}[(a)]
\item\label{it:parta} $n=2$. Let $p\colon\thinspace P_{2}(M)\longrightarrow P_{1}(M)$ denote the surjective homomorphism given geometrically by forgetting the second string. The kernel of $p$ is a free group of rank $2$ for which $(A_{2,3},\rho_{3})$ is a basis, and $P_{1}(M)=\left\langle \rho_{2}\right\rangle$ is infinite cyclic. Let $Q=\langle \rho_{2}^{2}\rangle$ be the index $2$ subgroup of $P_{1}(M)$, and let $G=p^{-1}(Q)$ be the index $2$ subgroup of $P_{2}(M)$. Then we have the following commutative diagram of short exact sequences:
\begin{equation}\label{eq:commdiagM}
\begin{tikzcd}
& & 1\arrow{d} & 1\arrow{d} &\\
1 \arrow{r} & \operatorname{\text{Ker}}(p) \arrow{r}\arrow[equal]{d} & G \arrow{r}{p\bigl\lvert_{G}\bigr.} \arrow{d} & Q \arrow{r} \arrow{d} & 1\\
1 \arrow{r} & \operatorname{\text{Ker}}(p) \arrow{r} & P_{2}(M) \arrow{r}{p} \arrow{d}{q'} & P_{1}(M) \arrow{r} \arrow{d}{q} & 1,\\
& & \mathbb{Z}_{2}\arrow{d}  \arrow[equal]{r} & \mathbb{Z}_{2}\arrow{d} &\\
& & 1 & 1 &
\end{tikzcd}
\end{equation}
where $q\colon\thinspace P_{1}(M) \longrightarrow \mathbb{Z}_{2}$ is defined by $q(\rho_{2})=\overline{1}$, and $q'\colon\thinspace P_{2}(M) \longrightarrow \mathbb{Z}_{2}$ is given by $q'=q\circ p$. From~(\ref{eq:deltan1}), we have $\Delta_{3}^{2}=A_{1,2}\rho_{3}^{-2}$, and since $A_{1,2}=\rho_{2}^{-1} A_{2,3}\rho_{2}^{-1}$ by~\cite[Proposition~11, relation~(V)]{GG3}, we see that:
\begin{equation}\label{eq:reldelta3}
\Delta_{3}^{2}=\rho_{2}^{-1} A_{2,3}\rho_{2}^{-1}\rho_{3}^{-2}=\rho_{2}^{-2} \ldotp \underbrace{\rho_{2} A_{2,3}\rho_{2}^{-1}\rho_{3}^{-2}}_{\in \operatorname{\text{Ker}}(p)}
\end{equation}
using exactness of~(\ref{eq:commdiagM}). So the restriction $p\bigl\lvert_{G}\bigr.\colon\thinspace G \longrightarrow Q$ admits a section given by sending $\rho_{2}^{2}$ to $\Delta_{3}^{-2}$, and from this and the fact that $\Delta^{2}_{3}\in Z(P_{2}(M))$, the upper row of~(\ref{eq:commdiagM}) splits as a direct product. In particular, $G$ is the internal direct product of $\operatorname{\text{Ker}}(p)$ and $\left\langle \Delta^{2}_{3}\right\rangle$, and $Z(G)=\left\langle \Delta^{2}_{3}\right\rangle$. Now $P_{2}(M)$ (resp.\ $G$) is an index~$2$ subgroup of $B_{2}(M)$ (resp.\ of $P_{2}(M)$), and a transversal is given by $\{ 1,\sigma_{2}\}$ (resp.\ by $\{ 1,\rho_{2}\}$). So every element $z$ of $B_{2}(M)$ may be written as $z=a\rho_{2}^{i} \sigma_{2}^{j}$, where $a\in G$ and $i,j\in \{ 0,1\}$ are unique. We shall refer to this expression as the \emph{normal form} of $z$. Let $z\in Z(B_{2}(M))$, and assume first that $z\notin P_{2}(M)$, so $j=1$. Consider the basis $(u,v)$ of $\operatorname{\text{Ker}}(p)$, where $u=A_{2,3}\rho_{3}$ and $v=\rho_{3}$. By~\cite[Proposition~11, relations~(III) and~(IV)]{GG3}, we have:
\begin{equation}\label{eq:conjrho2}
\text{$\rho_{2}u \rho_{2}^{-1}= \rho_{3}^{-1}A_{2,3}^{-1}\rho_{3}\ldotp A_{2,3}\rho_{3}=u^{-1}vu$ and $\rho_{2}v \rho_{2}^{-1}=u$.}
\end{equation}
Hence the action by conjugation of $\rho_{2}$ on $\operatorname{\text{Ker}}(p)$ is given by composing the involution that exchanges $u$ and $v$ with conjugation by $u^{-1}$. The relation $\rho_{3}=\sigma_{2}^{-1} \rho_{2}\sigma_{2}^{-1}$ implies that:
\begin{equation*}
\sigma_{2} v \sigma_{2}^{-1} =\sigma_{2} \rho_{3} \sigma_{2}^{-1}= \rho_{2} \sigma_{2}^{-2}=\rho_{2} A_{2,3}^{-1}=\rho_{2}vu^{-1}=\rho_{2}vu^{-1}\rho_{2}^{-1} \ldotp \rho_{2}= v^{-1}u \rho_{2}
\end{equation*}
by~(\ref{eq:conjrho2}). Since $z\in Z(B_{2}(M))$, $z$ and $v$ commute, so:
\begin{equation}\label{eq:vwv}
a\rho_{2}^{i}\sigma_{2}=z= vzv^{-1}= v a\rho_{2}^{i}\sigma_{2} v^{-1} \sigma_{2}^{-1} \ldotp \sigma_{2}= v a\rho_{2}^{i}\rho_{2}^{-1} u^{-1}v \sigma_{2}.
\end{equation}
If $i=1$ then the left- and right-hand sides of~(\ref{eq:vwv}) are in normal form, and they clearly differ. If $i=0$ then using the fact that $\rho_{2}^{2}=u^{-1}v^{-1} \Delta^{-2}_{3}\in G$ by~(\ref{eq:reldelta3}) and~(\ref{eq:conjrho2}), equation~(\ref{eq:vwv}) may be written as:
\begin{equation*}
a\sigma_{2}=v a\rho_{2}^{-1} u^{-1}v \sigma_{2}=v a\rho_{2}^{-2} \ldotp \rho_{2} u^{-1}v \rho_{2}^{-1} \ldotp\rho_{2} \sigma_{2}=
v a 
%v^{-2}u^{-1}v \Delta^{-2}_{3} 
\Delta^{2}_{3} u^{2} \rho_{2} \sigma_{2}.
\end{equation*}
Again the left- and right-hand sides are in normal form, and they differ also. In both cases, this contradicts the fact that $z\in Z(B_{2}(M))$, and so we conclude that $j=0$. Hence $z\in P_{2}(M)$, and $\left\langle \Delta^{2}_{3}\right\rangle \subset Z(B_{2}(M)) \subset Z(P_{2}(M))$. It remains to show that $z\in \left\langle \Delta^{2}_{3}\right\rangle$. Suppose that $i=1$. Since $z\in Z(P_{2}(M))$, $z$ commutes with $u$, so:
\begin{equation*}
a\rho_2=z =uzu^{-1}=ua\rho_2 u^{-1}=ua\rho_2 u^{-1} \rho_2^{-1}\ldotp \rho_2=uau^{-1}v^{-1}u \rho_{2}
\end{equation*}
by~(\ref{eq:conjrho2}). Both sides are in normal form, and thus $a=uau^{-1}v^{-1}u$ in $\operatorname{\text{Ker}}(p)$, which gives rise to a contradiction under Abelianisation in this free group. Hence $i=0$, and thus $z\in G$. So $z\in Z(G)$, and therefore $z\in \left\langle \Delta^{2}_{3}\right\rangle$. We conclude that  $\left\langle \Delta^{2}_{3}\right\rangle = Z(B_{2}(M)) = Z(P_{2}(M))$ as required.

\item $n\geq 3$. Then $Z(S_{n})=\{\operatorname{\text{Id}}\}$, and since the homomorphism $B_{n}(M)\longrightarrow S_{n}$ of~(\ref{eq:sespnbn}) is surjective, it follows that $\left\langle \Delta^{2}_{n+1}\right\rangle\subset Z(B_{n}(M))\subset Z(P_{n}(M))$. The proof of the fact that $Z(P_{n}(M))\subset \left\langle \Delta^{2}_{n+1}\right\rangle$ is analogous to that of the inclusion $Z(P_{n}(\mathbb{K}))\subset Z_{n}$ given in the second paragraph of Proposition~\ref{centro}, where the kernel of the Fadell-Neuwirth short exact sequence involving the pure braid groups of $M$ is a free group with trivial centre.\qedhere
\end{enumerate}
\end{proof}

%\hfill{$\blacksquare$}

\end{document}